\newif\ifjar\jarfalse
\newif\ifreferee\refereefalse
\newif\ifmydraft\mydraftfalse   %
\makeatletter \@input{texdirectives.tex} \makeatother
\else \usepackage{newtxtext,newtxmath} \fi %
\setlist{itemsep=0pt}
\definecolor{rulenameColor}{rgb}{0.5,0.5,0.5}
\definecolor{darkgreen}{rgb}{0,0.2,0}
\definecolor{darkred}{rgb}{0.25,0,0}
\definecolor{darkblue}{rgb}{0,0,0.3}
\newcommand{\orcidlogo}{\includegraphics[height=\fontcharht\font`A]{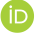}}
\newcommand{\orcidlogo}{\resizebox{1.2\fontcharht\font`A}{!}{\includegraphics{orcidlogo}}}
\newcommand{\orcidlink}[1]{\href{https://orcid.org/#1}{\orcidlogo}}
\numberwithin{equation}{section}
\numberwithin{figure}{section}
\declaretheorem[numberwithin=section, style=plain]{theorem}
\declaretheorem[sibling=theorem, style=plain]{proposition}
\declaretheorem[sibling=theorem, style=plain]{lemma}
\declaretheorem[sibling=theorem, style=plain]{corollary}
\declaretheorem[sibling=theorem, style=definition]{definition}
\declaretheorem[sibling=theorem, style=definition]{example}
\newcommand{\defemph}[1]{\textbf{\emph{#1}}} %
\newcommand{\inCaseCustom}[1]{\smallskip\par\noindent\textit{#1}:}
\newcommand{\inCase}[1]{\inCaseCustom{Case \rref{#1}}}
\newcommand{\inCasesText}[1]{\inCaseCustom{Cases #1}}
\newcommand{\inCaseText}[1]{\inCaseCustom{Case #1}}
\newcommand{\mkset}[1]{\{#1\}} %
\newcommand{\such}{\mid} %
\newcommand{\pow}[1]{\mathcal{P}#1} %
\newcommand{\mto}{{\mapsto}} %
\newcommand{\finmap}[1]{\langle #1 \rangle} %
\newcommand{\extend}[3]{\finmap{#1, #2 {\mapsto} #3}} %
\newcommand{\extendMany}[2]{\finmap{#1, #2}} %
\newcommand{\dummy}{{\star}} %
\newcommand{\act}[1]{#1_{*}} %
\newcommand{\upto}[2]{#1_{(#2)}} %
\newcommand{\upact}[2]{#1_{(#2){*}}} %
\newcommand{\all}[1]{\forall #1 \,.\,}
\newcommand{\some}[1]{\exists #1 \,.\,}
\newcommand{\lthen}{\Rightarrow}
\newcommand{\Clos}[1]{\mathsf{Clos}(#1)}
\newcommand{\closure}[1]{\overline{#1}}
\newcommand{\Der}[2]{\mathsf{Der}_{#1}(#2)}
\newcommand{\bnfis}{\mathrel{\;{:}{:}{=}\ }}
\newcommand{\bnfor}{\mathrel{\;\big\vert\ \ }}
\newcommand{\Ty}{\mathsf{Ty}} %
\newcommand{\Tm}{\mathsf{Tm}} %
\newcommand{\EqTy}{\mathsf{EqTy}} %
\newcommand{\EqTm}{\mathsf{EqTm}} %
\newcommand{\sym}[1]{\mathsf{#1}} %
\newcommand{\symS}{\sym{S}} %
\newcommand{\symM}{\sym{M}} %
\newcommand{\arity}[1]{\mathsf{ar}(#1)} %
\newcommand{\rawRule}[2]{#1 \Longrightarrow #2} %
\newcommand{\genapp}[1]{\widehat{#1}} %
\newcommand{\abstr}[1]{\{#1\}} %
\newcommand{\types}{\vdash} %
\newcommand{\suitable}[1]{\text{$#1$ suitable}} %
\newcommand{\mkvar}[1]{\mathsf{#1}} %
\newcommand{\avar}[2]{\mkvar{#1}^{#2}} %
\newcommand{\avars}[3]{\mkvar{#1}_{#2}^{#3}} %
\newcommand{\bdry}{\mathcal{b}} %
\newcommand{\BB}{\mathcal{B}} %
\newcommand{\DD}{\mathcal{D}} %
\newcommand{\J}{\mathcal{j}} %
\newcommand{\JJ}{\mathcal{J}} %
\newcommand{\isType}[1]{#1\;\mathsf{type}}
\newcommand{\isVCtx}[1]{#1\;\mathsf{vctx}}
\newcommand{\isMCtx}[1]{#1\;\mathsf{mctx}}
\newcommand{\emptyExt}{[\,]} %
\newcommand{\emptyCtx}{[\,]} %
\newcommand{\of}{{:}} %
\newcommand{\plug}[2]{#1{\setlength{\fboxrule}{0.5pt}\setlength{\fboxsep}{1pt}\fbox{\vphantom{$#1$}$#2$}}} %
\newmdenv[linecolor=rulenameColor]{ruleframe}
\newcommand{\natty}[2]{\tau_{#1}(#2)} %
\newcommand{\asset}[1]{\{\kern-0.237em\vert #1 \vert\kern-0.237em\}} %
\newcommand{\asm}[1]{\mathsf{asm}(#1)} %
\newcommand{\fvz}[1]{\mathsf{fv}_0(#1)} %
\newcommand{\fvt}[1]{\mathsf{fvt}(#1)} %
\newcommand{\fv}[1]{\mathsf{fv}(#1)} %
\newcommand{\mv}[1]{\mathsf{mv}(#1)} %
\newcommand{\bv}[1]{\mathsf{bv}(#1)} %
\newcommand{\erase}[1]{\lfloor #1 \rfloor} %
\newcommand{\erasex}[1]{\lfloor\!\!\lfloor #1 \rfloor\!\!\rfloor} %
\newcommand{\strip}[1]{{\mathcal s}(#1)} %
\newcommand{\residue}[1]{{\mathcal r}(#1)} %
\newcommand{\by}{\;\mathsf{by}\;} %
\newcommand{\convert}[2]{\upkappa(#1, #2)} %
\newcommand{\trantt}[1]{{#1}_{\mathrm{tt}}} %
\newcommand{\rulename}[1]{\textnormal{\textsc{#1}}}
\newcommand{\inferenceRuleBare}[5][]{\inferrule*[#2={\color{rulenameColor}\rulename{#3}},#1]{#4}{#5}}
\newcommand{\inferenceRuleT}[4][]{\inferenceRuleBare[#1]{lab}{#2}{#3}{#4}}
\newcommand{\inferenceRuleLabel}[5][]{\inferenceRuleT[#1]{#2}{\label{#3}#4}{#5}}
\newcommand{\inferenceRule}[4][]{\inferenceRuleLabel[#1]{#2}{#2}{#3}{#4}}
\newcommand{\infRL}[4][]{\inferenceRuleBare[#1]{Left}{{\scriptsize #2}}{#3}{#4}}
\newcommand{\infRR}[4][]{\inferenceRuleBare[#1]{Right}{{\scriptsize #2}}{#3}{#4}}
\newcommand{\rref}[1]{\hyperref[#1]{\rulename{#1}}} %
\begin{document}
\title[Finitary type theories]{Finitary type theories with and without contexts}
\author[Philipp~G.\ Haselwarter]{Philipp~G.\ Haselwarter \orcidlink{0000-0003-0198-7751}}
\author[Andrej Bauer]{Andrej Bauer \orcidlink{0000-0001-5378-0547}}

\begin{abstract}
We give a definition of finitary type theories that subsumes many examples of dependent
type theories, such as variants of Martin-Löf type theory, simple type theories, first-order and higher-order logics, and homotopy type theory.
We prove several general meta-theorems about finitary type theories: weakening, admissibility of substitution and instantiation of metavariables, derivability of presuppositions, uniqueness of typing, and inversion principles.

We then give a second formulation of finitary type theories in which there are no explicit contexts. Instead, free variables are explicitly annotated with their types. We provide translations between finitary type theories with and without contexts, thereby showing that they have the same expressive power. The context-free type theory is implemented in the nucleus of the Andromeda~2 proof assistant.
\end{abstract}

\keywords{Dependent type theory, context-free type theory, formal meta-theory, proof assistants}

\maketitle

\section{Introduction}
\label{sec:introduction}

We present a general definition of a class of dependent type theories which we call \emph{finitary type theories}.
In fact, we provide two variants of such type theories, with and without typing contexts, and show that they are equally expressive by providing translations between them.
Our definition broadly follows the development of general type theories~\cite{gtt}, but is specialized to serve as a formalism for implementation of a proof assistant. Indeed, the present paper is the theoretical foundation of the Andromeda~2 proof assistant, in which type theories are entirely defined by the user.

To be quite precise, we shall study \emph{syntactic presentations} of type theories, in the sense that theories are seen as syntactic constructions, and the meta-theorems conquered by a frontal assault on abstract syntax. Even though this may not be the most fashionable approach to type theory, we were lead to it by our determination to understand precisely what we were implementing in Andromeda~2. We certainly expect that the syntactic presentations will match nicely with some of the modern semantic accounts of type theories, and that the usefulness of finitary type theories will transcend mere theoretical support for proof assistants.

We thus present our development of type theories in an elementary style, preferring concrete to abstract definitions and constructions, without compromising generality.
In particular, this means that we first define ``raw'' terms, judgements, rules, and the like, and then proceed in stages to carve out the well-behaved fragment via predicates.
Our motivations for this choice are threefold.
First, an elementary definition requires only very modest meta-mathematical foundations and lends itself to interpretation in various foundational systems.
Second, by eschewing intermediate surrogates such as logical frameworks~\cite{harper-honsell-plotkin:framework,Uemura:General:2019} or quotient inductive-inductive types~\cite{Altenkirch:Type:2016}, the semantics of finitary type theories may be addressed directly, without recourse to the interpretation of such intermediates. And in any case, even the intermediates must eventually be syntactically presented if they are to be used at all.
Third, the programming languages available to us are not sufficiently expressive to isolate the well-formed fragment of type theory in one fell swoop. They enable and insist on a more traditional approach, in which the input strings are converted to syntactic trees, and the type theoretic entities presented in their ``raw'' form, as values of inductively defined datatypes.
The concrete nature of our constructions and meta-theorems then makes it possible to transcribe them to code in a straightforward fashion.
Further discussion of alternative approaches is postponed to \cref{sec:conclusion}.

Our definition captures dependent type theories of Martin-Löf style, i.e.\ theories that strictly separate terms and types, have four judgement forms (for terms, types, type equations, and typed term equations), and hypothetical judgements standing in intuitionistic contexts. Among examples are the intensional and extensional Martin-Löf type theory, possibly with Tarski-style universes, homotopy type theory, Church's simple type theory, simply typed $\uplambda$-calculi, and many others. Counter-examples can be found just as easily: in cubical type theory the interval type is special, cohesive and linear type theories have non-intuitionistic contexts, polymorphic $\uplambda$-calculi quantify over all types, pure type systems organize the judgement forms in their own way, and so on.

\subsection*{Contributions}

In \cref{sec:finitary-type-theories} we give an account of dependent type theories that is close to how they are traditionally presented.
A type theory should verify certain meta-theoretical properties: the constituent parts of any derivable judgement should be well-formed, substitution rules should be admissible, and each term should have a unique type.
The definition of \emph{finitary type theories} proceeds in stages.
Each of the stages refines the notion of \emph{rule} and \emph{type theory} by specifying conditions of well-formedness.
We start with the raw syntax~(\cref{sec:raw-syntax}) of expressions and formal metavariables, out of which contexts, substitutions, and judgements are formed.
Next we define \emph{raw rules}~(\cref{sec:raw-rules}), a formal notion of what is commonly called ``schematic inference rule''. We introduce the \emph{structural rules}~(\cref{fig:struct-rules,fig:equality-rules,fig:well-formed-boundaries}) that are shared by all type theories, and define \emph{congruence rules}~(\cref{def:congruence-rule}). These rules are then collected into raw type theories~(\cref{def:raw-type-theory}).
The definition of raw rules ensures the well-typedness of each constituent part of a raw rule, by requiring the derivability of the presuppositions of a rule.
In order to rule out circularities in the derivations of well-typedness, and to provide an induction principle for finitary type theories, we introduce \emph{finitary rules} and \emph{finitary type theories}~(\cref{sec:finitary-rules-and-tts}).
Finally, \emph{standard} type theories are introduced~(\cref{def:standard-type-theory}) to enforce that each symbol is associated to a unique rule.

We prove the following metatheorems about raw~(\cref{sec:meta-theorems-raw}), finitary~(\cref{sec:meta-thm-finitary}), and standard type theories~(\cref{sec:meta-theorems-about}):
admissibility of substitution and equality substitution~(\cref{thm:substitution-admissible}),
admissibility of instantiation of metavariables~(\cref{prop:instantiation-admissible}) and equality instantiation~(\cref{thm:admissibility-equality-instantiation}),
derivability of presuppositions~(\cref{prop:presuppositivity}),
admissibility of ``economic'' rules~(\cref{prop:tt-specific-eco,prop:tt-meta-eco,def:congruence-rule-eco}),
inversion principles~(\cref{thm:inversion}),
uniqueness of typing~(\cref{thm:uniqueness-of-typing}).

The goal of \cref{sec:context-free-tt} is the development of a context-free presentation of finitary type theories that can serve as foundation of the implementation of a proof assistant.
The definition of finitary type theories in \cref{sec:finitary-type-theories} is well-suited for the metatheoretic study of type theory, but does not directly lend itself to implementation. For instance, in keeping with traditional accounts of type theory, contexts are explicitly represented as lists.

In \emph{context-free type theories}, the syntax of expressions~(\cref{sec:context-free-raw-syntax}) is modified so that each free variable is annotated with its type $\avar a A$ rather than being assigned a type by a context. As the variables occurring in the type annotation $A$ are also annotated, the dependency between variables is recorded. Judgements in context-free type theories thus do not carry an explicit context. Metavariables are treated analogously.
To account for the possibility of proof-irrelevant rules like equality reflection, where not all of the variables used to derive the premises are recorded in the conclusion, we augment type and term equality judgements with \emph{assumption sets}~(\cref{sec:context-free-judg-bound}). Intuitively, in a judgement $\types A \equiv B \by \alpha$, the assumption set $\alpha$ contains the (annotated) variables that were used in the derivation of the equation but may not be amongst the free variables of $A$ and $B$.
The conversion rule of type theory allows the use of a judgemental equality to construct a term judgement. To ensure that assumption sets on equations are not lost as a result of conversion, we include \emph{conversion terms}~(\cref{fig:syntax-context-free-type-theories}).

Following the development of finitary type theories, we introduce raw context-free rules and type theories~(\cref{sec:context-free-rules-type}).
We proceed to define \emph{context-free finitary rules} and type theories whose well-formedness is derivable with respect to a well-founded order~(\cref{def:context-free-finitary}), and \emph{standard} theories~(\cref{def:context-free-standard-type-theory}).

Subsequently, we prove metatheorems about context-free raw~(\cref{sec:meta-theorems-cf-raw}), finitary~(\cref{sec:meta-theorems-cf-finitary}), and standard type theories~(\cref{sec:meta-theorems-cf-standard}).
The metatheorems in this section are similar to those obtained for finitary type theories, with the exception of the metatheorems specific to context-free type theorems~(\cref{sec:meta-theorems-cf-specific}).
In particular, and contrary to finitary type theories, context-free raw type theories satisfy strengthening (\cref{thm:context-free-strengthening}).
We further prove that conversion terms do not ``get in the way'' when working in context-free type theory (\cref{lem:boundary-convert}).
The constructions underlying these metatheorems are defined on judgements rather than derivations, and can thus be implemented effectively in a proof assistant for context-free type theories without storing derivation trees.

In \cref{sec:context-free-cons-comp}, we establish a correspondence between type theories with and without contexts by constructing translations back and forth (\cref{thm:cf-to-tt-bdry-jdg,thm:tt-to-cf}).

\subsubsection*{Acknowledgements}

The present work draws its inspiration from our joint work with Peter LeFanu Lumsdaine on general type theories~\cite{gtt}. We thank Peter for spearheading the development of general type theories, which inspired us to implement user-definable dependent type theories in Andromeda~2.
We also thank Anja Petković Komel for numerous fruitful discussions, and for pushing through even the most horrid technicalities with us. The theorems about admissibility of substitutions and instantiations are to be considered joint work with Anja.
We are grateful to Robert Harper and Matija Pretnar for valuable comments on an earlier version of this material as included in~\cite{pgh:thesis}.

This material is based upon work supported by the Air Force Office of Scientific Research under award numbers FA9550-14-1-0096 and FA9550-21-1-0024.

\section{Finitary type theories}
\label{sec:finitary-type-theories}

Our treatment of type theories follows in essence the definition of general type theories carried out in~\cite{gtt}, but is tailored to support algorithmic derivation checking in three respects: we limit ourselves to finitary symbols and rules, construe metavariables as a separate syntactic class rather than extensions of signatures by fresh symbols, and take binding of variables to be a primitive operation on its own.

\subsection{Raw syntax}
\label{sec:raw-syntax}

In this section we describe the \emph{raw} syntax of fintary type theories, also known as pre-syntax. We operate at the level of \emph{abstract} syntax, i.e.\ we construe syntactic entities as syntax trees generated by grammatical rules in inductive fashion. Of course, we still display such trees \emph{concretely} as string of symbols, a custom that should not detract from the abstract view.

Raw expressions are formed without any typing discipline, but they have to be syntactically well-formed in the sense that free and bound variables must be well-scoped and that all symbols must be applied in accordance with the given signature. We shall explain the details of these conditions after a short word on notation.

We write $[X_1, \ldots, X_n]$ for a finite sequence and $f = \finmap{X_1 \mto Y_1, \ldots, X_n \mto Y_n}$ for a sequence of pairs $(X_i, Y_i)$ that represents a map taking each~$X_i$ to~$Y_i$. An alternative notation is $\finmap{X_1 \of Y_1, \ldots, X_n \of Y_n}$, and we may elide the parenthess $[\cdots]$ and $\finmap{\cdots}$.
The \defemph{domain} of such~$f$ is the set $|f| = \mkset{X_1, \ldots, X_n}$, and it is understood that all $X_i$ are different from one another.
Given $X \not\in |f|$, the \defemph{extension $\extend{f}{X}{Y}$} of $f$ by $X \mapsto Y$ is
the map
\begin{equation*}
  \extend{f}{X}{Y} : Z \mapsto
  \begin{cases}
    Y    & \text{if $Z = X$,}\\
    f(Z) & \text{if $Z \in |f|$.}
  \end{cases}
\end{equation*}
Given a list $\ell = [\ell_1, \ldots, \ell_n]$, we write $\upto{\ell}{i} = [\ell_1, \ldots, \ell_{i-1}]$ for its $i$-th initial segment. We use the same notation in other situations, for example $\upto{f}{i} = \finmap{X_1 \mapsto Y_1, \ldots, X_{i-1} \mapsto Y_{i-1}}$ for~$f$ as above.

\subsubsection{Variables and substitution}
\label{sec:variables-and-substitution}

We distinguish notationally between the disjoint sets of \emph{free variables} $\mkvar{a}, \mkvar{b}, \mkvar{c}, \ldots$ and \emph{bound variables} $x, y, z, \ldots$, each of which are presumed to be available in unlimited supply. The free variables are scoped by variable contexts, while the bound ones are always captured by abstractions.

The strict separation of free and bound variables is fashioned after \emph{locally nameless syntax}~\cite{mckinna,chargueraud}, a common implementation technique of variable binding in which free variables are represented as names and the bound ones as de~Bruijn indices~\cite{debruijn}. In \cref{sec:context-free-tt} the separation between free and bound variables will be even more pronounced, as only the former ones are annotated with types.

We write $e[s/x]$ for the substitution of an expression~$s$ for a bound variable~$x$ in expression~$e$ and $e[\vec s/\vec x]$ for the (parallel) substitution of $s_1, \ldots, s_n$ for $x_1, \ldots, x_n$, with the usual proviso about avoiding the capture of bound variables.
In \cref{sec:meta-theorems-raw}, when we prove admissibility of substitution, we shall also substitute expressions for free variables, which of course is written as $e[s/\mkvar{a}]$. Elsewhere we avoid such substitutions and only ever replace free variables by bound ones, in which case we write $e[x/\sym{a}]$. This typically happens when an expression with a free variable is used as part of a binder, such as the codomain of a $\Pi$-type or the body of a lambda. We take care to always keep bound variables well-scoped under binders.

\subsubsection{Arities and signatures}
\label{sec:signatures}

The raw expressions of a finitary type theory are formed using \defemph{symbols} and \defemph{metavariables}, which constitute two separate syntactic classes. Each symbol and metavariable has an associated arity, as follows.

The \defemph{symbol arity $(c, [(c_1, n_1), \ldots, (c_k, n_k)])$} of a symbol~$\symS$ tells us that
\begin{enumerate}
\item the syntactic class of $\symS$ is $c \in \mkset{\Ty, \Tm}$,
\item $\symS$ accepts $k$ arguments,
\item the $i$-th argument must have syntactic class~$c_i \in \mkset{\Ty, \Tm, \EqTy, \EqTm}$ and binds~$n_i$ variables.
\end{enumerate}
The syntactic classes $\Ty$ and $\Tm$ stand for type and term expressions, and~$\EqTy$ and~$\EqTm$ for type and term equations, respectively. For the time being the latter two are mere formalities, as the only expression of these syntactic classes are the dummy values~$\dummy_\Ty$ and $\dummy_\Tm$. However, in \cref{sec:context-free-tt} we will introduce genuine expressions of syntactic classes~$\EqTy$ and~$\EqTm$.

The information about symbol arities is collected in a \defemph{signature~$\Sigma$}, which maps each symbol to its arity. When discussing syntax, it is understood that such a signature has been given, even if we do not mention it explicitly.

\begin{example}
  The arity of a type constant such as $\mathsf{bool}$ is $(\Ty, [])$, the arity of a binary term operation such as~$+$ is $(\Tm, [(\Tm, 0), (\Tm, 0)])$, and the arity of a quantifier such as the dependent product~$\Uppi$ is $(\Ty, [(\Ty, 0), (\Ty, 1)])$ because it is a type former taking two type arguments, with the second one binding one variable.
\end{example}

The \defemph{metavariable arity} associated to a metavariable $\symM$ is a pair $(c, n)$, where the syntactic class $c \in \mkset{\Ty, \Tm, \EqTy, \EqTm}$ indicates whether~$\symM$ is respectively a type, term, type equality, or term equality metavariable, and $n$ is the number of term arguments it accepts. The metavariables of syntactic classes $\Ty$ and $\Tm$ are the \defemph{object} metavariables, and can be used to form expressions. The metavariable of syntactic classes $\EqTy$ and $\EqTm$ are the \defemph{equality} metavariables, and do not participate in formation of expressions. We introduce them to streamline several definitions, and to have a way of referring to equational premises in \cref{sec:context-free-tt}.
The information about metavariable arities is collected in a metavariable context, cf.\ \cref{sec:judg-bound}.

A metavariable $\symM$ of arity $(c, n)$ could be construed as a symbol of arity
\begin{equation*}
(c, [\underbrace{(\Tm,0), \ldots, (\Tm,0)]}_n) .
\end{equation*}
This apporach is taken in~\cite{gtt}, but we keep metavariables and symbols separate because they play different roles, especially in context-free type theories in \cref{sec:context-free-tt}.

\subsubsection{Raw expressions}
\label{sec:expressions}

The raw syntactic constituents of a finitary type theory, with respect to a given signature~$\Sigma$, are outlined in \cref{fig:syntax-general-type-theories}. In this section we discuss the top part of the figure, which involves the syntax of term and type expressions, and arguments.

\begin{figure}[thbp]
  \centering
  \small
  \begin{ruleframe}[innerleftmargin=6pt,innerrightmargin=6pt]
  \begin{align*}
  \text{Type expression}\ A, B
  \bnfis& \symS(e_1, \ldots, e_n)   &&\text{type symbol application}\\
  \bnfor& \symM(t_1, \ldots, t_n)   &&\text{type metavariable application}
  \\
  \text{Term expression}\ s, t
  \bnfis& \mkvar a                         &&\text{free variable}\\
  \bnfor& x                              &&\text{bound variable}\\
  \bnfor& \symS(e_1, \ldots, e_n)   &&\text{term symbol application}\\
  \bnfor& \symM(t_1, \ldots, t_n)   &&\text{term metavariable application}
  \\
  \text{Argument}\ e
  \bnfis& A           &&\text{type argument} \\
  \bnfor& t           &&\text{term argument} \\
  \bnfor& \dummy_\Ty  &&\text{dummy argument} \\
  \bnfor& \dummy_\Tm  &&\text{dummy argument} \\
  \bnfor& \abstr x e  &&\text{abstracted arg. ($x$ bound in $e$)}
  \\[1ex]
  \text{Judgement thesis}\ \J
  \bnfis& \isType{A}                    && \text{$A$ is a type} \\
  \bnfor& t : A                         && \text{$t$ has type $T$} \\
  \bnfor& A \equiv B \by \dummy_\Ty     && \text{$A$ and $B$ are equal types} \\
  \bnfor& s \equiv t : A \by \dummy_\Tm && \text{$s$ and $t$ are equal terms at $A$}
  \\
  \text{Abstracted judgement}\ \JJ
  \bnfis& \J                   &&\text{judgement thesis} \\
  \bnfor& \abstr{x \of A}\; \JJ  &&\text{abstracted jdgt. ($x$ bound in $\JJ$)}
  \\[1ex]
  \text{Boundary thesis}\ \bdry
  \bnfis& \isType{\Box}            &&\text{a type}\\
  \bnfor& \Box :  A                &&\text{a term of type $A$}\\
  \bnfor& A \equiv B \by \Box      &&\text{type equation boundary}\\
  \bnfor& s \equiv t : B \by \Box  &&\text{term equation boundary}
  \\
  \text{Abstracted boundary}\ \BB
  \bnfis& \bdry                   &&\text{boundary thesis} \\
  \bnfor& \abstr{x \of A}\; \BB  &&\text{abstracted bdry. ($x$ bound in $\BB$)}
  \\[1ex]
  \text{Variable context}\ \Gamma
  \bnfis& \mathrlap{[\mkvar{a}_1 \of A_1, \ldots, \mkvar{a}_n \of A_n]}
  \\[1ex]
  \text{Metavariable context}\ \Theta
  \bnfis& \mathrlap{[\symM_1 \of \BB_1, \ldots, \symM_n \of \BB_n]}
  \\[1ex]
  \text{Hypothetical judgement}\
  \phantom{\bnfis}& \Theta; \Gamma \types \JJ \\
  \text{Hypothetical boundary}\
  \phantom{\bnfis}& \Theta; \Gamma \types \BB
  \end{align*}
  \end{ruleframe}
  \caption{The raw syntax of expressions, boundaries and judgements.}
  \label{fig:syntax-general-type-theories}
\end{figure}

A \defemph{type expression}, or just a \defemph{type}, is formed by an application $\symS(e_1, \ldots, e_n)$ of a type symbol to arguments, or an application $\symM(t_1, \ldots, t_n)$ of a type metavariable to term expressions.
A \defemph{term expression}, or just a \defemph{term}, is a free variable~$\mkvar{a}$, a bound variable~$x$, an application $\symS(e_1, \ldots, e_n)$ of a term symbol to arguments, or an application $\symM(t_1, \ldots, t_n)$ of a term metavariable to term expressions.

An \defemph{argument} is a type or a term expression, the dummy argument~$\dummy_\Ty$ of syntactic class~$\EqTy$, or the dummy argument~$\dummy_\Tm$ of syntactic class~$\EqTm$.
We write just $\dummy$ when it is clear which of the two should be used.
Another kind of argument is an \defemph{abstraction} $\abstr{x} e$, which binds~$x$ in~$e$.
An iterated abstraction $\abstr{x_1} \abstr{x_2} \cdots \abstr{x_n} e$ is abbreviated as $\abstr{\vec{x}} e$.
Note that abstraction is a primitive syntactic operation, and that it provides no typing information about~$x$.

\begin{example}
  In our notation a dependent product is written as $\Uppi(A, \abstr{x} B)$, and a fully annotated function as $\uplambda(A, \abstr{x} B, \abstr{x} e)$. The fact that $x$ ranges over~$A$ is not part of the raw syntax and will be specified later by an inference rule.
\end{example}

In all cases, in order for an expression to be well-formed, the arities of symbols and metavariables must be respected. If~$\symS$ has arity $(c, [(c_1, n_1), \ldots, (c_k, n_k)])$, then it must be applied to~$k$ arguments $e_1, \ldots, e_k$, where each $e_i$ is of the form $\abstr{x_1} \cdots \abstr{x_{n_i}} e_i'$ with $e_i'$ a non-abstracted argument of syntactic class~$c_i$.
Similarly, a metavariable $\symM$ of arity $(n, c)$ must be applied to~$n$ term expressions.
When a symbol~$\symS$ takes no arguments, we write the corresponding expression as $\symS$ rather than $\symS()$, and similarly for metavariables.

As is usual, expressions which differ only in the choice of names of bound variables are considered syntactically equal, e.g., $\abstr{x} \symS(\mkvar{a}, x)$ and $\abstr{y} \symS(\mkvar{a}, y)$ are syntactically equal and we may write $(\abstr{x} \symS(\mkvar{a}, x)) = (\abstr{y} \symS(\mkvar{a}, y))$.

For future reference we define in \cref{fig:variable-occurrences} the sets of free variable, bound variable, and metavariable occurrences, where we write set comprehension as $\asset{\cdots}$ in order to distinguish it from abstraction. A syntactic entity is said to be \defemph{closed} if no free variables occur in it.

\begin{figure}[htbp]
  \centering
  \begin{ruleframe}
    \small
    \centering

    \textbf{Free variables:}
    \begin{gather*}
      \fv{\mkvar{a}} = \asset{\mkvar{a}} \qquad
      \fv{x} = \asset{} \qquad
      \fv{\abstr{x} e} = \fv{e} \\
      \begin{aligned}
        \fv{\symS(e_1 \ldots e_n)} &= \fv{e_1} \cup \cdots \cup \fv{e_n} \\
        \fv{\symM(t_1 \ldots t_n)} &= \fv{t_1} \cup \cdots \cup \fv{t_n} \\
      \end{aligned}
      \\[1ex]
      \fv{\isType A} = \fv{A} \qquad
      \fv{t : A} = \fv{t} \cup \fv{A} \\
      \begin{aligned}
        \fv{A \equiv B \by \dummy} &= \fv{A} \cup \fv{B} \\
        \fv{s \equiv t : A \by \dummy} &= \fv{s} \cup \fv{t} \cup \fv{A} \\
        \fv{\abstr{x : A}\; \JJ} &= \fv{A} \cup \fv{\JJ} \\
      \end{aligned}
      \\[1ex]
      \fv{\isType \Box} = \asset{} \qquad
      \fv{\Box : A} = \fv{A} \\
      \begin{aligned}
        \fv{A \equiv B \by \Box} &= \fv{A} \cup \fv{B} \\
        \fv{s \equiv t : A \by \Box} &= \fv{s} \cup \fv{t} \cup \fv{A} \\
        \fv{\abstr{x : A\;} \BB} &= \fv{A} \cup \fv{\BB}
      \end{aligned}
    \end{gather*}

    \medskip

    \textbf{Bound variables:}
    \begin{gather*}
      \bv{\mkvar{a}} = \asset{} \qquad
      \bv{x} = \asset{x} \qquad
      \bv{\abstr{x} e} = \bv{e} \setminus \asset{x} \\
      \begin{aligned}
        \bv{\symS(e_1 \ldots e_n)} &= \bv{e_1} \cup \cdots \cup \bv{e_n} \\
        \bv{\symM(t_1 \ldots t_n)} &= \bv{t_1} \cup \cdots \cup \bv{t_n}
      \end{aligned}
    \end{gather*}

    \medskip

    \textbf{Metavariables:}
    \begin{gather*}
      \mv{\mkvar{a}} = \asset{} \qquad
      \mv{x} = \asset{} \qquad
      \mv{\abstr{x} e} = \mv{e} \\
      \begin{aligned}
        \mv{\symS(e_1 \ldots e_n)} &= \mv{e_1} \cup \cdots \cup \mv{e_n} \\
        \mv{\symM(t_1 \ldots t_n)} &= \asset{\symM} \cup \mv{t_1} \cup \cdots \cup \mv{t_n} \\
      \end{aligned}
      \\[1ex]
      \mv{\isType{A}} = \mv{A} \qquad
      \mv{t : A} = \mv{t} \cup \mv{A} \\
      \begin{aligned}
        \mv{A \equiv B \by \dummy} &= \mv{A} \cup \mv{B} \\
        \mv{s \equiv t : A \by \dummy} &= \mv{s} \cup \mv{t} \cup \mv{A} \\
        \mv{\abstr{x \of A}\; \JJ} &= \mv{A} \cup \mv{\JJ}
      \end{aligned}
      \\[1ex]
      \mv{\isType \Box} = \asset{} \qquad
      \mv{\Box : A} = \mv{A} \\
      \begin{aligned}
        \mv{A \equiv B \by \Box} &= \mv{A} \cup \mv{B} \\
        \mv{s \equiv t : A \by \Box} &= \mv{s} \cup \mv{t} \cup \mv{A} \\
        \mv{\abstr{x : A}\; \BB} &= \mv{A} \cup \mv{\BB}
      \end{aligned}
    \end{gather*}
  \end{ruleframe}
  \caption{Free, bound, and metavariable occurrences}
  \label{fig:variable-occurrences}
\end{figure}

\subsubsection{Judgements and boundaries}
\label{sec:judg-bound}

The bottom part of \cref{fig:syntax-general-type-theories} displays the syntax of judgements and boundaries, which we discuss next.

There are four \defemph{judgement forms}: ``$\isType{A}$'' asserts that $A$ is a type; ``$t : A$'' that $t$ is a term of type~$A$; ``$A \equiv B \by \dummy_\Ty$'' that types $A$ and $B$ are equal; and ``$s \equiv t : A \by \dummy_\Tm$'' that terms $s$ and $t$ of type $A$ are equal.
We may shorten the equational forms to ``$A \equiv B$'' and ``$s \equiv t : A$'' in this section, as the only possible choice for $\by$ is $\dummy$.

Less familiar, but equally fundamental, is the notion of a \defemph{boundary}.
Whereas a judgement is an assertion, a boundary is a \emph{question} to be answered, a \emph{promise} to be fulfilled, or a \emph{goal} to be accomplished:
``$\isType{\Box}$'' asks that a type be constructed; ``$\Box : A$'' that the type $A$ be inhabited; and ``$A \equiv B \by \Box$'' and ``$s \equiv t : A \by \Box$'' that equations be proved.

An \defemph{abstracted judgement} has the form $\abstr{x \of A}\; \JJ$, where $A$ is a type expression and $\JJ$ is a (possibly abstracted) judgement. The variable~$x$ is bound in $\JJ$ but not in~$A$. Thus in general an abstracted judgement has the form
\begin{equation*}
  \abstr{x_1 \of A_1} \cdots \abstr{x_n \of A_n} \; \J,
\end{equation*}
where $\J$ is a \defemph{judgement thesis}, i.e.\ an expression taking one of the four (non-abstracted) judgement forms. We may abbreviate such an abstraction as $\abstr{\vec{x} \of \vec{A}} \; \J$.
Analogously, an \defemph{abstracted boundary} has the form
\begin{equation*}
  \abstr{x_1 \of A_1} \cdots \abstr{x_n \of A_n} \; \bdry,
\end{equation*}
where~$\bdry$ is a \defemph{boundary thesis}, i.e.\ it takes one of the four (non-abstracted) boundary forms. The reason for introducing abstracted judgements and boundaries will be explained shortly.

An abstracted boundary has the associated metavariable arity
\begin{equation*}
  \arity{\abstr{x_1 \of A_1} \cdots \abstr{x_n \of A_n}\; \bdry}
  = (c, n)
\end{equation*}
where $c \in \mkset{\Ty, \Tm, \EqTy, \EqTm}$ is the syntactic class of~$\bdry$. Similarly, the associated metavariable arity of an argument is
\begin{equation*}
  \arity{\abstr{x_1} \cdots \abstr{x_n} e} = (c, n)
\end{equation*}
where $c \in \mkset{\Ty, \Tm}$ is the syntactic class of the (non-abstracted) expression~$e$.

The placeholder $\Box$ in a boundary $\BB$ may be \defemph{filled} with an argument~$e$, called the \defemph{head}, to give a judgement $\plug{\BB}{e}$, provided that the arities of~$\BB$ and~$e$ match.
Because equations are proof irrelevant, their placeholders can be filled uniquely with (suitably abstracted) dummy value~$\dummy$.
Filling is summarized in \cref{fig:boundary-filling}, where we also include notation for filling an object boundary with an equation that results in the corresponding equation. The figure rigorously explicates the dummy values, but we usually omit them.
Filling may be inverted: given an abstracted judgement $\JJ$ there is a unique abstracted boundary~$\BB$ and a unique argument~$e$ such that $\JJ = \plug{\BB}{e}$.

\begin{figure}[htbp]
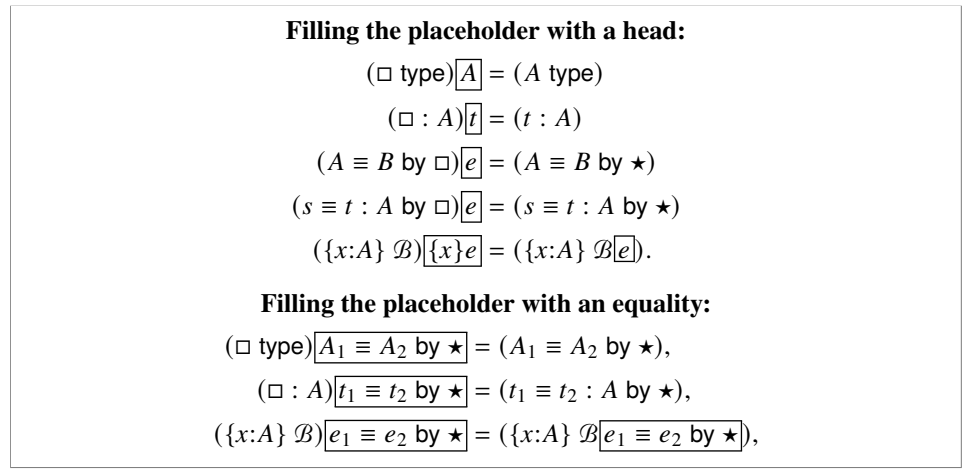

  \centering
  \begin{ruleframe}
    \small
    \centering

    \textbf{Filling the placeholder with a head:}
    \begin{align*}
      \plug{(\isType \Box)}{A} &= (\isType A) \\
      \plug{(\Box : A)}{t} &= (t : A) \\
      \plug{(A \equiv B \by \Box)}{e} &= (A \equiv B \by \dummy) \\
      \plug{(s \equiv t : A \by \Box)}{e} &= (s \equiv t : A \by \dummy) \\
      \plug{(\abstr{x \of A}\; \BB)}{\abstr{x} e} &= (\abstr{x \of A}\; \plug{\BB}{e}).
    \end{align*}

    \medskip

    \textbf{Filling the placeholder with an equality:}
    \begin{align*}
      \plug{(\isType{\Box})}{A_1 \equiv A_2 \by \dummy} &= (A_1 \equiv A_2 \by \dummy), \\
      \plug{(\Box : A)}{t_1 \equiv t_2 \by \dummy} &= (t_1 \equiv t_2 : A \by \dummy),\\
      \plug{(\abstr{x \of A}\; \BB)}{e_1 \equiv e_2 \by \dummy} &=
          (\abstr{x \of A}\; \plug{\BB}{e_1 \equiv e_2 \by \dummy}),
    \end{align*}
  \end{ruleframe}
  \caption{Filling the head of a boundary}
  \label{fig:boundary-filling}
\end{figure}

\begin{example}
  If the symbols $\sym{A}$  and  $\sym{Id}$ have arities
  \begin{equation*}
    (\Ty, []),
    \quad\text{and}\quad
    (\Ty, [(\Ty, 0), (\Tm, 0), (\Tm, 0)]),
  \end{equation*}
  respectively, then the boundaries
  \begin{equation*}
    \abstr{x \of \sym{A}} \abstr{y \of \sym{A}} \; \Box : \sym{Id}(\sym{A}, x, y)
    \qquad\text{and}\qquad
    \abstr{x \of \sym{A}} \abstr{y \of \sym{A}} \; x \equiv y : \sym{A} \by \Box
  \end{equation*}
  may be filled with heads $\abstr{x} \abstr{y} x$ and $\abstr{x} \abstr{y} \dummy$ to yield abstracted judgements
  \begin{equation*}
    \abstr{x \of \sym{A}} \abstr{y \of \sym{A}} \;  x : \sym{Id}(\sym{A}, x, y)
    \qquad\text{and}\qquad
    \abstr{x \of \sym{A}} \abstr{y \of \sym{A}} \; x \equiv y : \sym{A} \by \dummy.
  \end{equation*}
  Names of bound variables are immaterial, we would still get the same judgement if we filled the left-hand boundary with $\abstr{u} \abstr{v} u$ or $\abstr{y} \abstr{x} y$,
  but not with $\abstr{x} \abstr{y} y$.
\end{example}

Information about available metavariables is collected by a \defemph{metavariable context}, which is a finite list $\Theta = [\symM_1 \of \BB_1, \ldots, \symM_n \of \BB_n]$, also construed as a map, assigning to each metavariable $\symM_i$ a boundary $\BB_i$.
In \cref{sec:raw-rules}, the assigned boundaries will assign the typing of metavariable, while at the level of raw syntax they determine metavariable arities. That is, $\Theta$ assigns the metavariable arity $\arity{\BB_i}$ to~$\symM_i$.

A metavariable context $\Theta = [\symM_1 \of \BB_1, \ldots, \symM_n \of \BB_n]$ may be \emph{restricted} to a metavariable context $\upto{\Theta}{i} = [\symM_1 \of \BB_1, \ldots, \symM_{i-1} \of \BB_{i-1}]$.

The metavariable context~$\Theta$ is syntactically well formed when each~$\BB_i$ is a
syntactically well-formed boundary over~$\Sigma$ and~$\upto{\Theta}{i}$. In addition each $\BB_i$ must be closed, i.e.\ contain no free variables.

A \defemph{variable context}~$\Gamma = [\mkvar{a}_1 \of A_1, \ldots, \mkvar{a}_n \of A_n]$ over a metavariable context~$\Theta$ is a finite list of pairs written as $\mkvar{a}_i \of A_i$. It is considered syntactically valid when the variables $\mkvar{a}_1, \ldots, \mkvar{a}_n$ are all distinct, and for each~$i$ the type expression $A_i$ is valid with respect to the signature and the metavariable arities assigned by~$\Theta$, and the free variables occurring in~$A_i$ are among $\mkvar{a}_1, \ldots, \mkvar{a}_{i-1}$. A variable context $\Gamma$ yields a finite map, also denoted $\Gamma$, defined by $\Gamma(\mkvar{a}_i) = A_i$. The \emph{domain} of~$\Gamma$ is the set $\vert \Gamma \vert = \{\mkvar{a}_1, \ldots, \mkvar{a}_n\}$.

A \defemph{context} is a pair $\Theta; \Gamma$ consisting of a metavariable context~$\Theta$ and a variable context~$\Gamma$ over $\Theta$. A syntactic entity is considered syntactically valid over a signature and a context $\Theta; \Gamma$ when all symbol and metavariable applications respect the assigned arities, the free variables are among~$\vert\Gamma\vert$, and all bound variables are properly abstracted. It goes without saying that we always require all syntactic entities to be valid in this sense.

A \defemph{(hypothetical) judgement} has the form
\begin{equation*}
  \Theta ; \Gamma \types \JJ.
\end{equation*}
It differs from traditional notion of a judgement in a non-essential way, which nevertheless requires an explanation. First, the context of a hypothetical judgement
\begin{equation*}
  \Theta ; \mkvar{a}_1 \of A_1, \ldots, \mkvar{a}_m \of A_m
  \types
  \abstr{x_1 \of B_1} \cdots \abstr{x_m \of B_m}\; \J
\end{equation*}
provides information about metavariables, not just the free variables.
Second, the variables are split between the context $\mkvar{a}_1 \of A_1, \ldots, \mkvar{a}_n \of A_n$ on the left of~$\types$, and the abstraction $\abstr{x_1 \of B_1} \cdots \abstr{x_m \of B_m}$ on the right. It is useful to think of the former as the \emph{global} hypotheses that interact with other judgements, and the latter as \emph{local} to the judgement.
We could of course delegate the metavariable context to be part of the signature as is done in \cite{gtt}, and revert to the more familiar form
\begin{equation*}
  \mkvar{a}_1 \of A_1, \ldots, \mkvar{a}_n \of A_n, x_1 \of B_1, \ldots, x_m \of B_m \types \J
\end{equation*}
by joining the variable context and the abstraction, but we would still have to carry the metavariable information in the signature, and would lose the ability to explicitly mark the split between the global and the local parts.
The split will be especially important in \cref{sec:context-free-tt}, where the context will be removed, but the abstraction kept.

\defemph{Hypothetical boundaries} are formed in the same fashion, as
\begin{equation*}
  \Theta; \Gamma \types \BB.
\end{equation*}
The intended meaning is that $\BB$ is a well-typed boundary in context $\Theta; \Gamma$.

\subsubsection{Metavariable instantiations}
\label{sec:meta-vari-inst}

Metavariables are slots that can be instantiated with arguments. Suppose $\Theta = \finmap{\symM_1 \of \BB_1, \ldots, \symM_k \of \BB_k}$ is a metavariable context over a signature~$\Sigma$. An \defemph{instantiation of~$\Theta$ over} a context~$\Xi; \Gamma$ is a seqence $I = \finmap{\symM_1 \mto e_1, \ldots, \symM_k \mto e_k}$, representing a map that takes each $\symM_i$ to an argument~$e_i$ over $\Theta; \Gamma$ such that $\arity{\BB_i} = \arity{e_i}$.

An instantiation~$I = \finmap{\symM_1 \mto e_1, \ldots, \symM_k \mto e_k}$ of~$\Theta$ may be \emph{restricted} to an instantiation~$\upto{I}{i} = \finmap{\symM_1 \mto e_1, \ldots, \symM_{i-1} \mto e_{i-1}}$ of $\upto{\Theta}{i}$.

An instantiation~$I$ of $\Theta$ over $\Xi; \Gamma$ acts on an expression~$e$ over $\Theta; \Delta$ to give an expression $\act{I} e$ in which the metavariables are replaced by the corresponding expressions, as follows:
\begin{gather*}
  \act{I} x = x,\qquad
  \act{I} \mkvar{a} = \mkvar{a},\qquad
  \act{I} \dummy = \dummy,\qquad
  \act{I} (\abstr{x} e) = \abstr{x} (\act{I} e),\\
  \begin{aligned}
  \act{I} (\symS(e_1, \ldots, e_n)) &= \symS(\act{I} e_1, \ldots, \act{I} e_n),\\
  \act{I} (\symM_i(t_1, \ldots, t_n)) &=
    e_i[(\act{I} t_1)/x_1, \ldots, (\act{I} t_{n_i})/x_{n_i}].
  \end{aligned}
\end{gather*}
The instantiated expression $\act{I} e$ is valid for $\Xi ; \Gamma, \act{I} \Delta$.
Abstracted judgements and boundaries may be instantiated too:
\begin{align*}
  \act{I} (\isType A) &= (\isType{\act{I} A}), &
  \act{I} (t : A) &= (\act{I} t : \act{I} A),\\
  \act{I} (A \equiv B \by \dummy) &= (\act{I} A \equiv \act{I} B \by \dummy), &
  \act{I} (\abstr{x \of A}\; \JJ) &= \abstr{x \of \act{I} A}\; \act{I} \JJ,
\end{align*}
and by imagining that $\act{I} \Box = \Box$, the reader can tell how to instantiate a boundary.
Finally, a hypothetical judgement $\Theta; \Delta \types \JJ$ may be instantiated to $\Xi; \Gamma, \act{I} \Delta \types \act{I} \JJ$, and similarly for a hypothetical boundary.

\subsection{Deductive systems}
\label{sec:deductive-systems}

We briefly recall the notions of a deductive system, derivability, and a derivation tree; see for example~\cite{Aczel:introduction:1977} for an introduction.
A \defemph{(finitary) closure rule} on a set $S$ is a pair $([p_1, \ldots, p_n], q)$, also displayed as
\begin{equation*}
  \infer{p_1 \ \cdots \  p_n}{q},
\end{equation*}
where $\{p_1, \ldots, p_n\} \subseteq S$ are the \defemph{premises} and $q \in S$ is the \defemph{conclusion}.
Let $\Clos{S}$ be the set of all closure rules on~$S$.

A \defemph{deductive system} (also called a \emph{closure system}) on a set $S$ is a family of closure rules $C : R \to \Clos{S}$, indexed by a set~$R$ of rule names.
A set $D \subseteq S$ is said to be \defemph{deductively closed for~$C$} when, for all $i \in R$, if $C_i = ([p_1, \ldots, p_n], q)$ and $\mkset{p_1, \ldots, p_n} \subseteq D$, then $q \in D$.
The \defemph{associated closure operator} is the map $\pow{S} \to \pow{S}$ which takes $D \subseteq S$ to the least deductively closed supserset $\closure{D}$ of~$D$, which exists by Tarski's fixed-point theorem~\cite{tarski}.
We say that $q \in S$ is \defemph{derivable from hypotheses $H \subseteq S$} when $q \in \closure{H}$, and that it is \defemph{derivable in~$C$} when $q \in \closure{\emptyset}$.

A closure rule $([p_1, \ldots, p_n], q)$ is \defemph{admissible for $C$} when $q \in \closure{\mkset{p_1, \ldots, p_n}}$. That is, adjoining an admissible closure rule to a closure system has no effect on its associated closure operator.

Derivability is witnessed by well-founded trees, which are constructed as follows.
For each $q \in S$ let $\Der{C}{q}$ be generated inductively by the clause (where $\mathsf{der}$ is a formal tag):
\begin{itemize}
\item for every $i \in R$, if $C_i = ([p_1, \ldots, p_n], q)$ and $t_j \in \Der{C}{p_j}$ for all $j = 1, \ldots, n$, then $\mathsf{der}_i(t_1, \ldots, t_n) \in \Der{C}{q}$.
\end{itemize}
The elements of $\Der{C}{q}$ are \defemph{derivation trees} with \defemph{conclusion}~$q$.
Indeed, we may view $\mathsf{der}_i(t_1, \ldots, t_n)$ as tree with the root labeled by~$i$ and the subtrees $t_1, \ldots, t_n$. A leaf is a tree of the form $\mathsf{der}_j()$, which arises when the corresponding closure rule~$C_j$ has no
premises.

\begin{proposition}
  \label{prop:derivation-tree}%
  Given a closure system~$C$ on $S$, an element $q \in S$ is derivable in~$C$ if, and only if, there exists a derivation tree over $C$ whose conclusion is~$q$.
\end{proposition}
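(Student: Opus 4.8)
The plan is to prove the two implications separately, each by invoking the appropriate induction principle: structural induction on derivation trees for one direction, and the minimality of the closure operator (i.e.\ induction on $\closure{-}$) for the other.

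For the ``if'' direction, suppose $t \in \Der{C}{q}$; I would argue by structural induction on~$t$ that $q \in \closure{\emptyset}$. By the generating clause, the only way $t$ can arise is as $t = \mathsf{der}_i(t_1, \ldots, t_n)$ with $C_i = ([p_1, \ldots, p_n], q)$ and $t_j \in \Der{C}{p_j}$ for each~$j$. The induction hypothesis gives $p_j \in \closure{\emptyset}$ for all~$j$, and since $\closure{\emptyset}$ is by definition deductively closed and contains $\mkset{p_1, \ldots, p_n}$, it contains~$q$.

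For the ``only if'' direction, I would consider the set $D = \mkset{q \in S \such \Der{C}{q} \neq \emptyset}$ of elements admitting a derivation tree, and show that $D$ is deductively closed for~$C$: given a rule $C_i = ([p_1, \ldots, p_n], q)$ with $\mkset{p_1, \ldots, p_n} \subseteq D$, pick $t_j \in \Der{C}{p_j}$ for each~$j$, and then $\mathsf{der}_i(t_1, \ldots, t_n) \in \Der{C}{q}$, so $q \in D$. Since $D$ is a deductively closed superset of~$\emptyset$ and $\closure{\emptyset}$ is the least such set, we get $\closure{\emptyset} \subseteq D$; hence $q \in \closure{\emptyset}$ yields $\Der{C}{q} \neq \emptyset$, i.e.\ a derivation tree with conclusion~$q$.

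Neither step is a genuine obstacle — each is immediate once the right induction principle is in place, and the interest lies precisely in pairing the inductive definition of $\Der{C}{-}$ with the inductive (least-fixed-point) characterization of $\closure{-}$. The only point demanding any care is the selection of the subtrees~$t_j$ in the second direction; this is harmless classically, and in fact uses only bounded choice, since the rules are finitary. If one wanted the analogous statement for derivability from hypotheses~$H$, the same proof applies verbatim after enlarging $\Der{C}{-}$ with leaves tagged by elements of~$H$ and replacing $\closure{\emptyset}$ by $\closure{H}$.
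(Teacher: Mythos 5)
Your proposal is correct and follows essentially the same route as the paper: one direction by structural induction on derivation trees, the other by observing that the set of elements admitting a derivation tree is deductively closed and invoking minimality of $\closure{\emptyset}$. Your remark about the (bounded) choice of subtrees matches the paper's own observation that only infinitary rules would genuinely require choice.
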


\begin{proof}
  The claim is that $T = \mkset{q \in S \such \some{t \in \Der{C}{q}} \top}$ coincides with~$\closure{C}$.
  The inclusion $\closure{C} \subseteq T$ holds because $T$ is deductively closed.
  The reverse inclusion $T \subseteq \closure{C}$ is established by induction on derivation trees.
\end{proof}

We remark that allowing infinitary closure rules brings with it the need for the axiom of choice, for it is unclear how to prove that~$T$ is deductively closed without the aid of choice.

It is evident that derivability and derivation trees are monotone in all arguments: if $S \subseteq S'$, $R \subseteq R'$, and the closure system $C' : R' \to \Clos{S'}$ restricts to $C : R \to \Clos{S}$, then any $q \in S$ derivable in $C$ is also derivable in $C'$ as an element of~$S'$. Moreover, any derivation tree in $\Der{C}{q}$ may be construed as a derivation tree in $\Der{C'}{q}$.

Henceforth we shall consider solely deductive systems on the set of hypothetical judgements and boundaries. Because we shall vary the deductive system, it is useful to write $\Theta; \Gamma \types_{C} \JJ$ when $(\Theta; \Gamma \types \JJ) \in \closure{C}$, and similarly for $\Theta; \Gamma \types_{C} \BB$.

\subsection{Raw rules and type theories}
\label{sec:raw-rules}

A type theory in its basic form is a collection of closure rules. Some closure rules are specified directly, but many are presented by \emph{inference rules} -- templates whose instantiations yield the closure rules.
We deal with the \emph{raw} syntactic structure of such rules first.

\begin{definition}
  \label{def:raw-rule}%
  A \defemph{raw rule} over a signature~$\Sigma$ is a hypothetical judgement over~$\Sigma$ of the form
  $
    \Theta; \emptyCtx \types \J
  $.
  We notate such a raw rule as
  \begin{equation*}
    \rawRule{\Theta}{\J}.
  \end{equation*}
  The elements of~$\Theta$ are the \defemph{premises} and $\J$ is the \defemph{conclusion}.
  We say that the rule is an \defemph{object rule} when~$\J$ is a type or a term judgement,
  and an \defemph{equality rule} when~$\J$ is an equality judgement.
\end{definition}

Defining inference rules as hypothetical judgements with empty contexts and empty abstractions permits in many situations uniform treatment of rules and judgements.
Note that the premises and the conclusion may not contain any free variables, and that the conclusion must be non-abstracted. Neither condition impedes expressivity of raw rules, because free variables and abstractions may be promoted to premises.

\begin{example}
  \label{ex:raw-rule}%
  To help the readers' intuition, let us see how \cref{def:raw-rule} captures a traditional inference rule, such as product formation
  \begin{equation*}
    \inferenceRuleLabel{Ty-$\Uppi$}{Ty-Pi}{
      \types \isType {\sym{A}} \\
      \types \abstr{x \of \sym{A}}\; \isType {\sym{B}(x)}
    }{
      \types \isType {\Uppi(\sym{A}, \abstr{x} \sym{B}(x))}
    }
  \end{equation*}
  The use of $\sym{A}$ and $\sym{B}$ in the premises reveals that their arities are $(\Ty, 0)$, and $(\Ty, 1)$, respectively. In fact, the premises assign boundaries to metavariables, namely each metavariable, applied generically, is the head of its boundary.
  If we pull out the metavariables from the heads of premises, the assignment becomes explicit:
  \begin{equation*}
    \infer{
      \sym{A} \ : \ (\isType \Box) \\
      \sym{B} \ : \ (\abstr{x \of \sym{A}} \; \isType \Box)
    }{
      \isType {\Uppi(\sym{A}, \abstr{x} \sym{B}(x))}
    }
  \end{equation*}
  This is just a different way of writing the raw rule
  \begin{equation*}
    \rawRule{
      \sym{A} \of (\isType{\Box}),\
      \sym{B} \of (\abstr{x \of A}\; \isType{\Box})
    }{
      \isType {\Uppi(\sym{A}, \abstr{x} \sym{B}(x))}
    }.
  \end{equation*}
\end{example}

\begin{example}%
  \label{ex:tt-equality-reflection}
  We may translate raw rules back to their traditional form by filling the heads with metavariables applied generically. For example, the reader may readily verify that the raw rule
  \begin{equation*}
    \rawRule{
      \sym{A} \of (\isType{\Box}),\
      \sym{s} \of (\Box : \sym{A}),\
      \sym{t} \of (\Box : \sym{A}),\
      \sym{p} \of (\Box : \sym{Id}(\sym{A}, \sym{s}, \sym{t}))
      }{
       \sym{s} \equiv \sym{t} : \sym{A} \by \dummy
      }
  \end{equation*}
  corresponds to the \emph{equality reflection} rule of extensional type theory that is traditionally written as
  \begin{equation*}%
    \inferenceRuleT{Eq-Reflect}{
      \types \isType{\sym{A}} \\
      \types \sym{s} : \sym{A} \\
      \types \sym{t} : \sym{A} \\
      \types \sym{p} : \sym{Id}(\sym{A}, \sym{s}, \sym{t})
    }{
      \types \sym{s} \equiv \sym{t} : \sym{A}
    }
  \end{equation*}
  For everyone's benefit, we shall display raw rules in traditional form, but use \cref{def:raw-rule} when formalities demand so.
\end{example}

It may be mystifying that there is no variable context~$\Gamma$ in a raw rule, for is it not the case that rules may be applied in arbitrary contexts? Indeed, \emph{closure} rules have contexts, but raw rules do not because they are just templates. The context appears once we instantiate the template, as follows.

\begin{definition}
  \label{def:raw-rule-instantiation}
  An \defemph{instantiation} of a raw rule $R = (\rawRule{\symM_1 \of \BB_1, \ldots, \symM_n \of \BB_n}{\plug{\bdry}{e}})$ over context~$\Theta; \Gamma$ is an instantiation $I = \finmap{\symM_1 \mto e_1, \ldots, \symM_n \mto e_n}$ of its premises over~$\Theta; \Gamma$.
  The associated \defemph{closure rule $\act{I} R$} is $([p_1, \ldots, p_n, q], r)$ where $p_i$ is
  $
    \Theta; \Gamma \types \plug{(\upact{I}{i} \BB_i)}{e_i}
  $,
  $q$ is $\Theta; \Gamma \types \act{I} \bdry$,
  and~$r$ is
  $
    \Theta; \Gamma \types \act{I} (\plug{\bdry}{e})
  $.
\end{definition}

We included among the premises the well-formedness of the instantiated boundary $\Theta; \Gamma \types \act{I} \bdry$, so that the conclusion is well-formed. We need the premise as an induction hypothesis in the proof of \cref{prop:presuppositivity}. In \cref{sec:meta-thm-finitary} we shall formulate well-formedness conditions that allow us to drop the boundary premise.

Of special interest are the rules that give type-theoretic meaning to primitive symbols. To define them, we need the boundary analogue of raw rules.

\begin{definition}
  \label{def:raw-rule-boundary}%
  A \defemph{raw rule-boundary} over a signature~$\Sigma$ is a hypothetical boundary over~$\Sigma$ of the form
  $
    \Theta; \emptyCtx \types \bdry
  $.
  We notate such a raw rule-boundary as
  \begin{equation*}
    \rawRule{\Theta}{\bdry}.
  \end{equation*}
  The elements of~$\Theta$ are the \defemph{premises} and $\bdry$ is the \defemph{conclusion boundary}.
  We say that the rule-boundary is an \defemph{object rule-boundary} when~$\bdry$ is a type or a term boundary,
  and an \defemph{equality rule-boundary} when $\bdry$ is an equality boundary.
\end{definition}

Here is how a rule-boundary generates a rule associated to a symbol.

\begin{definition}
  \label{def:symbol-rule}%
  Given a raw object rule-boundary
  \begin{equation*}
    \rawRule{\symM_1 \of \BB_1, \ldots, \symM_n \of \BB_n}{\bdry}
  \end{equation*}
  over~$\Sigma$, the \defemph{associated symbol arity} is $(c, [\arity{\BB_1}, \ldots, \arity{\BB_n}])$, where $c \in \mkset{\Ty, \Tm}$ is the syntactic class of~$\bdry$.
  The \defemph{associated symbol rule} for $\symS \not\in \vert\Sigma\vert$ is the raw rule
  \begin{equation*}
    \rawRule
      {\symM_1 \of \BB_1, \ldots, \symM_n \of \BB_n}
      {\bdry[\symS(\genapp{\symM}_1, \ldots, \genapp{\symM}_n)]}
  \end{equation*}
  over the extended signature $\extendMany{\Sigma}{\symS \mto (c, [\arity{\BB_1}, \ldots, \arity{\BB_n}])}$, where $\genapp{\symM}$ is the \defemph{generic application} of the metavariable~$\symM$ with associated boundary~$\BB$, defined as:
  \begin{enumerate}
  \item
    $\genapp{M} = \abstr{x_1} \cdots \abstr{x_k} \symM(x_1, \ldots, x_k)$ if $\arity{\BB} = (c, k)$ and $c \in \mkset{\Ty, \Tm}$,
  \item
    $\genapp{M} = \abstr{x_1} \cdots \abstr{x_k} \dummy$ if $\arity{\BB} = (c, k)$ and $c \in \mkset{\EqTy, \EqTm}$.
  \end{enumerate}
  A raw rule is said to be a \defemph{symbol rule} if it is the associated symbol rule for some symbol $\symS$.
\end{definition}

The above definition separates the rule-boundary from the head of the conclusion because the latter can be calculated from the former. It would be less economical to define a symbol rule directly as a raw rule, as we would still have to verify that the supplied head is the expected one. In examples we shall continue to display symbol rules in their traditional form.

\begin{example}
  According to \cref{def:symbol-rule}, the symbol rule for $\Uppi$ is generated by the rule-boundary
  \begin{equation*}
    \infer{
      \types \isType{\sym{A}} \\
      \types \abstr{x \of \sym{A}} \; \isType{\sym{B}(x)}
    }{
      \types \isType{\Box}
    }
  \end{equation*}
  Indeed, the associated symbol rule for $\Uppi$ is
  \begin{equation*}
    \infer{
      \types \isType{\sym{A}} \\
      \types \abstr{x \of \sym{A}} \; \isType{\sym{B}(x)}
    }{
      \types \isType{\Uppi(\sym{A}, \abstr{x} \sym{B}(x))}
    }
  \end{equation*}
  We allow equational premises in object rules. For example,
  \begin{equation*}
    \inferenceRule{Refl'}{
      \types \isType{\sym{A}} \\
      \types \sym{s} : \sym{A} \\
      \types \sym{t} : \sym{A} \\
      \types \sym{s} \equiv \sym{t} : \sym{A}
    }{
      \types \sym{refl}(\sym{A}, \sym{s}, \sym{t}, \dummy) : \sym{Id}(\sym{A}, \sym{s}, \sym{t})
    }
  \end{equation*}
  is a valid symbol rule, assuming $\sym{Id}$ and $\sym{refl}$ have their usual arities.
\end{example}

We also record the analogous construction of an equality rule from a given equality rule-boundary.

\begin{definition}
  \label{def:equality-rule}%
  Given an equality rule-boundary
  \begin{equation*}
    \rawRule
      {\symM_1 \of \BB_1, \ldots, \symM_n \of \BB_n}
      {\bdry},
  \end{equation*}
  the \defemph{associated equality rule} is
  \begin{equation*}
    \rawRule
      {\symM_1 \of \BB_1, \ldots, \symM_n \of \BB_n}
      {\plug{\bdry}{\dummy}}.
  \end{equation*}
\end{definition}

We next formulate the rules that all type theories share, starting with the most nitty-gritty ones, the congruence rules.

\begin{definition}
  \label{def:congruence-rule}
  The \defemph{congruence rules} associated with a raw object rule~$R$
  \begin{equation*}
    \rawRule{\symM_1 \of \BB_1, \ldots, \symM_n \of \BB_n}{\plug{\bdry}{e}}
  \end{equation*}
  are closure rules, with
  \begin{equation*}
    I = \finmap{\symM_1 \mto f_1, \ldots, \symM_n \mto f_n}
    \quad\text{and}\quad
    J = \finmap{\symM_1 \mto g_1, \ldots, \symM_n \mto g_n},
  \end{equation*}
  of the form
  \begin{equation*}
    \infer{
      {\begin{aligned}
      &\Theta; \Gamma \types \plug{(\upact{I}{i} \BB_i)}{f_i}  &&\text{for $i = 1, \ldots, n$}\\
      &\Theta; \Gamma \types \plug{(\upact{J}{i} \BB_i)}{g_i}  &&\text{for $i = 1, \ldots, n$}\\
      &\Theta; \Gamma \types \plug{(\upact{I}{i} \BB_i)}{f_i \equiv g_i} &&\text{for object boundary $\BB_i$} \\
      &\Theta; \Gamma \types \act{I} B \equiv \act{J} B        &&\text{if $\bdry = (\Box : B)$}
    \end{aligned}}
    }{
      \Theta; \Gamma \types \plug{(\act{I} \bdry)}{\act{I} e \equiv \act{J} e}
    }
  \end{equation*}
\end{definition}

In case of a term equation at type~$B$, the congruence rule has the additional premise $\Theta; \Gamma \types \act{I} B \equiv \act{J} B$, which ensures that the right-hand side of the conclusion $\act{J} e$ has type $\act{I} B$. Having the equation avaliable as a premise allows us to use it in the inductive proof of \cref{prop:presuppositivity}. In \cref{sec:meta-thm-finitary} we show that the rule without the premises is admissible under suitable conditions.

\begin{example}
  \label{ex:pi-congruence-rule}
  The congruence rule associated with the product formation rule from \cref{ex:raw-rule} is
  \begin{equation}
    \label{eq:pi-congruence-rule}
    \infer{
      {
        \begin{aligned}
          &\Theta; \Gamma \types \isType {A_1} &
          &\Theta; \Gamma \types \abstr{x \of A_1} \; \isType {B_1}
          \\
          &\Theta; \Gamma \types \isType {A_2} &
          &\Theta; \Gamma \types \abstr{x \of A_2} \; \isType {B_2}
          \\
          &\Theta; \Gamma \types A_1 \equiv A_2 &
          &\Theta; \Gamma \types \abstr{x \of A_1} \; B_1 \equiv B_2
        \end{aligned}
      }
    }{
      \Theta; \Gamma \types
      \Uppi(A_1, \abstr{x} B_1)
      \equiv
      \Uppi(A_2, \abstr{x} B_2)
    }
  \end{equation}
\end{example}

Next we have formation and congruence rules for the metavariables.
As metavariables are like symbols whose arguments are terms, it is not suprising that their rules are quite similar to symbol rules.

\begin{definition}
  \label{def:metavariable-rule}%
  Given a context $\Theta; \Gamma$ over~$\Sigma$ with
  $\Theta = [\symM_1 \of \BB_1, \ldots, \symM_n \of \BB_n]$,
  and $\BB_k = (\abstr{x_1 \of A_1} \cdots \abstr{x_m \of A_m}\; \bdry)$, the \defemph{metavariable rules} for $\symM_k$ are the closure rules of the form
  \begin{equation*}
    \inferenceRuleT{TT-Meta}
    {{\begin{aligned}
     &\mathrlap{\Theta(\symM_k) = \abstr{x_1 \of A_1} \cdots \abstr{x_m \of A_m}\; \bdry} \\
     &\Theta; \Gamma \types t_j : A_j[\upto{\vec{t}}{j}/\upto{\vec{x}}{j}]
      &&\text{for $j = 1, \ldots, m$}
     \\
     &\Theta; \Gamma \types \bdry[\vec{t}/\vec{x}]
    \end{aligned}}
    }{
      \Theta; \Gamma \types \plug{(\bdry[\vec{t}/\vec{x}])}{\symM_k(\vec{t})}
    }
  \end{equation*}
  where $\vec{x} = (x_1, \ldots, x_m)$ and $\vec{t} = (t_1, \ldots, t_m)$.
  Recall that $\upto {\vec t} j$ stands for $[t_1, \ldots, t_{j-1}]$.
  In the second line of premises, we thus substitute the preceding term arguments $t_1, \ldots, t_{j-1}$ for the bound variables $x_1, \ldots, x_{j-1}$ in each type $A_j$.
  The last premise ensures the well-formedness of the boundary of the conclusion, just like the definition of the closure rule associated to a raw rule (Def.~\ref{def:raw-rule-instantiation}).

  Furthermore, if $\bdry$ is an object boundary, then the \defemph{metavariable congruence
    rules} for $\symM_k$ are the closure rules of the form
  \begin{equation*}
    \inferenceRuleT{TT-Meta-Congr}
    {
     { \begin{aligned}
          &\Theta(\symM_k) = \abstr{x_1 \of A_1} \cdots \abstr{x_m \of A_m}\; \bdry\\
          &\Theta; \Gamma \types s_j :
              A_j[\upto{\vec{s}}{j}/\upto{\vec{x}}{j}]
          &\text{for $j = 1, \ldots, m$}
          \\
          &\Theta; \Gamma \types t_j :
              A_j[\upto{\vec{t}}{j}/\upto{\vec{x}}{j}]
          &\text{for $j = 1, \ldots, m$}
          \\
          &\Theta; \Gamma \types s_j \equiv t_j :
              A_j[\upto{\vec{s}}{j}/\upto{\vec{x}}{j}]
          &\text{for $j = 1, \ldots, m$}
          \\
          &\Theta; \Gamma \types C[\vec{s}/\vec{x}] \equiv C[\vec{t}/\vec{x}]
          &\text{if $\bdry = (\Box : C)$}
        \end{aligned} }
    }{
      \Theta; \Gamma \types
      \plug
      {(\bdry[\vec{s}/\vec{x}])}
      {\symM_k(\vec{s}) \equiv \symM_k(\vec{t})}
    }
  \end{equation*}
  where
  $\vec{s} = (s_1, \ldots, s_m)$ and
  $\vec{t} = (t_1, \ldots, t_m)$.
\end{definition}

We are finally ready to give a definition of type theory which is sufficient for explaining derivability.

\begin{definition}
  \label{def:raw-type-theory}
  A \defemph{raw type theory~$T$} over a signature~$\Sigma$ is a family of raw rules over~$\Sigma$, called the \defemph{specific rules} of~$T$.
  The \defemph{associated deductive system} of~$T$ consists of:
  \begin{enumerate}
  \item the \defemph{structural rules} over~$\Sigma$:
    \begin{enumerate}
    \item the \emph{variable}, \emph{metavariable}, \emph{metavariable congruence}, and \emph{abstraction} closure rules (\cref{fig:struct-rules}),
    \item the \emph{equality} closure rules, (\cref{fig:equality-rules}),
    \item the \emph{boundary} closure rules (\cref{fig:well-formed-boundaries});
    \end{enumerate}
  \item the instantiations of the specific rules of~$T$
    (\cref{def:raw-rule-instantiation});
  \item for each specific object rule of~$T$, the instantiations of the associated congruence rule (\cref{def:congruence-rule}).
  \end{enumerate}
  We write $\Gamma \types_{T} \JJ$ when $\Gamma \types \JJ$ is derivable with respect to the deductive system associated to~$T$, and similarly for $\Gamma \types_{T} \BB$.
\end{definition}

\begin{figure}[pt]
  \centering
  \small
  \begin{ruleframe}
  \begin{mathpar}
    \inferenceRule{TT-Var}
    {
      \mkvar a \in \vert\Gamma\vert
    }{
      \Theta; \Gamma \types \mkvar{a} : \Gamma(\mkvar{a})
    }

    \inferenceRule{TT-Meta}
    {{\begin{aligned}
     &\mathrlap{\Theta(\symM_k) = \abstr{x_1 \of A_1} \cdots \abstr{x_m \of A_m}\; \bdry} \\
     &\Theta; \Gamma \types t_j : A_j[\upto{\vec{t}}{j}/\upto{\vec{x}}{j}]
      &&\text{for $j = 1, \ldots, m$}
     \\
     &\Theta; \Gamma \types \bdry[\vec{t}/\vec{x}]
    \end{aligned}}
    }{
      \Theta; \Gamma \types \plug{(\bdry[\vec{t}/\vec{x}])}{\symM_k(\vec{t})}
    }

    \inferenceRule{TT-Meta-Congr}
    {
     { \begin{aligned}
          &\Theta(\symM_k) = \abstr{x_1 \of A_1} \cdots \abstr{x_m \of A_m}\; \bdry\\
          &\Theta; \Gamma \types s_j :
              A_j[\upto{\vec{s}}{j}/\upto{\vec{x}}{j}]
          &\text{for $j = 1, \ldots, m$}
          \\
          &\Theta; \Gamma \types t_j :
              A_j[\upto{\vec{t}}{j}/\upto{\vec{x}}{j}]
          &\text{for $j = 1, \ldots, m$}
          \\
          &\Theta; \Gamma \types s_j \equiv t_j :
              A_j[\upto{\vec{s}}{j}/\upto{\vec{x}}{j}]
          &\text{for $j = 1, \ldots, m$}
          \\
          &\Theta; \Gamma \types C[\vec{s}/\vec{x}] \equiv C[\vec{t}/\vec{x}]
          &\text{if $\bdry = (\Box : C)$}
        \end{aligned} }
    }{
      \Theta; \Gamma \types
      \plug
      {(\bdry[\vec{s}/\vec{x}])}
      {\symM_k(\vec{s}) \equiv \symM_k(\vec{t})}
    }

    \inferenceRule{TT-Abstr}
    {
      \Theta; \Gamma \types \isType A \\
      \mkvar{a} \not\in \vert\Gamma\vert \\
      \Theta; \Gamma, \mkvar{a} \of A \types \JJ[\mkvar{a}/x]
    }{
      \Theta; \Gamma \types \abstr{x \of A} \; \JJ
    }

  \end{mathpar}
  \end{ruleframe}
  \caption{Variable, metavariable and abstraction closure rules}
  \label{fig:struct-rules}
\end{figure}

\begin{figure}[pht]
  \centering
  \small
  \begin{ruleframe}
  \begin{mathpar}
  \inferenceRule{TT-EqTy-Refl}
  { \Theta; \Gamma \types \isType { A } }
  { \Theta; \Gamma \types A \equiv A }

  \inferenceRule{TT-EqTy-Sym}
  { \Theta; \Gamma \types A \equiv B }
  { \Theta; \Gamma \types B \equiv A }

  \inferenceRule{TT-EqTy-Trans}
  { \Theta; \Gamma \types A \equiv B \\
    \Theta; \Gamma \types B \equiv C }
  { \Theta; \Gamma \types A \equiv C }

  \inferenceRule{TT-EqTm-Refl}
  { \Theta; \Gamma \types t : A }
  { \Theta; \Gamma \types t \equiv t : A }

  \inferenceRule{TT-EqTm-Sym}
  { \Theta; \Gamma \types s \equiv t : A }
  { \Theta; \Gamma \types t \equiv s : A}

  \inferenceRule{TT-EqTm-Trans}
  { \Theta; \Gamma \types s \equiv t : A \\
    \Theta; \Gamma \types t \equiv u : A }
  { \Theta; \Gamma \types s \equiv u : A }

  \inferenceRule{TT-Conv-Tm}
  { \Theta; \Gamma \types t : A \\
    \Theta; \Gamma \types A \equiv B }
  { \Theta; \Gamma \types t : B }

  \inferenceRule{TT-Conv-EqTm}
  {
    \Theta; \Gamma \types s \equiv t : A \\
    \Theta; \Gamma \types A \equiv B }
  { \Theta; \Gamma \types s \equiv t : B }
  \end{mathpar}
  \end{ruleframe}
  \caption{Equality closure rules}
  \label{fig:equality-rules}
\end{figure}

\begin{figure}[pht]
  \centering
  \small
  \begin{ruleframe}
  \begin{mathpar}
    \inferenceRule{TT-Bdry-Ty}
    {
    }{
     \Theta; \Gamma \types \isType \Box
    }

    \inferenceRule{TT-Bdry-Tm}
    {
      \Theta; \Gamma \types \isType{A}
    }{
      \Theta; \Gamma \types \Box : A
    }

    \inferenceRule{TT-Bdry-EqTy}
    {
      \Theta; \Gamma \types \isType{A} \\
      \Theta; \Gamma \types \isType{B}
    }{
      \Theta; \Gamma \types A \equiv B \by \Box
    }

    \inferenceRule{TT-Bdry-EqTm}
    {
      \Theta; \Gamma \types \isType{A} \\
      \Theta; \Gamma \types s : A \\
      \Theta; \Gamma \types t : A
    }{
      \Theta; \Gamma \types s \equiv t : A \by \Box
    }

    \inferenceRule{TT-Bdry-Abstr}
    {
      \Theta; \Gamma \types \isType A \\
      \mkvar{a} \not\in \vert\Gamma\vert \\
      \Gamma, \mkvar{a} \of A \types \BB[\mkvar{a}/x]
    }{
      \Theta; \Gamma \types \abstr{x \of A} \; \BB
    }
  \end{mathpar}
  \end{ruleframe}
  \caption{Well-formed abstracted boundaries}
  \label{fig:well-formed-boundaries}
\end{figure}

\begin{figure}[pht]
  \centering
  \small
  \begin{ruleframe}
  \begin{mathpar}
    \inferenceRule{MCtx-Empty}
    {
    }{
      \types \isMCtx{\emptyExt}
    }

    \inferenceRule{MCtx-Extend}
    {
      \types \isMCtx{\Theta} \\
      \Theta ; \emptyCtx \types \BB \\
      \symM \not\in \vert\Theta\vert
    }{
      \types \isMCtx{\finmap{\Theta, \symM \of \BB}}
    }

    \\

    \inferenceRule{VCtx-Empty}
    {
    }{
      \Theta \types \isVCtx{\emptyCtx}
    }

    \inferenceRule{VCtx-Extend}
    {
      \Theta \types \isVCtx{\Gamma} \\
      \Theta; \Gamma \types \isType{A} \\
      \mkvar{a} \not\in \vert\Gamma\vert
    }{
      \Theta \types \isVCtx{(\Gamma, \mkvar{a} \of A)}
    }
  \end{mathpar}
  \end{ruleframe}
  \caption{Well-formed metavariable and variable contexts}
  \label{fig:contexts}
\end{figure}

Several remarks are in order regarding the above definition and the rules in \cref{fig:struct-rules,fig:equality-rules,fig:well-formed-boundaries}:
\begin{enumerate}

\item
  It is assumed throughout that all the entities involved are syntactically valid, i.e.\ that arities are respected and variables are well-scoped.

\item
  The metavariable rules \rref{TT-Meta} and \rref{TT-Meta-Congr} are exactly as in \cref{def:metavariable-rule}.

\item
  The rules \rref{TT-Var}, \rref{TT-Meta}, and \rref{TT-Abstr} contain \emph{side-conditions}, such as $\mkvar{a} \in \vert\Gamma\vert$ and $\Theta(\symM) = \abstr{x_1 \of A_1} \cdots \abstr{x_m \of A_m} \; \bdry$.
  For purely aesthetic reasons, these are written where premises ought to stand.
  For example, the correct way to read \rref{TT-Abstr} is: ``For all $\Theta$, $\Gamma$, $A$, $\mkvar{a}$, $\JJ$, if $\mkvar{a} \not\in \vert\Gamma\vert$, then there is a closure rule with premises $\Theta; \Gamma \types \isType{A}$ and $\Theta; \Gamma, \mkvar{a} \of A \types \JJ[\mkvar{a}/x]$, and the conclusion $\Theta; \Gamma \types \abstr{x \of A}\; \JJ$.''

\item
  The structural rules impose no well-typedness conditions on contexts. Instead, \cref{fig:contexts} provides two auxiliary judgement forms, ``$\types \isMCtx{\Theta}$'' and ``$\Theta \types \isVCtx{\Gamma}$'', stating that $\Theta$ is a well-typed metavariable context, and~$\Gamma$ a well-typed variable context over~$\Theta$, respectively. These will be used as necessary.
  Note that imposing the additional premise $\Theta; \Gamma \types \isType{\Gamma(\mkvar{a})}$ in \rref{TT-Var} would \emph{not} ensure well-formednes of~$\Gamma$, as not all variables need be accessed in a derivation. Requiring that \rref{TT-Meta} check the boundary of the metavariable is similarly ineffective.
\item
  We shall show in \cref{sec:meta-theorems-raw} that substitution rules (\cref{fig:substitution-rules}) are admissible.
\end{enumerate}

This may be a good moment to record the difference between derivability and admissibility.

\begin{definition}
  Consider a raw theory~$T$ and a raw rule~$R$, both over a signature~$\Sigma$:
  \begin{enumerate}
  \item $R$ is \defemph{derivable} in $T$ when it has a derivation in~$T$.
  \item $R$ is \defemph{admissible} in~$T$ when, for every instantiation~$I$ of~$R$, the conclusion of~$\act{I} R$ is derivable in~$T$ from the premises of~$\act{I} R$.
  \end{enumerate}
\end{definition}

\subsection{Finitary rules and type theories}
\label{sec:finitary-rules-and-tts}

Raw rules are \emph{syntactically} well-behaved: the premises and the conclusion are syntactically well-formed entities, and all metavariables, free variable and bound variables well-scoped. Nevertheless, a raw rule may be ill-formed for type-theoretic reasons, a deficiency rectified by the next definition.

Recall that a \defemph{well-founded order} on a set~$I$ is an irreflexive and transitive relation~$\prec$ satisfying, for each $S \subseteq I$,
\begin{equation*}
  (\all{i \in I} (\all{j \prec i} j \in S) \lthen i \in S) \lthen S = I.
\end{equation*}
The logical reading of the above condition is an induction principle: in order to show $\all{x \in I} \phi(x)$ one has to prove, for any $i \in I$, that $\phi(i)$ holds assuming that $\phi(j)$ does for all $j \prec i$.

\begin{definition}
  \label{def:finitary}%
  Given a raw theory~$T$ over a signature~$\Sigma$, a raw rule
  $\rawRule{\Theta}{\plug{\bdry}{e}}$
  over $\Sigma$ is \defemph{finitary} over~$T$ when $\types_T \isMCtx{\Theta}$ and $\Theta; \emptyCtx \types_T \bdry$.
  Similarly, a raw rule-boundary $\rawRule{\Theta}{\bdry}$ is finitary when
  $\types_T \isMCtx{\Theta}$ and $\Theta; \emptyCtx \types_T \bdry$.

  A \defemph{finitary type theory} is a raw type theory $(R_i)_{i \in I}$ for which there exists a well-founded order $(I, {\prec})$ such that each $R_i$ is finitary over $(R_j)_{j \prec i}$.
\end{definition}

\begin{example}
  \label{ex:rule-examples}%
  We take stock by considering several examples of rules.
  The rule
  \begin{equation*}
    \inferenceRule{Unique-Ty}{
      \types \isType{\sym{A}} \\
      \types \isType{\sym{B}} \\
      \types \isType{\sym{t} : \sym{A}} \\
      \types \isType{\sym{t} : \sym{B}}
    }{
      \types \sym{A} \equiv \sym{B}
    }
  \end{equation*}
  is not raw because it introduces the metavariable $\sym{t}$ twice.
  Assuming $\Uppi$ has arity $(\Ty, [(\Ty,0), (\Ty,1)])$,
  consider the rules
  \begin{mathpar}
    \inferenceRuleLabel{Ty-$\Uppi$-Short}{Ty-Pi-Short}{
      \types \abstr{x \of \sym{A}} \; \isType{\sym{B}(x)}
    }{
      \types \Uppi(\sym{A}, \abstr{x} \sym{B}(x))
    }

    \inferenceRuleLabel{Ty-$\Uppi$-Long}{Ty-Pi-Long}{
      \types \isType{\sym{A}} \\
      \types \abstr{x \of \sym{A}} \; \isType{\sym{B}(x)}
    }{
      \types \Uppi(\sym{A}, \abstr{x} \sym{B}(x))
    }
  \end{mathpar}
  The rule \rulename{Ty-$\Uppi$-Short} is not raw because it fails to introduce the metavariable~$\sym{A}$, while \rulename{Ty-$\Uppi$-Long} is finitary over any theory.
  The rule
  \begin{equation*}
    \inferenceRule{Succ-Congr-Typo}{
      \types \sym{m} : \sym{nat} \\
      \types \sym{n} : \sym{bool} \\
      \types \sym{m} \equiv \sym{n} : \sym{nat}
    }{
      \types \sym{succ}(\sym{m}) \equiv \sym{succ}(\sym{n}) : \sym{nat}
    }
  \end{equation*}
  is raw when the symbols $\sym{bool}$, $\sym{nat}$, and $\sym{succ}$ respectively have arities $(\Ty, [])$, $(\Ty, [])$, and $(\Tm, [(\Tm,0)])$.
  Whether it is also finitary depends on a theory. For instance, given the raw rules
  \begin{mathpar}
    \inferenceRule{Ty-Bool}{ }{\types \isType{\sym{bool}}}

    \inferenceRule{Ty-Nat}{ }{\types \isType{\sym{nat}}}

    \inferenceRule{Tm-Succ}{
      \types \sym{n} : \sym{nat}
    }{
      \types \sym{succ}(\sym{n}) : \sym{nat}
    }

    \inferenceRule{Bool-Eq-Nat}{ }{ \types \sym{bool} \equiv \sym{nat} }
  \end{mathpar}
  the rule \rref{Succ-Congr-Typo} is not finitary over the first three rules, but is finitary over all four of them.
  As a last example, given the symbol $\sym{Id}$ with arity $(\Ty, [(\Ty,0), (\Tm,0), (\Tm,0)])$, the rules
  \begin{mathpar}
    \inferenceRule{Ty-Id}{
      \types \isType{\sym{A}} \\
      \types \sym{s} : \sym{A} \\
      \types \sym{t} : \sym{A}
    }{
      \types \isType{\sym{Id}(\sym{A}, \sym{s}, \sym{t})}
    }

    \inferenceRule{Ty-Id-Typo}{
      \types \isType{\sym{A}} \\
      \types \sym{s} : \sym{A} \\
      \types \sym{t} : \sym{A}
    }{
      \types \isType{\sym{Id}(\sym{A}, \sym{s}, \sym{s})}
    }

    \inferenceRule{Eq-Reflect}{
      \types \isType{\sym{A}} \\
      \types \sym{s} : \sym{A} \\
      \types \sym{t} : \sym{A} \\
      \types \sym{p} : \sym{Id}(\sym{A}, \sym{s}, \sym{t})
      }{
        \types \sym{s} \equiv \sym{t} : \sym{A}
      }
  \end{mathpar}
  are all raw, both \rref{Ty-Id} and \rref{Ty-Id-Typo} are finitary over an empty theory, while \rref{Eq-Reflect} is finitary over a theory containing \rref{Ty-Id}. The rule \rref{Ty-Id} is a symbol rule, but \rref{Ty-Id-Typo} does not.
\end{example}

Could we have folded \cref{def:raw-rule} of raw rules and \cref{def:finitary} of finitary rules into a single definition? Not easily, as that would generate a loop: finitary rules refer to theories and derivability, which refer to closure rules, which are generated from raw rules. Without a doubt something is to be learned by transforming the cyclic dependency to an inductive definition, but we do not attempt to do so here.

A finitary type theory is fairly well behaved from a type-theoretic point of view, but can still suffer from unusual finitary rules, such as \rref{Ty-Id-Typo} from \cref{ex:rule-examples}, which looks like a spelling mistake.
We thus impose a further restriction by requiring that every rule be either a symbol rule or an equality rule.

\begin{definition}
  \label{def:standard-type-theory}%
  A finitary type theory is \defemph{standard} if its specific object rules are symbol rules, and each symbol has precisely one associated rule.
\end{definition}

A standard type theory and its signature may be built iteratively as follows:
\begin{enumerate}

\item
  The empty theory is standard over the empty signature.

\item
  Given a standard type theory~$T$ over~$\Sigma$, and a rule-boundary
  \begin{equation*}
    \rawRule{\symM_1 \of \BB_1, \ldots, \symM_n \of \BB_n}{\bdry}
  \end{equation*}
  finitary for~$T$:
  \begin{itemize}
  \item
    If $\bdry$ is an object boundary, and $\symS \not\in \vert\Sigma\vert$, then~$T$ extended with the associated symbol rule
    \begin{equation*}
      \rawRule{\symM_1 \of \BB_1, \ldots, \symM_n \of \BB_n}{\plug{\bdry}{\symS(\genapp{\symM}_1, \ldots, \genapp{\symM}_n)}}
    \end{equation*}
    is standard over the extended signature $\extendMany{\Sigma}{\symS \mto \alpha}$,
    where $\alpha$ is the symbol arity associated with the rule-boundary.
  \item
    If $\bdry$ is an equation boundary, then $T$ extended with the equality rule
    \begin{equation*}
      \rawRule{\symM_1 \of \BB_1, \ldots, \symM_n \of \BB_n}{\plug{\bdry}{\dummy}}
    \end{equation*}
    is standard over~$\Sigma$.
  \end{itemize}
\end{enumerate}
A more elaborate well-founded induction may be employed when a theory features infinitely many rules, such as an infinite succession of universes.

\section{Meta-theorems}
\label{sec:meta-theorems}

We put our definitions to the test by proving meta-theorems which stipulate desirable structural properties of type theories. The theorems are all rather standard and expected. Nevertheless, we prove them to verify that our definition of type theories is sensible, and to provide general-purpose meta-theorems that apply in a wide range of situations.

Making the statements precise in full generality has not always been trivial. We therefore include them here, together with statements of auxiliary lemmas, to give the reader an overview of the technique, but mostly relegate the rather lengthy induction proofs to the appendix. We shall continue to do so in subsequent sections.

\subsection{Meta-theorems about raw theories}
\label{sec:meta-theorems-raw}

A \defemph{renaming} of an expression $e$ is an injective map $\rho$ with domain $\mv{e} \cup \fv{e}$ that takes metavariables to metavariables and free variables to free variables. The renaming acts on~$e$ to yield an expression $\act{\rho} e$ by replacing each occurrence of a metavariable~$\symM$ and a free variable~$\mkvar{a}$ with $\rho(\symM)$ and $\rho(\mkvar{a})$, respectively. We similarly define renamings of contexts, judgements, and boundaries.

\begin{proposition}[Renaming]
  \label{prop:tt-renaming}%
  If a raw type theory derives a judgement or a boundary, then it also derives its renaming.
\end{proposition}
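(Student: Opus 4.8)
The plan is to argue by structural induction on the derivation of the hypothetical judgement or boundary in the deductive system associated with the raw type theory (appealing to \cref{prop:derivation-tree} to licence induction on derivation trees). To make the induction close I would first strengthen the statement: instead of insisting that the renaming $\rho$ have domain exactly $\mv{\cdot}\cup\fv{\cdot}$ of the derived entity, I allow $\rho$ to be any injective map on a \emph{finite superset} of that set which still sends free variables to free variables and metavariables to metavariables. The proposition is then the special case. The slack is genuinely needed: passing from the conclusion of a closure rule to its premises can introduce free variables and metavariables absent from the conclusion — the fresh variable $\mkvar a$ of \rref{TT-Abstr} and \rref{TT-Bdry-Abstr}, and the free variables of the instantiating arguments in an instantiation of a specific rule.

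Before the induction I would record the routine commutation facts, all of which hold because a renaming touches neither bound variables nor symbols: $\act\rho (e[\vec s/\vec x]) = (\act\rho e)[\act\rho\vec s/\vec x]$; $\act\rho\,\plug{\BB}{e} = \plug{\act\rho\BB}{\act\rho e}$; $\act\rho\genapp{\symM} = \genapp{\rho(\symM)}$; and, writing $\act\rho I$ for the instantiation $\symM_i\mapsto\act\rho e_i$ obtained from $I=\finmap{\symM_i\mapsto e_i}$, one has $\act\rho(\act I e)=\act{(\act\rho I)}(\act\rho e)$ and likewise on abstracted judgements and boundaries. I would also note that $\act\rho I$ is a legitimate instantiation of the renamed metavariable context $\act\rho\Theta$, since renaming preserves arities and injectivity keeps metavariable names distinct; that renaming preserves syntactic validity of variable and metavariable contexts for the same reasons; and that it preserves the side-conditions of the structural rules — $\mkvar a\notin|\Gamma|$ becomes $\rho(\mkvar a)\notin|\act\rho\Gamma|$ and $\symM\notin|\Theta|$ becomes $\rho(\symM)\notin|\act\rho\Theta|$ by injectivity, while conditions on symbols such as $\symS\notin|\Sigma|$ are untouched.

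With these in place the induction is mechanical. At the root one has a closure rule $([p_1,\ldots,p_n],q)$ of $T$'s deductive system and a renaming $\rho$ of (a finite superset of the support of) $q$. I would first extend $\rho$ injectively to $\rho'$ whose domain additionally covers the supports of all the $p_i$, picking the new images fresh; since the action of a renaming depends only on its restriction to the support, $\act{\rho'}q=\act\rho q$. The induction hypothesis applied to each $p_i$ with $\rho'$ yields derivations of $\act{\rho'}p_i$, and it remains to check, by cases on the kind of closure rule at the root, that $([\act{\rho'}p_1,\ldots,\act{\rho'}p_n],\act{\rho'}q)$ is again an instance of a closure rule of $T$. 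For the structural rules of \cref{fig:struct-rules,fig:equality-rules,fig:well-formed-boundaries} this is immediate from the commutation facts — e.g.\ \rref{TT-Meta} with $\symM_k$ replaced by $\rho'(\symM_k)$ and $\vec t$ by $\act{\rho'}\vec t$, using that $\act{\rho'}\Theta$ assigns $\act{\rho'}\BB_k$ to $\rho'(\symM_k)$, and \rref{TT-Abstr}/\rref{TT-Bdry-Abstr} using that $\rho'$ is defined on the freshly abstracted variable; for an instantiation $\act I R$ of a specific rule $R$ the renamed instance is the instantiation $\act{(\act{\rho'}I)}R$ of the \emph{same} rule $R$ (one renames the instantiation, not the rule, since the rule-local metavariables of $R$ lie outside the support of any judgement); and for an instantiation of a congruence rule (\cref{def:congruence-rule}) one takes the congruence instance along $\act{\rho'}I$ and $\act{\rho'}J$, the boundary-well-formedness premise being handled by the induction hypothesis like any other.

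I expect the one genuinely fiddly point to be bookkeeping rather than anything conceptual: keeping track of the fresh variables and metavariables that premises may introduce beyond the conclusion (which forces the ``finite superset'' strengthening and the re-extension of $\rho$ at every step), and patiently verifying the commutation of renaming with the whole bundle of operations — bound-variable substitution, metavariable instantiation, boundary filling, generic application — across all four judgement forms and their abstractions, including inside the comparatively intricate congruence and metavariable(-congruence) rules. Everything else is routine.
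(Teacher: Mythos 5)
Your proof is correct and takes essentially the same route as the paper's: induction on the derivation, with every case routine except abstraction, which is handled by extending the renaming with a fresh variable. Your ``finite superset'' strengthening and the explicit commutation facts are harmless bookkeeping that the paper leaves implicit --- since the renaming of a hypothetical judgement already covers the whole context $\Theta;\Gamma$, only \rref{TT-Abstr} (and \rref{TT-Bdry-Abstr}) actually forces an extension of $\rho$, exactly as in the paper's proof.
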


\begin{proof}
  \label{proof:tt-renaming}
  Let $\rho$ be a renaming of a derivable judgement $\Theta; \Gamma \types \JJ$.
  We show that $\act{\rho} \Theta; \act{\rho} \Gamma \types \act{\rho} \JJ$ is derivable by induction on the derivation. The case of boundaries is similar.

  Most cases only require a direct application of the induction hypotheses to the premises.
  The only somewhat interesting case is \rref{TT-Abstr},
  \begin{equation*}
    \infer
    {
      \Theta; \Gamma \types \isType A \\
      \mkvar{a} \not\in \vert\Gamma\vert \\
      \Theta; \Gamma, \mkvar{a} \of A \types \JJ[\mkvar{a}/x]
    }{
      \Theta; \Gamma \types \abstr{x \of A} \; \JJ
    }
  \end{equation*}
  As $\mkvar a \not\in \vert\Gamma\vert$, and thus $\mkvar a \not\in \vert\rho\vert$, we may extend $\rho$ to a renaming $\rho' = \extend{\rho}{\mkvar{a}}{\mkvar{b}}$, where $\mkvar b$ is such that $\mkvar{b} \not\in \vert\act{\rho} \Gamma\vert$.
  By induction hypothesis for the first premise, $\act{\rho} \Theta; \act{\rho} \Gamma \types \isType{\act{\rho} A}$ is derivable. We apply the induction hypothesis for the second premise to~$\rho'$ and obtain $\act{\rho'} \Theta; \act{\rho'} (\Gamma, \mkvar{a} \of A) \types \act{\rho'} (\JJ[\mkvar{a}/x])$, which equals $\act{\rho} \Theta; \act{\rho} \Gamma, \mkvar{b} \of \act{\rho} A \types (\act{\rho} \JJ)[\mkvar{b}/x]$. Thus, we may conclude by \rref{TT-Abstr},
  \begin{equation*}
    \infer
    {
      \act{\rho} \Theta; \act{\rho} \Gamma \types \isType {(\act{\rho} A)} \\
      \mkvar{b} \not\in \vert\act{\rho} \Gamma\vert \\
      \act{\rho} \Theta; \act{\rho} \Gamma, \mkvar{b} \of \act{\rho} A \types (\act{\rho} \JJ)[\mkvar{b}/x]
    }{
      \act{\rho} \Theta; \act{\rho} \Gamma \types \abstr{x \of \act{\rho} A} \; \act{\rho} \JJ
    }
  \end{equation*} \qedhere
\end{proof}

\begin{proposition}[Weakening]
  \label{prop:tt-weakening}
  For a raw type theory:
  \begin{enumerate}
  \item If $\Theta; \Gamma_1, \Gamma_2 \types \JJ$ and $\mkvar{a} \not\in \vert\Gamma_1, \Gamma_2\vert$ then $\Theta; \Gamma_1, \mkvar{a} \of A, \Gamma_2 \types \JJ$.
  \item If $\Theta_1, \Theta_2; \Gamma \types \JJ$ and $\symM \not\in \vert\Theta_1, \Theta_2\vert$ then $\Theta_1, \symM \of \BB, \Theta_2; \Gamma \types \JJ$.
  \end{enumerate}
  An analogous statement holds for boundaries.
\end{proposition}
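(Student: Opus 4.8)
The plan is to prove both parts — together with the analogous statements for boundaries — simultaneously, by induction on the derivation, handling the variable-context weakening~(1) and the metavariable-context weakening~(2) separately but along identical lines. A simultaneous induction over judgement derivations and boundary derivations is forced on us, since a judgement derivation may have a boundary sub-derivation (the boundary premise $\Theta;\Gamma \types \bdry[\vec t/\vec x]$ of \rref{TT-Meta}, and the boundary premise of an instantiation, cf.\ \cref{def:raw-rule-instantiation}), and conversely a boundary derivation has judgement sub-derivations (\rref{TT-Bdry-Tm}, \rref{TT-Bdry-EqTy}, \rref{TT-Bdry-EqTm}, \rref{TT-Bdry-Abstr}). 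Throughout I will lean on the standing convention that every context written down is syntactically valid, so that for~(1) we have $\fv A \subseteq |\Gamma_1|$ and $\Gamma_1, \mkvar a \of A, \Gamma_2$ is a genuine variable context, and for~(2) that $\BB$ is a valid closed boundary over $\Sigma$ and $\Theta_1$; no derivability of $A$ or $\BB$ is assumed, matching the fact that the structural rules impose no well-typedness conditions on contexts.

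For the bulk of the cases the argument is mechanical, by casing on the last closure rule of the derivation. If it is one of the equality rules, a metavariable or metavariable-congruence rule, or an instantiation $\act I R$ of a specific rule or of its associated congruence rule, then all premises and the conclusion are hypothetical judgements or boundaries over one and the same context $\Theta; \Gamma_1, \Gamma_2$ (resp.\ $\Theta_1, \Theta_2; \Gamma$). Since syntactic validity of expressions is monotone under enlarging the variable context (resp.\ the metavariable context), exactly the same data — in the instantiation case, the very same instantiation $I$ — specify a closure rule over the weakened context whose premises and conclusion are precisely the weakenings of the originals; one applies the induction hypothesis to each premise and reapplies the rule. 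For~(2) one further notes that a look-up $\Theta(\symM_k)$ in \rref{TT-Meta} or \rref{TT-Meta-Congr} is unaffected, because $\symM \not\in |\Theta_1, \Theta_2|$ forces $\symM \ne \symM_k$. The rule \rref{TT-Var} is dispatched the same way: for~(1) its side condition $\mkvar b \in |\Gamma|$ survives weakening, and $\mkvar a \ne \mkvar b$ (as $\mkvar a \not\in |\Gamma_1, \Gamma_2|$) ensures $\Gamma(\mkvar b)$ is unchanged.

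The one case that needs an extra idea — and which I expect to be the main obstacle — is the abstraction rule \rref{TT-Abstr} (and, symmetrically, \rref{TT-Bdry-Abstr}) in part~(1): it derives $\Theta; \Gamma_1, \Gamma_2 \types \abstr{x \of B}\; \JJ$ from $\Theta; \Gamma_1, \Gamma_2 \types \isType B$, a side condition $\mkvar c \not\in |\Gamma_1, \Gamma_2|$, and $\Theta; \Gamma_1, \Gamma_2, \mkvar c \of B \types \JJ[\mkvar c/x]$, and the freshly chosen $\mkvar c$ might coincide with the variable $\mkvar a$ we are inserting. The remedy is to first apply \cref{prop:tt-renaming} to the third sub-derivation to rename $\mkvar c$ to some $\mkvar c'$ with $\mkvar c' \not\in |\Gamma_1, \Gamma_2| \cup \{\mkvar a\}$; then the induction hypothesis applies to that sub-derivation with the context split as $\Gamma_1$ on the left and $\Gamma_2, \mkvar c' \of B$ on the right, giving $\Theta; \Gamma_1, \mkvar a \of A, \Gamma_2, \mkvar c' \of B \types \JJ[\mkvar c'/x]$; the induction hypothesis on the first premise gives $\Theta; \Gamma_1, \mkvar a \of A, \Gamma_2 \types \isType B$; and since $\mkvar c' \not\in |\Gamma_1, \mkvar a \of A, \Gamma_2|$, one reapplies \rref{TT-Abstr}. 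In part~(2) the abstraction rules present no difficulty at all, because the variable they introduce is a free variable whereas $\symM$ is a metavariable, so there is never a clash: one simply pushes the induction hypothesis through all premises, leaving the side condition $\mkvar c \not\in |\Gamma|$ untouched, and reapplies the rule. Thus the only real work is the name clash in the abstraction rules, resolved by invoking renaming; everything else is bookkeeping.
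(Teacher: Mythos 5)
Your proof is correct and follows essentially the same route as the paper: a straightforward induction on the derivation (covering boundaries simultaneously), with the only real work in the \rref{TT-Abstr} case, where \cref{prop:tt-renaming} is invoked to keep the inserted variable $\mkvar{a}$ fresh in the subderivation — exactly the paper's argument, just spelled out in more detail.
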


\begin{proof}
  Once again we proceed by induction on the derivation of the judgement in a straightforward manner, where
  the case \rref{TT-Abstr} relies on renaming (\cref{prop:tt-renaming}) to ensure that~$\mkvar{a}$ remains fresh in the subderivations.
\end{proof}

In several places we shall require well-formedness of contexts, a useful consequence of which we record first.

\begin{proposition}
  \label{prop:ctx-inversion}%
  If a raw type theory derives $\types \isMCtx{\Theta}$ then it derives $\Theta ; \emptyCtx \types \Theta(\symM)$ for every $\symM \in \vert\Theta\vert$;
  and if it derives $\Theta \types \isVCtx \Gamma$, then it derives $\Theta ; \Gamma \types \isType{\Gamma(\mkvar a)}$ for every $\mkvar a \in \vert\Gamma\vert$.
\end{proposition}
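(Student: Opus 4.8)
The plan is to prove both claims by a straightforward induction on the derivation of the context well-formedness judgement, using that the only rules concluding $\types \isMCtx{\Theta}$ are \rref{MCtx-Empty} and \rref{MCtx-Extend}, and the only rules concluding $\Theta \types \isVCtx{\Gamma}$ are \rref{VCtx-Empty} and \rref{VCtx-Extend} (\cref{fig:contexts}). The engine in the step cases is weakening (\cref{prop:tt-weakening}), in both its variable and its metavariable forms, including the ``analogous statement'' for boundaries.

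For the metavariable-context claim, consider a derivation of $\types \isMCtx{\Theta}$. If it ends with \rref{MCtx-Empty} then $\Theta = \emptyExt$ and the statement is vacuous, since $\vert\Theta\vert = \emptyset$. If it ends with \rref{MCtx-Extend} then $\Theta = \finmap{\Theta', \symM \of \BB}$, and the premises give $\types \isMCtx{\Theta'}$, a subderivation of $\Theta'; \emptyCtx \types \BB$, and $\symM \not\in \vert\Theta'\vert$. Take any $\symM' \in \vert\Theta\vert$. If $\symM' = \symM$ then $\Theta(\symM') = \BB$ and we already have $\Theta'; \emptyCtx \types \BB$; otherwise $\symM' \in \vert\Theta'\vert$, $\Theta(\symM') = \Theta'(\symM')$, and the induction hypothesis yields $\Theta'; \emptyCtx \types \Theta'(\symM')$. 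In either case, applying metavariable weakening (\cref{prop:tt-weakening}, second clause, for boundaries) to adjoin $\symM \of \BB$ produces $\Theta; \emptyCtx \types \Theta(\symM')$, as required.

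The variable-context claim is entirely analogous, using the first clause of \cref{prop:tt-weakening} for types in place of the metavariable clause for boundaries. A derivation of $\Theta \types \isVCtx{\Gamma}$ ending with \rref{VCtx-Empty} gives $\Gamma = \emptyCtx$, for which the statement is vacuous. If it ends with \rref{VCtx-Extend} then $\Gamma = (\Gamma', \mkvar{a} \of A)$ with $\Theta \types \isVCtx{\Gamma'}$, a subderivation of $\Theta; \Gamma' \types \isType{A}$, and $\mkvar{a} \not\in \vert\Gamma'\vert$. For $\mkvar{b} \in \vert\Gamma\vert$ with $\mkvar{b} = \mkvar{a}$ we use $\Theta; \Gamma' \types \isType{A}$, and otherwise the induction hypothesis gives $\Theta; \Gamma' \types \isType{\Gamma'(\mkvar{b})}$; weakening by $\mkvar{a} \of A$ then yields $\Theta; \Gamma \types \isType{\Gamma(\mkvar{b})}$.

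There is no genuine obstacle here: the proof is a short structural induction, and the only point requiring care is that it must come \emph{after} weakening has been established, since the step cases insert the newly-added context entry into an already-derived judgement or boundary rather than re-deriving it. (One could instead avoid the appeal to weakening by generalising the induction hypothesis to quantify over all extensions of the context, but reusing \cref{prop:tt-weakening} is cleaner.)
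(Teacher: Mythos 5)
Your proof is correct and follows exactly the paper's route: induction on the derivations of $\types \isMCtx{\Theta}$ and $\Theta \types \isVCtx{\Gamma}$, with weakening (\cref{prop:tt-weakening}) applied in the extension cases. The paper states this in one line; your write-up merely makes the case analysis explicit.
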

\begin{proof}
  By induction on the derivation of $\types \isMCtx{\Theta}$ and
  $\Theta \types \isVCtx{\Gamma}$, respectively, followed by weakening.
\end{proof}

\subsubsection{Admissibility of substitution}
\label{sec:admiss-subst}

In this section we prove that in a raw type theory substitution is admissible, and that substitution preserves judgemental equality.

\begin{restatable}{lemma}{restatepreparesubst}
  \label{lem:prepare-subst}%
  If a raw type theory derives $\Theta; \Gamma, \mkvar{a} \of A, \Delta \types \JJ$ and $\Theta; \Gamma \types t : A$ then it derives $\Theta; \Gamma, \Delta[t/\mkvar{a}] \types \JJ[t/\mkvar{a}]$.
\end{restatable}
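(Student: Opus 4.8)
The plan is to prove \cref{lem:prepare-subst} by induction on the derivation of $\Theta; \Gamma, \mkvar{a} \of A, \Delta \types \JJ$, treating the derivation of $\Theta; \Gamma \types t : A$ as fixed side data carried through the induction. For each closure rule of the associated deductive system, the induction hypothesis applies the substitution $[t/\mkvar a]$ to every premise (noting that the context splits as $\Gamma, \mkvar a \of A, \Delta$ throughout, since $\mkvar a$ is fixed), and we must re-assemble the substituted premises into an instance of the same rule with conclusion $\Theta; \Gamma, \Delta[t/\mkvar a] \types \JJ[t/\mkvar a]$. The key algebraic fact underpinning all the symbol-, metavariable-, equality-, and boundary-rule cases is that substitution for a free variable commutes with the operations used to build judgements — with instantiation of metavariables ($\act I(e)[t/\mkvar a] = (\act I e)[t/\mkvar a]$ up to the obvious bookkeeping, using that $t$ is closed over $\Gamma$ and the instantiation data in a rule contains no free variables), with filling boundaries ($\plug{\BB}{e}[t/\mkvar a] = \plug{\BB[t/\mkvar a]}{e[t/\mkvar a]}$), and with the bound-variable substitutions $[\vec t/\vec x]$ appearing in \rref{TT-Meta} and \rref{TT-Meta-Congr} (a standard substitution-commutation lemma, valid since free and bound variables are disjoint classes). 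These commutations are routine but must be stated; I would isolate them as a preliminary remark rather than belabour them in each case.

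Two cases need genuine care. The first is \rref{TT-Var}: if the variable being concluded is $\mkvar a$ itself, then $\JJ = (\mkvar a : A)$ and after substitution the required judgement is $\Theta; \Gamma, \Delta[t/\mkvar a] \types t : A[t/\mkvar a]$; since $A$ is a type in context $\Gamma$ (it appears before $\mkvar a$), $\mkvar a \notin \fv A$, so $A[t/\mkvar a] = A$, and we must derive $\Theta; \Gamma, \Delta[t/\mkvar a] \types t : A$ — this follows from the hypothesis $\Theta; \Gamma \types t : A$ together with weakening (\cref{prop:tt-weakening}) to insert $\Delta[t/\mkvar a]$. If instead the variable is some $\mkvar b \in \vert\Gamma\vert$ to the left of $\mkvar a$, then again $\mkvar a \notin \fv{\Gamma(\mkvar b)}$ so the type is unchanged and we simply re-apply \rref{TT-Var}; if $\mkvar b \in \vert\Delta\vert$, the type picks up the substitution and \rref{TT-Var} applies directly to $\Delta[t/\mkvar a]$.

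The second delicate case is \rref{TT-Abstr} (and likewise \rref{TT-Bdry-Abstr}), where the derivation ends by abstracting a fresh variable $\mkvar c \notin \vert\Gamma, \mkvar a \of A, \Delta\vert$. Here one must ensure that $\mkvar c$ can be chosen distinct from $\mkvar a$ and outside $\vert\Gamma\vert$ (rename via \cref{prop:tt-renaming} if necessary), apply the induction hypothesis to the subderivation $\Theta; \Gamma, \mkvar a \of A, \Delta, \mkvar c \of A' \types \JJ'[\mkvar c/x]$ with the same $t$ (the context extension is on the right of $\mkvar a$, so the shape required by the lemma is preserved), and then use that substitution commutes with the renaming $[\mkvar c/x]$ of a bound variable to a free one — since $\mkvar c \ne \mkvar a$ and $\mkvar c \notin \fv t$ — to recover $\JJ'[t/\mkvar a][\mkvar c/x]$ and re-apply \rref{TT-Abstr}. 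The main obstacle, then, is not any single deep idea but the disciplined verification that all the substitution-commutation identities hold with the correct freshness side-conditions, and in particular the interaction in \rref{TT-Abstr} between substituting a free variable and the locally-nameless convention of opening a binder with a fresh free variable; the freshness of $\mkvar c$ relative to both $t$ and $\mkvar a$ is exactly what makes this go through, and renaming is the tool that guarantees it.
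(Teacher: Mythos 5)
Your proposal is correct and follows essentially the same route as the paper: induction on the derivation (implicitly mutual with the boundary analogue, \cref{lem:prepare-subst-bdry}), with weakening of $\Theta;\Gamma\types t:A$ in the \rref{TT-Var} case for $\mkvar a$, the commutation identities for substitution with instantiation, filling, and the bound-variable substitutions in the metavariable rules, and the freshness-based exchange $(\JJ[\mkvar b/x])[t/\mkvar a]=(\JJ[t/\mkvar a])[\mkvar b/x]$ in \rref{TT-Abstr}. The only cosmetic differences are that the renaming you invoke in the abstraction case is unnecessary (the rule's side condition already gives the needed freshness) and that "the instantiation data in a rule contains no free variables" should read "the raw rule's premises and conclusion are closed", which is what actually licenses $(\act I \J)[t/\mkvar a]=\act{(I[t/\mkvar a])}\J$.
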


\begin{proof}
  See the proof on \cpageref{proof:prepare-subst}.
\end{proof}

\begin{lemma}
  \label{lem:prepare-subst-bdry}%
  If a raw type theory derives $\Theta; \Gamma, \mkvar{a} \of A, \Delta \types \BB$ and $\Theta; \Gamma \types t : A$ then it derives $\Theta; \Gamma, \Delta[t/\mkvar{a}] \types \BB[t/\mkvar{a}]$.
\end{lemma}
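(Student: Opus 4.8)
The plan is to derive \cref{lem:prepare-subst-bdry} directly from \cref{lem:prepare-subst} by exploiting the relationship between boundaries and judgements established by filling. Recall that every abstracted boundary $\BB$ can be obtained from a (suitably abstracted) judgement by erasing the head, and conversely $\plug{\BB}{e}$ is a judgement whenever the arities match. So the first step is to fill the hole of $\BB$ with a convenient head, turn the boundary judgement into an ordinary judgement, invoke \cref{lem:prepare-subst}, and then read off the boundary part of the result.

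Concretely, I would argue by cases on the boundary thesis underlying $\BB$. If $\BB$ is (an abstraction of) $\isType\Box$, then $\Theta; \Gamma, \mkvar a \of A, \Delta \types \BB$ must have been derived by \rref{TT-Bdry-Ty} under some abstractions handled by \rref{TT-Bdry-Abstr}; these impose no constraints beyond well-formed-context conditions, so the substituted boundary is derivable by the same rules after applying \cref{lem:prepare-subst} to the types $A_i$ occurring in the abstraction layer. If $\BB$ is $\abstr{\vec x \of \vec C}\,(\Box : B)$, then by \rref{TT-Bdry-Tm} (under \rref{TT-Bdry-Abstr}) the derivation contains a subderivation of $\Theta; \Gamma, \mkvar a \of A, \Delta, \vec{\mkvar c} \of \vec C \types \isType B$; apply \cref{lem:prepare-subst} to it, and reassemble with \rref{TT-Bdry-Tm} and \rref{TT-Bdry-Abstr}, using that substitution commutes with filling and abstraction. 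The two equality-boundary cases \rref{TT-Bdry-EqTy} and \rref{TT-Bdry-EqTm} are analogous: each premise of the boundary rule is an object judgement to which \cref{lem:prepare-subst} applies, and the substituted instances of those judgements are exactly the premises needed to rederive the substituted boundary. Throughout, one uses the straightforward syntactic facts that $\act{}$-substitution on free variables commutes with the filling operation $\plug{-}{-}$ and with abstraction $\abstr{x\of-}{-}$, and that $(\Delta, \vec{\mkvar c}\of\vec C)[t/\mkvar a] = \Delta[t/\mkvar a], \vec{\mkvar c}\of \vec C[t/\mkvar a]$.

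An alternative, cleaner route is to observe that the proof of \cref{lem:prepare-subst} itself proceeds by induction on derivations, and the boundary closure rules in \cref{fig:well-formed-boundaries} are structurally just like the object judgement rules; so \cref{lem:prepare-subst-bdry} could equally well be proved by the same simultaneous induction that establishes \cref{lem:prepare-subst} — indeed \cref{prop:derivation-tree} and the shared shape of the deductive system make a mutual induction the natural framework. In that case one simply adds the boundary rules to the induction and each new case is handled exactly as the corresponding judgement case (the interesting one again being the abstraction rule \rref{TT-Bdry-Abstr}, which is treated just like \rref{TT-Abstr} using renaming, \cref{prop:tt-renaming}, to keep $\mkvar a$ fresh). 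For the present exposition I would state it as a corollary, spelling out only the reduction via filling.

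The main obstacle is bookkeeping rather than mathematics: one must be careful that the free variable $\mkvar a$ being substituted is genuinely distinct from the bound/abstracted variables $\vec x$ introduced in the boundary's abstraction layer (which is guaranteed since those are captured by $\abstr{x\of-}{}$ and hence not free), and that when $\BB$ carries its own internal abstractions $\abstr{x \of C}$, the types $C$ themselves may mention $\mkvar a$ and so also get substituted — the inductive structure of the filling/abstraction operations makes this automatic, but it must be tracked explicitly. No genuinely hard step is expected; the lemma is essentially \cref{lem:prepare-subst} transported across the boundary/judgement correspondence.
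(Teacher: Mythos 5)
Your proposal is correct and essentially coincides with the paper's proof: the paper establishes this lemma by exactly the mutual induction you mention, with the non-abstracted boundary cases reducing immediately to the preceding substitution lemma and \rref{TT-Bdry-Abstr} handled just like \rref{TT-Abstr}. (Only your opening framing via ``filling the hole with a convenient head'' is a red herring, since a term or equation boundary need not admit a derivable filling; your concrete argument rightly proceeds by inversion on the boundary rules and applies \cref{lem:prepare-subst} to their premises instead.)
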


\begin{proof}
  The base cases immediately reduce to the previous lemma. The case of \rref{TT-Bdry-Abstr} is similar to the case of \rref{TT-Abstr} in the previous lemma.
\end{proof}

\begin{restatable}{lemma}{restatettsubstandttconvabstr}
  \label{lem:tt-subst-and-tt-conv-abstr}%
  In a raw type theory the following rules are admissible:
  \begin{mathpar}
    \inferenceRuleT{TT-Subst}
    {
      \Theta; \Gamma \types \abstr{x \of A} \; \JJ
      \\
      \Theta; \Gamma \types t : A
    }{
      \Theta; \Gamma \types \JJ[t/x]
    }

    \inferenceRuleT{TT-Bdry-Subst}
    { \Theta; \Gamma \types \abstr{x \of A} \; \BB
      \\
      \Theta; \Gamma \types t : A
    }{
      \Theta; \Gamma \types \BB[t/x]
    }

    \inferenceRuleT{TT-Conv-Abstr}
    {
      \Theta; \Gamma \types \abstr{x \of A}\; \JJ
      \\
      \Theta; \Gamma \types \isType B
      \\
      \Theta; \Gamma \types A \equiv B
    }{
      \Theta; \Gamma \types \abstr{x \of B}\; \JJ
    }
  \end{mathpar}
\end{restatable}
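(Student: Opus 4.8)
The plan is to derive all three rules from the two preceding substitution lemmas plus one inversion observation. Since these schematic rules contain no metavariables to instantiate, admissibility here simply means: whenever the premises of an instance are derivable, so is the conclusion. The key point is that \rref{TT-Abstr} is the \emph{only} closure rule whose conclusion is an abstracted judgement, and \rref{TT-Bdry-Abstr} the only one whose conclusion is an abstracted boundary: every specific rule, congruence rule, metavariable rule (\rref{TT-Meta}, \rref{TT-Meta-Congr}) and equality rule has a non-abstracted thesis in its conclusion, \rref{TT-Var} concludes with a non-abstracted term judgement, and the remaining boundary rules \rref{TT-Bdry-Ty}, \rref{TT-Bdry-Tm}, \rref{TT-Bdry-EqTy}, \rref{TT-Bdry-EqTm} conclude with non-abstracted boundaries. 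Hence a derivation of $\Theta; \Gamma \types \abstr{x \of A}\; \JJ$ must end with an instance of \rref{TT-Abstr}, whose premises are $\Theta; \Gamma \types \isType A$, $\mkvar a \notin \vert\Gamma\vert$ for some free variable $\mkvar a$, and $\Theta; \Gamma, \mkvar a \of A \types \JJ[\mkvar a/x]$; symmetrically for abstracted boundaries via \rref{TT-Bdry-Abstr}.

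For \rref{TT-Subst}: by the inversion above we have $\Theta; \Gamma, \mkvar a \of A \types \JJ[\mkvar a/x]$, and the second premise gives $\Theta; \Gamma \types t : A$. Applying \cref{lem:prepare-subst} with empty trailing context yields $\Theta; \Gamma \types (\JJ[\mkvar a/x])[t/\mkvar a]$. Since $\mkvar a \notin \vert\Gamma\vert$ while $\fv{\JJ} \cup \fv{t} \subseteq \vert\Gamma\vert$, the occurrences of $\mkvar a$ in $\JJ[\mkvar a/x]$ are exactly the positions of $x$ in $\JJ$, so the composite substitution collapses to $\JJ[t/x]$, which is the claimed conclusion. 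The case of \rref{TT-Bdry-Subst} is verbatim the same, using \rref{TT-Bdry-Abstr} for the inversion and \cref{lem:prepare-subst-bdry} for the substitution.

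For \rref{TT-Conv-Abstr}: inverting the first premise gives a fresh $\mkvar a$ with $\Theta; \Gamma, \mkvar a \of A \types \JJ[\mkvar a/x]$. Choose $\mkvar b \notin \vert\Gamma, \mkvar a \of A\vert$. By \cref{prop:tt-weakening} we obtain $\Theta; \Gamma, \mkvar b \of B, \mkvar a \of A \types \JJ[\mkvar a/x]$. From \rref{TT-Var} we have $\Theta; \Gamma, \mkvar b \of B \types \mkvar b : B$, and weakening the third premise $\Theta; \Gamma \types A \equiv B$ into the context $\Gamma, \mkvar b \of B$ and applying \rref{TT-EqTy-Sym} and \rref{TT-Conv-Tm} gives $\Theta; \Gamma, \mkvar b \of B \types \mkvar b : A$. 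Now \cref{lem:prepare-subst}, substituting $\mkvar b$ for $\mkvar a$ with empty trailing context, yields $\Theta; \Gamma, \mkvar b \of B \types (\JJ[\mkvar a/x])[\mkvar b/\mkvar a]$, and since $\mkvar a \notin \fv{\JJ}$ this judgement is $\Theta; \Gamma, \mkvar b \of B \types \JJ[\mkvar b/x]$. Finally, with the second premise $\Theta; \Gamma \types \isType B$ and $\mkvar b \notin \vert\Gamma\vert$, the rule \rref{TT-Abstr} concludes $\Theta; \Gamma \types \abstr{x \of B}\; \JJ$.

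The only genuinely delicate point is the context conversion hidden in \rref{TT-Conv-Abstr}: the variable carrying type $A$ cannot simply be re-typed to $B$ in place, so it is first shadowed by a fresh variable of type $B$ (using weakening) and then eliminated by substitution, with the equation $A \equiv B$ being precisely what makes that substitution typecheck. Everything else — verifying that the inversion on abstracted theses is exhaustive and that the composite substitutions simplify by freshness — is routine.
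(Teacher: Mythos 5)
Your proof is correct and follows essentially the same route as the paper: inversion of the abstraction (justified exactly as you observe, since only \rref{TT-Abstr} and \rref{TT-Bdry-Abstr} conclude with abstracted theses) followed by \cref{lem:prepare-subst} and \cref{lem:prepare-subst-bdry}, and for \rref{TT-Conv-Abstr} a fresh variable of type $B$ introduced by weakening, converted along the symmetrized equation $B \equiv A$, substituted for the abstracted variable, and then re-abstracted. The paper merely packages that last part by applying the just-established \rref{TT-Subst} to the weakened abstracted judgement instead of re-inverting and invoking \cref{lem:prepare-subst} directly, which is the same argument up to reordering.
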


\begin{proof}
  See the proof on \cpageref{proof:tt-subst-and-tt-conv-abstr}.
\end{proof}

The next lemma claims that substitution preserves equality, but is a bit finicky to state. Given terms $s$ and $t$, and an object judgement $\JJ$, define $\JJ[(s \equiv t)/\mkvar{a}]$ by
\begin{align*}
  (\isType{A})[(s \equiv t)/\mkvar{a}] &\ =\ (A[s/\mkvar{a}] \equiv A[t/\mkvar{a}]) \\
  (u : A)[(s \equiv t)/\mkvar{a}] &\ =\ (u[s/\mkvar{a}] \equiv u[t/\mkvar{a}] : A[s/\mkvar{a}]) \\
  (\abstr{x \of A}\; \JJ)[(s \equiv t)/\mkvar{a}] &\ =\ (\abstr{x \of A[s/\mkvar{a}]}\; \JJ[(s \equiv t)/\mkvar{a}]).
\end{align*}
That is, $\JJ[(s \equiv t)/\mkvar{a}]$ descends into abstractions by substituting~$s$ for~$\mkvar{a}$ in the types, and distributes types and terms over the equation $s \equiv t$.

\begin{restatable}{lemma}{restatepreparesubsteq}
  \label{lem:prepare-subst-eq}%
  If a raw type theory derives
  \begin{align}
  &\Theta; \Gamma \types s : A, \label{eq:prep-3} \\
  &\Theta; \Gamma \types t : A, \label{eq:prep-4} \\
  &\Theta; \Gamma \types s \equiv t : A. \label{eq:prep-5} \\
  &\Theta; \Gamma, \mkvar{a} \of A, \Delta \types \JJ, \label{eq:prep-2} \\
  &\Theta; \Gamma, \Delta[s/\mkvar{a}] \types B[s/\mkvar{a}] \equiv B[t/\mkvar{a}]
    \quad \text{for all $\mkvar{b} \in \vert\Delta\vert$ with $\Delta(\mkvar{b}) = B$}, \label{eq:prep-1}
  \end{align}
  then it derives
  \begin{enumerate}
  \item \label{it:prep-1} $\Theta; \Gamma, \Delta[s/\mkvar{a}] \types \JJ[s/\mkvar{a}]$,
  \item \label{it:prep-2} $\Theta; \Gamma, \Delta[s/\mkvar{a}] \types \JJ[t/\mkvar{a}]$, and
  \item \label{it:prep-3} $\Theta; \Gamma, \Delta[s/\mkvar{a}] \types \JJ[(s \equiv t)/\mkvar{a}]$
    if $\JJ$ is an object judgement.
  \end{enumerate}
\end{restatable}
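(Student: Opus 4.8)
The plan is to prove conclusions \cref{it:prep-1}, \cref{it:prep-2}, \cref{it:prep-3} simultaneously by induction on the derivation of \eqref{eq:prep-2}, with the induction ranging over derivations of hypothetical judgements and hypothetical boundaries alike; for a boundary $\BB$ the statement reads ``$\Theta; \Gamma, \Delta[s/\mkvar{a}] \types \BB[s/\mkvar{a}]$ and $\Theta; \Gamma, \Delta[s/\mkvar{a}] \types \BB[t/\mkvar{a}]$'', with no analogue of \cref{it:prep-3}. Conclusion \cref{it:prep-1} by itself is just \cref{lem:prepare-subst} (and \cref{lem:prepare-subst-bdry}); the reason for the joint statement is that in several cases \cref{it:prep-2} consumes type equations produced by \cref{it:prep-3}. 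Throughout, the hypotheses \eqref{eq:prep-3}--\eqref{eq:prep-1} are re-used after weakening (\cref{prop:tt-weakening}), freshness of bound-turned-free variables is preserved by renaming (\cref{prop:tt-renaming}) when necessary, and one uses freely that ordinary substitutions commute (e.g.\ $\JJ[\mkvar{c}/x][s/\mkvar{a}] = \JJ[s/\mkvar{a}][\mkvar{c}/x]$ for fresh $\mkvar{c}$) and that $[s/\mkvar{a}]$ commutes with $[\vec{t}/\vec{x}]$ and with the instantiation action whenever the types $A_j$, resp.\ the metavariable context of the raw rule, are closed.

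The base case is \rref{TT-Var}. If the variable is $\mkvar{a}$ itself, so $\JJ = (\mkvar{a} : A)$, then $\JJ[s/\mkvar{a}] = (s : A)$, $\JJ[t/\mkvar{a}] = (t : A)$, and $\JJ[(s \equiv t)/\mkvar{a}] = (s \equiv t : A)$ are precisely \eqref{eq:prep-3}, \eqref{eq:prep-4}, \eqref{eq:prep-5} after weakening. If the variable lies in $\Gamma$, all three forms follow from \rref{TT-Var}, \rref{TT-EqTm-Refl} and weakening. If it lies in $\Delta$, with $B = \Delta(\mkvar{b})$, then \cref{it:prep-1} is \rref{TT-Var} and \cref{it:prep-3} is \rref{TT-EqTm-Refl}, while for \cref{it:prep-2} we derive $\mkvar{b} : B[s/\mkvar{a}]$ and convert it along the equation $B[s/\mkvar{a}] \equiv B[t/\mkvar{a}]$ supplied by \eqref{eq:prep-1}, using \rref{TT-Conv-Tm}. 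The boundary cases of the induction all reduce to the corresponding judgement cases by inverting \rref{TT-Bdry-Ty}, \rref{TT-Bdry-Tm}, \rref{TT-Bdry-EqTy}, \rref{TT-Bdry-EqTm} and \rref{TT-Bdry-Abstr}, which are the only closure rules concluding hypothetical boundaries.

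For the equality closure rules (reflexivity, symmetry, transitivity, \rref{TT-Conv-Tm}, \rref{TT-Conv-EqTm}), conclusions \cref{it:prep-1} and \cref{it:prep-2} follow by applying the induction hypothesis to the premises and reassembling with the same rule --- only \cref{it:prep-1} and \cref{it:prep-2} of the hypothesis are needed for equational premises, which is all that is available there --- and \cref{it:prep-3} is relevant only for \rref{TT-Conv-Tm}, where from $u[s/\mkvar{a}] \equiv u[t/\mkvar{a}] : A[s/\mkvar{a}]$ (by \cref{it:prep-3} on the first premise) and $A[s/\mkvar{a}] \equiv B[s/\mkvar{a}]$ (by \cref{it:prep-1} on the second) we conclude with \rref{TT-Conv-EqTm}. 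The interesting structural case is \rref{TT-Abstr}, $\JJ = \abstr{x \of C}\;\JJ'$: we apply the induction hypothesis to the subderivation of $\JJ'[\mkvar{c}/x]$ with $\Delta$ enlarged to $\Delta, \mkvar{c} \of C$, the extra instance of \eqref{eq:prep-1} needed for $\mkvar{c}$ being exactly \cref{it:prep-3} applied to the premise $\isType C$; then \cref{it:prep-1} and \cref{it:prep-3} follow by re-applying \rref{TT-Abstr}, whereas for \cref{it:prep-2} we first build $\abstr{x \of C[s/\mkvar{a}]}\;\JJ'[t/\mkvar{a}]$ and retype it to $\abstr{x \of C[t/\mkvar{a}]}\;\JJ'[t/\mkvar{a}]$ by \rref{TT-Conv-Abstr} (\cref{lem:tt-subst-and-tt-conv-abstr}), using the type equation from \cref{it:prep-3}.

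The metavariable rules \rref{TT-Meta}, \rref{TT-Meta-Congr} and the instantiations of the specific rules of $T$ and of their associated congruence rules are all handled the same way: $[s/\mkvar{a}]$ turns an instantiation $I$ into the instantiation $I[s/\mkvar{a}]$ obtained by substituting into the arguments (the rule's metavariable context being closed), so \cref{it:prep-1} and \cref{it:prep-2} come from applying the induction hypothesis to the premises and re-invoking the same closure rule with $I[s/\mkvar{a}]$, resp.\ $I[t/\mkvar{a}]$, over $\Gamma, \Delta[s/\mkvar{a}]$; and when the rule is an object rule, $\JJ[(s \equiv t)/\mkvar{a}]$ is precisely the conclusion of the associated congruence rule (\cref{def:congruence-rule}, resp.\ \rref{TT-Meta-Congr}) instantiated with $I := I[s/\mkvar{a}]$ and $J := I[t/\mkvar{a}]$, whose premises are supplied by \cref{it:prep-1} and \cref{it:prep-2} together with \cref{it:prep-3} on the object premises, the extra premise $\act{(I[s/\mkvar{a}])} C \equiv \act{(I[t/\mkvar{a}])} C$ present when $\bdry = (\Box : C)$ coming from \cref{it:prep-3} applied to the subderivation of $\Theta; \Gamma, \mkvar{a} \of A, \Delta \types \isType{\act{I} C}$ that lies inside the derivation of the boundary premise $\Theta; \Gamma, \mkvar{a} \of A, \Delta \types \act{I} \bdry$. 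The main obstacle is the bookkeeping forced by \cref{it:prep-2}: since the ambient context must remain $\Gamma, \Delta[s/\mkvar{a}]$ rather than $\Gamma, \Delta[t/\mkvar{a}]$, \cref{lem:prepare-subst} does not apply directly and one must thread the conversions \rref{TT-Conv-Tm} and \rref{TT-Conv-Abstr} through the induction, which is exactly why \cref{it:prep-2} and \cref{it:prep-3} have to be proved together; verifying the substitution-commutation identities and, in the object-rule case, that the substituted premises match the shape of the congruence rule is the routine-but-fiddly remainder.
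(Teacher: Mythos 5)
Your proposal is correct and follows essentially the same route as the paper's proof: simultaneous induction on the derivation, with the variable-in-$\Delta$ case converted along \eqref{eq:prep-1}, the \rref{TT-Abstr} case retyped via \rref{TT-Conv-Abstr}, \rref{TT-Conv-Tm} handled through \rref{TT-Conv-EqTm}, and the metavariable/specific-rule cases re-instantiated with $I[s/\mkvar{a}]$ and $I[t/\mkvar{a}]$ and closed off by the associated congruence rules, whose extra type-equation premise comes from \cref{it:prep-3} on the (inverted) boundary premise. Your explicit extension of the induction to boundary premises is just a cleaner rendering of what the paper handles tersely ("its premises are induction hypotheses"), so the two arguments coincide in substance.
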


\begin{proof}
  See the proof on \cpageref{proof:prepare-subst-eq}.
\end{proof}

\begin{figure}[pht]
  \centering
  \small
  \begin{ruleframe}
  \begin{mathpar}
    \inferenceRule{TT-Subst}
    { \Theta; \Gamma \types \abstr{x \of A} \; \JJ
      \\
      \Theta; \Gamma \types t : A
    }{
      \Theta; \Gamma \types \JJ[t/x]
    }

    \inferenceRule{TT-Bdry-Subst}
    { \Theta; \Gamma \types \abstr{x \of A} \; \BB
      \\
      \Theta; \Gamma \types t : A
    }{
      \Theta; \Gamma \types \BB[t/x]
    }

    \inferenceRule{TT-Subst-EqTy}
    { \Theta; \Gamma \types \abstr{x \of A} \abstr{\vec{y} \of \vec{B}} \; \isType{C} \\\\
      \Theta; \Gamma \types s : A \\
      \Theta; \Gamma \types t : A \\
      \Theta; \Gamma \types s \equiv t : A
    }{
      \Theta; \Gamma \types \abstr{\vec{y} \of \vec{B}[s/x]} \; C[s/x] \equiv C[t/x]
    }

    \inferenceRule{TT-Subst-EqTm}
    { \Theta; \Gamma \types \abstr{x \of A} \abstr{\vec{y} \of \vec{B}} \; u : C \\\\
      \Theta; \Gamma \types s : A \\
      \Theta; \Gamma \types t : A \\
      \Theta; \Gamma \types s \equiv t : A
    }{
      \Theta; \Gamma \types \abstr{\vec{y} \of \vec{B}[s/x]} \; u[s/x] \equiv u[t/x] : C[s/x]
    }

    \inferenceRule{TT-Conv-Abstr}
    { \Theta; \Gamma \types \abstr{x \of A}\; \JJ
      \\ \Theta; \Gamma \types \isType B \\ \Theta; \Gamma \types A \equiv B }
    { \Theta; \Gamma \types \abstr{x \of B}\; \JJ }
  \end{mathpar}
  \end{ruleframe}
  \caption{Admissible substitution rules}
  \label{fig:substitution-rules}
\end{figure}

\begin{theorem}[Admissibility of substitution]
  \label{thm:substitution-admissible}
  In a raw type theory, the closure rules from \cref{fig:substitution-rules} are admissible.
\end{theorem}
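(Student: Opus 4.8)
The plan is to note that most of the work has already been done. Of the five rules collected in \cref{fig:substitution-rules}, three — \rref{TT-Subst}, \rref{TT-Bdry-Subst}, and \rref{TT-Conv-Abstr} — are exactly the rules shown admissible in \cref{lem:tt-subst-and-tt-conv-abstr}, so nothing remains to be done for them. It therefore suffices to establish admissibility of the two equality-substitution rules \rref{TT-Subst-EqTy} and \rref{TT-Subst-EqTm}, and for these I would reduce to \cref{lem:prepare-subst-eq} after moving the abstracted premise into free-variable form.

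For \rref{TT-Subst-EqTy}, assume derivations of $\Theta; \Gamma \types \abstr{x \of A} \abstr{\vec y \of \vec B}\; \isType C$, of $\Theta; \Gamma \types s : A$, of $\Theta; \Gamma \types t : A$, and of $\Theta; \Gamma \types s \equiv t : A$, and pick a free variable $\mkvar{a} \not\in \vert\Gamma\vert$. Weakening (\cref{prop:tt-weakening}) gives $\Theta; \Gamma, \mkvar{a} \of A \types \abstr{x \of A} \abstr{\vec y \of \vec B}\; \isType C$, and \rref{TT-Var} gives $\Theta; \Gamma, \mkvar{a} \of A \types \mkvar{a} : A$; so the admissible rule \rref{TT-Subst} yields $\Theta; \Gamma, \mkvar{a} \of A \types \abstr{\vec y \of \vec B[\mkvar{a}/x]}\; \isType{C[\mkvar{a}/x]}$. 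Now invoke \cref{lem:prepare-subst-eq} with the empty $\Delta$ — so that its hypothesis on $\Delta$ is vacuous — with the object judgement $\JJ \defeq \abstr{\vec y \of \vec B[\mkvar{a}/x]}\; \isType{C[\mkvar{a}/x]}$ (an abstracted type judgement, hence an object judgement), and with the three remaining premises, which already live over $\Gamma$. Its conclusion for object judgements gives $\Theta; \Gamma \types \JJ[(s \equiv t)/\mkvar{a}]$. Unfolding the recursive definition of $(-)[(s \equiv t)/\mkvar{a}]$, which descends through the abstractions substituting $s$ into the annotations $\vec B[\mkvar{a}/x]$ and turns the type thesis into $C[\mkvar{a}/x][s/\mkvar{a}] \equiv C[\mkvar{a}/x][t/\mkvar{a}]$, and then using freshness of $\mkvar{a}$ to rewrite $(-)[\mkvar{a}/x][s/\mkvar{a}]$ as $(-)[s/x]$ and likewise for $t$, this is exactly $\Theta; \Gamma \types \abstr{\vec y \of \vec B[s/x]}\; C[s/x] \equiv C[t/x]$, the conclusion of \rref{TT-Subst-EqTy}. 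The rule \rref{TT-Subst-EqTm} is treated identically, with $\JJ \defeq \abstr{\vec y \of \vec B[\mkvar{a}/x]}\; u[\mkvar{a}/x] : C[\mkvar{a}/x]$; the equality-substitution now turns the term thesis into $u[\mkvar{a}/x][s/\mkvar{a}] \equiv u[\mkvar{a}/x][t/\mkvar{a}] : C[\mkvar{a}/x][s/\mkvar{a}]$, which simplifies to the desired $u[s/x] \equiv u[t/x] : C[s/x]$.

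Since the real content is already packaged in \cref{lem:tt-subst-and-tt-conv-abstr} and \cref{lem:prepare-subst-eq}, the only thing needing care here is the variable bookkeeping: choosing $\mkvar{a}$ genuinely fresh so that weakening leaves it available and the successive substitutions $[\mkvar{a}/x]$ then $[s/\mkvar{a}]$ compose to $[s/x]$, and matching the recursively defined operation $(-)[(s \equiv t)/\mkvar{a}]$ against the iterated abstraction $\abstr{\vec y \of \vec B}$ so that the distribution over the equation happens at the thesis and the $s$-substitution happens in each annotation. I do not anticipate a genuine obstacle at this stage.
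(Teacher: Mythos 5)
Your proof is correct and takes essentially the same route as the paper: the rules \rref{TT-Subst}, \rref{TT-Bdry-Subst}, \rref{TT-Conv-Abstr} are discharged by \cref{lem:tt-subst-and-tt-conv-abstr}, and \rref{TT-Subst-EqTy}, \rref{TT-Subst-EqTm} are reduced to \cref{lem:prepare-subst-eq} (with empty $\Delta$, so its equational side condition is vacuous) after replacing the outermost abstracted variable by a fresh free variable. The only, immaterial, difference is that the paper strips the abstraction by inverting \rref{TT-Abstr} directly, while you obtain the same intermediate judgement over $\Gamma, \mkvar{a} \of A$ via weakening, \rref{TT-Var} and the already-admissible \rref{TT-Subst}.
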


\begin{proof}
  We already established admissibility of \rref{TT-Subst}, \rref{TT-Bdry-Subst}, and \rref{TT-Conv-Abstr} in \cref{lem:tt-subst-and-tt-conv-abstr}.
  Both \rref{TT-Subst-EqTy} and \rref{TT-Subst-EqTm} are seen to be admissible the same way: invert the abstraction and apply \cref{lem:prepare-subst-eq} to derive the desired conclusion.
\end{proof}

We provide two more lemmas that allow us to combine substitutions and judgmental equalities more flexibly.

\begin{restatable}{lemma}{restatesubsteqsides}
  \label{lem:subst-eq-sides}%
  Suppose a raw type theory derives
  \begin{equation*}
    \Theta; \Gamma \types s : A, \qquad
    \Theta; \Gamma \types t : A, \quad\text{and}\quad
    \Theta; \Gamma \types s \equiv t : A.
  \end{equation*}
  \begin{enumerate}
  \item If it derives
    \begin{equation*}
      \Theta; \Gamma \types \abstr{x \of A} \abstr{\vec{y} \of \vec{B}} \; C \equiv D
      \quad\text{and}\quad
      \Theta; \Gamma \types \abstr{x \of A} \abstr{\vec{y} \of \vec{B}} \; \isType{D}
    \end{equation*}
    then it derives
    $
      \Theta; \Gamma \types \abstr{\vec{y} \of \vec{B}[s/x]} \; C[s/x] \equiv D[t/x].
    $

  \item If it derives
    \begin{equation*}
      \Theta; \Gamma \types
      \abstr{x \of A} \abstr{\vec{y} \of \vec{B}} \; u \equiv v : C
      \quad\text{and}\quad
      \Theta; \Gamma \types
      \abstr{x \of A} \abstr{\vec{y} \of \vec{B}} \; v : C
    \end{equation*}
    then it derives
    $
      \Theta; \Gamma \types
      \abstr{\vec{y} \of \vec{B}[s/x]} \; u[s/x] \equiv v[t/x] : C[s/x]
    $.
  \end{enumerate}
\end{restatable}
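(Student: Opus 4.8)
The plan is to reduce both claims to ordinary transitivity of judgemental equality, carried out underneath the common abstraction prefix $\abstr{\vec y \of \vec B}$, after first manufacturing the two equalities to be chained with the admissible substitution rules of \cref{thm:substitution-admissible}.

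For~(1), apply \rref{TT-Subst} with $s$ to the hypothesis $\Theta; \Gamma \types \abstr{x \of A}\abstr{\vec y \of \vec B}\;(C \equiv D)$, obtaining $\Theta; \Gamma \types \abstr{\vec y \of \vec B[s/x]}\;(C[s/x] \equiv D[s/x])$, and apply \rref{TT-Subst-EqTy} with $s$, $t$, $s \equiv t$ to the hypothesis $\Theta; \Gamma \types \abstr{x \of A}\abstr{\vec y \of \vec B}\;\isType{D}$, obtaining $\Theta; \Gamma \types \abstr{\vec y \of \vec B[s/x]}\;(D[s/x] \equiv D[t/x])$. It remains to chain these through $D[s/x]$. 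Since no closure rule other than \rref{TT-Abstr} derives an abstracted judgement (specific rules have non-abstracted conclusions by \cref{def:raw-rule}, and of the structural rules only \rref{TT-Abstr} does), both derivations must end in a sequence of \rref{TT-Abstr} steps; inverting them — and using \cref{prop:tt-renaming} to align the fresh variables — we arrive at a common context $\Theta; \Gamma, \mkvar c_1 \of B_1[s/x], \ldots$ in which we have $C[s/x] \equiv D[s/x]$ and $D[s/x] \equiv D[t/x]$ (with $\vec y$ uniformly renamed), so that \rref{TT-EqTy-Trans} yields $C[s/x] \equiv D[t/x]$. We then re-abstract with \rref{TT-Abstr}; the well-formedness side premises $\Theta; \Gamma, \mkvar c_1 \of B_1[s/x], \ldots, \mkvar c_{i-1} \of \ldots \types \isType{B_i[s/x]}$ that \rref{TT-Abstr} demands at each step are precisely the premises produced when we inverted the abstractions on the already-derived judgement $\Theta; \Gamma \types \abstr{\vec y \of \vec B[s/x]}\;(D[s/x] \equiv D[t/x])$. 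This gives the conclusion $\Theta; \Gamma \types \abstr{\vec y \of \vec B[s/x]}\;(C[s/x] \equiv D[t/x])$. Claim~(2) is proved in the same way with \rref{TT-Subst-EqTm} in place of \rref{TT-Subst-EqTy} and \rref{TT-EqTm-Trans} in place of \rref{TT-EqTy-Trans}: \rref{TT-Subst} applied to $\abstr{x \of A}\abstr{\vec y \of \vec B}\;(u \equiv v : C)$ gives $\abstr{\vec y \of \vec B[s/x]}\;(u[s/x] \equiv v[s/x] : C[s/x])$, \rref{TT-Subst-EqTm} applied to $\abstr{x \of A}\abstr{\vec y \of \vec B}\;(v : C)$ gives $\abstr{\vec y \of \vec B[s/x]}\;(v[s/x] \equiv v[t/x] : C[s/x])$ — both equalities at the type $C[s/x]$, which is what \rref{TT-EqTm-Trans} requires — and stripping the abstractions, transitivity, and re-abstracting produce the claim.

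The mathematical content is slight; the real work is the bookkeeping in the strip-and-re-abstract step: keeping the iterated substitution $[s/x]$, the de~Bruijn renamings of $\vec y$, and the choices of fresh variables aligned between the two derivations, and checking that \rref{TT-Abstr}'s well-formedness premises are in hand when re-abstracting. It is precisely to secure the second equality in each item that the lemma assumes the right-hand side is derivable ($\isType D$, resp.\ $v : C$): without it one could not invoke \rref{TT-Subst-EqTy}/\rref{TT-Subst-EqTm}, presuppositivity not yet being at our disposal.
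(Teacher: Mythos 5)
Your proposal is correct and follows essentially the same route as the paper's proof: substitute $s$ for $x$ in the first hypothesis via \rref{TT-Subst}, apply \rref{TT-Subst-EqTy} (resp.\ \rref{TT-Subst-EqTm}) to the second hypothesis to get the equality $D[s/x] \equiv D[t/x]$ (resp.\ $v[s/x] \equiv v[t/x] : C[s/x]$), and then chain the two by transitivity after unpacking and re-packing the abstraction $\abstr{\vec y \of \vec B[s/x]}$. The only difference is that you spell out the strip-and-re-abstract bookkeeping (inversion of \rref{TT-Abstr}, renaming of fresh variables, recovery of the $\isType{B_i[s/x]}$ side premises) which the paper leaves implicit.
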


\begin{proof}
  See the proof on \cpageref{proof:subst-eq-sides}.
\end{proof}

\begin{restatable}{lemma}{restateeqsubstn}
  \label{lem:eq-subst-n}%
  Suppose a raw type theory derives, for $i = 1, \ldots, n$,
  \begin{align*}
    \Theta; \Gamma &\types s_i : A_i[\upto{\vec{s}}{i}/\upto{\vec{x}}{i}]
    \\
    \Theta; \Gamma &\types t_i : A_i[\upto{\vec{t}}{i}/\upto{\vec{x}}{i}]
    \\
    \Theta; \Gamma &\types s_i \equiv t_i : A_i[\upto{\vec{s}}{i}/\upto{\vec{x}}{i}].
  \end{align*}
  If it derives an object judgement
  $
  \Theta; \Gamma \types
  \abstr{\vec{x} \of \vec{A}}\;
  \plug{\BB}{e}
  $
  then it derives
  \begin{equation*}
    \Theta; \Gamma \types 
      \plug
      {(\BB[\vec{s}/\vec{x}])}
      {e[\vec{s}/\vec{x}] \equiv e[\vec{t}/\vec{x}]}.
  \end{equation*}
\end{restatable}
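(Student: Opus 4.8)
The plan is to argue by induction on the number $n$ of abstracted variables, reducing the simultaneous $n$-fold equality substitution to the single-variable equality substitution already available in \cref{lem:prepare-subst-eq} and \cref{lem:subst-eq-sides}. For the base case $n = 0$ there is nothing to substitute, and the desired conclusion is $\Theta; \Gamma \types \plug{\BB}{e \equiv e}$; this follows from the assumed object judgement $\Theta; \Gamma \types \plug{\BB}{e}$ by reflexivity, i.e.\ by \rref{TT-EqTy-Refl} when $\plug{\BB}{e}$ is a type judgement and by \rref{TT-EqTm-Refl} when it is a term judgement.

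For the inductive step I would peel off the innermost abstracted variable $x_n$. Writing the given object judgement as $\Theta; \Gamma \types \abstr{x_1 \of A_1} \cdots \abstr{x_{n-1} \of A_{n-1}}\; (\abstr{x_n \of A_n}\; \plug{\BB}{e})$ and regarding $\abstr{x_n \of A_n}\; \plug{\BB}{e}$ as an object head, the inductive hypothesis applied to the outer $n-1$ abstractions—with the sub-family of hypotheses for $i = 1, \ldots, n-1$, whose telescopes $\upto{\vec s}{i}$ and $\upto{\vec t}{i}$ involve only $s_1, \ldots, s_{i-1}$ and $t_1, \ldots, t_{i-1}$ and therefore match verbatim—yields an abstracted equation in the single remaining variable $x_n$, of the form $\Theta; \Gamma \types \abstr{x_n \of A_n[\upto{\vec s}{n}/\upto{\vec x}{n}]}\; \plug{\BB[\upto{\vec s}{n}/\upto{\vec x}{n}]}{e[\upto{\vec s}{n}/\upto{\vec x}{n}] \equiv e[\upto{\vec t}{n}/\upto{\vec x}{n}]}$. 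It then remains to substitute $x_n$ by $s_n$ on the left and $t_n$ on the right and to discharge the abstraction, which is exactly the shape handled by \cref{lem:subst-eq-sides}, fed with the hypotheses $\Theta;\Gamma\types s_n \equiv t_n : A_n[\upto{\vec s}{n}/\upto{\vec x}{n}]$ and the accompanying typing facts.

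The main obstacle is an alignment of types between the $s$- and $t$-substituted telescopes. The hypotheses supply $t_n$ at the type $A_n[\upto{\vec t}{n}/\upto{\vec x}{n}]$, whereas both \cref{lem:subst-eq-sides} and the abstraction produced by the inductive hypothesis live over the $s$-substituted type $A_n[\upto{\vec s}{n}/\upto{\vec x}{n}]$; likewise the ``one-sided'' well-formedness premise of \cref{lem:subst-eq-sides} asks for the right-hand side at an $s$-substituted type. To bridge this I would derive the type equality $A_n[\upto{\vec s}{n}/\upto{\vec x}{n}] \equiv A_n[\upto{\vec t}{n}/\upto{\vec x}{n}]$ by a second use of the inductive hypothesis, applied to the type judgement $\abstr{x_1\of A_1}\cdots\abstr{x_{n-1}\of A_{n-1}}\;\isType{A_n}$, and analogously for the carrier type of $\BB$. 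The derivability of these auxiliary type judgements is extracted by inversion of the abstraction structure: since an abstracted judgement can only be concluded by \rref{TT-Abstr}, each $\isType{A_i}$ is read off its first premise and re-abstracted. The $t$-side typing and well-formedness facts—obtained by iterated admissible substitution via \cref{thm:substitution-admissible}—are then transported along these equalities using \rref{TT-Conv-Tm}, \rref{TT-Conv-EqTm}, and \rref{TT-Conv-Abstr}, so that every premise of \cref{lem:subst-eq-sides} is stated over the $s$-substituted types.

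With the premises thus aligned, \cref{lem:subst-eq-sides} delivers the required equation $\Theta; \Gamma \types \plug{(\BB[\vec{s}/\vec{x}])}{e[\vec{s}/\vec{x}] \equiv e[\vec{t}/\vec{x}]}$, completing the step. Throughout, \cref{prop:tt-weakening} keeps freshly chosen variables in scope, and the verification that the nested substitutions $[\,\cdot\,/x_1]\cdots[\,\cdot\,/x_{i-1}]$ collapse to the simultaneous forms $[\upto{\vec s}{i}/\upto{\vec x}{i}]$ is routine, since each $s_j$ and $t_j$ lives over $\Gamma$ and contains none of the bound variables $x_k$.
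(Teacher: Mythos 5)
Your proof is correct and follows essentially the same route as the paper's: both reduce the simultaneous $n$-fold equality substitution to \cref{lem:subst-eq-sides}, obtain the telescope equalities $A_i[\upto{\vec{s}}{i}/\upto{\vec{x}}{i}] \equiv A_i[\upto{\vec{t}}{i}/\upto{\vec{x}}{i}]$ from the type judgements recovered by inversion, and use them with \rref{TT-Conv-Tm} to realign the type of $t_i$ before substituting. The only difference is bookkeeping: the paper iterates \cref{lem:subst-eq-sides} peeling the outermost variable, with a separate inner induction on the telescope index supplying those equalities, whereas you induct on $n$ peeling the innermost variable and reuse the outer induction hypothesis to supply them — an equivalent and slightly more self-contained organization.
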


\begin{proof}
  See the proof on \cpageref{proof:eq-subst-n}.
\end{proof}

\subsubsection{Admissibility of instantiations}
\label{sec:admiss-inst}

We next turn to admissibility of instantiations, i.e.\ preservation of derivability under instantiation of metavariables by heads of derivable judgements.

\begin{definition}
  An instantiation $I = \finmap{\symM_1 \mto e_1, \ldots, \symM_n \mto e_n}$ of a metavariable context $\Xi = [\symM_1 \of \BB_1, \ldots, \symM_n \of \BB_n]$ over $\Theta; \Gamma$ is \defemph{derivable} when
  $\Theta; \Gamma \types \plug{(\upact{I}{k} \BB_k)}{e_k}$
  is derivable for $k = 1, \ldots, n$.
\end{definition}

\begin{restatable}{lemma}{restateinstantiationadmissible}
  \label{lem:instantiation-admissible}%
  In a raw type theory, let~$I$ be a derivable instantiation of~$\Xi$ over context~$\Theta; \Gamma$.
  If $\Xi; \Gamma, \Delta \types \JJ$ is derivable then so is $\Theta; \Gamma, \act{I} \Delta \types \act{I} \JJ$, and similarly for boundaries.
\end{restatable}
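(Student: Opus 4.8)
The plan is to argue by induction on the derivation of $\Xi; \Gamma, \Delta \types \JJ$, treating hypothetical judgements and boundaries simultaneously, keeping $I$, $\Theta$, and $\Gamma$ fixed while letting $\Delta$ and $\JJ$ (resp.\ $\BB$) vary with the induction. First I would record two elementary facts about raw syntax that get used throughout: the action $\act{I}$ preserves arities, distributes over filling a boundary with a head, and commutes with substitution of terms for bound (and free) variables, so that for instance $\act{I}(A_j[\upto{\vec{t}}{j}/\upto{\vec{x}}{j}]) = (\act{I} A_j)[\upto{\act{I}\vec{t}}{j}/\upto{\vec{x}}{j}]$ and $\act{I}(\JJ[\mkvar{a}/x]) = (\act{I}\JJ)[\mkvar{a}/x]$; and instantiations compose, in the sense that if $K$ is an instantiation of a metavariable context $\Upsilon$ over $\Xi; \Gamma, \Delta$, then $\act{I} K \defeq \finmap{\symM \mto \act{I}(K(\symM))}$ is an instantiation of $\Upsilon$ over $\Theta; \Gamma, \act{I}\Delta$ with $\act{I} \circ \act{K} = \act{(\act{I} K)}$. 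Note also that $\act{I}$ leaves $\Gamma$ and the domain of $\Delta$ unchanged, so freshness side-conditions survive instantiation.

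For the structural rules other than \rref{TT-Abstr} and the metavariable rules --- namely \rref{TT-Var}, the equality rules of \cref{fig:equality-rules}, and the boundary rules of \cref{fig:well-formed-boundaries} apart from \rref{TT-Bdry-Abstr} --- one applies the induction hypotheses to the premises and reassembles the same rule; \rref{TT-Var} is immediate because $\act{I}$ fixes free variables and commutes with context look-up. For \rref{TT-Abstr} and \rref{TT-Bdry-Abstr}, whose premise introduces a fresh $\mkvar{a} \of A$, one invokes the induction hypothesis with $\Delta, \mkvar{a} \of A$ in place of $\Delta$; since $\vert\act{I}\Delta\vert = \vert\Delta\vert$ the side-condition $\mkvar{a} \notin \vert\Gamma, \act{I}\Delta\vert$ still holds, so one concludes with \rref{TT-Abstr} (using renaming, \cref{prop:tt-renaming}, to keep $\mkvar{a}$ fresh if necessary). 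For a closure rule arising as an instantiation $\act{K} R$ of a specific rule $R$ of the theory --- or of an associated congruence rule from \cref{def:congruence-rule}, which carries two instantiations and is treated the same way --- observe that $\act{(\act{I} K)} R$ is again an instantiation of $R$, now over $\Theta; \Gamma, \act{I}\Delta$, hence a closure rule of the associated deductive system; by the composition law its premises are exactly the $\act{I}$-images of the premises of $\act{K} R$, so they are derivable by the induction hypotheses, and therefore so is its conclusion $\Theta; \Gamma, \act{I}\Delta \types \act{I}\JJ$.

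The crux, and the step I expect to be the main obstacle, is the case of \rref{TT-Meta} applied to some $\symM_k \in \vert\Xi\vert$ with $\Xi(\symM_k) = \abstr{x_1 \of A_1}\cdots\abstr{x_m \of A_m}\; \bdry$; write $e_k = I(\symM_k)$. Using that $\act{I}$ distributes over filling and commutes with substitution, the instantiated conclusion to be derived is
\[
  \Theta; \Gamma, \act{I}\Delta \types \plug{(\act{I}\bdry)[\act{I}\vec{t}/\vec{x}]}{e_k[\act{I}\vec{t}/\vec{x}]}.
\]
Since $I$ is derivable and $\BB_k$ mentions only $\symM_1, \ldots, \symM_{k-1}$, so that $\upact{I}{k}\BB_k = \act{I}\BB_k = \abstr{x_1 \of \act{I} A_1}\cdots\abstr{x_m \of \act{I} A_m}\; \act{I}\bdry$, we have $\Theta; \Gamma \types \plug{(\act{I}\BB_k)}{e_k}$, which we weaken along $\act{I}\Delta$ by \cref{prop:tt-weakening}. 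The induction hypotheses applied to the premises $\Xi; \Gamma, \Delta \types t_j : A_j[\upto{\vec{t}}{j}/\upto{\vec{x}}{j}]$ yield $\Theta; \Gamma, \act{I}\Delta \types \act{I} t_j : (\act{I} A_j)[\upto{\act{I}\vec{t}}{j}/\upto{\vec{x}}{j}]$ for $j = 1, \ldots, m$. Iterating the admissible rule \rref{TT-Subst} from \cref{thm:substitution-admissible} to substitute $\act{I} t_1, \ldots, \act{I} t_m$ for the abstracted variables $x_1, \ldots, x_m$ then produces exactly the required judgement; the congruence case \rref{TT-Meta-Congr} is handled identically, using \rref{TT-Subst-EqTy}, \rref{TT-Subst-EqTm}, or \cref{lem:eq-subst-n} in place of \rref{TT-Subst}. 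This metavariable case is where derivability of the instantiation, weakening, admissibility of substitution, and the commutation laws must all be marshalled together while carefully tracking the nested bound-variable substitutions; all remaining cases are bookkeeping with the induction hypotheses.
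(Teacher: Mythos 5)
Your proposal is correct and follows essentially the same route as the paper's proof: induction on the derivation, with the crux being \rref{TT-Meta}, handled exactly as in the paper by weakening the judgement supplied by derivability of $I$ at $\symM_k$ and then iterating the admissible rule \rref{TT-Subst}, and \rref{TT-Meta-Congr} via \cref{lem:eq-subst-n}; the abstraction case via the extended $\Delta$ and the specific/congruence cases via composed instantiations likewise match the paper's (largely implicit) treatment. No gaps.
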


\begin{proof}
  See the proof on \cpageref{proof:instantiation-admissible}.
\end{proof}

\begin{theorem}[Admissibility of instantiation]
  \label{prop:instantiation-admissible}%
  In a raw type theory, let~$I$ be a derivable instantiation of~$\Xi$ over context~$\Theta; \Gamma$.
  If $\Xi; \Gamma \types \JJ$ is derivable then so is $\Theta; \Gamma \types \act{I} \JJ$, and similarly for boundaries.
\end{theorem}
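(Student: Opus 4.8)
The plan is to read off this theorem as the special case of \cref{lem:instantiation-admissible} in which the auxiliary tail context is empty. Concretely, I would instantiate the lemma with $\Delta = \emptyCtx$: then $\act{I}\emptyCtx = \emptyCtx$, so $\Gamma, \act{I}\Delta$ is just $\Gamma$, and the lemma yields that $\Theta; \Gamma \types \act{I}\JJ$ is derivable whenever $\Xi; \Gamma \types \JJ$ is. The boundary case is obtained in the same way from the boundary half of the lemma. So the theorem itself requires no work beyond invoking the lemma.

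All of the substance is therefore in \cref{lem:instantiation-admissible}, and it is worth recording why its statement carries the extra context $\Delta$ at all: it is precisely the strengthening needed to make the induction on the derivation of $\Xi; \Gamma, \Delta \types \JJ$ go through. In the \rref{TT-Abstr} case one passes to a premise of the form $\Xi; \Gamma, \Delta, \mkvar{a}\of A \types \JJ'[\mkvar{a}/x]$, which extends exactly the tail $\Delta$, and so the induction hypothesis must be available for this longer tail; a statement phrased directly as the theorem would not be self-strengthening in this sense. The generalized form with $\Delta$ is the right induction invariant, while the theorem is the clean consequence one actually wants to cite elsewhere.

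The step I expect to be the main obstacle — and the reason the lemma's proof is relegated to the appendix — is the metavariable case \rref{TT-Meta} (and, symmetrically, \rref{TT-Meta-Congr}): when the metavariable being used is one of the $\symM_i$ of $\Xi$, its application gets replaced by $e_i$ with the instantiated arguments substituted for the bound variables, so one must show that the instantiated premises rederive the instantiated conclusion using derivability of $I$ together with the already-available admissibility of substitution (\cref{thm:substitution-admissible}) to discharge the substitutions $\act{I}t_j/x_j$; the \rref{TT-Abstr} case additionally needs \cref{prop:tt-renaming} to keep the bound-turned-free variable fresh after instantiation. The remaining cases — the equality and boundary closure rules, the structural rules, and the instantiations of specific and congruence rules of the theory — follow by routine appeals to the induction hypotheses, using that instantiation commutes with substitution, with filling of boundaries, and with restriction of instantiations.
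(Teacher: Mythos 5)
Your proposal is correct and matches the paper's proof exactly: the theorem is obtained by applying \cref{lem:instantiation-admissible} with empty $\Delta$, for both judgements and boundaries. Your additional remarks on why the lemma carries the tail context $\Delta$ and on the \rref{TT-Meta}/\rref{TT-Abstr} cases accurately reflect the structure of the lemma's proof in the appendix, but no further argument is needed for the theorem itself.
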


\begin{proof}
  Apply \cref{lem:instantiation-admissible} with empty $\Delta$.
\end{proof}

We next show that, under favorable conditions, instantiating by judgementally equal instantiations leads to judgemental equality.
To make the claim precise, define the notation $\act{(I \equiv J)} \JJ$ by
\begin{align*}
  \act{(I \equiv J)} (\isType{A}) &= (\act{I} A \equiv \act{J} A \by \dummy),\\
  \act{(I \equiv J)} (t : A) &= (\act{I} t \equiv \act{J} t : \act{I} A \by \dummy),\\
  \act{(I \equiv J)} (\abstr{x \of A}\; \JJ) &= (\abstr{x \of \act{I} A}\; \act{(I \equiv J)} \JJ)
\end{align*}
and say that instantiations
\begin{equation*}
  I = \finmap{\symM_1 \mto e_1, \ldots, \symM_n \mto e_n}
  \qquad\text{and}\qquad
  J = \finmap{\symM_1 \mto f_1, \ldots, \symM_n \mto f_n}
\end{equation*}
of~$\Xi = [\symM_1 \of \BB_1, \ldots, \symM_n \of \BB_n]$ over~$\Theta; \Gamma$ are \defemph{judgementally equal} when, for $k = 1, \ldots, n$, if $\BB_k$ is an object boundary then
$
  \Theta; \Gamma \types \plug{(\upact{I}{k}\BB_k)}{e_k \equiv f_k}
$
is derivable.

\begin{restatable}{lemma}{restateeqinstadmit}
  \label{lem:eq-inst-admit}
  In a raw type theory, consider derivable instantiations $I$ and $J$
  of~$\Xi = [\symM_1 \of \BB_1, \ldots, \symM_n \of \BB_n]$ over~$\Theta; \Gamma$ which are judgementally equal.
  Suppose that $\types \isMCtx \Xi$ and $\Theta \types \isVCtx \Gamma$, and that $\Theta; \Gamma, \Delta \types \plug{(\upact I i \BB_i)}{J(\symM_i)}$ is derivable for $i = 1, \ldots, n$,
  and additionally that, for all $\mkvar{a} \in \vert\Delta\vert$ with $\Delta(\mkvar{a}) = A$, so are
  \begin{align*}
    \Theta; \Gamma, \act{I} \Delta &\types \isType{\act{I} A}, \\
    \Theta; \Gamma, \act{I} \Delta &\types \isType{\act{J} A}, \\
    \Theta; \Gamma, \act{I} \Delta &\types \act{I} A \equiv \act{J} A
  \end{align*}
  If $\Xi; \Gamma, \Delta \types \JJ$ is derivable then so are
  \begin{align}
    \label{eq:instEq-jdg-I}%
    \Theta; \Gamma, \act{I} \Delta &\types \act{I} \JJ, \\
    \label{eq:instEq-jdg-J}%
    \Theta; \Gamma, \act{I} \Delta &\types \act{J} \JJ, \\
    \label{eq:instEq-eq}%
    \Theta; \Gamma, \act{I} \Delta &\types \act{(I \equiv J)} \JJ
    \quad \text{if $\JJ$ is an object judgement.}
  \end{align}
\end{restatable}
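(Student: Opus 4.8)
The plan is to obtain \eqref{eq:instEq-jdg-I} directly from \cref{lem:instantiation-admissible} applied to the derivable instantiation~$I$, and to prove \eqref{eq:instEq-jdg-J} and \eqref{eq:instEq-eq} simultaneously by induction on the derivation of $\Xi; \Gamma, \Delta \types \JJ$. The entire argument runs parallel to \cref{lem:prepare-subst-eq}, with the single substituted variable there replaced here by the whole instantiated metavariable context~$\Xi$: derivability of $I$ and $J$ plays the role of the typings $\Theta;\Gamma \types s : A$ and $\Theta;\Gamma \types t : A$, judgemental equality of $I$ and $J$ plays the role of $\Theta;\Gamma \types s \equiv t : A$, and the three $\Delta$-hypotheses play the role of the hypothesis on $B[s/\mkvar{a}] \equiv B[t/\mkvar{a}]$, supplying at each variable leaf the conversion data that keeps \eqref{eq:instEq-jdg-J} in the context $\Gamma, \act{I}\Delta$ rather than $\Gamma, \act{J}\Delta$.

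Most cases are routine: one applies the induction hypotheses to the premises and reassembles with the same rule. In \rref{TT-Var} on $\mkvar{b} \in \vert\Gamma\vert$ both instantiations act as the identity, so \eqref{eq:instEq-jdg-J} is immediate and \eqref{eq:instEq-eq} is reflexivity; for $\mkvar{b} \in \vert\Delta\vert$ we derive $\mkvar{b} : \act{I}\Delta(\mkvar{b})$ by \rref{TT-Var} and convert to $\act{J}\Delta(\mkvar{b})$ by \rref{TT-Conv-Tm} along the supplied equation $\act{I}\Delta(\mkvar{b}) \equiv \act{J}\Delta(\mkvar{b})$, while \eqref{eq:instEq-eq} is again reflexivity. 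In a \rref{TT-Conv-Tm} step one transports the instantiated term equation to the new type using \cref{lem:instantiation-admissible} on the equation premise. The \rref{TT-Abstr} case extends $\Delta$ by a fresh $\mkvar{a} \of A$; applying \eqref{eq:instEq-jdg-I}, \eqref{eq:instEq-jdg-J}, and \eqref{eq:instEq-eq} to the premise $\isType A$ produces exactly the three $\Delta$-hypotheses required of $\mkvar{a}$, after which we recurse and close with \rref{TT-Abstr}, invoking \cref{prop:tt-renaming} to keep the bound name fresh.

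The main obstacle is the \rref{TT-Meta} case for an instantiated metavariable $\symM_k \in \vert\Xi\vert$ with boundary $\BB_k = \abstr{\vec{x} \of \vec{A}}\, \bdry'$, where the instantiated heads are $e_k[\act{I}\vec{t}/\vec{x}]$ and $f_k[\act{J}\vec{t}/\vec{x}]$. Conclusion \eqref{eq:instEq-jdg-J} follows by substituting $\act{J}\vec{t}$ (typed by the induction hypothesis \eqref{eq:instEq-jdg-J} on the argument premises) into the derivable $J$-boundary-fit $\plug{(\upact{J}{k}\BB_k)}{f_k}$ via admissibility of substitution (\cref{thm:substitution-admissible}). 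For the equation \eqref{eq:instEq-eq} we chain two steps by transitivity. First, substituting $\act{I}\vec{t}$ into the judgemental-equality hypothesis $\plug{(\upact{I}{k}\BB_k)}{e_k \equiv f_k}$ yields $e_k[\act{I}\vec{t}/\vec{x}] \equiv f_k[\act{I}\vec{t}/\vec{x}]$. Second, the argument equations $\act{I} t_j \equiv \act{J} t_j$ from the induction hypothesis \eqref{eq:instEq-eq}, pushed through $f_k$ by equality substitution (\cref{lem:eq-subst-n}), yield $f_k[\act{I}\vec{t}/\vec{x}] \equiv f_k[\act{J}\vec{t}/\vec{x}]$; this step is legitimate precisely because of the supplied hypothesis that $\plug{(\upact{I}{i}\BB_i)}{J(\symM_i)}$ is derivable, which makes $f_k$ fill the \emph{$I$-instantiated} boundary $\upact{I}{k}\BB_k$. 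The delicate points are that $\BB_k$ may depend on the earlier metavariables $\symM_1, \ldots, \symM_{k-1}$, so $f_k$ must be shown to fit the $I$-instantiated rather than the $J$-instantiated boundary before the argument equations can be transported through it, and that one must track the type annotation $(\act{I}\bdry')[\act{I}\vec{t}/\vec{x}]$ on which the resulting term equation lives; the assumptions $\types \isMCtx \Xi$ and $\Theta \types \isVCtx \Gamma$ ensure that every boundary and context in play is well-formed so that these substitution lemmas apply. The remaining \rref{TT-Meta} case for a non-instantiated $\symM \in \vert\Theta\vert$ is closed by reassembling with \rref{TT-Meta-Congr}, fed the argument equations and both argument typings from the induction hypotheses.
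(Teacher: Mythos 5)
Your overall plan is the paper's own: \eqref{eq:instEq-jdg-I} is delegated to \cref{lem:instantiation-admissible}, the other two claims are proved by induction over the derivation, \rref{TT-Var} and \rref{TT-Abstr} are treated identically (modulo one omission: after recursing in the abstraction case, \eqref{eq:instEq-jdg-J} yields $\abstr{x \of \act{I}B}\;\act{J}\JJ$, and you still need \rref{TT-Conv-Abstr} to convert the abstraction type to $\act{J}B$), and your two-step transitivity chain in the \rref{TT-Meta} case --- $e_k[\act{I}\vec{t}/\vec{x}] \equiv f_k[\act{I}\vec{t}/\vec{x}]$ from judgemental equality of $I$ and $J$, then $f_k[\act{I}\vec{t}/\vec{x}] \equiv f_k[\act{J}\vec{t}/\vec{x}]$ from \cref{lem:eq-subst-n} applied to the hypothesis that $J(\symM_k)$ fills the $I$-instantiated boundary --- is exactly the paper's argument. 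However, there is a genuine gap at precisely the point you flag as delicate. To apply \cref{lem:eq-subst-n} in the second leg you need all three argument judgements over a \emph{single} family of types, the $I$-instantiated ones: $\act{I}t_j$ and $\act{J}t_j$ typed at instances of $\act{I}A_j$, together with the equations. Your induction hypothesis \eqref{eq:instEq-jdg-J} types $\act{J}t_j$ only at $(\act{J}A_j)[\upto{(\act{J}\vec{t})}{j}/\upto{\vec{x}}{j}]$. Bridging requires an equation between the $I$- and $J$-instantiations of $A_j$ followed by \rref{TT-Conv-Tm}, and that equation is an instance of the very lemma being proved, applied to the well-formedness of $A_j$ --- a judgement that lives over the \emph{smaller} metavariable context $\upto{\Xi}{k}$, obtained from $\types \isMCtx{\Xi}$ via \cref{prop:ctx-inversion}. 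This appeal is only licensed if the induction is organized simultaneously over the derivation of $\types \isMCtx{\Xi}$ and the derivation of the judgement (as the paper does); your induction, stated solely on the derivation of $\Xi; \Gamma, \Delta \types \JJ$, does not make it available, and the blanket remark that well-formedness assumptions ``ensure the substitution lemmas apply'' does not produce the missing type equation.

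Two smaller problems. First, ``apply the induction hypotheses and reassemble with the same rule'' is wrong for \eqref{eq:instEq-eq} when the derivation ends with a specific object rule $R$ instantiated by some $K$: the equation is not an instance of $R$ but of the \emph{associated congruence rule} of $R$, instantiated with $\act{I}K$ and $\act{J}K$ --- this is the entire reason congruence rules are part of the deductive system, and the paper devotes a separate case to it. Second, your closing case of \rref{TT-Meta} for a ``non-instantiated $\symM \in \vert\Theta\vert$'' is vacuous: the metavariable context of the judgement is $\Xi$, all of which $I$ and $J$ instantiate, so no such application can occur. Meanwhile the case that does remain, a derivation ending with \rref{TT-Meta-Congr} for some $\symM_k \in \vert\Xi\vert$, is left untreated; for \eqref{eq:instEq-jdg-J} it is handled by repeated \rref{TT-Subst-EqTm} applied to $\plug{(\act{J}\BB_k)}{J(\symM_k)}$ with the $J$-instantiated argument typings and equations supplied by the induction hypotheses.
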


\begin{proof}
  See the proof on \cpageref{proof:eq-inst-admit}.
\end{proof}

\Cref{lem:eq-inst-admit} imposes conditions on the instantiations and the context which can be reduced to the more familiar assumption of well-typedness of the context, using \Cref{lem:eq-inst-admit} itself, as follows.

\begin{restatable}{lemma}{restateinsteqbootstrapctx}
  \label{lem:inst-eq-bootstrap-ctx}
  In a raw type theory, consider
  $\Xi = [\symM_1 \of \BB_1, \ldots, \symM_n \of \BB_n]$ such that $\types \isMCtx{\Xi}$,
  and derivable instantiations
  \begin{equation*}
    I = \finmap{\symM_1 \mto e_1, \ldots, \symM_n \mto e_n}
    \qquad\text{and}\qquad
    J = \finmap{\symM_1 \mto f_1, \ldots, \symM_n \mto f_n}
  \end{equation*}
  of~$\Xi$ over~$\Theta; \Gamma$ which are judgementally equal.
  Suppose further that $\Theta \types \isVCtx{\Gamma}$ and $\Theta; \Gamma \types \plug{(\upact{I}{i} \BB_i)}{f_i}$ for $i = 1, \ldots, n$.
  If $\Theta \types \isVCtx{(\Gamma, \Delta)}$, then for all $\mkvar{a} \in \vert\Delta\vert$ with $\Delta(\mkvar{a}) = A$:
  \begin{align*}
    \Theta; \Gamma, \act{I} \Delta &\types \isType{\act{I} A}, \\
    \Theta; \Gamma, \act{I} \Delta &\types \isType{\act{J} A}, \\
    \Theta; \Gamma, \act{I} \Delta &\types \act{I} A \equiv \act{J} A.
  \end{align*}
\end{restatable}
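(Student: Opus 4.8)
The plan is to establish the three derivability facts by induction on the length of $\Delta$, using \cref{lem:eq-inst-admit} as the inductive engine. The apparent circularity is that \cref{lem:eq-inst-admit} \emph{assumes} precisely the three conditions we are trying to prove; the resolution is that it only ever needs them for the portion of the context \emph{strictly preceding} the variable under consideration, so a strict-prefix induction breaks the loop. Write $\Delta = [\mkvar{a}_1 \of A_1, \ldots, \mkvar{a}_m \of A_m]$ and recall that $\upto{\Delta}{k} = [\mkvar{a}_1 \of A_1, \ldots, \mkvar{a}_{k-1} \of A_{k-1}]$ is its $k$-th initial segment. For each $k$ I would prove
\begin{align*}
  \Theta; \Gamma, \act{I}(\upto{\Delta}{k}) &\types \isType{\act{I} A_k}, \\
  \Theta; \Gamma, \act{I}(\upto{\Delta}{k}) &\types \isType{\act{J} A_k}, \\
  \Theta; \Gamma, \act{I}(\upto{\Delta}{k}) &\types \act{I} A_k \equiv \act{J} A_k,
\end{align*}
and then obtain the statement as displayed by weakening (\cref{prop:tt-weakening}) the context from $\Gamma, \act{I}(\upto{\Delta}{k})$ up to the full $\Gamma, \act{I}\Delta$.

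To carry out the step at position $k$, I would first invert the well-formedness of the variable context (\cref{prop:ctx-inversion}, i.e.\ the type premise of the \rref{VCtx-Extend} step that introduces $\mkvar{a}_k$) to obtain the object judgement $\Xi; \Gamma, \upto{\Delta}{k} \types \isType{A_k}$ over $\Xi$ in the prefix context. I would then invoke \cref{lem:eq-inst-admit} with its ``$\Delta$'' instantiated to $\upto{\Delta}{k}$ and its source judgement taken to be $\isType{A_k}$. Its hypotheses are discharged as follows: $\types \isMCtx{\Xi}$, the derivability and judgemental equality of $I$ and $J$, and $\Theta \types \isVCtx{\Gamma}$ are all assumptions of the present lemma; the required fillings $\plug{(\upact{I}{i}\BB_i)}{f_i}$ follow by weakening from the assumed $\Theta; \Gamma \types \plug{(\upact{I}{i}\BB_i)}{f_i}$; and the three ``extra'' conditions for the variables of $\upto{\Delta}{k}$, in the context $\Gamma, \act{I}(\upto{\Delta}{k})$, are exactly the statements furnished by the induction hypothesis for indices $j < k$ (weakened into the larger context). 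Applying conclusions \cref{eq:instEq-jdg-I,eq:instEq-jdg-J,eq:instEq-eq} of \cref{lem:eq-inst-admit} to the object judgement $\isType{A_k}$ then unfolds to precisely the three displayed goals, since $\act{I}(\isType{A_k}) = \isType{\act{I}A_k}$, $\act{J}(\isType{A_k}) = \isType{\act{J}A_k}$, and $\act{(I \equiv J)}(\isType{A_k}) = (\act{I}A_k \equiv \act{J}A_k)$.

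The only genuine obstacle is the circular appearance of \cref{lem:eq-inst-admit}, whose premises include the very conclusion being proved. This is defused by the strict-prefix structure of the induction: at position $k$ the lemma is applied with context-extension $\upto{\Delta}{k}$, all of whose variables have index $<k$, so the instances it needs are supplied by the induction hypothesis and are never assumed circularly. Everything else is routine bookkeeping with weakening and with the unfolding of $\act{I}$, $\act{J}$, and $\act{(I \equiv J)}$ on the single judgement $\isType{A_k}$.
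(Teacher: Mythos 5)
Your proof is correct and essentially the paper's own: both proceed by induction on the length of $\Delta$, obtain $\Xi; \Gamma, \upto{\Delta}{k} \types \isType{A_k}$ by inversion of context well-formedness, feed it to \cref{lem:eq-inst-admit} with the strict prefix playing the role of the context extension (its three extra hypotheses supplied by the induction hypothesis, which breaks the apparent circularity exactly as you say), and finish with weakening. The only difference is bookkeeping—the paper peels the last variable off $\Delta$ and weakens the induction hypothesis for the older variables, while you prove a per-position statement in the prefix context and weaken at the end—so there is nothing to add.
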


\begin{proof}
  See the proof on \cpageref{proof:inst-eq-bootstrap-ctx}.
\end{proof}

\begin{restatable}{lemma}{restateinsteqbootstrapinst}
  \label{lem:inst-eq-bootstrap-inst}
  In a raw type theory, consider $\Xi = [\symM_1 \of \BB_1, \ldots, \symM_n \of \BB_n]$ such that $\types \isMCtx{\Xi}$,
  and derivable instantiations
  \begin{equation*}
    I = \finmap{\symM_1 \mto e_1, \ldots, \symM_n \mto e_n}
    \qquad\text{and}\qquad
    J = \finmap{\symM_1 \mto f_1, \ldots, \symM_n \mto f_n}
  \end{equation*}
  of~$\Xi$ over~$\Theta; \Gamma$ which are judgementally equal.
  Suppose that $\Theta \types \isVCtx{\Gamma}$.
  Then $\Theta; \Gamma \types \plug{(\upact{I}{i} \BB_i)}{f_i}$ is derivable for $i = 1, \ldots, n$.
\end{restatable}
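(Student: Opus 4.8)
The plan is to induct on the position $i$ of a metavariable in $\Xi$, establishing $\Theta; \Gamma \types \plug{(\upact{I}{i} \BB_i)}{f_i}$ under the inductive hypothesis that the same holds for every $k < i$. The apparent difficulty is circularity: the equality-instantiation engine \cref{lem:eq-inst-admit} lists conclusions of this very lemma among its hypotheses. I break the loop by only ever invoking \cref{lem:eq-inst-admit} on the strict prefix $\upto{\Xi}{i}$. On that prefix its derivability hypothesis $\Theta; \Gamma \types \plug{(\upact{I}{k} \BB_k)}{f_k}$ for $k < i$ is exactly the inductive hypothesis; its metavariable context $\upto{\Xi}{i}$ is well-formed, since prefixes of well-formed metavariable contexts are (invert \rref{MCtx-Extend}); and the restricted instantiations $\upact{I}{i}$, $\upact{J}{i}$ remain derivable and judgementally equal by restriction.

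I then split on the syntactic class of $\BB_i$. If $\BB_i$ is an equality boundary of arity $(c,m)$, both $e_i$ and $f_i$ are forced to be the unique dummy argument $\abstr{x_1}\cdots\abstr{x_m}\dummy$, so $e_i = f_i$ and $\plug{(\upact{I}{i}\BB_i)}{f_i} = \plug{(\upact{I}{i}\BB_i)}{e_i}$ is derivable simply because $I$ is derivable; neither the inductive hypothesis nor \cref{lem:eq-inst-admit} is needed here. If $\BB_i = \abstr{x_1 \of A_1}\cdots\abstr{x_m \of A_m}\;\bdry$ is an object boundary, derivability of $J$ already yields $\Theta; \Gamma \types \plug{(\upact{J}{i}\BB_i)}{f_i}$, and it remains to transport this filling from $\upact{J}{i}\BB_i$ to $\upact{I}{i}\BB_i$; the two boundaries differ only in that the metavariables $\symM_1, \ldots, \symM_{i-1}$ appearing in the $A_j$ and in the carrier type of $\bdry$ are instantiated by $J$ rather than $I$.

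To bridge the difference I read off the object-judgement components of $\BB_i$. From $\isMCtx{\Xi}$ and \cref{prop:ctx-inversion} the boundary $\upto{\Xi}{i}; \emptyCtx \types \BB_i$ is derivable, and inverting the abstraction and boundary rules yields the derivable abstracted type judgements $\abstr{x_1 \of A_1}\cdots\abstr{x_{j-1}\of A_{j-1}}\;\isType{A_j}$ and, when $\bdry = (\Box : C)$, also $\abstr{x_1 \of A_1}\cdots\abstr{x_m \of A_m}\;\isType{C}$. Weakening each into $\Gamma$ (\cref{prop:tt-weakening}) and applying the equality conclusion \eqref{eq:instEq-eq} of \cref{lem:eq-inst-admit} over $\upto{\Xi}{i}$ with empty $\Delta$ (so that the context hypotheses of that lemma are vacuous) produces the component equalities $\upact{I}{i}A_j \equiv \upact{J}{i}A_j$ and $\upact{I}{i}C \equiv \upact{J}{i}C$, each suitably abstracted; note that taking $\Delta$ empty lets me sidestep \cref{lem:inst-eq-bootstrap-ctx} entirely.

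Starting from the $J$-filling I now rewrite its boundary. Working from the outermost binder inwards, \rref{TT-Conv-Abstr} — fed the component equalities via \rref{TT-EqTy-Sym}, read off in the already-converted context — replaces each abstraction type $\upact{J}{i}A_j$ by $\upact{I}{i}A_j$, and for a term boundary a final \rref{TT-Conv-Tm} replaces the carrier type $\upact{J}{i}C$ by $\upact{I}{i}C$; since these rules act only at the outermost binder, reaching the inner types requires peeling binders and re-abstracting, which is legitimate by admissibility of substitution (\cref{thm:substitution-admissible}) together with \rref{TT-Var} and \cref{prop:tt-weakening}. The end result is $\Theta; \Gamma \types \plug{(\upact{I}{i}\BB_i)}{f_i}$. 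The main obstacle is architectural rather than computational: one must arrange the induction so that every self-reference of \cref{lem:eq-inst-admit} falls on a strictly shorter metavariable context, and one must keep the context bookkeeping straight while converting the nested abstraction types, since each inner equation has to be interpreted in the context whose earlier types have already been converted.
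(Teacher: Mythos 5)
Your proof is correct, and it shares the paper's two structural pillars: induction on the prefix length of $\Xi$, and breaking the apparent circularity by invoking \cref{lem:eq-inst-admit} only on strict prefixes, where its derivability hypothesis is precisely your induction hypothesis. Within that frame you diverge from the paper in two ways, both defensible. For equality boundaries the paper transports the $J$-filling and therefore needs the transitivity-and-conversion arguments of its cases \rref{TT-Bdry-EqTy} and \rref{TT-Bdry-EqTm}; you instead note that filling an equality boundary ignores the head (both $e_i$ and $f_i$ are the abstracted dummy), so the goal $\Theta; \Gamma \types \plug{(\upact{I}{i} \BB_i)}{f_i}$ is literally the judgement that derivability of $I$ already provides --- a valid and cleaner shortcut. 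For object boundaries the paper opens $\BB_i$ into a fresh variable context $\Delta$, uses the companion lemma \cref{lem:inst-eq-bootstrap-ctx} to discharge the $\Delta$-side-conditions of \cref{lem:eq-inst-admit} and to convert the fresh variables to their $J$-types, substitutes them into the $J$-filling, and re-abstracts; you keep $\Delta$ empty throughout (so those side conditions are vacuous and \cref{lem:inst-eq-bootstrap-ctx} is never needed), read off the component equalities of $\BB_i$ in abstracted form, and push them through the $J$-filling by an outside-in peel, convert, re-abstract loop using \rref{TT-Conv-Abstr} and \rref{TT-Conv-Tm}. The cost is the nested-conversion bookkeeping you flag (each inner equality must be peeled with the same fresh variables, in the already-converted context, with \cref{prop:tt-renaming} aligning variable choices); the gain is that \cref{lem:inst-eq-bootstrap-ctx} drops out of the dependency graph of this lemma. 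One citation to fix: the prefix judgement $\upto{\Xi}{i}; \emptyCtx \types \BB_i$ comes from inverting \rref{MCtx-Extend} (as you already do for prefix well-formedness), not from \cref{prop:ctx-inversion}, which yields the boundary only over the full context $\Xi$ --- and over the full $\Xi$ an application of \cref{lem:eq-inst-admit} would reintroduce the circularity, so your argument genuinely needs the inversion form.
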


\begin{proof}
  See the proof on \cpageref{proof:inst-eq-bootstrap-inst}.
\end{proof}

Finally, the lemmas can be assembled into an admissibility theorem about judgementally equal derivable instantiations.

\begin{theorem}[Admissibility of instantiation equality]
  \label{thm:admissibility-equality-instantiation}
  In a raw type theory, consider derivable instantiations $I$ and $J$ of~$\Xi$ over~$\Theta; \Gamma$ which are judgementally equal.
  Suppose that $\types \isMCtx{\Xi}$ and $\Theta \types \isVCtx{\Gamma}$.
  If an object judgement $\Xi; \Gamma \types \JJ$ is derivable then so is $\Theta; \Gamma \types \act{(I \equiv J)} \JJ$.
\end{theorem}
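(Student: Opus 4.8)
The plan is to obtain this theorem as an immediate consequence of \cref{lem:eq-inst-admit}, specialised to an empty local context extension $\Delta = \emptyCtx$, after discharging its hypotheses with the bootstrapping lemma \cref{lem:inst-eq-bootstrap-inst}. The point is that \cref{lem:eq-inst-admit} already carries all the content: with $\Delta$ empty, its conclusion \eqref{eq:instEq-eq} is precisely $\Theta; \Gamma \types \act{(I \equiv J)} \JJ$ for an object judgement $\JJ$, which is exactly what we want to derive.

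So the work reduces to checking the hypotheses of \cref{lem:eq-inst-admit} in our situation. The assumptions $\types \isMCtx{\Xi}$, $\Theta \types \isVCtx{\Gamma}$, derivability of $I$ and $J$, and their judgemental equality are given verbatim. The family of side-conditions indexed by $\mkvar{a} \in \vert\Delta\vert$ holds vacuously because $\Delta$ is empty. The one remaining hypothesis, $\Theta; \Gamma, \Delta \types \plug{(\upact{I}{i} \BB_i)}{J(\symM_i)}$ for $i = 1, \ldots, n$, collapses with $\Delta = \emptyCtx$ to $\Theta; \Gamma \types \plug{(\upact{I}{i} \BB_i)}{f_i}$, and this is exactly the conclusion of \cref{lem:inst-eq-bootstrap-inst}, whose hypotheses ($\types \isMCtx{\Xi}$, derivable and judgementally equal $I, J$, and $\Theta \types \isVCtx{\Gamma}$) are all available. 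Feeding these into \cref{lem:eq-inst-admit} and reading off \eqref{eq:instEq-eq} completes the argument.

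Since the theorem is merely a repackaging of earlier results, it has no genuine obstacle of its own; the real difficulties live in \cref{lem:eq-inst-admit} and in the circularity it shares with \cref{lem:inst-eq-bootstrap-ctx}. There the main issue is the mutual dependency: \cref{lem:eq-inst-admit} requires the context-level equations $\Theta; \Gamma, \act{I}\Delta \types \act{I} A \equiv \act{J} A$ for $\Delta(\mkvar{a}) = A$, which are produced by \cref{lem:inst-eq-bootstrap-ctx} by applying \cref{lem:eq-inst-admit} to the strictly shorter judgement $\types \isVCtx{(\Gamma,\Delta)}$; this has to be set up as a single well-founded induction to avoid a vicious circle. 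Inside \cref{lem:eq-inst-admit} itself, the delicate cases will be \rref{TT-Abstr} (descending under a binder while keeping the three conclusions \eqref{eq:instEq-jdg-I}, \eqref{eq:instEq-jdg-J}, \eqref{eq:instEq-eq} synchronised), the conversion rules \rref{TT-Conv-Tm} and \rref{TT-Conv-EqTm} (transporting along $\act{I} A \equiv \act{J} A$, which draws on \cref{lem:prepare-subst-eq,lem:subst-eq-sides}), and the metavariable rule \rref{TT-Meta}, where the instantiation genuinely acts and one needs \cref{lem:eq-subst-n} for the argument substitutions.
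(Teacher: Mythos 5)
Your proof is correct and matches the paper's own argument exactly: apply \cref{lem:eq-inst-admit} with empty $\Delta$, noting that the only non-vacuous extra hypothesis is supplied by \cref{lem:inst-eq-bootstrap-inst}. The additional remarks about where the real work lies are accurate but not needed for this statement.
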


\begin{proof}
  \Cref{lem:eq-inst-admit} applies with empty~$\Delta$ because the additional precondition for~$I$ and~$J$ is guaranteed by \cref{lem:inst-eq-bootstrap-inst}.
\end{proof}

Our last meta-theorem about raw type theories shows that whenever a judgement is derivable, so are its presuppositions, i.e.\ its boundary is well-formed.

\begin{restatable}[Presuppositivity]{theorem}{restatepresuppositivity}
  \label{prop:presuppositivity}%
  If a raw type theory derives $\types \isMCtx{\Theta}$, $\Theta \types \isVCtx{\Gamma}$, and $\Theta; \Gamma \types \plug{\BB}{e}$ then it derives $\Theta; \Gamma \types \BB$.
\end{restatable}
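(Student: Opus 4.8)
The plan is to argue by induction on the derivation of $\Theta; \Gamma \types \plug{\BB}{e}$, carrying the hypotheses $\types \isMCtx{\Theta}$ and $\Theta \types \isVCtx{\Gamma}$ as invariants; only \rref{TT-Abstr} enlarges the variable context, and there the invariant is re-established by \rref{VCtx-Extend} from the premise $\Theta; \Gamma \types \isType{A}$. Throughout I will use the trivial \emph{boundary inversion} principle: since the rules of \cref{fig:well-formed-boundaries} are the only closure rules concluding a boundary judgement, and each concludes a syntactically distinct boundary form, any derivation of $\Theta; \Gamma \types A \equiv B \by \Box$ must end in \rref{TT-Bdry-EqTy} and hence exhibits $\Theta; \Gamma \types \isType{A}$ and $\Theta; \Gamma \types \isType{B}$ as subderivations, and analogously for the other forms.

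Several cases are immediate because the required boundary is literally a premise. For \rref{TT-Meta} the premise $\Theta; \Gamma \types \bdry[\vec{t}/\vec{x}]$ is exactly the boundary of the conclusion, and for an instantiation $\act{I} R$ of a specific rule the premise $q = (\Theta; \Gamma \types \act{I} \bdry)$ built into \cref{def:raw-rule-instantiation} is precisely what is wanted — this is exactly why that premise was included. For \rref{TT-Var} the conclusion is $\mkvar{a} : \Gamma(\mkvar{a})$ with $\mkvar{a} \in \vert\Gamma\vert$, so \cref{prop:ctx-inversion} (applicable since $\Theta \types \isVCtx{\Gamma}$) yields $\Theta; \Gamma \types \isType{\Gamma(\mkvar{a})}$, whence \rref{TT-Bdry-Tm} gives $\Box : \Gamma(\mkvar{a})$. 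For \rref{TT-Abstr} the conclusion is $\abstr{x \of A}\; \JJ$ with boundary $\abstr{x \of A}\; \BB'$; applying the induction hypothesis to the body premise (in the context $\Gamma, \mkvar{a} \of A$, well-formed by the invariant) and combining with the premise $\isType{A}$ yields the boundary via \rref{TT-Bdry-Abstr}.

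The equality rules of \cref{fig:equality-rules} are handled uniformly: feed each premise to the induction hypothesis, invert the resulting boundary to extract its constituent type and term judgements, and reassemble the conclusion's boundary with \rref{TT-Bdry-EqTy} or \rref{TT-Bdry-EqTm}, inserting conversions where the two sides superficially live at different types. For instance, in \rref{TT-Conv-EqTm} the induction hypothesis on $s \equiv t : A$ gives $\isType{A}$, $s : A$, $t : A$ by inversion, the induction hypothesis on $A \equiv B$ gives $\isType{B}$, and \rref{TT-Conv-Tm} moves $s$ and $t$ to type $B$, after which \rref{TT-Bdry-EqTm} produces $s \equiv t : B \by \Box$.

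The congruence rules (\cref{def:congruence-rule} and \rref{TT-Meta-Congr}) are where I expect the real work. Here the conclusion fills an object boundary with an equation $\act{I} e \equiv \act{J} e$, so one must re-derive the well-typedness of \emph{both} sides. The premises already exhibit $I$ and $J$ as derivable instantiations, so re-applying the underlying symbol rule (respectively \rref{TT-Meta}) to $I$ and to $J$ re-derives $\Theta; \Gamma \types \act{I} e$ and $\Theta; \Gamma \types \act{J} e$ in judgement form. When $\bdry = (\isType{\Box})$ the boundary needed for those applications is trivial (\rref{TT-Bdry-Ty}) and \rref{TT-Bdry-EqTy} concludes. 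When $\bdry = (\Box : B)$, the extra premise $\Theta; \Gamma \types \act{I} B \equiv \act{J} B$ — included in the congruence rules expressly for this proof — supplies, via the induction hypothesis and boundary inversion, both $\isType{\act{I} B}$ (needed to legitimately re-apply the underlying rule) and the equation used by \rref{TT-Conv-Tm} to coerce $\act{J} e$ from type $\act{J} B$ to type $\act{I} B$; \rref{TT-Bdry-EqTm} then assembles $\act{I} e \equiv \act{J} e : \act{I} B \by \Box$. The anticipated obstacle is exactly this term-equation subcase: obtaining $\isType{\act{I} B}$ without circularity and aligning the types of the two sides, which is precisely what the additional equality premise of the congruence and metavariable-congruence rules was designed to make possible.
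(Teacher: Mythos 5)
Your proof is correct and follows essentially the same route as the paper's: induction on the derivation, reading the boundary directly off the built-in premises for \rref{TT-Meta} and specific-rule instantiations, using \cref{prop:ctx-inversion} for \rref{TT-Var}, reassembling equational boundaries from inverted induction hypotheses plus conversions, and handling congruence cases by re-applying the underlying rule to both instantiations and converting along the extra equational premise. The only local divergence is in the metavariable-congruence case, where the paper obtains $\isType{C[\vec{s}/\vec{x}]}$ and the boundary premises for \rref{TT-Meta} via \cref{prop:ctx-inversion} on $\types \isMCtx{\Theta}$ followed by admissible substitution, whereas you extract them from the induction hypothesis applied to the equational premise plus boundary inversion --- both are valid, and yours treats \rref{TT-Meta-Congr} and the general congruence rules more uniformly.
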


\begin{proof}
  See the proof on \cpageref{proof:presuppositivity}.
\end{proof}

\subsection{Meta-theorems about finitary type theories}
\label{sec:meta-thm-finitary}

Several closure rules contain premises which at first sight seem extraneous, in particular the boundary premises in rule instantiations (\cref{def:raw-rule-instantiation}) and the object premises in a congruence rule (\cref{def:congruence-rule}). While these are needed for raw rules, they ought to be removable for finitary rules, which already have well-formed boundaries. We show that this is indeed the case by providing \emph{economic} versions of the rules, which are admissible in finitary type theories.
We also show that the metavariable rules (\cref{def:metavariable-rule}) have economic versions that are valid in well-formed metavariable contexts.

\begin{proposition}
  \label{prop:tt-specific-eco}[Economic version of \cref{def:raw-rule-instantiation}]
  Let $R$ be the raw rule $\rawRule{\Xi}{\plug{\bdry}{e}}$ with $\Xi = [\symM_1 \of \BB_1, \ldots, \symM_n \of \BB_n]$ such that $\Xi; \emptyCtx \types \bdry$ is derivable, in particular $R$ may be finitary. Then for any instantiation $I = [\symM_1 \mto e_1, \ldots, \symM_n \mto e_n]$ over $\Theta; \Gamma$, the following closure rule is admissible:
  \begin{equation*}
    \inferenceRule{TT-Specific-Eco}
    {\Theta; \Gamma \types \plug {(\upact{I}{i} \BB_i)}{e_i} \quad \text{for $i = 1, \ldots, n$}}
    {\Theta; \Gamma \types \act{I} (\plug{\bdry}{e})}
  \end{equation*}
\end{proposition}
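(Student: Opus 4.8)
The plan is to reduce the economic rule \rref{TT-Specific-Eco} back to the full closure rule $\act{I} R$ of \cref{def:raw-rule-instantiation}, which already belongs to the deductive system of the raw type theory. The only premise that $\act{I} R$ carries beyond those of \rref{TT-Specific-Eco} is the well-formedness of the instantiated boundary, $q \defeq (\Theta; \Gamma \types \act{I} \bdry)$. Recalling that admissibility of a closure rule means its conclusion lies in the closure of its premises, it therefore suffices to show that $q$ is derivable from the premises $p_i \defeq (\Theta; \Gamma \types \plug{(\upact{I}{i} \BB_i)}{e_i})$; once $q$ is available, an application of $\act{I} R$ to $p_1, \ldots, p_n, q$ delivers the conclusion $\Theta; \Gamma \types \act{I}(\plug{\bdry}{e})$.

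First I would observe that the premises $p_1, \ldots, p_n$ are, word for word, the conditions that make $I$ a derivable instantiation of $\Xi$ over $\Theta; \Gamma$. So, taking these premises as hypotheses (as we may when establishing admissibility), $I$ is a derivable instantiation. This is the entire input needed to invoke the instantiation machinery.

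The key step is the derivation of $q$. By assumption the boundary $\Xi; \emptyCtx \types \bdry$ is derivable; this is precisely the hypothesis on $R$, and it is exactly what is guaranteed when $R$ is finitary. Applying admissibility of instantiation (\cref{prop:instantiation-admissible}) in its boundary form to this derivable boundary and to the derivable instantiation $I$ yields $\Theta; \Gamma \types \act{I} \bdry$, which is $q$. Since the proof of \cref{prop:instantiation-admissible} proceeds by grafting the derivations of the instantiation premises at the metavariable leaves, the resulting derivation of $q$ uses only the hypotheses $p_1, \ldots, p_n$, so indeed $q$ lies in their closure; combining it with the $p_i$ via $\act{I} R$ closes the argument.

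I expect no deep obstacle here: the economic rule is essentially the observation that, once the rule's boundary is known to be well-formed, the boundary premise built into \cref{def:raw-rule-instantiation} is redundant, and all the work is absorbed by \cref{prop:instantiation-admissible}. The only points demanding care are the bookkeeping of the restricted instantiations $\upact{I}{i}$ when matching the premises, and the mild subtlety that \cref{prop:instantiation-admissible} is invoked on a \emph{closed} source boundary over $\Xi; \emptyCtx$ instantiated into the possibly non-trivial target context $\Theta; \Gamma$, so one must read the instantiation action on the empty source variable context as leaving $\Gamma$ in place.
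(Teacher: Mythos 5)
Your proposal is correct and follows exactly the paper's own argument: the premises of \rref{TT-Specific-Eco} make $I$ a derivable instantiation, \cref{prop:instantiation-admissible} then supplies the missing boundary premise $\Theta; \Gamma \types \act{I} \bdry$ from the hypothesis $\Xi; \emptyCtx \types \bdry$, and an application of $\act{I} R$ finishes the proof. The only cosmetic difference is that you spell out the bookkeeping (admissibility relative to hypotheses, the closed boundary landing in the context $\Gamma$ -- which formally also uses weakening, \cref{prop:tt-weakening}) that the paper's one-line proof leaves implicit.
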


\begin{proof}
  To apply $\act I R$, derive the missing premise $\Theta; \Gamma \types \act I \bdry$ via \cref{prop:instantiation-admissible}.
\end{proof}

\begin{restatable}[Economic version of \cref{def:metavariable-rule}]{proposition}{restatettmetaeco}
  \label{prop:tt-meta-eco}
  If a raw type theory derives $\types \isMCtx \Theta$ with $\Theta(\symM) = (\abstr{x_1 \of A_1} \cdots \abstr{x_m \of A_m} \; \bdry)$, the following closure rules are admissible:
  \begin{mathpar}
    \inferenceRule{TT-Meta-Eco}
    {
      \Theta; \Gamma \types t_j : A_j[\upto{\vec{t}}{j}/\upto{\vec{x}}{j}]
      \quad \text{for $j = 1, \ldots, m$}
    }{
      \Theta; \Gamma \types \plug{(\bdry[\vec{t}/\vec{x}])}{\symM(\vec{t})}
    }

    \inferenceRule{TT-Meta-Congr-Eco}
    { \Theta; \Gamma \types s_j \equiv t_j :
      A_j[\upto{\vec{s}}{j}/\upto{\vec{x}}{j}]
      \quad \text{for $j = 1, \ldots, m$}
    }{
      \Theta; \Gamma \types
      \plug
      {(\bdry[\vec{s}/\vec{x}])}
      {\symM_k(\vec{s}) \equiv \symM_k(\vec{t})}
    }
  \end{mathpar}
\end{restatable}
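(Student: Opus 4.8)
The plan is to show each economic rule admissible by reconstructing precisely the premises that were dropped in passing from the full rules \rref{TT-Meta} and \rref{TT-Meta-Congr} (\cref{def:metavariable-rule}) to their economic forms, and then applying the full rules. The assumption $\types \isMCtx{\Theta}$ is the engine of the reconstruction: by \cref{prop:ctx-inversion} it yields the generic boundary $\Theta; \emptyCtx \types \abstr{x_1 \of A_1} \cdots \abstr{x_m \of A_m}\; \bdry$, and, because weakening (\cref{prop:tt-weakening}) imposes no constraint on the variables it adds, this boundary transports to any $\Gamma$.

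For \rref{TT-Meta-Eco} the sole premise of \rref{TT-Meta} not already present is the boundary $\Theta; \Gamma \types \bdry[\vec t/\vec x]$. First I would weaken the generic boundary to $\Theta; \Gamma \types \abstr{\vec x \of \vec A}\; \bdry$ and then substitute $t_1, \ldots, t_m$ in turn using admissibility of \rref{TT-Bdry-Subst} (\cref{lem:tt-subst-and-tt-conv-abstr}); the $j$-th substitution is licensed exactly by the premise $\Theta; \Gamma \types t_j : A_j[\upto{\vec t}{j}/\upto{\vec x}{j}]$, since after the first $j-1$ substitutions the leading abstraction binds a variable of type $A_j[\upto{\vec t}{j}/\upto{\vec x}{j}]$. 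With $\bdry[\vec t/\vec x]$ in hand, \rref{TT-Meta} delivers the conclusion.

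For \rref{TT-Meta-Congr-Eco} I would first assemble the generic metavariable judgement $\Theta; \Gamma \types \abstr{\vec x \of \vec A}\; \plug{\bdry}{\symM_k(\vec x)}$ — introducing $x_1, \ldots, x_m$ as fresh free variables by \rref{TT-Abstr} and closing with \rref{TT-Meta}, whose boundary premise is produced exactly as above — and then apply \cref{lem:eq-subst-n} with head $e = \symM_k(\vec x)$. Its conclusion $\plug{(\bdry[\vec s/\vec x])}{\symM_k(\vec s) \equiv \symM_k(\vec t)}$ is verbatim the thesis we want. The hypotheses \cref{lem:eq-subst-n} requires are, for each $i$, the given equation $\Theta; \Gamma \types s_i \equiv t_i : A_i[\upto{\vec s}{i}/\upto{\vec x}{i}]$ together with the typings $\Theta; \Gamma \types s_i : A_i[\upto{\vec s}{i}/\upto{\vec x}{i}]$ and $\Theta; \Gamma \types t_i : A_i[\upto{\vec t}{i}/\upto{\vec x}{i}]$, which I would extract from the equation premises by presuppositivity (\cref{prop:presuppositivity}).

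The main obstacle lies in the type annotations: presuppositivity returns both $s_i$ and $t_i$ at the single type $A_i[\upto{\vec s}{i}/\upto{\vec x}{i}]$, whereas \cref{lem:eq-subst-n} demands $t_i$ at $A_i[\upto{\vec t}{i}/\upto{\vec x}{i}]$. Bridging the two calls for the type equation $A_i[\upto{\vec s}{i}/\upto{\vec x}{i}] \equiv A_i[\upto{\vec t}{i}/\upto{\vec x}{i}]$, after which \rref{TT-Conv-Tm} re-types $t_i$. I expect to obtain this equation by a nested application of \cref{lem:eq-subst-n} to the generic type judgement $\abstr{x_1 \of A_1} \cdots \abstr{x_{i-1} \of A_{i-1}}\; \isType{A_i}$ peeled from the well-formed boundary of $\symM_k$, which itself consumes the correctly typed $s_l, t_l$ for $l < i$. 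Consequently the congruence case becomes a single induction on $i = 1, \ldots, m$ that simultaneously secures the well-typed $s_i, t_i$ and the level-$i$ type equation from the data already at lower levels. This reconciliation of $\vec s$- against $\vec t$-substitutions through the iterated boundary, together with the appeal to presuppositivity (and hence to well-formedness of the ambient context), is the only genuinely delicate ingredient; the remainder is routine assembly of the substitution and substitution-equality results already in hand.
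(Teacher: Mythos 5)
Your proposal is correct, and for \rref{TT-Meta-Eco} it coincides with the paper's proof: \cref{prop:ctx-inversion} plus weakening gives $\Theta; \Gamma \types \abstr{\vec x \of \vec A}\;\bdry$, the premises license the iterated substitution, and \rref{TT-Meta} closes the derivation. For \rref{TT-Meta-Congr-Eco} your endgame differs from the paper's. The paper reconstructs the missing premises of the structural rule \rref{TT-Meta-Congr} and then applies that rule: presuppositivity gives the $\vec s$-typings, the induction on $j$ with nested \cref{lem:eq-subst-n} and \rref{TT-Conv-Tm} re-types each $t_j$ at $A_j[\upto{\vec{t}}{j}/\upto{\vec{x}}{j}]$, and, when $\bdry = (\Box : C)$, one further application of \cref{lem:eq-subst-n} to $\abstr{\vec x \of \vec A}\;\isType{C}$ supplies the last premise. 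You instead build the generic judgement $\Theta;\Gamma \types \abstr{\vec x \of \vec A}\;\plug{\bdry}{\symM_k(\vec x)}$ --- by inverting the boundary obtained from \cref{prop:ctx-inversion}, applying \rref{TT-Meta} at fresh variables, and re-abstracting --- and then make a single application of \cref{lem:eq-subst-n} with head $\symM_k(\vec x)$, whose conclusion is verbatim the goal. The delicate middle portion (presuppositivity for the typings, the induction on $j$, the nested \cref{lem:eq-subst-n} producing $A_j[\upto{\vec{s}}{j}/\upto{\vec{x}}{j}] \equiv A_j[\upto{\vec{t}}{j}/\upto{\vec{x}}{j}]$, and the conversion of $t_j$) is identical in both arguments; only the final assembly differs. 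Your route is slightly more uniform --- no case split on $\bdry$, and \rref{TT-Meta-Congr} is never invoked, since the equation $C[\vec s/\vec x] \equiv C[\vec t/\vec x]$ is absorbed inside \cref{lem:eq-subst-n} --- at the cost of constructing the generic judgement, which the paper's route avoids entirely. One shared caveat: both proofs appeal to \cref{prop:presuppositivity}, whose statement also assumes $\Theta \types \isVCtx{\Gamma}$, a hypothesis not present in the proposition; since the paper's own proof makes the same appeal, this is not a defect of your argument relative to it.
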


\begin{proof}
  See the proof on \cpageref{proof:tt-meta-eco}.
\end{proof}

\begin{restatable}{lemma}{restateequalboundaryconvert}
  \label{lem:equal-boundary-convert}%
  In a raw type theory, suppose $\Xi; \Gamma \types \BB$, and consider judgementally equal derivable instantiations $I, J$ of~$\Xi$ over $\Theta; \Gamma$.
  If $\Theta; \Gamma \types \plug{(\act{I} \BB)}{e}$ is derivable then so is $\Theta; \Gamma \types \plug{(\act{J} \BB)}{e}$.
\end{restatable}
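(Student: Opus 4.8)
The plan is to induct on the structure of the abstracted boundary $\BB$, treating the four boundary theses as base cases and the abstraction $\abstr{x \of C}\; \BB'$ as the inductive step. Throughout, the crucial inputs are the presuppositions of $\BB$: since $\Xi; \Gamma \types \BB$ is derivable and each boundary closure rule of \cref{fig:well-formed-boundaries} has a conclusion of a distinct syntactic shape (there are no specific, equality, or conversion rules producing boundary judgements), a single inversion on this derivation recovers the object judgements hidden in $\BB$. These object judgements are exactly what let me invoke admissibility of instantiation equality (\cref{thm:admissibility-equality-instantiation}) to compare $I$ and $J$, under the standing well-formedness hypotheses $\types \isMCtx{\Xi}$ and $\Theta \types \isVCtx{\Gamma}$ that the theorem requires.

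First, the base cases. If $\BB = \isType{\Box}$ there is nothing to do, since $\act I \BB = \act J \BB = \isType{\Box}$ and so $\plug{(\act I \BB)}{e}$ and $\plug{(\act J \BB)}{e}$ are literally the same judgement. If $\BB = (\Box : A)$, inversion of $\Xi; \Gamma \types \BB$ through \rref{TT-Bdry-Tm} yields $\Xi; \Gamma \types \isType A$; applying \cref{thm:admissibility-equality-instantiation} gives $\Theta; \Gamma \types \act I A \equiv \act J A$, and one use of \rref{TT-Conv-Tm} turns the given $\Theta; \Gamma \types e : \act I A$ into $\Theta; \Gamma \types e : \act J A$. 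The two equality boundaries follow the same recipe plus the structural equality rules: for $\BB = (A \equiv B \by \Box)$, inversion (\rref{TT-Bdry-EqTy}) and \cref{thm:admissibility-equality-instantiation} produce $\Theta; \Gamma \types \act I A \equiv \act J A$ and $\Theta; \Gamma \types \act I B \equiv \act J B$, which I splice around the given $\Theta; \Gamma \types \act I A \equiv \act I B$ by \rref{TT-EqTy-Sym} and \rref{TT-EqTy-Trans}; the case $\BB = (s \equiv t : A \by \Box)$ is identical, chaining the term-equality versions with \rref{TT-EqTm-Sym} and \rref{TT-EqTm-Trans} at type $\act I A$ and finishing with one \rref{TT-Conv-EqTm} to move to type $\act J A$.

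For the inductive step, write $\BB = \abstr{x \of C}\; \BB'$ and $e = \abstr{x} e'$. Inverting the given derivation through \rref{TT-Abstr} yields $\Theta; \Gamma \types \isType{\act I C}$ and, for a fresh $\mkvar a$, the premise $\Theta; \Gamma, \mkvar a \of \act I C \types \plug{(\act I(\BB'[\mkvar a/x]))}{e'[\mkvar a/x]}$, using that substitution of $\mkvar a$ for $x$ commutes with the instantiation action. Inverting $\Xi; \Gamma \types \BB$ through \rref{TT-Bdry-Abstr} gives $\Xi; \Gamma, \mkvar a \of C \types \BB'[\mkvar a/x]$, so the induction hypothesis applies over the extended context $\Gamma, \mkvar a \of \act I C$: $I$ and $J$ remain derivable, judgementally equal instantiations there by weakening (\cref{prop:tt-weakening}), and $\Theta \types \isVCtx{(\Gamma, \mkvar a \of \act I C)}$ holds by \rref{VCtx-Extend} since $\Theta; \Gamma \types \isType{\act I C}$. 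The hypothesis delivers $\Theta; \Gamma, \mkvar a \of \act I C \types \plug{(\act J(\BB'[\mkvar a/x]))}{e'[\mkvar a/x]}$, which re-abstracts by \rref{TT-Abstr} to $\Theta; \Gamma \types \abstr{x \of \act I C}\; \plug{(\act J \BB')}{e'}$.

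The one genuine obstacle is that this last judgement carries $\act I C$ where the target $\plug{(\act J \BB)}{e} = \abstr{x \of \act J C}\; \plug{(\act J \BB')}{e'}$ demands $\act J C$. This is precisely the mismatch that \rref{TT-Conv-Abstr} is designed to repair: feeding it $\Theta; \Gamma \types \isType{\act J C}$ (from \cref{prop:instantiation-admissible} applied to $\Xi; \Gamma \types \isType C$) together with $\Theta; \Gamma \types \act I C \equiv \act J C$ (from \cref{thm:admissibility-equality-instantiation}) rewrites the bound variable's type and closes the case. I expect the only delicate points to be the bookkeeping around this step: commuting $[\mkvar a/x]$ past $\act I$, checking that the preconditions of the instantiation-equality theorem survive passage to the extended context, and confirming the inversion of filled abstracted judgements; the base cases are routine once the presuppositions of $\BB$ are in hand.
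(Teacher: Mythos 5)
Your proof is correct and follows essentially the same route as the paper's: the paper inducts on the derivation of $\Xi;\Gamma\types\BB$, which coincides with your structural induction on $\BB$ plus inversion, and both arguments use \cref{thm:admissibility-equality-instantiation} to produce the equations $\Theta;\Gamma\types\act{I}A\equiv\act{J}A$ (etc.) in the base cases, splice them in with symmetry/transitivity/conversion, and close the abstraction case by the induction hypothesis under the binder, re-abstraction, and \rref{TT-Conv-Abstr}. If anything, you are more explicit than the paper about the bookkeeping in the abstraction step (weakening the instantiations and establishing $\Theta\types\isVCtx{(\Gamma,\mkvar{a}\of\act{I}C)}$), which the paper's proof leaves implicit.
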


\begin{proof}
  See the proof on \cpageref{proof:equal-boundary-convert}.
\end{proof}

\begin{restatable}[Economic version of \cref{def:congruence-rule}]{proposition}{restatecongruenceruleeco}
  \label{def:congruence-rule-eco}
  In a finitary type theory, consider one of its object rules~$R$
  \begin{equation*}
    \rawRule{\symM_1 \of \BB_1, \ldots, \symM_n \of \BB_n}{\plug{\bdry}{e}}.
  \end{equation*}
  Given instantiations of its premises,
  \begin{equation*}
    I = \finmap{\symM_1 \mto f_1, \ldots, \symM_n \mto f_n}
    \quad\text{and}\quad
    J = \finmap{\symM_1 \mto g_1, \ldots, \symM_n \mto g_n},
  \end{equation*}
  over $\Theta; \Gamma$ such that $\types \isMCtx{\Theta}$ and $\Theta \types \isVCtx{\Gamma}$,
  the following closure rule is admissible:
  \begin{equation*}
    \inferenceRule{TT-Congr-Eco}
    { {\begin{aligned}
      &\Theta; \Gamma \types \plug{(\upact{I}{i} \BB_i)}{f_i}  &&\text{for equation boundary $\BB_i$}\\
      &\Theta; \Gamma \types \plug{(\upact{I}{i} \BB_i)}{f_i \equiv g_i} &&\text{for object boundary $\BB_i$} \\
    \end{aligned}}
    }{
      \Theta; \Gamma \types \plug{(\act{I} \bdry)}{\act{I} e \equiv \act{J} e}
    }
  \end{equation*}
\end{restatable}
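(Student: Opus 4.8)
Write $\Xi = [\symM_1 \of \BB_1, \ldots, \symM_n \of \BB_n]$ for the premises of $R$. The plan is to reduce \rref{TT-Congr-Eco} to the full congruence rule of \cref{def:congruence-rule}, whose instantiations belong to the deductive system of every finitary type theory (item~3 of \cref{def:raw-type-theory}). The two rules have the same conclusion $\Theta; \Gamma \types \plug{(\act{I} \bdry)}{\act{I} e \equiv \act{J} e}$, so it suffices to derive, from the economic premises, the premises that \cref{def:congruence-rule} carries in addition: the left derivability $\Theta; \Gamma \types \plug{(\upact{I}{i} \BB_i)}{f_i}$ for the object boundaries $\BB_i$, the right derivability $\Theta; \Gamma \types \plug{(\upact{J}{i} \BB_i)}{g_i}$ for all $i$, and, when $\bdry = (\Box : B)$, the equation $\Theta; \Gamma \types \act{I} B \equiv \act{J} B$. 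Since $R$ is a rule of a finitary type theory we may use that $\types \isMCtx{\Xi}$ and $\Xi; \emptyCtx \types \bdry$ are derivable, alongside the standing hypotheses $\types \isMCtx{\Theta}$ and $\Theta \types \isVCtx{\Gamma}$.

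First I would recover left derivability together with judgemental equality of $I$ and $J$. For an equation boundary $\BB_i$ we have $f_i = g_i = \dummy$ and the economic premise supplies $\Theta; \Gamma \types \plug{(\upact{I}{i} \BB_i)}{f_i}$ directly. For an object boundary $\BB_i$, the economic premise $\Theta; \Gamma \types \plug{(\upact{I}{i} \BB_i)}{f_i \equiv g_i}$ is an equation judgement; applying presuppositivity (\cref{prop:presuppositivity}, whose hypotheses are the standing ones) produces its boundary, and inverting the rule concluding that boundary (\rref{TT-Bdry-EqTy} or \rref{TT-Bdry-EqTm}, preceded by the abstraction rules) extracts that both sides satisfy the $I$-instantiated boundary, giving $\Theta; \Gamma \types \plug{(\upact{I}{i} \BB_i)}{f_i}$ and $\Theta; \Gamma \types \plug{(\upact{I}{i} \BB_i)}{g_i}$. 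Hence $I$ is a derivable instantiation of $\Xi$, and the object-boundary premises are exactly the statement that $I$ and $J$ are judgementally equal.

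The crux is right derivability $\Theta; \Gamma \types \plug{(\upact{J}{i} \BB_i)}{g_i}$, where the boundary is instantiated by $J$ rather than $I$; the previous step only places $g_i$ against the $I$-instantiated boundary. I would bridge this by induction on $i$. Inversion on the derivation of $\types \isMCtx{\Xi}$ (peeling off \rref{MCtx-Extend}) yields $\upto{\Xi}{i}; \emptyCtx \types \BB_i$, which weakening (\cref{prop:tt-weakening}) lifts to $\upto{\Xi}{i}; \Gamma \types \BB_i$. By the induction hypothesis $\upto{J}{i}$ is a derivable instantiation of $\upto{\Xi}{i}$ over $\Theta; \Gamma$, while $\upto{I}{i}$ is derivable by the preceding paragraph, and the two are judgementally equal. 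As $\Theta; \Gamma \types \plug{(\upact{I}{i} \BB_i)}{g_i}$ is already in hand, \cref{lem:equal-boundary-convert} applied to $\BB_i$ with $\upto{I}{i}$ and $\upto{J}{i}$ converts it to $\Theta; \Gamma \types \plug{(\upact{J}{i} \BB_i)}{g_i}$, advancing the induction. Marshalling boundary conversion against the inductive build-up of the $J$-instantiation is where I expect the genuine effort to lie.

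Finally, suppose $\bdry = (\Box : B)$. From $\Xi; \emptyCtx \types \bdry$ and inversion of \rref{TT-Bdry-Tm} we obtain $\Xi; \emptyCtx \types \isType{B}$, which weakening lifts to $\Xi; \Gamma \types \isType{B}$. Now that $I$ and $J$ are known to be derivable and judgementally equal, and with $\types \isMCtx{\Xi}$ and $\Theta \types \isVCtx{\Gamma}$ in place, \cref{thm:admissibility-equality-instantiation} gives $\Theta; \Gamma \types \act{(I \equiv J)}(\isType{B})$, that is $\Theta; \Gamma \types \act{I} B \equiv \act{J} B$. With every omitted premise reconstructed, an instantiation of the full congruence rule of \cref{def:congruence-rule} fires and delivers the desired conclusion.
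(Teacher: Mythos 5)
Your proof is correct and follows essentially the same route as the paper's: reduce \rref{TT-Congr-Eco} to the full congruence rule of \cref{def:congruence-rule}, recover the missing object premises via \cref{prop:presuppositivity}, transport $g_i$ from the $I$-instantiated to the $J$-instantiated boundaries with \cref{lem:equal-boundary-convert}, and obtain $\Theta; \Gamma \types \act{I} B \equiv \act{J} B$ from finitariness of~$R$ via \cref{thm:admissibility-equality-instantiation}. The only difference is expository: you make explicit the induction on $i$ needed to establish derivability of $\upto{J}{i}$ before \cref{lem:equal-boundary-convert} can fire, a point the paper's proof leaves implicit.
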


\begin{proof}
  See the proof on \cpageref{proof:congruence-rule-eco}.
\end{proof}

\subsection{Meta-theorems about standard type theories}
\label{sec:meta-theorems-about}

We next investigate to what extent a derivation of a derivable judgement can be reconstructed from the judgement itself. Firstly, a term expression holds enough information to recover a candidate for its type.

\begin{definition}
  Let $T$ be a standard type theory.
  The \defemph{natural type} $\natty{\Theta; \Gamma}{t}$ of a term expression~$t$ with respect to a context $\Theta; \Gamma$ is defined by:
  \begin{align*}
    \natty{\Theta; \Gamma}{\mkvar{a}} &= \Gamma(a), \\
    \natty{\Theta; \Gamma}{\symM(\vec{t})} &= B[\vec{t}/\vec{x}]
       &&\text{where $\Theta(\symM) = (\abstr{\vec{x} \of \vec{A}} \; \Box : B)$}
    \\
    \natty{\Theta; \Gamma}{\symS(\vec{e})} &= \act{\finmap{\vec{\symM} \mto \vec{e}}} B
       &&\text{where the rule for $\symS$ is $\rawRule{\vec{\symM} \of \vec{\BB}}{\Box : B}$}
  \end{align*}
\end{definition}

We prove an inversion principle that recovers the ``stump'' of a derivation of a derivable object judgement.

\begin{restatable}[Inversion]{theorem}{restateinversion}
  \label{thm:inversion}
  If a standard type theory derives an object judgement then it does so by a derivation which concludes with precisely one of the following rules:
  \begin{enumerate}
  \item the variable rule \rref{TT-Var},
  \item the metavariable rule \rref{TT-Meta},
  \item an instantiation of a symbol rule,
  \item the abstraction rule \rref{TT-Abstr},
  \item the term conversion rule \rref{TT-Conv-Tm} of the form
    \begin{equation*}
      \infer
      {\Theta; \Gamma \types t : \natty{\Theta;\Gamma}{t} \\
       \Theta; \Gamma \types \natty{\Theta; \Gamma}{t} \equiv A}
      {\Theta; \Gamma \types t : A}
    \end{equation*}
    where $\natty{\Theta;\Gamma}{t} \neq A$.
  \end{enumerate}
\end{restatable}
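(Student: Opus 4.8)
The plan is to prove the statement in three movements: a syntactic analysis of which closure rules can conclude an object judgement, an inductive lemma computing natural types, and a final assembly by case analysis on the shape of the judgement.

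First I would carry out a \emph{syntactic analysis}: scanning the structural rules (\cref{fig:struct-rules,fig:equality-rules,fig:well-formed-boundaries}) together with the instantiations of specific rules and the congruence rules, observe that the only closure rules whose conclusion is an object judgement are \rref{TT-Var}, \rref{TT-Meta}, \rref{TT-Abstr}, \rref{TT-Conv-Tm}, and instantiations of symbol rules. Indeed, every other closure rule---the equality rules save \rref{TT-Conv-Tm}, the rule \rref{TT-Meta-Congr}, the boundary rules, the congruence rules, and the instantiations of specific equality rules---concludes an equality judgement or a boundary, and in a standard theory the specific object rules are exactly the symbol rules (\cref{def:standard-type-theory}). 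This already settles two cases. Since only \rref{TT-Abstr} has an abstracted conclusion, every derivation of an abstracted object judgement ends in \rref{TT-Abstr} (case 4). Likewise, a bare type judgement $\isType A$ ends in \rref{TT-Meta} when $A = \symM(\vec t)$ (case 2) and in an instantiation of the unique symbol rule for $\symS$ when $A = \symS(\vec e)$ (case 3), with no conversion possible.

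The remaining work concerns a non-abstracted term judgement $\Theta;\Gamma \types t : A$, where the subject $t$ is a free variable, a metavariable application, or a symbol application (a bare bound variable cannot occur unabstracted). Here I would prove, by induction on the derivation, the following lemma: if $\Theta;\Gamma \types t : A$ is derivable then $\Theta;\Gamma \types t : \natty{\Theta;\Gamma}{t}$ is derivable by a derivation whose last rule is the formation rule matching the head of $t$ (namely \rref{TT-Var}, \rref{TT-Meta}, or a symbol-rule instantiation), and moreover, whenever $A \neq \natty{\Theta;\Gamma}{t}$, the equation $\Theta;\Gamma \types \natty{\Theta;\Gamma}{t} \equiv A$ is derivable. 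In the three formation base cases one checks directly from \rref{TT-Var}, \rref{TT-Meta}, and \cref{def:symbol-rule} that the concluded type is precisely $\natty{\Theta;\Gamma}{t}$ (this is where uniqueness of the rule per symbol in a standard theory is used), so the hypothesis $A \neq \natty{\Theta;\Gamma}{t}$ is vacuous and the given derivation already ends in the formation rule. In the \rref{TT-Conv-Tm} case the premises are $t : A'$ and $A' \equiv A$: the induction hypothesis supplies the formation-ending derivation of $t : \natty{\Theta;\Gamma}{t}$ unchanged, and when $A \neq \natty{\Theta;\Gamma}{t}$ the required equation is obtained either as $A' \equiv A$ (if $A' = \natty{\Theta;\Gamma}{t}$) or by \rref{TT-EqTy-Trans} from $\natty{\Theta;\Gamma}{t} \equiv A'$ (induction hypothesis) and $A' \equiv A$. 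Notably this needs neither reflexivity nor presuppositivity, since the equation is demanded only when $A \neq \natty{\Theta;\Gamma}{t}$.

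Finally I would assemble the theorem, combining these with the abstracted and type-judgement cases already settled. For the term judgement $t : A$, if $A = \natty{\Theta;\Gamma}{t}$ the lemma provides a derivation ending in the formation rule for the head of $t$, placing us in case 1, 2, or 3; if $A \neq \natty{\Theta;\Gamma}{t}$, applying \rref{TT-Conv-Tm} to $t : \natty{\Theta;\Gamma}{t}$ and $\natty{\Theta;\Gamma}{t} \equiv A$ yields a derivation of exactly the form required in case 5. Mutual exclusivity (``precisely one'') follows because the cases are separated by the shape of the judgement: abstracted versus not, the head of the subject, and whether $A$ coincides with the natural type. I expect the main obstacle to be the bookkeeping of the syntactic analysis---verifying exhaustively that no equality, congruence, boundary, or equality-rule instantiation can sneak in an object conclusion---together with the careful matching, in the base cases of the lemma, of each formation rule's conclusion type against the defining clauses of $\natty{\Theta;\Gamma}{t}$.
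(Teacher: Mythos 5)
Your proposal is correct, and its mathematical core---an induction on derivations in which the terminal \rref{TT-Conv-Tm} case is handled by composing equations through the natural type via \rref{TT-EqTy-Trans}---is the same as the paper's. The packaging, however, is genuinely different. The paper runs the induction directly on the inversion statement: it inverts the premise $D_1$ of a terminal conversion, case-analyses the shape of the resulting derivation $D'$, and only afterwards extracts the natural-type statement (\cref{cor:natural-type}) as a corollary of inversion. You reverse this dependency: you first prove a strengthened natural-type lemma (a formation-rule-ending derivation of $t : \natty{\Theta;\Gamma}{t}$, together with a derivation of $\natty{\Theta;\Gamma}{t} \equiv A$ whenever $A \neq \natty{\Theta;\Gamma}{t}$), and then assemble inversion from it in one step. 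This buys you something concrete: your lemma's invariant, that the derivation of $t : \natty{\Theta;\Gamma}{t}$ ends with the formation rule matching the head of $t$, is precisely the strengthening needed to make the induction airtight. In the paper's proof, when $D'$ ends with a conversion of form (5) and $\natty{\Theta;\Gamma}{t} = B$, the answer returned is the sub-derivation $D'_1$ of $t : \natty{\Theta;\Gamma}{t}$; the induction hypothesis as literally stated says nothing about the final rule of $D'_1$, and $D'_1$ is not a sub-derivation of the original derivation, so re-inverting it is not licensed by the induction---one must implicitly strengthen the hypothesis, which is exactly what your lemma does explicitly. Your preliminary syntactic scan (only \rref{TT-Var}, \rref{TT-Meta}, \rref{TT-Abstr}, \rref{TT-Conv-Tm}, and symbol-rule instantiations can conclude an object judgement, using standardness to identify specific object rules with symbol rules) is left implicit in the paper but is correct, as is your observation that type judgements and abstracted judgements need no transformation at all; the exclusivity argument for ``precisely one'' likewise matches what the paper leaves unsaid.
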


\begin{proof}
  See the proof on \cpageref{proof:inversion}.
\end{proof}

We may keep applying the theorem to all the object premises of a stump to recover the proof-relevant part of the derivation. The remaining proof-irrelevant parts are the equational premises. The inversion theorem yields further desirable meta-theoretic properties of standard type theories.

\begin{corollary}
  \label{cor:natural-type}%
  If a standard type theory derives $\Theta; \Gamma \types t : A$ then it derives $\Theta; \Gamma \types \natty{\Theta; \Gamma}{t} \equiv A$.
\end{corollary}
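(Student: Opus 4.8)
The plan is to prove the claim by a case analysis on the concluding rule of a derivation of $\Theta; \Gamma \types t : A$, as supplied by the inversion theorem (\cref{thm:inversion}). Since $t : A$ is a non-abstracted term judgement, the abstraction rule \rref{TT-Abstr} cannot be the concluding rule, so only four possibilities remain. In the conversion case there is nothing to do: the inversion theorem delivers $\Theta; \Gamma \types \natty{\Theta; \Gamma}{t} \equiv A$ verbatim as one of the premises of the concluding \rref{TT-Conv-Tm} instance. It therefore remains to treat the three ``direct'' cases.

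In each of the cases \rref{TT-Var}, \rref{TT-Meta}, and instantiation of a symbol rule, I would check, straight from the definition of the natural type, that $\natty{\Theta; \Gamma}{t}$ and $A$ are literally the same expression. For \rref{TT-Var} we have $t = \mkvar a$, $A = \Gamma(\mkvar a)$, and $\natty{\Theta;\Gamma}{\mkvar a} = \Gamma(\mkvar a)$. For \rref{TT-Meta} the conclusion is $\symM(\vec t) : B[\vec t/\vec x]$ with $\Theta(\symM) = (\abstr{\vec x \of \vec A}\; \Box : B)$, matching the second clause of the definition. For a term symbol $\symS$ with its unique (by standardness) rule $\rawRule{\vec{\symM}\of\vec{\BB}}{\Box : B}$, the instantiation $I = \finmap{\vec\symM \mto \vec e}$ recovers the arguments, $\act I \genapp{\symM}_i = e_i$, so that $\act I(\symS(\vec{\genapp{\symM}})) = \symS(\vec e) = t$ and $A = \act I B = \natty{\Theta;\Gamma}{\symS(\vec e)}$. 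In all three cases the goal thus reduces to deriving $\Theta; \Gamma \types A \equiv A$, which follows from reflexivity \rref{TT-EqTy-Refl} provided $\Theta; \Gamma \types \isType A$ is derivable.

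The one point requiring care is therefore the derivability of $\Theta; \Gamma \types \isType A$. For \rref{TT-Meta} and the symbol-rule case this is immediate from the rule itself, since each carries the boundary of its conclusion as a premise (the premise $\Theta;\Gamma \types \bdry[\vec t/\vec x]$ of \rref{TT-Meta}, respectively the boundary premise $\Theta;\Gamma \types \act I \bdry$ of \cref{def:raw-rule-instantiation}): in both cases $\Theta; \Gamma \types \Box : A$ is a derivable subjudgement, and since $\Box : A$ can only be concluded by \rref{TT-Bdry-Tm}, its premise $\Theta; \Gamma \types \isType A$ is derivable. The variable case is the genuine obstacle, as \rref{TT-Var} records no such premise, and indeed $\isType{\Gamma(\mkvar a)}$ need not be derivable unless the context is well-formed. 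I would supply it by appealing to presuppositivity (\cref{prop:presuppositivity}) applied to $\Theta; \Gamma \types t : A = \plug{(\Box : A)}{t}$, which yields the boundary $\Theta; \Gamma \types \Box : A$ and hence $\Theta; \Gamma \types \isType A$. Since presuppositivity assumes $\types \isMCtx{\Theta}$ and $\Theta \types \isVCtx{\Gamma}$, the corollary is to be read as applying in well-formed contexts, which is the standing situation for the meta-theorems of this section.
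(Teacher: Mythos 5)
Your proposal follows the same skeleton as the paper's proof---both reduce the claim to the inversion theorem (\cref{thm:inversion})---but yours is substantially more careful, and the extra care is warranted. The paper's entire proof reads: by inversion, either $\natty{\Theta;\Gamma}{t} = A$ or one obtains a derivation of $\Theta;\Gamma \types \natty{\Theta;\Gamma}{t} \equiv A$; it silently treats the syntactic-equality case as finished, eliding exactly the step you isolate, namely that $\Theta;\Gamma \types A \equiv A$ still has to be derived by \rref{TT-EqTy-Refl}, which requires $\Theta;\Gamma \types \isType{A}$. Your extraction of $\isType{A}$ from the boundary premises of \rref{TT-Meta} and of symbol-rule instantiations (\cref{def:raw-rule-instantiation}), via inversion of \rref{TT-Bdry-Tm}, is correct, and you are also right that \rref{TT-Var} is the one case where no such premise is available, so that context well-formedness is genuinely needed there: in an ill-formed context the corollary as literally stated can fail, since $\Theta;\Gamma \types \mkvar{a} : \Gamma(\mkvar{a})$ is always derivable by \rref{TT-Var} while $\Theta;\Gamma \types \Gamma(\mkvar{a}) \equiv \Gamma(\mkvar{a})$ need not be. Two minor remarks: for the variable case, \cref{prop:ctx-inversion} yields $\Theta;\Gamma \types \isType{\Gamma(\mkvar{a})}$ directly from $\Theta \types \isVCtx{\Gamma}$, which is slightly more economical than routing through presuppositivity; and your reading of the corollary as carrying implicit well-formedness hypotheses is the charitable one---note that the same caveat propagates to part (1) of \cref{thm:uniqueness-of-typing}, which cites this corollary without stating those hypotheses.
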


\begin{proof}
  By inversion, $\natty{\Theta; \Gamma}{t} = A$ or we obtain a derivation of
  $\types \natty{\Theta; \Gamma}{t} \equiv A$.
\end{proof}

\begin{theorem}[Uniqueness of typing]%
  \label{thm:uniqueness-of-typing}%
  For a standard type theory:
  \begin{enumerate}
  \item If $\Theta; \Gamma \types t : A$ and $\Theta; \Gamma \types t : B$ then $\Theta; \Gamma \types A \equiv B$.
  \item If $\types \isMCtx{\Theta}$ and $\Theta \types \isVCtx{\Gamma}$ and $\Theta; \Gamma \types s \equiv t : A$ and $\Theta; \Gamma \types s \equiv t : B$ then $\Theta; \Gamma \types A \equiv B$.
  \end{enumerate}
\end{theorem}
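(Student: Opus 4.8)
The plan is to delegate both parts to the two structural meta-theorems already established for standard theories, rather than arguing by a direct induction on derivations: the natural-type corollary (\cref{cor:natural-type}) handles part~(1), and presuppositivity (\cref{prop:presuppositivity}) reduces part~(2) to part~(1).

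For part~(1), the crucial point is that the natural type $\natty{\Theta;\Gamma}{t}$ is a function of $t$ and the context alone, independent of any particular derivation. Thus from $\Theta;\Gamma \types t : A$ I would apply \cref{cor:natural-type} to obtain $\Theta;\Gamma \types \natty{\Theta;\Gamma}{t} \equiv A$, and from $\Theta;\Gamma \types t : B$ obtain $\Theta;\Gamma \types \natty{\Theta;\Gamma}{t} \equiv B$ with the \emph{same} left-hand side. One application of \rref{TT-EqTy-Sym} and one of \rref{TT-EqTy-Trans} then give $\Theta;\Gamma \types A \equiv B$. Note that this argument needs no well-formedness hypotheses on $\Theta$ or $\Gamma$, which is why the statement of part~(1) carries none.

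For part~(2), I would reduce to part~(1) by producing a typing of $s$ at both $A$ and $B$. The hypotheses $\types \isMCtx{\Theta}$ and $\Theta \types \isVCtx{\Gamma}$ are exactly the side conditions required to invoke presuppositivity. Viewing $\Theta;\Gamma \types s \equiv t : A$ as the boundary $s \equiv t : A \by \Box$ filled with the dummy head, \cref{prop:presuppositivity} yields derivability of the boundary $\Theta;\Gamma \types s \equiv t : A \by \Box$; since a non-abstracted term-equation boundary can be concluded only by \rref{TT-Bdry-EqTm}, a one-step inversion of that rule recovers its premise $\Theta;\Gamma \types s : A$. The identical argument applied to $\Theta;\Gamma \types s \equiv t : B$ yields $\Theta;\Gamma \types s : B$, and part~(1) now gives $\Theta;\Gamma \types A \equiv B$.

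The proof is short, so there is no serious obstacle; the only place demanding a little care is the inversion step in part~(2), where one must observe that among the boundary closure rules of \cref{fig:well-formed-boundaries} only \rref{TT-Bdry-EqTm} has a conclusion of the shape $s \equiv t : A \by \Box$, so that its premises---in particular the typing of $s$---are forced. The real conceptual work has already been absorbed into \cref{cor:natural-type} (ultimately the inversion theorem) and \cref{prop:presuppositivity}.
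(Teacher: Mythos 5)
Your proposal is correct and follows the paper's own proof: part (1) via \cref{cor:natural-type} with symmetry and transitivity, and part (2) reduced to part (1) by extracting the term presuppositions via \cref{prop:presuppositivity}. The only differences are immaterial — you type $s$ where the paper types $t$, and you spell out the inversion of \rref{TT-Bdry-EqTm} that the paper leaves implicit in the phrase ``the presuppositions \ldots are derivable.''
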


\begin{proof}
  The first statement holds because $A$ and $B$ are both judgmentally equal to the natural type of~$t$ by \cref{cor:natural-type}. The second statement reduces to the first one because the presuppositions $\Theta; \Gamma \types t : A$ and $\Theta; \Gamma \types t : B$ are derivable by \cref{prop:presuppositivity}.
\end{proof}

\section{Context-free finitary type theories}
\label{sec:context-free-tt}

In the forward-chaining style, characteristic of LCF-style theorem provers, which Andromeda~2 is designed to be, a judgement is not construed by reducing a goal to subgoals, but as a value of an abstract datatype, and built by applying an abstract datatype constructor to previously derived judgements. What should such a constructor do when its arguments have mismatching variable contexts? It can try to combine them if possible, or require that the user make sure ahead of time that they match. As was already noted by Geuvers et al.\ in the context of pure type systems \cite{geuvers10}, it is best to sidestep the whole issue by dispensing with contexts altogether. In the present section we give a second account of finitary type theories, this time without context and with free variables explicitly annotated with their types. These are actually implemented in the Andromeda~2 trusted nucleus.

Our formulation of \defemph{context-free finitary type theories} is akin to the $\Gamma_\infty$ formalism for pure type
systems~\cite{geuvers10}.
We would like to replace judgements of the form ``$\Theta; \Gamma \types \JJ$'' with just ``$\JJ$''.
In traditional accounts of logic, as well as in $\Gamma_\infty$, this is accomplished by explicit type annotations of free variables:
rather than having $\mkvar{a} : A$ in the variable context, each occurrence of~$\mkvar{a}$ is annotated with its type as~$\avar{a}{A}$.

We use the same idea, although we have to overcome several technical complications, of which the most challenging one is the lack of strengthening, which is the principle stating that
if $\Theta; \Gamma, \mkvar{a} \of A, \Delta \types \JJ$ is derivable and~$\mkvar{a}$ does not appear in~$\Delta$ and~$\JJ$, then~$\Theta; \Gamma, \Delta \types \JJ$ is derivable.
An example of a rule that breaks strengthening for finitary type theories is equality reflection from \cref{ex:tt-equality-reflection},
\begin{equation*}
  \infer{
    \types \isType{\sym{A}} \\
    \types \sym{s} : \sym{A} \\
    \types \sym{t} : \sym{A} \\
    \types \sym{p} : \sym{Id}(\sym{A}, \sym{s}, \sym{t})
  }{
    \types \sym{s} \equiv \sym{t} : \sym{A}
  }
\end{equation*}
Because the conclusion elides the metavariable~$\sym{p}$, it will not record the fact that a variable may have been used in the derivation of the fourth premise. Consequently, we cannot tell what variables ought to occur in the context just by looking at the judgement thesis.
As it turns out, variables elided by derivations of equations are the only culprit, and the situation can be rectified by modifying equality judgements so that they carry additional information about usage of variables.
In the present section we show how this is accomplished by revisiting the definition of type theories from \cref{sec:finitary-type-theories} and making the appropriate modifications.

\subsection{Raw syntax of context-free type theories}
\label{sec:context-free-raw-syntax}

Apart from removing the variable context and annotating free variables with type expressions, we make three further modifications to the raw syntax: we remove metavariable contexts, and instead annotate metavariables with boundaries; we introduce assumption sets that keep track of variables used in equality derivations; and we introduce explicit conversions.

\subsubsection{Free and bound variables}
\label{sec:vari-subst}

The \defemph{bound variables} $x, y, z, \ldots$ are as before, for example they could be de Bruijn indices, whereas the free variables are annotated explicitly with type expressions.
More precisely, given a set of names $\mkvar{a}, \sym{b}, \sym{c}, \ldots$ a \defemph{free variable} takes the form $\avar{a}{A}$ where $A$ is a type expression, cf.\ \cref{sec:context-free-raw-expressions}. Two such variables $\avar{a}{A}$ and $\sym{b}^B$ are considered syntactically equal when the symbols $\mkvar{a}$ and $\sym{b}$ are the same \emph{and} the type expressions~$A$ and $B$ are syntactically equal.
Thus it is quite possible to have variables $\avar{a}{A}$ and $\avar{a}{B}$ which are different even though $A$ and $B$ are judgmentally equal. In an implementation it may be a good idea to prevent such extravaganza by generating fresh symbols so that each one receives precisely one annotation.

Similarly, metavariables are tagged with boundaries, where again $\sym{M}^\BB$ and $\sym{N}^{\BB'}$ are considered equal when both the symbols $\sym{M}$ and $\sym{N}$ are equal and the boundaries $\BB$ and $\BB'$ are syntactically identical.

\subsubsection{Arities and signatures}
\label{sec:context-free-arities-signatures}

Arities of symbols and metavariables are as in \cref{sec:signatures}, and so are signatures.

\subsubsection{Raw expressions}
\label{sec:context-free-raw-expressions}

The raw expressions of a context-free type theory are built over a signature~$\Sigma$, as summarized in the top part of \cref{fig:syntax-context-free-type-theories}.

\begin{figure}[htbp]
  \centering
  \small
  \begin{ruleframe}[innerleftmargin=6pt,innerrightmargin=6pt]
  \begin{align*}
  \text{Type expression}\ A, B
  \bnfis& \symS(e_1, \ldots, e_n)       &&\text{type symbol application}\\
  \bnfor& \symM^\BB(t_1, \ldots, t_n)   &&\text{type metavariable application}
  \\
  \text{Term expression}\ s, t
  \bnfis& \avar{a}{A}                   &&\text{free variable}\\
  \bnfor& x                             &&\text{bound variable}\\
  \bnfor& \symS(e_1, \ldots, e_n)       &&\text{term symbol application}\\
  \bnfor& \symM^\BB(t_1, \ldots, t_n)   &&\text{term metavariable application} \\
  \bnfor& \convert{t}{\alpha}           &&\text{conversion}
  \\
  \text{Assumption set}\ \alpha, \beta
  \bnfis& \mathrlap{\asset{\ldots, a_i^A, \ldots, x_j, \ldots, \symM^\BB_k, \ldots}}
  \\
  \text{Argument}\ e
  \bnfis& A           &&\text{type argument} \\
  \bnfor& t           &&\text{term argument} \\
  \bnfor& \alpha      &&\text{assumption set} \\
  \bnfor& \abstr x e  &&\text{abstracted arg. ($x$ bound in $e$)}
  \\[1ex]
  \text{Judgement}\ \J
  \bnfis& \isType{A}                && \text{$A$ is a type} \\
  \bnfor& t : A                     && \text{$t$ has type $T$} \\
  \bnfor& A \equiv B \by \alpha     && \text{$A$ and $B$ are equal types} \\
  \bnfor& s \equiv t : A \by \alpha && \text{$s$ and $t$ are equal terms at $A$}
  \\
  \text{Abstracted judgement}\ \JJ
  \bnfis& \J                   &&\text{non-abstracted judgement} \\
  \bnfor& \abstr{x \of A}\; \JJ  &&\text{abstracted jdgt. ($x$ bound in $\JJ$)}
  \\[1ex]
  \text{Boundary}\ \bdry
  \bnfis& \isType{\Box}            &&\text{a type}\\
  \bnfor& \Box :  A                &&\text{a term of type $A$}\\
  \bnfor& A \equiv B \by \Box      &&\text{type equation boundary}\\
  \bnfor& s \equiv t : B \by \Box  &&\text{term equation boundary}
  \\
  \text{Abstracted boundary}\ \BB
  \bnfis& \bdry                   &&\text{non-abstracted boundary} \\
  \bnfor& \abstr{x \of A}\; \BB  &&\text{abstracted bdry. ($x$ bound in $\BB$)}
  \end{align*}
  \end{ruleframe}
\caption{The raw syntax of context-free finitary type theories}
\label{fig:syntax-context-free-type-theories}
\end{figure}

A type expression is either a type symbol $\symS$ applied to arguments $e_1, \ldots, e_n$, or a metavariable $\symM^\BB$ applied to term expressions $t_1, \ldots, t_n$.

The syntax of term expressions differs from the one in \cref{fig:syntax-general-type-theories} in two ways.
First, we annotate free variables with type expressions and metavariables with boundaries, as was already discussed, where it should be noted that in an annotation $A$ of $\avar{a}{A}$ or $\BB$ of $\symM^\BB$ there may be further free and metavariables, which are also annotated, and so on. We require that a boundary annotation $\BB$ be closed, and that a type annotation~$A$ contain no ``exposed'' bound variables, i.e.\ $A$ is syntactically valid on its own, without having to appear under an abstraction.
Second, we introduce the \defemph{conversion terms} ``$\convert{t}{\alpha}$'', which will serve to record the variables used to derive the equality along which~$t$ has been converted.

The expressions of syntactic classes $\EqTy$ and $\EqTm$ are the \defemph{assumption sets}, which are finite sets of free and bound variables, and metavariables. As we are already using the curly braces for abstraction, we write finite set comprehension as $\asset{\cdots}$.
Assumption sets record the variables and metavariables that are used in a derivation of an equality judgement but may not appear in the boundary of the conclusion.

We ought to be a bit careful about occurrences of variables, since the free variables may occur in variable annotations, and the metavariables in boundary annotations. \Cref{fig:cf-variable-occurrences}, the context-free analogue of \cref{fig:variable-occurrences}, shows the definitions of free, bound and metavariable occurrences.
\begin{figure}[htp]
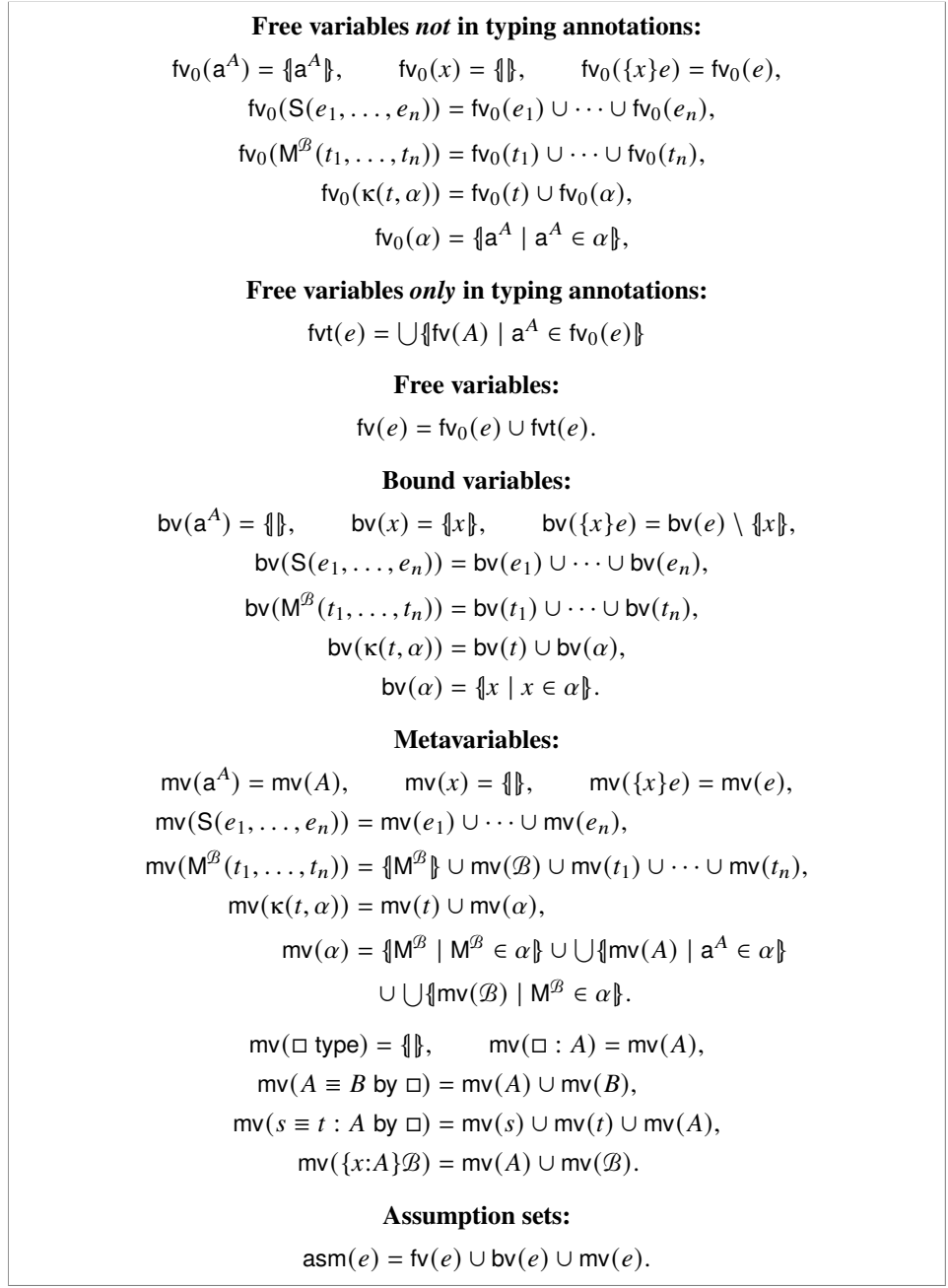

  \begin{ruleframe}
  \centering
  \small
  \textbf{Free variables \emph{not} in typing annotations:}
  \begin{gather*}
    \fvz{\avar{a}{A}} = \asset{\avar{a}{A}}, \qquad
    \fvz{x} = \asset{}, \qquad
    \fvz{\abstr{x} e} = \fvz{e}, \\
    \begin{aligned}
      \fvz{\symS(e_1, \ldots, e_n)} &= \fvz{e_1} \cup \cdots \cup \fvz{e_n}, \\
      \fvz{\symM^\BB(t_1, \ldots, t_n)} &= \fvz{t_1} \cup \cdots \cup \fvz{t_n}, \\
      \fvz{\convert{t}{\alpha}} &= \fvz{t} \cup \fvz{\alpha}, \\
      \fvz{\alpha} &= \asset{\avar{a}{A} \such \avar{a}{A} \in \alpha},
    \end{aligned}
  \end{gather*}

  \medskip

  \textbf{Free variables \emph{only} in typing annotations:}
  \begin{equation*}
    \textstyle
    \fvt{e} = \bigcup \asset{ \fv{A} \such \avar{a}{A} \in \fvz{e} }
  \end{equation*}

  \medskip

  \textbf{Free variables:}
  \begin{equation*}
    \textstyle
    \fv{e} = \fvz{e} \cup \fvt e.
  \end{equation*}

  \medskip

  \textbf{Bound variables:}
  \begin{gather*}
    \bv{\avar{a}{A}} = \asset{}, \qquad
    \bv{x} = \asset{x}, \qquad
    \bv{\abstr{x} e} = \bv{e} \setminus \asset{x}, \\
    \begin{aligned}
      \bv{\symS(e_1, \ldots, e_n)} &= \bv{e_1} \cup \cdots \cup \bv{e_n}, \\
      \bv{\symM^\BB(t_1, \ldots, t_n)} &= \bv{t_1} \cup \cdots \cup \bv{t_n}, \\
      \bv{\convert{t}{\alpha}} &= \bv{t} \cup \bv{\alpha}, \\
      \bv{\alpha} &= \asset{ x \such x \in \alpha}.
    \end{aligned}
  \end{gather*}

  \medskip

  \textbf{Metavariables:}
  \begin{gather*}
    \mv{\avar{a}{A}} = \mv{A}, \qquad
    \mv{x} = \asset{}, \qquad
    \mv{\abstr{x} e} = \mv{e}, \\
    \begin{aligned}
      \mv{\symS(e_1, \ldots, e_n)} &= \mv{e_1} \cup \cdots \cup \mv{e_n}, \\
      \mv{\symM^\BB(t_1, \ldots, t_n)} &= \asset{\symM^\BB} \cup \mv{\BB} \cup \mv{t_1} \cup \cdots \cup \mv{t_n}, \\
      \mv{\convert{t}{\alpha}} &= \mv{t} \cup \mv{\alpha}, \\
      \mv{\alpha} &= \textstyle
                      \asset{ \symM^\BB \such \symM^\BB \in \alpha}
                      \cup \bigcup \asset{\mv{A} \such {\avar a A} \in \alpha}\\
       &\phantom{=}\; \textstyle
                      {} \cup \bigcup \asset{\mv{\BB} \such {\symM^\BB} \in \alpha}.
    \end{aligned}
    \\[1ex]
    \mv{\isType{\Box}} = \asset{}, \qquad
    \mv{\Box : A} = \mv{A}, \\
    \begin{aligned}
      \mv{A \equiv B \by \Box} &= \mv{A} \cup \mv{B}, \\
      \mv{s \equiv t : A \by \Box} &= \mv{s} \cup \mv{t} \cup \mv{A}, \\
      \mv{\abstr{x \of A} \BB} &= \mv{A} \cup \mv{\BB}.
    \end{aligned}
  \end{gather*}

  \medskip

  \textbf{Assumption sets:}
  \begin{equation*}
    \asm{e} = \fv{e} \cup \bv{e} \cup \mv{e}.
  \end{equation*}
  \end{ruleframe}
  \caption{Context-free variable occurrences and assumption sets}
  \label{fig:cf-variable-occurrences}
\end{figure}
Note the difference between $\fvz{e}$, which collects \emph{only} the free variable occurrences not appearing in a type annotation, and $\fv{e}$ which collects them all.
Bound variables need not be collected from annotations, as they cannot appear there.

The collection of all free, bound and metavariables occurring in an expression is its
\defemph{assumption set $\asm{e}$}. Sometimes we write $\asm{e_1, \ldots, e_n}$ for the union $\bigcup_i \asm{e_i}$.

\subsubsection{Substitution and syntactic equality}
\label{sec:context-free-subst-equal}

We must review substitution and syntactic equality, because they are affected by annotations, assumption sets, and conversion terms.

There are two kinds of substitutions. An \defemph{abstraction} $e[x/\avar{a}{A}]$ transforms the free variable $\avar{a}{A}$ in~$e$ to a bound variable~$x$, whereas a \defemph{substitution} $e[s/x]$ replaces the bound variable $x$ with the term $s$. These are shown in \cref{fig:abstraction-substitution}.
Note that an abstraction $e[x/\avar{a}{A}]$ is only valid when $\avar{a}{A}$ does not appear in any type annotation in~$e$, $\avar{a}{A} \notin \fvt{e}$, because type annotations cannot refer to bound variables. Consequently, abstraction of several variables must be carried out in the reverse order of their dependencies.
We abbreviate a series of abstractions $((e[x_1/\avars{a}{1}{A_1}]) \cdots) [x_n/\avars{a}{n}{A_n}]$ as $e[x_1/\avars{a}{1}{A_1}, \ldots, x_n/\avars{a}{n}{A_n}]$ or just $e[\vec{x}/\vec{\sym{a}}_n^{A_n}]$.
Similarly, a series of substitutions $((e[s_1/x_1]) \cdots)[s_n/x_n]$ is written $e[s_1/x_1, \ldots, s_n/x_n]$ or just $e[\vec{s}/\vec{x}]$.

\begin{figure}[htp]
  \begin{ruleframe}
  \centering
  \small
  \textbf{Abstraction:}
  \begin{gather*}
    \avar{a}{A} [x/\avar{a}{A}] = x, \hspace{5em}
    y [x/\avar{a}{A}] = y, \\
    \begin{aligned}
      b^B [x/\avar{a}{A}] &= b^B \quad\text{if $\avar{a}{A} \neq b^B$ and $\avar{a}{A} \not\in \fvt{B}$} \\
      \symS(e_1, \ldots, e_n) [x/\avar{a}{A}] &= \symS(e_1[x/\avar{a}{A}], \ldots, e_n[x/\avar{a}{A}]), \\
      \symM^\BB(t_1, \ldots, t_n) [x/\avar{a}{A}] &= \symM^\BB(t_1[x/\avar{a}{A}], \ldots, t_n[x/\avar{a}{A}]), \\
      \convert{t}{\alpha} [x/\avar{a}{A}] &= \convert{t[x/\avar{a}{A}]}{A[x/\avar{a}{A}]}{\alpha[x/\avar{a}{A}]}, \\
      (\abstr{y} e)[x/\avar{a}{A}] &= \abstr{y} (e[x/\avar{a}{A}])
      \quad\text{if $x \neq y$},\\
      \alpha [x/\avar{a}{A}] &= \alpha
      \quad\text{if $\avar{a}{A} \not\in \alpha$}, \\
      \alpha [x/\avar{a}{A}] &= (\alpha \setminus \asset{\avar{a}{A}}) \cup \asset{x}
      \quad\text{if $\avar{a}{A} \in \alpha$ and $\avar{a}{A} \not\in \fvt{\alpha}$},
    \end{aligned}
  \end{gather*}

  \medskip

  \textbf{Substitution:}
  \begin{gather*}
    \avar{a}{A} [s/x] = \avar{a}{A}, \qquad
    x [s/x] = s, \qquad
    y [s/x] = y \quad\text{if $x \neq y$},\\
    \begin{aligned}
      \symS(e_1, \ldots, e_n) [s/x] &= \symS(e_1[s/x], \ldots, e_n[s/x]) \\
      \symM^\BB(t_1, \ldots, t_n) [s/x] &= \symM^\BB(t_1[s/x], \ldots, t_n[s/x]) \\
      \convert{t}{\alpha} [s/x] &= \convert{t[s/x]}{\alpha[s/x]} \\
      (\abstr{y} e)[s/x] &= \abstr{y} (e[s/x]) \\
      \alpha [s/x] &= \alpha
      \quad\text{if $x \not\in \alpha$} \\
      \alpha [s/x] &= (\alpha \setminus \asset{x}) \cup \asm{s}
      \quad\text{if $x \in \alpha$.}
    \end{aligned}
  \end{gather*}
  \end{ruleframe}
  \caption{Abstraction and substitution}
  \label{fig:abstraction-substitution}
\end{figure}

Syntactic equality is treated in a standard way, we only have to keep in mind the fact that symbols are considered syntactically equal if the bare symbols are equal \emph{and} their annotations are equal.
More interestingly, since conversion terms and assumption sets carry proof-irrelevant information, they should be ignored in certain situations. For this purpose, define the \defemph{erasure~$\erase{e}$} to be the raw expression~$e$ with the assumption sets and conversion terms removed:
\begin{gather*}
  \erase{\avar{a}{A}} = \avar{a}{A}, \quad
  \erase{x} = x, \quad
  \erase{\convert{t}{\alpha}} = \erase{t}, \quad
  \erase{\alpha} = \dummy, \quad
  \erase{\abstr{x} e} = \abstr{x} \erase{e}, \\
  \erase{\symS(e_1, \ldots, e_n)} = \symS(\erase{e_1}, \ldots, \erase{e_n}), \quad
  \erase{\symM^\BB(t_1, \ldots, t_n)} = \symM^\BB(\erase{t_1}, \ldots, \erase{t_n}).
\end{gather*}
The mapping $e \mapsto \erase{e}$ takes the context-free raw syntax of \cref{fig:syntax-context-free-type-theories} to the type-theoretic raw syntax of \cref{fig:syntax-general-type-theories} where the variables~$\avar{a}{A}$ and the metavariables~$\symM^\BB$ are construed as atomic symbols, i.e.\ their annotations are part of the symbol name.

\subsubsection{Judgements and boundaries}
\label{sec:context-free-judg-bound}

The lower part of \cref{fig:syntax-context-free-type-theories} summarizes the syntax of context-free judgements and boundaries. Apart from not having contexts, type judgements ``$\isType{A}$'' and term judgements ``$t : A$'' are as before.
Equality judgements are modified to carry assumption sets: a type equality takes the form ``$A \equiv B \by \alpha$'' and a term equality ``$s \equiv t : A \by \alpha$''.

Boundaries do not change, except of course that they have no contexts. The head of a boundary is filled like before, except that assumption sets are used instead of dummy values, see \cref{fig:cf-boundary-filling}.

\begin{figure}[htbp]
  \centering
  \begin{ruleframe}
    \small
    \centering

    \textbf{Filling the placeholder with a head:}
    \begin{align*}
      \plug{(\isType \Box)}{A} &= (\isType A) \\
      \plug{(\Box : A)}{t} &= (t : A) \\
      \plug{(A \equiv B \by \Box)}{e} &= (A \equiv B \by \asm{e}) \\
      \plug{(s \equiv t : A \by \Box)}{e} &= (s \equiv t : A \by \asm{e}) \\
      \plug{(\abstr{x \of A}\; \BB)}{\abstr{x} e} &= (\abstr{x \of A}\; \plug{\BB}{e}).
    \end{align*}

    \medskip

    \textbf{Filling the placeholder with an equality:}
    \begin{align*}
      \plug{(\isType{\Box})}{A_1 \equiv A_2 \by \alpha} &= (A_1 \equiv A_2 \by \alpha), \\
      \plug{(\Box : A)}{t_1 \equiv t_2 \by \alpha} &= (t_1 \equiv t_2 : A \by \alpha),\\
      \plug{(\abstr{x \of A}\; \BB)}{e_1 \equiv e_2 \by \alpha} &=
          (\abstr{x \of A}\; \plug{\BB}{e_1 \equiv e_2 \by \alpha}),
    \end{align*}
  \end{ruleframe}
  \caption{Context-free filling the head of a boundary}
  \label{fig:cf-boundary-filling}
\end{figure}

Free-variable occurrences in judgements and boundaries are defined as follows:
\begin{gather*}
  \fvz{\isType A} = \fvz{A}, \qquad
  \fvz{t : A} = \fvz{t} \cup \fvz{A}, \\
  \begin{aligned}
  \fvz{A \equiv B \by \alpha} &= \fvz{A} \cup \fvz{B} \cup \fvz{\alpha}, \\
  \fvz{s \equiv t : A \by \alpha} &= \fvz{s} \cup \fvz{t} \cup \fvz{A} \cup \fvz{\alpha},\\
  \fvz{\abstr{x : A} \JJ} &= \fvz{A} \cup \fvz{\JJ}, \\[1ex]
  \fv{\JJ} &= \fvz{\JJ} \cup \fvt \JJ.
\end{aligned}
\end{gather*}
We trust the reader can emulate the above definition to define the set $\mv{\JJ}$ of metavariable occurrences in a judgement~$\JJ$, as well as occurrences of free and metavariables in boundaries.

\subsubsection{Metavariable instantiations}
\label{sec:context-free-meta-vari-inst}

Next, let us rethink how metavariable instantiations work in the presence of the newly introduced syntactic constructs.
As before an \defemph{instantiation} is a sequence, representing a map,
\begin{equation*}
  I = \finmap{\symM^{\BB_1}_1 \mto e_1, \ldots, \symM^{\BB_n}_n \mto e_n}
\end{equation*}
such that $\mv{\BB_i} \subseteq \asset{\symM_1^{\BB_1}, \ldots, \symM_{i-1}^{\BB_{i-1}}}$ and $\arity{\BB_i} = \arity{e_i}$, for each $i = 1, \ldots, n$.
As in \cref{sec:meta-vari-inst}, $I$ acts on an expression $e$, provided that $\mv{e} \subseteq \vert{}I\vert$, by replacing metavariables with the corresponding expressions, see \cref{fig:metavariable-instantiation}.
\begin{figure}[htbp]
  \centering
  \begin{ruleframe}
  \begin{gather*}
    \act{I} x = x,\qquad
    \act{I} (\convert{t}{\alpha}) = \convert{\act{I} t}{\act{I} \alpha}, \\
    \act{I} \avar{a}{A} = \avar{a}{\act{I} A},\qquad
    \act{I} (\abstr{x} e) = \abstr{x} (\act{I} e),
    \\
    \begin{aligned}
      \act{I} (\symS(e_1, \ldots, e_k))
      &= \symS(\act{I} e_1, \ldots, \act{I} e_k),\\
      \act{I} (\symM_i^{\BB_i}(t_1, \ldots, t_{m_i}))
      &= I(\symM_i)[(\act{I} t_1)/x_1, \ldots, (\act{I} t_{m_i})/x_{m_i}], \\
      \act{I} \alpha &= \textstyle\bigcup \mkset{\asm{I(\symM_i)} \such \symM^{\BB_i}_i \in \alpha} \\
      &\quad \cup \asset{x \such x \in \alpha}
        \cup \asset{\avar{a}{\act{I} A} \such \avar{a}{A} \in \alpha}.
    \end{aligned}
    \\[1ex]
    \begin{aligned}[t]
      \act{I} (\isType A) &= (\isType{\act{I} A}), \\
      \act{I} (t : A) &= (\act{I} t : \act{I} A),\\
      \act{I} (A \equiv B \by \alpha) &= (\act{I} A \equiv \act{I} B \by \act{I} \alpha), \\
      \act{I} (\abstr{x \of A} \JJ) &= \abstr{x \of \act{I} A} \act{I} \JJ, \\
      \act{I} \Box &= \Box.
    \end{aligned}
  \end{gather*}
  \end{ruleframe}
  \caption{The action of a metavariable instantiation}
  \label{fig:metavariable-instantiation}
\end{figure}
Note that the action of~$I$ on a free variable changes the identity of the
variable by acting on its typing annotation.

\subsection{Context-free rules and type theories}
\label{sec:context-free-rules-type}

In this section we adapt the notions of raw and finitary rules and type theories to the
context-free setting. We shall be rather telegraphic about it, as the changes are
straightforward and require little discussion.

\begin{definition}
  \label{def:context-free-raw-rule}%
  A \defemph{context-free raw rule} $R$ over a signature $\Sigma$ has the form
  \begin{equation*}
    \rawRule{\symM_1^{\BB_1}, \ldots, \symM_n^{\BB_n}}{\J}
  \end{equation*}
  where the \defemph{premises} $\BB_i$ and the \defemph{conclusion} $\J$ are closed and syntactically valid over~$\Sigma$, $\mv{\BB_i} \subseteq \asset{\symM_1^{\BB_1}, \ldots, \symM_{i-1}^{\BB_{i-1}}}$ for every $i = 1, \ldots, n$, and $\mv{\J} = \asset{\symM_1^{\BB_1}, \ldots, \symM_n^{\BB_n}}$.
  We say that~$R$ is an \defemph{object rule} when~$\J$ is a type or a term judgement,
  and an \defemph{equality rule} when $\J$ is an equality judgement.
\end{definition}

The condition $\mv{\J} = \asset{\symM_1^{\BB_1}, \ldots, \symM_n^{\BB_n}}$ ensures that the conclusion of an instantiation of a raw rule records all uses of variables. We shall need it in the proof of \cref{thm:cf-to-tt-bdry-jdg}.

\begin{example}
    The context-free version of equality reflection from \cref{ex:tt-equality-reflection} is
    {
    \newcommand{\AAA}{\sym{A}^{\isType{\Box{}}}}
    \newcommand{\s}{\sym{s}^{\Box{}\, :\, \AAA{}}}
    \newcommand{\symt}{\sym{t}^{\Box{}\, :\, \AAA{}}}
    \newcommand{\ppp}{\sym{p}^{\sym{Id}(\AAA, \s, \symt)}}
    \begin{multline*}
      \rawRule{\AAA,\quad \s,\quad \symt,\quad \ppp\\}
      {\s\ \equiv\ \symt\ :\ \AAA\ \by\ \asset{\ppp}}
    \end{multline*}
    }
    which is quite unreadable. We indulge in eliding annotations on any variable that is already typed by a premise or a hypothesis, and write just
    \begin{equation*}
      \inferenceRule{CF-Eq-Reflect}{
        \types \isType{\sym{A}} \\
        \types \sym{s} : \sym{A} \\
        \types \sym{t} : \sym{A} \\
        \types \sym{p} : \sym{Id}(\sym{A}, \sym{s}, \sym{t})
      }{
        \types \sym{s} \equiv \sym{t} : \sym{A} \by \asset{\sym{p}}
      }
    \end{equation*}
    As there are no contexts, we could remove $\types$ too, but we leave it there out of habit. Note how the assumption
    set in the conclusion must record dependence on~$\sym{p}$, or else it would violate the assumption set condition of \cref{def:context-free-raw-rule}.
\end{example}

When formulating equality closure rules we face a choice of assumptions sets. For example, what should~$\gamma$ be in the transitivity rule
\begin{equation*}
  \infer
  {
    \types A \equiv B \by \alpha \\
    \types B \equiv C \by \beta
  }{
    \types A \equiv C \by \gamma}
  \; ?
\end{equation*}
Its intended purpose is to record any assumptions used in the premises but not already recorded by~$A$ and~$C$, which suggests the requirement
\begin{equation*}
  \asm{A} \cup \asm{B} \cup \asm{C} \cup \alpha \cup \beta \subseteq
  \asm{A} \cup \asm{C} \cup \gamma.
\end{equation*}
If we replace $\subseteq$ with $=$ we also avoid any extraneous asumptions, which leads to the following definition.

\begin{definition}
  In a closure rule $([p_1, \ldots, p_n], \plug{\bdry}{\alpha})$ whose conclusion is an equality judgement,
  $\alpha$ is \defemph{suitable} when $\asm{p_1, \ldots, p_n} = \asm{\plug{\bdry}{\alpha}}$.
\end{definition}

\noindent
Provided that $\asm{\bdry} \subseteq \asm{p_1, \ldots, p_n}$, we may always take the minimal suitable assumption set $\alpha = \asm{p_1, \ldots, p_n} \setminus \asm{\bdry}$. We do not insist on minimality, even though an implementation might make an effort to keep the assumption sets small, because minimality is not preserved by instantiations, whereas suitability is.
We shall indicate the suitability requirement in an equality closure rule by stating it as the side condition ``$\suitable{\alpha}$''.

\begin{definition}
  \label{def:context-free-rule-boundary}%
  A \defemph{context-free raw rule-boundary} over a signature $\Sigma$ has the form
  \begin{equation*}
    \rawRule{\symM_1^{\BB_1}, \ldots, \symM_n^{\BB_n}}{\bdry}
  \end{equation*}
  where the boundaries $\BB_i$ and $\bdry$ are closed and syntactically valid over~$\Sigma$, $\mv{\BB_i} \subseteq \asset{\symM_1^{\BB_1}, \ldots, \symM_{i-1}^{\BB_{i-1}}}$ for every $i = 1, \ldots, n$, and $\mv{\bdry} \subseteq \asset{\symM_1^{\BB_1}, \ldots, \symM_n^{\BB_n}}$.
  We say that~$R$ is an \defemph{object rule-boundary} when~$\bdry$ is an object boundary,
  and an \defemph{equality rule-boundary} when $\bdry$ is an equality boundary.
\end{definition}

\begin{definition}
  \label{def:context-free-symbol-rule}%
  Given an object rule-boundary
  \begin{equation*}
    \rawRule{\symM_1^{\BB_1}, \ldots, \symM_n^{\BB_n}}{\bdry}
  \end{equation*}
  over~$\Sigma$, the \defemph{associated symbol arity} is $(c, [\arity{\BB_1}, \ldots, \arity{\BB_n}])$, where $c \in \mkset{\Ty, \Tm}$ is the syntactic class of~$\bdry$.
  The \defemph{associated symbol rule} for $\symS \not\in \vert\Sigma\vert$ is the raw rule
  \begin{equation*}
    \rawRule
      {\symM_1^{\BB_1}, \ldots, \symM_n^{\BB_n}}
      {\plug{\bdry}{\symS(\genapp{\symM}_1^{\BB_1}, \ldots, \genapp{\symM}_n^{\BB_n})}}
  \end{equation*}
  over the extended signature $\extendMany{\Sigma}{\symS \mto (c, [\arity{\BB_1}, \ldots, \arity{\BB_n}])}$, where $\genapp{\symM}^\BB$ is the \defemph{generic application} of the metavariable~$\symM^\BB$, defined as:
  \begin{enumerate}
  \item
    $\genapp{\symM}^\BB = \abstr{x_1} \cdots \abstr{x_k} \symM^\BB(x_1, \ldots, x_k)$ if $\arity{\BB} = (c, k)$ and $c \in \mkset{\Ty, \Tm}$,
  \item
    $\genapp{\symM}^\BB = \abstr{x_1} \cdots \abstr{x_k} \asset{\symM^\BB, x_1, \ldots, x_k}$ if $\arity{\BB} = (c, k)$ and $c \in \mkset{\EqTy, \EqTm}$.
  \end{enumerate}
\end{definition}

\begin{definition}
  \label{def:context-free-equality-rule}%
  Given an equality rule-boundary
  \begin{equation*}
    \rawRule
      {\symM_1^{\BB_1}, \ldots, \symM_n^{\BB_n}}
      {\bdry},
  \end{equation*}
  the \defemph{associated equality rule} is
  \begin{equation*}
    \rawRule
      {\symM_1^{\BB_1}, \ldots, \symM_n^{\BB_n}}
      {\plug{\bdry}{\asset{\symM_1^{\BB_1}, \ldots, \symM_n^{\BB_n}} \setminus \asm{\bdry}}}.
  \end{equation*}
\end{definition}

\begin{definition}
  \label{def:context-free-rule-instantiation}
  An \defemph{instantiation} of a raw rule
  \begin{equation*}
    R = (\rawRule{\symM_1^{\BB_1}, \ldots, \symM_n^{\BB_n}}{\plug{\bdry}{e}})
  \end{equation*}
  over a signature~$\Sigma$ is an instantiation $I = \finmap{\symM_1^{\BB_1} \mto e_1, \ldots, \symM_n^{\BB_n} \mto e_n}$ of the metavariables of~$R$.
  The \defemph{closure rule $\act{I} R$} associated with $I$ and $R$ is $([p_1, \ldots, p_n, q], r)$ where $p_i$ is
  $
    \types
      \plug
         {\upact{I}{i} \BB_i)}
         {e_i}
  $,
  $q$ is $\types \act{I} \bdry$,
  and $r$ is $\types \act{I} (\plug{\bdry}{e})$.
\end{definition}

A minor complication arises when congruence rules (\Cref{def:congruence-rule}) are adapted to the context-free setting, because conversions must be inserted. Consider the congruence rule~\eqref{eq:pi-congruence-rule} for~$\Uppi$ from \cref{ex:pi-congruence-rule}. The premise $A_1 \equiv A_2$ ensures that the premise
$
\abstr{x \of A_1} \; B_1(x) \equiv B_2(x)
$
is well-formed by conversion of~$x$ on the right-hand side from~$A_1$ to~$A_2$, thus in the context-free version of the rule we should allow for the possibility of an explicit conversion. However, we should not enforce an unnecessary conversion in case $A_1 = A_2$, nor should we require particular conversions, as there may be many ways to convert a term. We therefore formulate flexible congruence rules as follows: if an occurrence of a term~$t$ possibly requires conversion, we allow in its place a term~$t'$ such that $\erase{t} = \erase{t'}$.

\begin{definition}
  \label{def:context-free-congruence-rule}
  The \defemph{context-free congruence rules} associated with a context-free raw type rule
  \begin{equation*}
    \rawRule{\symM_1^{\BB_1}, \ldots, \symM_n^{\BB_n}}{\isType{A}}
  \end{equation*}
  are closure rules, where
  \begin{equation*}
    I = \finmap{\symM_1^{\BB_1} \mto f_1, \ldots, \symM_n^{\BB_n} \mto f_n},
    \quad\text{and}\quad
    J = \finmap{\symM_1^{\BB_1} \mto g_1, \ldots, \symM_n^{\BB_n} \mto g_n},
  \end{equation*}
  of the following form:
  \begin{equation*}
    \infer{
      {\begin{aligned}
      &\types \plug{(\upact{I}{i} \BB_i)}{f_i}  &&\text{for $i = 1, \ldots, n$}\\
      &\types \plug{(\upact{J}{i} \BB_i)}{g_i}  &&\text{for $i = 1, \ldots, n$}\\
      &\erase{g'_i} = \erase{g_i} &&\text{for object boundary $\BB_i$} \\
      &\types \plug{(\upact{I}{i} \BB_i)}{f_i \equiv g'_i \by \alpha_i} &&\text{for object boundary $\BB_i$} \\
      & &&\suitable{\beta}
    \end{aligned}}
    }{
      \types \act{I} A \equiv \act{J} A \by \beta
    }
  \end{equation*}
  Similarly, the congruence rule associated with a raw term rule
  \begin{equation*}
    \rawRule{\symM_1^{\BB_1}, \ldots, \symM_n^{\BB_n}}{t : A}
  \end{equation*}
  are closure rules of the form
  \begin{equation*}
    \infer{
      {\begin{aligned}
      &\types \plug{(\upact{I}{i} \BB_i)}{f_i}  &&\text{for $i = 1, \ldots, n$}\\
      &\types \plug{(\upact{J}{i} \BB_i)}{g_i}  &&\text{for $i = 1, \ldots, n$}\\
      &\erase{g'_i} = \erase{g_i} &&\text{for object boundary $\BB_i$} \\
      &\types \plug{(\upact{I}{i} \BB_i)}{f_i \equiv g'_i \by \alpha_i} &&\text{for object boundary $\BB_i$} \\
      &\types t' : \act{I} A &&\erase{t'} = \erase{\act{J} t} \\
      & &&\suitable{\beta}
    \end{aligned}}
    }{
      \types \act{I} t \equiv t' : \act{I} A \by \beta
    }
  \end{equation*}
\end{definition}

\begin{example}
  The context-free congruence rules for~$\Uppi$ from \cref{ex:pi-congruence-rule} take the form
  \begin{equation*}
    \infer{
      \types \isType {A_1} \\
      \types \abstr{x \of A_1} \; \isType {B_1}
      \\\\
      \types \isType {A_2} \\
      \types \abstr{x \of A_2} \; \isType {B_2}
      \\\\
      \erase{A'_2} = \erase{A_2} \\
      \erase{\abstr{x} B'_2} = \erase{\abstr{x} B_2}
      \\\\
      \types A_1 \equiv A'_2 \by \alpha_1 \\
      \types \abstr{x \of A_1} \; B_1 \equiv B'_2 \by \alpha_2
    }{
      \types
      \Uppi(A_1, \abstr{x} B_1)
      \equiv
      \Uppi(A_2, \abstr{x} B_2)
      \by \beta
    }
  \end{equation*}
  where the minimal suitable $\beta$ is
  \begin{equation*}
    (\alpha_1 \cup \alpha_2 \cup \asm{A'_2, \abstr{x} B_2'}) \setminus
    (\asm{A_1, A_2, \abstr{x} B_1, \abstr{x} B_2}).
  \end{equation*}
  The type expressions $A'_2$ and $B_2'$ may be chosen in such a way that the equations       $\types A_1 \equiv A'_2 \by \alpha_1$ and $\types \abstr{x \of A_1} \; B_1 \equiv B'_2 \by \alpha_2$ are well-typed, so long as they match $A_2$ and $B_2$ up to erasure.
  In this case, we expect to be able to directly use $A_2$ for $A'_2$.
  The equation $\types \abstr{x \of A_1} \; B_1 \equiv B_2 \by \alpha_2$ where we use $B_2$ instead of $B_2'$ is not obviously well-typed, as $B_2$ is a family over $A_2$ rather than $A_1$.
  Intuitively, $B_2'$ should thus be $B_2$ where uses of $x$ have to first convert along the equation $A_1 \equiv A_2 \by \alpha_1$.
\end{example}

The context-free metavariable closure rules are in direct analogy with the usual ones from \cref{def:metavariable-rule}:

\begin{definition}
  \label{def:context-free-metavariable-rule}%
  The \defemph{context-free metavariable rules} associated with the metavariable $\symM^\BB$ where $\BB = (\abstr{x_1 \of A_1} \cdots \abstr{x_n \of A_n}\; \bdry)$ are the closure rules
  \begin{equation*}
    \inferenceRuleT{CF-Meta}{
      {\begin{aligned}
      &\types t_i : A_i[\upto{\vec{t}}{i}/\upto{\vec{x}}{i}] &&\text{for $i = 1, \ldots, n$} \\
      &\types \bdry[\vec{t}/\vec{x}]
      \end{aligned}}
    }{
      \types \plug{(\bdry[\vec{t}/\vec{x}])}{\symM^{\BB}(\vec{t})}
    }
  \end{equation*}
  where $\vec{x} = (x_1, \ldots, x_m)$, $\vec{t} = (t_1, \ldots, t_m)$.
  Furthermore, if $\bdry$ is an object boundary, then the \defemph{metavariable congruence rules} for $\symM^\BB$ are the closure rules \rref{CF-Meta-Congr-Ty} and \rref{CF-Meta-Congr-Tm} displayed in \cref{fig:context-free-struct-rules}.
\end{definition}

The following definition of context-free raw type theories is analogous to \cref{def:raw-type-theory}, except that we have to use the context-free versions of structural rules.

\begin{definition}
  \label{def:context-free-raw-type-theory}%
  A \defemph{context-free raw type theory~$T$} over a signature~$\Sigma$ is a family of context-free raw rules, called the \defemph{specific rules} of~$T$.
  The \defemph{associated deductive system} of~$T$ consists of:
  \begin{enumerate}
  \item the \defemph{structural rules} over~$\Sigma$:
    \begin{enumerate}
    \item the \emph{variable}, \emph{metavariable}, \emph{metavariable congruence}, and \emph{abstraction} closure rules (\cref{fig:context-free-struct-rules}),
    \item the \emph{equality} closure rules (\cref{fig:context-free-equality-rules}),
    \item the \emph{boundary} closure rules (\cref{fig:context-free-well-formed-boundaries});
    \end{enumerate}
  \item the instantiations of the specific rules of~$T$ (\cref{def:context-free-rule-instantiation});
  \item for each specific object rule of~$T$, the instantiations of the associated congruence rule (\cref{def:context-free-congruence-rule}).
  \end{enumerate}
  We write $\types_{T} \JJ$ when $\types \JJ$ is derivable with respect to the deductive system associated to~$T$, and similarly for $\types_{T} \BB$.
\end{definition}

The formulations of the abstraction rules \rref{CF-Abstr} and \rref{CF-Bdry-Abstr} are suitable for the backward-chaining style of proof, because their conclusions take a general form. For forward-chaining, we may derive  abstraction rules with premises in general form as follows:
\begin{mathpar}
  \inferenceRule{CF-Abstr-Fwd}
    {
      \types \isType A \\
      \types \JJ \\
      \avar{a}{A} \not\in \fvt{\JJ}
    }{
      \types \abstr{x \of A} \; \JJ[x/\avar{a}{A}]
    }

    \inferenceRule{CF-Bdry-Abstr-Fwd}
    {
      \types \isType A \\
      \types \BB \\
      \avar{a}{A} \not\in \fvt{\BB}
    }{
      \types \abstr{x \of A} \; \BB[x/\avar{a}{A}]
    }
\end{mathpar}
The side condition $\avar{a}{A} \not\in \fvt{\JJ}$ ensures that $\avar{a}{A} \not\in \fv{\JJ[x/\avar{a}{A}]}$, hence \rref{CF-Abstr-Fwd} can be derived as the instance of \rref{CF-Abstr}
\begin{equation*}
  \inferrule
  {
    \types \isType A \\
    \avar{a}{A} \not\in \fv{\JJ[x/\avar{a}{A}]} \\
    \types (\JJ[x/\avar{a}{A}])[\avar{a}{A}/x]
  }{
    \types \abstr{x \of A} \JJ[x/\avar{a}{A}]
  }
\end{equation*}
and similarly for boundary abstractions.

\begin{figure}[htp]
  \centering
  \begin{ruleframe}
  \small
  \begin{mathpar}
    \inferenceRule{CF-Var} { }
    { \types \avar{a}{A} : A }

    \inferenceRule{CF-Abstr}
    {
      \types \isType A \\
      \avar{a}{A} \not\in \fv{\JJ} \\
      \types \JJ[\avar{a}{A}/x]
    }{
      \types \abstr{x \of A} \; \JJ
    }

    \inferenceRule{CF-Meta}{
      {\begin{aligned}
      &\types t_i : A_i[\upto{\vec{t}}{i}/\upto{\vec{x}}{i}] &&\text{for $i = 1, \ldots, n$} \\
      &\types \bdry[\vec{t}/\vec{x}]
      \end{aligned}}
    }{
      \types \plug{(\bdry[\vec{t}/\vec{x}])}{\symM^{\abstr{\vec{x} \of \vec{A}} \bdry}(\vec{t})}
    }

    \\

    \inferenceRule{CF-Meta-Congr-Ty}
    { { \begin{aligned}
          &\mathrlap{\BB = \abstr{x_1 \of A_1} \cdots \abstr{x_m \of A_m}\; \isType{\Box}}
          \\
          &\types s_j : A[\upto{\vec{s}}{j}/\upto{\vec{x}}{j}]
            &&\text{for $j = 1, \ldots, m$} \\
          &\types t_j : A[\upto{\vec{t}}{j}/\upto{\vec{x}}{j}]
            &&\text{for $j = 1, \ldots, m$} \\
          &\erase{t_j} = \erase{t'_j}
            &&\text{for $j = 1, \ldots, m$} \\
          &\types s_j \equiv t'_j : A_j[\upto{\vec{s}}{j}/\upto{\vec{x}}{j}] \by \alpha_j
            &&\text{for $j = 1, \ldots, m$} \\
          &\suitable{\beta}
        \end{aligned} }
    }{
      \types
      \symM^\BB(\vec{s}) \equiv
      \symM^\BB(\vec{t})
      \by \beta
    }

    \inferenceRule{CF-Meta-Congr-Tm}
    { { \begin{aligned}
          &\mathrlap{\BB = \abstr{x_1 \of A_1} \cdots \abstr{x_m \of A_m}\; \Box : B}
          \\
          &\types s_j : A_j[\upto{\vec{s}}{j}/\upto{\vec{x}}{j}]
             &\text{for $j = 1, \ldots, m$}\\
          &\types t_j : A_j[\upto{\vec{t}}{j}/\upto{\vec{x}}{j}]
             &\text{for $j = 1, \ldots, m$}\\
          &\erase{t_j} = \erase{t'_j}
             &\text{for $j = 1, \ldots, m$}\\
          &\types s_j \equiv t'_j : A_j[\upto{\vec{s}}{j}/\upto{\vec{x}}{j}] \by \alpha_j
             &\text{for $j = 1, \ldots, m$}\\
        \end{aligned} }
      \\\\
      \types v : B[\vec{s}/\vec{x}] \\
      \erase{\symM^\BB(\vec{t})} = \erase{v} \\
      \suitable{\beta}
    }{
      \types
      \symM^\BB(\vec{s}) \equiv
      v
      : B[\vec{s}/\vec{x}]
      \by \beta
    }

  \end{mathpar}
  \end{ruleframe}
  \caption{Context-free free variable, metavariable, and abstraction closure rules}
  \label{fig:context-free-struct-rules}
\end{figure}

\begin{figure}[htp]
  \centering
  \begin{ruleframe}
  \small
  \begin{mathpar}
  \inferenceRule{CF-EqTy-Refl}{
    \types \isType{A_1} \\
    \types \isType{A_2} \\
    \erase{A_1} = \erase{A_2}
  }{
    \types A_1 \equiv A_2 \by \asset{}
  }

  \inferenceRule{CF-EqTy-Sym}
  {
    \types A \equiv B \by \alpha
  }{
    \types B \equiv A \by \alpha
  }

  \inferenceRule{CF-EqTy-Trans}
  {
    \types A \equiv B \by \alpha \\
    \erase{B} = \erase{B'}
    \types B' \equiv C \by \beta \\
    \suitable{\gamma}
  }{
    \types A \equiv C \by \gamma
  }

  \\

  \inferenceRule{CF-EqTm-Refl}
  {
    \types t_1 : A \\
    \types t_2 : A \\
    \erase{t_1} = \erase{t_2}
  }{
    \types t_1 \equiv t_2 : A \by \asset{}
  }

  \inferenceRule{CF-EqTm-Sym}
  {
    \types s \equiv t : A \by \alpha
  }{
    \types t \equiv s : A \by \alpha
  }

  \inferenceRule{CF-EqTm-Trans}
  { \types s \equiv t : A \by \alpha \\
    \erase{t} \equiv \erase{t'} \\
    \types t' \equiv u : A \by \beta \\
    \suitable{\gamma}
  }{
    \types s \equiv u : A \by \gamma
  }

  \inferenceRule{CF-Conv-Tm}
  { \types t : A \\
    \types A \equiv B \by \alpha
    \\\\
    \asm{t, A, B, \alpha} = \asm{t, B, \beta}
  }{
    \types \convert{t}{\beta} : B
  }

  \inferenceRule{CF-Conv-EqTm}
  {
    \types s \equiv t : A \by \alpha \\
    \types A \equiv B \by \beta \\\\
    \asm{s, A, B, \beta} = \asm{s, B, \gamma} \\\\
    \asm{t, A, B, \beta} = \asm{t, B, \delta}
  }{
    \types
    \convert{s}{\gamma} \equiv \convert{t}{\delta} : B \by \alpha
  }

  \end{mathpar}
  \end{ruleframe}
  \caption{Context-free closure rules for equality}
  \label{fig:context-free-equality-rules}
\end{figure}

\begin{figure}[htp]
  \centering
  \begin{ruleframe}
  \small
  \begin{mathpar}
    \inferenceRule{CF-Bdry-Ty}
    {
    }{
     \types \isType \Box
    }

    \inferenceRule{CF-Bdry-Tm}
    {
      \types \isType{A}
    }{
      \types \Box : A
    }

    \inferenceRule{CF-Bdry-EqTy}
    {
      \types \isType{A} \\
      \types \isType{B}
    }{
      \types A \equiv B \by \Box
    }

    \inferenceRule{CF-Bdry-EqTm}
    {
      \types \isType{A} \\
      \types s : A \\
      \types t : A
    }{
      \types s \equiv t : A \by \Box
    }

    \inferenceRule{CF-Bdry-Abstr}
    {
      \types \isType A \\
      \avar{a}{A} \not\in \fv{\BB} \\
      \types \BB[\avar{a}{A}/x]
    }{
      \types \abstr{x \of A} \; \BB
    }
  \end{mathpar}
  \end{ruleframe}
  \caption{Well-formed context-free abstracted boundaries}
  \label{fig:context-free-well-formed-boundaries}
\end{figure}

The context-free analogues of the auxiliary judgements $\types \isMCtx{\Theta}$ and $\Theta \types \isVCtx{\Gamma}$ are as follows. For simplicity we define a single notion that encompasses the well-formedness of all annotations.

\begin{definition}\label{def:well-typed-annotations}
  An expression $e$ has \defemph{well-typed annotations} when
  $\types \BB$ for every $\symM^\BB \in \asm{e}$ and $\types \isType{A}$ for every $\avar{a}{A} \in \asm{e}$.
  The notion evidently extends to judgements and boundaries.
\end{definition}

The context-free version of finitary rules and type theories is quite similar to the original one.

\begin{definition}
  \label{def:context-free-finitary}%
  Given a raw theory~$T$ over a signature~$\Sigma$, a context-free raw rule
  $\rawRule{\symM_1^{\BB_1}, \ldots, \symM_n^{\BB_n}}{\plug{\bdry}{e}}$
  over $\Sigma$ is \defemph{finitary} over~$T$ when $\types_T \BB_i$ for $k = 1, \ldots, n$, and $\types_T \bdry$,
  Similarly, a raw rule-boundary $\rawRule{\symM_1^{\BB_1}, \ldots, \symM_n^{\BB_n}}{\bdry}$
  is finitary over $T$ when $\types_T \BB_i$ for $k = 1, \ldots, n$, and $\types_T \bdry$.

  A \defemph{context-free finitary type theory} is a context-free raw type theory $(R_i)_{i \in I}$ for which there exists a well-founded order $(I, \prec)$ such that each $R_i$ is finitary over $(R_j)_{j \prec i}$.
\end{definition}

\begin{definition}
  \label{def:context-free-standard-type-theory}%
  A context-free finitary type theory is \defemph{standard} if its specific object rules are symbol rules, and each symbol has precisely one associated rule.
\end{definition}

\section{Meta-theorems about context-free theories}
\label{sec:context-free-meta-theorems}

The meta-theorems from \cref{sec:meta-theorems} carry over to the context-free setting. Unfortunately, there seems to be no wholesale method for transferring the proofs, and one simply has to adapt them manually to the context-free setting. The process is quite straightforward, so we indulge in omitting the details.

\subsection{Meta-theorems about context-free raw theories}
\label{sec:meta-theorems-cf-raw}

In the context-free setting, a renaming is still an injective map~$\rho$ taking unannotated symbols to unannotated symbols. Its action~$\act{\rho} e$ on an expression~$e$ recursively descends into~$e$, including into variable annotations, i.e.\ $\act{\rho} (\avar{a}{A}) = \rho(\mkvar{a})^{\act{\rho} A}$ and $\act{\rho} (\symM^\BB) = \rho(\symM)^{\act{\rho} \BB}$. The action is extended to judgements and boundaries in a straightforward manner. Renaming preserves the size of an expression, as long as all symbols are deemed to have the same size.

\begin{proposition}[Context-free renaming]
  \label{prop:context-free-renaming}%
  If a context-free raw type theory derives a judgement or a boundary, then it also derives its renaming.
\end{proposition}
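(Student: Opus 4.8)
The plan is to mirror the proof of \cref{prop:tt-renaming}: fix a renaming $\rho$ of a derivable context-free judgement $\types \JJ$ and show by induction on its derivation that $\types \act{\rho} \JJ$ is again derivable, treating boundaries in the same way. Before starting the induction I would record the routine commutation lemmas that make each inductive step go through, all proved by straightforward structural induction using the injectivity of $\rho$ and the fact that $\rho$ now descends into annotations, $\act{\rho}(\avar{a}{A}) = \rho(\mkvar{a})^{\act{\rho} A}$. Concretely: renaming commutes with substitution and abstraction (\cref{fig:abstraction-substitution}); with the action of an instantiation (\cref{fig:metavariable-instantiation}), once $\rho$ is lifted to instantiations in the evident way; with erasure, $\erase{\act{\rho} e} = \act{\rho} \erase{e}$; and with the formation of occurrence and assumption sets, $\fv{\act{\rho} \JJ} = \act{\rho}\,\fv{\JJ}$ and $\asm{\act{\rho} e} = \act{\rho}\,\asm{e}$. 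From the last two together with injectivity it follows that $\rho$ preserves suitability, i.e.\ the side condition $\suitable{\beta}$ that appears in the equality and congruence closure rules.

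With these lemmas available, most cases reduce to applying the induction hypotheses to the premises and then re-assembling the very same closure rule in renamed form, exactly as in the context-bearing case. The equality rules of \cref{fig:context-free-equality-rules}, the metavariable congruence rules \rref{CF-Meta-Congr-Ty} and \rref{CF-Meta-Congr-Tm}, and the context-free congruence rules of \cref{def:context-free-congruence-rule} additionally carry erasure side conditions such as $\erase{t} = \erase{t'}$ and suitability requirements; these survive the renaming precisely because $\rho$ commutes with $\erase{-}$ and $\asm{-}$ and is injective, so the renamed premises still satisfy them. The instantiations of specific rules (\cref{def:context-free-raw-type-theory}) are dispatched by commuting $\rho$ past the action $\act{I}{-}$ and its restrictions $\upact{I}{i}{-}$.

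The one case requiring genuine care is the abstraction rule \rref{CF-Abstr} (and, identically, \rref{CF-Bdry-Abstr}), whose premise opens the abstraction with a variable $\avar{a}{A}$ subject to $\avar{a}{A} \notin \fv{\JJ}$. The new feature is that renaming changes the identity of a free variable by acting on its annotation, so I must verify that the renamed opening variable $\rho(\mkvar{a})^{\act{\rho} A}$ is still fresh for $\act{\rho} \JJ$. This is immediate from injectivity: if $\rho(\mkvar{a})^{\act{\rho} A} \in \fv{\act{\rho}\JJ} = \act{\rho}\,\fv{\JJ}$, then some $\mkvar{c}^C \in \fv{\JJ}$ satisfies $\rho(\mkvar{c})^{\act{\rho} C} = \rho(\mkvar{a})^{\act{\rho} A}$, whence $\mkvar{c} = \mkvar{a}$ and $C = A$, contradicting $\avar{a}{A} \notin \fv{\JJ}$. (If one keeps $\rho$ injective only on the names actually occurring, one instead extends it by $\mkvar{a} \mapsto \mkvar{b}$ for a fresh $\mkvar{b}$, exactly as in \cref{prop:tt-renaming}.) Applying the induction hypothesis to the premise $\types \JJ[\avar{a}{A}/x]$ and commuting $\rho$ past substitution then yields $\types (\act{\rho}\JJ)[\rho(\mkvar{a})^{\act{\rho} A}/x]$, and \rref{CF-Abstr} delivers $\types \abstr{x \of \act{\rho} A}\; \act{\rho}\JJ = \act{\rho}(\abstr{x \of A}\; \JJ)$.

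The main obstacle is therefore not the induction skeleton, which is verbatim that of \cref{prop:tt-renaming}, but the bookkeeping that renaming is compatible with the three genuinely new ingredients of the context-free syntax: the type and boundary annotations on variables and metavariables (so that $\rho$ truly alters variable identities, yet preserves freshness by injectivity), erasure, and assumption sets together with the suitability condition. Establishing the commutation lemmas for these ingredients is the heart of the argument, after which every closure rule of \cref{def:context-free-raw-type-theory} transports without further incident.
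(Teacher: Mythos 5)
Your proposal is correct and takes essentially the same approach as the paper: the paper's own proof is precisely the ``straightforward induction on the derivation'' that you carry out, and your commutation lemmas (renaming versus substitution, instantiation, erasure, and assumption sets) together with the injectivity argument for the freshness side condition in \rref{CF-Abstr} simply make explicit the bookkeeping the paper leaves implicit.
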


\begin{proof}
  Straightforward induction on the derivation.
\end{proof}

Weakening (\cref{prop:tt-weakening}) is not applicable, as there is no context that could be weakened, and no variable ever occurs in the conclusion of a judgement without it being used in the derivation.

We next prove admissibility of substitution rules. We take a slightly different route than in \cref{sec:meta-theorems-raw} in order to avoid substituting a term for a free variable, as that changes type annotations and therefore the identity of variables.
\Cref{lem:context-free-prepare-subst,lem:context-free-prepare-subst-bdry} are proved by mutual structural induction, with a further structural induction within each lemma.

\begin{restatable}{lemma}{restatecontextfreepreparesubst}
  \label{lem:context-free-prepare-subst}%
  If a context-free raw type theory derives
  \begin{align*}
    &\vdash \abstr{x_1 \of A_1} \cdots \abstr{x_n \of A_n} \; \JJ &&\text{and} \\
    &\vdash t_i : A_i[\upto{\vec{t}}{i}/\upto{\vec{x}}{i}] &&\text{for $i = 1, \ldots, n$}
  \end{align*}
  then it derives $\vdash \JJ[\vec{t}/\vec{x}]$.
\end{restatable}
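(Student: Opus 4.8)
The plan is to prove \cref{lem:context-free-prepare-subst} together with its boundary counterpart \cref{lem:context-free-prepare-subst-bdry}, and to organise the argument as an iterated single-variable substitution carried out from the outside in. The workhorse is the following claim, proved mutually for judgements and boundaries by structural induction on the derivation: from $\types \abstr{x \of A}\, \JJ$ and $\types t : A$ one obtains $\types \JJ[t/x]$, for an arbitrary (possibly abstracted) $\JJ$. Granting the claim, the lemma follows by induction on $n$: the case $n = 0$ is immediate since $\JJ[\,] = \JJ$; for $n \geq 1$ I would peel off the outermost abstraction, apply the single-variable claim to substitute $t_1$ for $x_1$, obtaining the $(n-1)$-fold abstraction $\abstr{x_2 \of A_2[t_1/x_1]} \cdots \JJ[t_1/x_1]$, and then invoke the induction hypothesis at $n-1$. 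For this last step I would check that the hypotheses $\types t_i : A_i[\upto{\vec t}{i}/\upto{\vec x}{i}]$ for $i \geq 2$ are exactly the typing conditions relative to the substituted types $A_i[t_1/x_1]$, and that the iterated single substitutions $\JJ[t_1/x_1][t_2/x_2]\cdots$ compose to the parallel substitution $\JJ[\vec t/\vec x]$, which holds because no later bound variable $x_j$ occurs in $t_1$.

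The crux is therefore the single-variable claim, and it is exactly here that one must take a different route than in \cref{sec:meta-theorems-raw}. Inverting \rref{CF-Abstr} yields $\types \isType{A}$, a fresh $\avar{a}{A} \notin \fv{\JJ}$, and a derivation of $\types \JJ[\avar{a}{A}/x]$, so that $\JJ[t/x] = (\JJ[\avar{a}{A}/x])[t/\avar{a}{A}]$ appears to require substituting the term $t$ for the \emph{free} variable $\avar{a}{A}$. At the top level this replacement is benign: $\avar{a}{A}$ is the unique free variable of $\JJ[\avar{a}{A}/x]$ needing treatment, and since every other variable of this expression is still bound beneath the inner abstractions, $\avar{a}{A}$ occurs in no other free variable's type annotation, so replacing it by $t$ cannot alter the identity of any other free variable. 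I would carry out this controlled replacement by an inner structural induction on the derivation of $\types \JJ[\avar{a}{A}/x]$: the only proof-relevant leaf is an application of \rref{CF-Var} to $\avar{a}{A}$, discharged by the supplied derivation of $\types t : A$, while the object-rule, metavariable, and equality cases follow by reassembling the subderivations.

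The main obstacle I expect is the abstraction case of this inner induction. When the derivation of $\types \JJ[\avar{a}{A}/x]$ descends under a further abstraction, \rref{CF-Abstr} opens it with a fresh variable whose annotation itself mentions $\avar{a}{A}$; after replacing $\avar{a}{A}$ by $t$ the annotation, and hence the very \emph{identity} of the opened variable, changes, so I must maintain an explicit correspondence between the pre- and post-substitution variables and re-apply \rref{CF-Abstr} with the updated annotation and a correspondingly fresh name. A secondary source of bookkeeping is the proof-irrelevant data: the conversion terms $\convert{t}{\alpha}$ and the assumption sets $\alpha$ carried by equality judgements must be updated so that the suitability and freshness side conditions of the context-free rules continue to hold, which will rest on the facts that substitution commutes with $\erase{\,\cdot\,}$ and acts on assumption sets precisely by the clauses of \cref{fig:abstraction-substitution}. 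Once these are settled, the boundary induction runs in parallel, with \rref{CF-Bdry-Abstr} handled exactly as \rref{CF-Abstr}.
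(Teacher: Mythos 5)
Your proposal is correct in outline, but it takes a genuinely different route from the paper --- indeed, exactly the route the paper announces it is avoiding: the development in \cref{sec:meta-theorems-cf-raw} is explicitly set up ``to avoid substituting a term for a free variable, as that changes type annotations and therefore the identity of variables'', whereas your single-variable claim is proved precisely by such a substitution. The paper never reduces to $n = 1$. It inverts all $n$ abstractions at once, obtaining fresh annotated variables $\avars{a}{1}{A'_1}, \ldots, \avars{a}{n}{A'_n}$ and a derivation $D$ of the opened judgement $\JJ[\avars{a}{1}{A'_1}/x_1, \ldots, \avars{a}{n}{A'_n}/x_n]$, and then performs induction on $D$ with the arity of the simultaneous substitution allowed to grow: in the \rref{CF-Abstr} case the substitution is extended by $t_{n+1} = \avars{a}{n+1}{A_{n+1}\tau}$ and the induction hypothesis is invoked at arity $n+1$, while in every other case the premises are re-abstracted via $\zeta = [x_n/\avars{a}{n}{A'_n}, \ldots, x_1/\avars{a}{1}{A'_1}]$ and the induction hypothesis applied to them. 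In this way only the two operations of \cref{fig:abstraction-substitution} --- abstraction $e[x/\avar{a}{A}]$ and substitution $e[t/x]$ for a bound variable --- are ever used, and your single-variable rule (\rref{CF-Subst}) falls out as the case $n = 1$ of the lemma rather than serving as its engine.

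Your route can be completed, but the hard part of your plan is exactly what you compress into ``maintain an explicit correspondence''. The operation $(\cdot)[t/\avar{a}{A}]$ does not exist in the paper's context-free syntax; you must introduce it as a new primitive that recursively rewrites annotations, $\avar{b}{B} \mapsto \avar{b}{B[t/\avar{a}{A}]}$, and your inner induction must then be strengthened from ``derivations of $\types \JJ[\avar{a}{A}/x]$'' to all derivable judgements, because after the first \rref{CF-Abstr} the judgements encountered no longer have that shape. Three side conditions then need genuine verification: (i) freshness can fail after substitution, since distinct variables may collide (e.g.\ $\avar{b}{\symS(\avar{a}{A})}$ and $\avar{b}{\symS(t)}$ both become $\avar{b}{\symS(t)}$ when $\avar{a}{A} \notin \fv{t}$), so the abstraction case must first rename the opened variable to a globally fresh name using \cref{prop:context-free-renaming}, which is what your ``correspondingly fresh name'' has to mean; (ii) the assumption-set equations in the side conditions of \rref{CF-Conv-Tm} and \rref{CF-Conv-EqTm}, and the suitability requirements of the equality and congruence rules, must be shown stable under the substitution, which holds because $\asm{e[t/\avar{a}{A}]}$ is a function of $\asm{e}$ commuting with unions; (iii) likewise the erasure conditions $\erase{e_1} = \erase{e_2}$, since $\erase{e[t/\avar{a}{A}]}$ is a function of $\erase{e}$. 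None of these is false, so there is no fatal gap; but together they constitute the metatheory of a new syntactic operation, which is precisely the bill the paper's formulation was designed not to pay, at the price of its slightly unusual induction on a derivation sitting atop a varying series of abstractions.
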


\begin{proof}
  See the proof on \cpageref{proof:context-free-prepare-subst}.
\end{proof}

\begin{lemma}
  \label{lem:context-free-prepare-subst-bdry}%
  If a context-free raw type theory derives
  \begin{align*}
    &\vdash \abstr{x_1 \of A_1} \cdots \abstr{x_n \of A_n} \; \BB &&\text{and} \\
    &\vdash t_i : A_i[\upto{\vec{t}}{i}/\upto{\vec{x}}{i}] &&\text{for $i = 1, \ldots, n$}
  \end{align*}
  then it derives $\vdash \BB[\vec{t}/\vec{x}]$.
\end{lemma}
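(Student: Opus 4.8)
The plan is to prove this lemma simultaneously with \cref{lem:context-free-prepare-subst} by mutual structural induction, exactly as announced in the text: the boundary statement bottoms out in the judgement statement at its leaves, while a metavariable application in the judgement statement (rule \rref{CF-Meta}) carries a boundary well-formedness premise that feeds back into the present lemma, so the two inductions genuinely interleave. Concretely, I would run the argument by structural induction on the derivation of $\types \abstr{x_1 \of A_1} \cdots \abstr{x_n \of A_n}\; \BB$, mirroring case by case the proof of \cref{lem:context-free-prepare-subst}, with an inner structural induction carrying the substitution through the derivation. Since the shape of such a derivation is forced, the argument splits into the abstraction case and the base cases in which the remaining boundary is a boundary thesis.

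The base cases are the straightforward part, reducing to \cref{lem:context-free-prepare-subst} just as the base cases in the with-context proof of \cref{lem:prepare-subst-bdry} reduce to \cref{lem:prepare-subst}. The derivation ends in one of the four boundary-formation rules \rref{CF-Bdry-Ty}, \rref{CF-Bdry-Tm}, \rref{CF-Bdry-EqTy}, \rref{CF-Bdry-EqTm}, whose premises are object judgements rather than boundaries. For $\types \abstr{\vec x \of \vec A}\; (\Box : C)$, for instance, peeling the prefix exposes the object premise $\types \isType{C}$ of \rref{CF-Bdry-Tm}; re-attaching the prefix (using the already-derived $\types \isType{A_i}$ and the freshness side conditions of the abstraction rules) and invoking \cref{lem:context-free-prepare-subst} substitutes the $t_i$ to give $\types \isType{C[\vec t/\vec x]}$, after which \rref{CF-Bdry-Tm} is re-applied to obtain $(\Box : C)[\vec t/\vec x]$. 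The equational boundaries are handled identically, applying \cref{lem:context-free-prepare-subst} to each constituent type or term judgement, and $\isType{\Box}$ is immediate.

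The real work sits in the abstraction case, which I would treat in lockstep with the corresponding case of \cref{lem:context-free-prepare-subst}. Here inversion on \rref{CF-Bdry-Abstr} peels off $\abstr{x_1 \of A_1}$, producing $\types \isType{A_1}$, a fresh $\avar{a_1}{A_1} \notin \fv{\BB'}$ with $\BB' = \abstr{x_2 \of A_2}\cdots \BB$, and a derivation of $\types \BB'[\avar{a_1}{A_1}/x_1]$; the induction hypothesis disposes of the remaining $n-1$ substitutions, leaving the term $t_1$ to be propagated in place of $\avar{a_1}{A_1}$ along the subderivation. I expect this propagation to be the main obstacle. Because a free variable in the context-free syntax carries its type as an annotation, substituting for $\avar{a_1}{A_1}$ forces a substitution inside the annotation of every variable whose type mentions it, and this alters the very identity of those variables; moreover any occurrence of $\avar{a_1}{A_1}$ reached through \rref{CF-Var} must be matched against the hypothesis $\types t_1 : A_1$. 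This is precisely why the statement is framed as a bound-variable substitution under an abstraction prefix rather than as a standalone free-variable substitution of the form used in \cref{lem:prepare-subst}: rather than invoke a separate lemma, the replacement is pushed through the derivation inside the same induction, with careful bookkeeping of freshness and of the well-scoping of the annotated variables. Verifying that this bookkeeping closes — in particular that the inner induction respects the order in which annotations must be rewritten — is the step I would spend the most care on.
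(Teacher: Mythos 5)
Your overall skeleton agrees with the paper's: \cref{lem:context-free-prepare-subst} and the present lemma are indeed proved by mutual structural induction, and your base cases are handled exactly as the paper handles them (the inner derivation ends with \rref{CF-Bdry-Ty}, \rref{CF-Bdry-Tm}, \rref{CF-Bdry-EqTy} or \rref{CF-Bdry-EqTm}; re-abstract its object-judgement premises, invoke \cref{lem:context-free-prepare-subst} with all of $\vec{t}$, and re-apply the same rule). The gap is in the abstraction case, which you yourself identify as the real work. You peel off only the first prefix abstraction, obtain a derivation of $\types \BB'[\avars{a}{1}{A_1}/x_1]$, let the induction hypothesis ``dispose of the remaining $n-1$ substitutions'', and leave $t_1$ to be propagated in place of $\avars{a}{1}{A_1}$. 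Neither ordering of these two steps closes. If the IH goes first, its hypotheses are not met: it would require $\types t_i : (A_i[\avars{a}{1}{A_1}/x_1])[\ldots]$, whereas you are given $\types t_i : A_i[\upto{\vec{t}}{i}/\upto{\vec{x}}{i}]$, and these types differ. If the propagation goes first, it is an admissibility-of-substitution statement for a \emph{free} variable in a context-free derivation --- precisely the operation this section is engineered to avoid (it rewrites annotations and hence the identity of variables), and it is not available as a lemma; indeed, re-abstracting via \rref{CF-Bdry-Abstr} shows it is interderivable with the $n=1$ instance of the very lemma being proved, applied to a derivation of the same size as the one you started from, so your induction measure does not decrease.

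The mechanism that makes the paper's proof work, and which your proposal does not supply, is structured differently: invert \emph{all} $n$ prefix abstractions at once, obtaining fresh variables $\avars{a}{i}{A'_i}$ and an inner derivation $D$ of the instantiated boundary, and induct on $D$ carrying the \emph{entire} substitution $\tau = [t_1/x_1, \ldots, t_n/x_n]$ along. When $D$ ends with \rref{CF-Bdry-Abstr} --- that is, when $\BB$ itself is abstracted by some $\abstr{x_{n+1} \of A_{n+1}}$ --- one does not discharge a term; one \emph{extends} the substitution by a fresh variable $t_{n+1} = \avars{a}{n+1}{A_{n+1}\tau}$, applies the IH to the re-abstracted premise (now under $n+1$ abstractions, whose inner derivation is structurally smaller than $D$), and concludes by abstracting $\avars{a}{n+1}{A_{n+1}\tau}$ with \rref{CF-Bdry-Abstr}. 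No free variable is ever substituted inside a derivation; the given terms $t_i$ enter only at the \rref{CF-Var} leaves of the mutual induction, where the judgement to be derived is literally the hypothesis $\types t_i : A_i[\upto{\vec{t}}{i}/\upto{\vec{x}}{i}]$. Your closing remark about ``careful bookkeeping'' points at the right difficulty, but the re-abstraction trick with an extended substitution is the idea that resolves it, and it is absent from your argument.
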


\begin{proof}
  We proceed as in the proof of \cref{lem:context-free-prepare-subst},
  where \rref{CF-Bdry-Abstr} is treated like \rref{CF-Abstr}, and the remaining ones invert to
  \cref{lem:context-free-prepare-subst}.
\end{proof}

\begin{theorem}[Context-free admissibility of substitution]
  \label{thm:context-free-substitution-admissible}
  In a context-free raw type theory, the following substitution rules are admissible:
  \begin{mathpar}
      \inferenceRule{CF-Subst}
      {
        \types \abstr{x \of A} \; \JJ \\
        \types t : A
      }{
        \types \JJ[t/x]
      }

      \inferenceRule{CF-Bdry-Subst}
      {
        \types \abstr{x \of A} \; \BB
        \\
        \types t : A
      }{
        \types \BB[t/x]
      }
  \end{mathpar}
\end{theorem}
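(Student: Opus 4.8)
The plan is to observe that the two rules \rref{CF-Subst} and \rref{CF-Bdry-Subst} are nothing but the single-variable instances of the prepare-substitution lemmas \cref{lem:context-free-prepare-subst} and \cref{lem:context-free-prepare-subst-bdry}, which have already done all the real work by mutual structural induction. Concretely, I would specialize those lemmas to the case $n = 1$, and the theorem falls out immediately.

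For \rref{CF-Subst}, take $n = 1$ in \cref{lem:context-free-prepare-subst} with $A_1 = A$, $x_1 = x$, and $t_1 = t$. The hypothesis $\types \abstr{x_1 \of A_1}\; \JJ$ is exactly the first premise $\types \abstr{x \of A}\; \JJ$. The term hypothesis for $i = 1$ reads $\types t_1 : A_1[\upto{\vec{t}}{1}/\upto{\vec{x}}{1}]$; since both $\upto{\vec{t}}{1}$ and $\upto{\vec{x}}{1}$ are the empty list, the substitution is vacuous and this is simply $\types t : A$, the second premise. The conclusion $\types \JJ[\vec{t}/\vec{x}]$ then collapses to $\types \JJ[t/x]$, which is precisely what \rref{CF-Subst} asserts. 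Admissibility follows, since ``admissible'' here means derivability of the conclusion from derivability of the premises — exactly the implication furnished by the lemma. The boundary rule \rref{CF-Bdry-Subst} is handled identically, specializing \cref{lem:context-free-prepare-subst-bdry} at $n = 1$ and replacing $\JJ$ by $\BB$; the empty initial segments again trivialize the substitution in the term premise.

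I do not expect any genuine obstacle at this stage. All of the difficulty — tracking how substitution interacts with type annotations, assumption sets, and conversion terms, and handling the \rref{CF-Abstr} case without the luxury of weakening that is unavailable in the context-free setting — has already been absorbed into the inductive proofs of the prepare-substitution lemmas. The only thing left to verify is the purely notational bookkeeping that the $n = 1$ specialization collapses the iterated-substitution notation $[\vec{t}/\vec{x}]$ to the ordinary single substitution $[t/x]$, which is immediate.
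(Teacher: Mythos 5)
Your proposal is correct and is exactly the paper's own proof: the paper also observes that \rref{CF-Subst} and \rref{CF-Bdry-Subst} are the $n = 1$ instances of \cref{lem:context-free-prepare-subst} and \cref{lem:context-free-prepare-subst-bdry}, respectively. The bookkeeping you spell out (vacuous substitution in the term premise, collapse of $[\vec{t}/\vec{x}]$ to $[t/x]$) is precisely the content the paper leaves implicit.
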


\begin{proof}
  The admissibility of \rref{CF-Subst} and \rref{CF-Bdry-Subst} corresponds to the case $n = 1$ of \cref{lem:context-free-prepare-subst} and \cref{lem:context-free-prepare-subst-bdry}, respectively.
\end{proof}

Before addressing the context-free versions of \rref{TT-Subst-EqTy} and \rref{TT-Subst-EqTm}, we prove the context-free presuppositivity theorem.

Of course, presuppositivity holds in the context-free setting as well.

\begin{restatable}[Context-free presuppositivity]{theorem}{restatecontextfreepresuppositivity}
  \label{prop:context-free-presuppositivity}
  If a context-free raw type theory derives $\types \plug{\BB}{e}$ and $\plug{\BB}{e}$ has well-typed annotations, then it derives $\types \BB$.
\end{restatable}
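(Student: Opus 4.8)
The plan is to argue by induction on the derivation of $\types \plug{\BB}{e}$, following the pattern of \cref{prop:presuppositivity} but with the hypothesis of well-typed annotations playing the role formerly taken by $\types \isMCtx{\Theta}$ and $\Theta \types \isVCtx{\Gamma}$. Two general observations are used throughout. First, a boundary judgement can only be concluded by one of the boundary closure rules of \cref{fig:context-free-well-formed-boundaries}, since boundaries form a distinct syntactic class; hence a derivation of $\types (A \equiv B \by \Box)$ inverts immediately to $\types \isType{A}$ and $\types \isType{B}$, and likewise for the other boundary forms. Second, since filling only adjoins the head, $\asm{\BB} \subseteq \asm{\plug{\BB}{e}}$, so the boundary of the conclusion always inherits well-typed annotations, and the goal of each inductive step is to reassemble $\types \BB$ from sub-presuppositions via \rref{CF-Bdry-Ty}, \rref{CF-Bdry-Tm}, \rref{CF-Bdry-EqTy}, \rref{CF-Bdry-EqTm}, and \rref{CF-Bdry-Abstr}.

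Several cases are immediate. When the last rule is \rref{CF-Meta}, or an instantiation $\act{I} R$ of a specific rule (\cref{def:context-free-rule-instantiation}), the well-formed boundary is already a premise ($\types \bdry[\vec{t}/\vec{x}]$, resp.\ $\types \act{I}\bdry$). When it is \rref{CF-Var}, concluding $\types \avar{a}{A} : A$, the boundary $\types \Box : A$ follows from \rref{CF-Bdry-Tm} together with $\types \isType{A}$, which holds because $\avar{a}{A} \in \asm{\avar{a}{A} : A}$ and the conclusion has well-typed annotations. For \rref{CF-EqTy-Refl} and \rref{CF-EqTm-Refl} the required object presuppositions are literally the premises, so \rref{CF-Bdry-EqTy}, resp.\ \rref{CF-Bdry-EqTm}, applies with no appeal to the induction hypothesis.

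The remaining cases invoke the induction hypothesis on equality or abstracted premises, and here the essential point is that well-typed annotations propagate to those premises. For every equality closure rule this is guaranteed by the suitability side condition $\suitable{\gamma}$ (and, for \rref{CF-Conv-Tm} and \rref{CF-Conv-EqTm}, by the explicit assumption-set equations), which force the assumption set of the conclusion to equal the union of the assumption sets of the premises, so that no premise annotation escapes. Thus in \rref{CF-EqTy-Trans} the induction hypothesis on the two equality premises yields $\types \isType{A}$ and $\types \isType{C}$ after inverting their boundaries, and \rref{CF-Bdry-EqTy} concludes; \rref{CF-EqTy-Sym} and the term-level analogues are similar. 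For \rref{CF-Conv-Tm} the induction hypothesis on $\types A \equiv B \by \alpha$ supplies $\types \isType{B}$; for \rref{CF-Conv-EqTm} the induction hypothesis on the term-equality premise supplies $\types s : A$ and $\types t : A$, which convert along $\types A \equiv B \by \beta$ by \rref{CF-Conv-Tm} to the two term presuppositions, while $\types \isType{B}$ comes from the second premise. In \rref{CF-Abstr} the induction hypothesis is applied to $\types \JJ[\avar{a}{A}/x]$—whose annotations are well-typed because the premise $\types \isType{A}$ certifies the annotation of the fresh $\avar{a}{A}$ and the rest descend from the conclusion—and the resulting boundary is re-abstracted by \rref{CF-Bdry-Abstr}. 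For the congruence rules of \cref{def:context-free-congruence-rule} and the metavariable congruences \rref{CF-Meta-Congr-Ty}, \rref{CF-Meta-Congr-Tm}, the two sides of the concluding equation are recovered either directly from the $t'$ (resp.\ $v$) premise or by re-applying the underlying object rule (resp.\ \rref{CF-Meta}) to the $f_i,g_i$ (resp.\ $s_j,t_j$) premises, whose own boundary premises are trivial for type rules and are furnished by the induction hypothesis applied to the $t'$ (resp.\ $v$) premise in the term case.

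The main obstacle, and the only genuinely delicate point, is exactly the assumption-set bookkeeping just sketched: one must check, rule by rule, that the suitability and conversion side conditions make the conclusion's assumption set capture every annotation occurring in the premises, so that well-typed annotations are available wherever the induction hypothesis is invoked. This is precisely the purpose for which those side conditions were designed, so the verification is routine but cannot be skipped. The only auxiliary ingredient is the inversion of boundary derivations, which is trivial because boundaries are produced solely by the boundary closure rules.
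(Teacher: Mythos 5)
Your proposal follows essentially the same route as the paper's proof: structural induction on the derivation, with the well-typed-annotations hypothesis supplying the base fact in \rref{CF-Var}, the explicit boundary premises disposing of \rref{CF-Meta} and specific-rule instantiations, the suitability and conversion side conditions propagating well-typed annotations to the premises, and the boundary closure rules reassembling $\types \BB$ in each case. Two local differences are worth noting. First, the paper officially inducts on the pair (number of metavariables in the judgement, derivation), reserving the freedom to invoke the statement on judgements that are not subderivations, such as the annotation boundaries $\types \BB$; your plain structural induction suffices because every appeal you make to the induction hypothesis is to a subderivation. Correspondingly, in \rref{CF-Meta-Congr-Tm} the paper derives $\types \isType{B[\vec{s}/\vec{x}]}$ by inverting $\types \BB$ (available from well-typed annotations) and applying \rref{CF-Subst}, whereas you obtain it from the induction hypothesis on the premise $\types v : B[\vec{s}/\vec{x}]$ and inversion of its boundary; both work, and yours is more uniform with the specific-rule congruence case. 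One slip: you claim \rref{CF-EqTm-Refl} needs no appeal to the induction hypothesis, but its boundary via \rref{CF-Bdry-EqTm} requires $\types \isType{A}$, which is \emph{not} among the premises $\types t_1 : A$, $\types t_2 : A$, $\erase{t_1} = \erase{t_2}$ of that rule; you must apply the induction hypothesis to $\types t_1 : A$ and invert. This is immediately repairable within your own argument, so it does not affect overall correctness.
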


\begin{proof}
  See the proof on \cpageref{proof:context-free-presuppositivity}.
\end{proof}

Let us now turn to meta-theorems stating that equal substitutions act equally. Once again we need to account for insertion of conversions. In congruence rules such conversions appeared in premises: equations associated to object premises of the shape $\plug{(\upact I i \BB_i)}{f_i \equiv g_i' \by \alpha_i}$ referred to a primed version of $g_i$ to allow the use of conversions in $g_i$.
In the following lemma, conversions appear in the result of a substitution. Therefore, rather than being permissive about insertions of conversions, we are faced with showing that it is possible to insert them.
Similarly to \cref{lem:prepare-subst-eq}, we prove that equal terms can be substituted into a judgement to yield equal results, but the right hand side of these results is only prescribed up to erasure, namely as $C'$ and $u'$.

\begin{restatable}{lemma}{restatecontextfreepreparesubsteq}
  \label{lem:context-free-prepare-subst-eq}%
  If a context-free raw type theory derives
  \begin{align}
    &\types \abstr{x_1 \of A_1} \cdots \abstr{x_n \of A_n} \; \JJ  \notag \\
    \intertext{
      where $\abstr{\vec{x} \of \vec{A}} \; \JJ$ has well-typed annotations,
      and for $i = 1, \ldots, n$}
    &\types s_i : A_i[\upto{\vec{s}}{i}/\upto{\vec{x}}{i}]  \notag \\
    &\types t_i : A_i[\upto{\vec{t}}{i}/\upto{\vec{x}}{i}] \notag \\
    &\types s_i \equiv t'_i : A_i[\upto{\vec{s}}{i}/\upto{\vec{x}}{i}] \by \alpha_i
     \qquad\text{and $\erase{t'_i} = \erase{t_i}$}.
    \label{eq:cf-prep-subst-eq-5}
  \end{align}
  then:
  \begin{enumerate}
  \item \label{it:cf-prep-subst-eq-1}%
    if $\JJ = (\abstr{\vec{y} \of \vec{B}} \; \isType{C})$ then there are $\gamma$ and $C'$ such that
    $\erase{C[\vec{t}/\vec{x}]} = \erase{C'}$,
    \begin{equation*}
      \types \abstr{\vec{y} \of \vec{B}[\vec{s}/\vec{x}]} \;
      C[\vec{s}/\vec{x}] \equiv C' \by \gamma,
    \end{equation*}

  \item \label{it:cf-prep-subst-eq-2}%
    if $\JJ = (\abstr{\vec{y} \of \vec{B}} \; u : C)$ then there are $\delta$ and $u'$ such that $\erase{u[\vec{t}/\vec{x}]} = \erase{u'}$ and
    \begin{equation*}
      \types \abstr{\vec{y} \of \vec{B}[\vec{s}/\vec{x}]} \;
        u[\vec{s}/\vec{x}] \equiv u' : C[\vec{s}/\vec{x}] \by \delta.
    \end{equation*}
  \end{enumerate}
  Furthermore, no extraneous assumptions are introduced by $\gamma$, $C'$, $\delta$ and $u'$:
  \begin{equation*}
    \asm{\abstr{\vec{y}} \gamma,
         \abstr{\vec{y}} C',
         \abstr{\vec{y}} \delta,
         \abstr{\vec{y}} u'}
    \subseteq
    \asm{\vec{s}, \vec{t}, \vec{t}', \vec{\alpha}, \abstr{\vec{x} \of \vec{A}} \; \JJ}.
  \end{equation*}
\end{restatable}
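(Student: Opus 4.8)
The plan is to prove both statements simultaneously by induction on the derivation of $\types \abstr{\vec{x} \of \vec{A}}\; \JJ$, following the shape of \cref{lem:prepare-subst-eq} but now having to \emph{construct} the conversions that the context-free discipline demands on the right-hand side, which is exactly why $C'$ and $u'$ are pinned down only up to erasure. First I would descend through the outer telescope $\abstr{\vec{x} \of \vec{A}}$ and any inner abstractions $\abstr{\vec{y} \of \vec{B}}$ by inverting \rref{CF-Abstr}. Peeling off an abstraction $\abstr{y_j \of B_j}$ introduces a fresh free variable, and since its annotation reads $B_j[\vec{s}/\vec{x}]$ on the left but $B_j[\vec{t}/\vec{x}]$ on the right, I record alongside it the type equation relating the two sides, obtained by applying the lemma inductively to the annotation $B_j$ (a strictly smaller subderivation). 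This reduces the goal to the non-abstracted thesis $\isType{C}$ or $u : C$, equipped with a parallel substitution together with, for each substituted or freshly abstracted variable, a derivable equation between its two sides.

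The core is then a structural induction on the thesis expression. For a substituted variable $u = x_i$ we have $u[\vec{s}/\vec{x}] = s_i$ and $u[\vec{t}/\vec{x}] = t_i$, so I take $u' = t'_i$ and $\delta = \alpha_i$ and appeal to hypothesis~\eqref{eq:cf-prep-subst-eq-5}; the requirement $\erase{t'_i} = \erase{t_i}$ is precisely the erasure condition asked of $u'$. For one of the freshly introduced $\vec{y}$-variables I convert its $t$-annotated copy back to the $s$-type along the recorded equation, taking that conversion as the primed right-hand side. A genuine free variable of $C$ is untouched by the substitution, since type annotations may not mention the bound variables $\vec{x}$, so it is handled by reflexivity. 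For a symbol application $u = \symS(\vec{e})$ I apply the induction hypotheses to the object arguments, obtaining converted primed arguments with their component equations, and discharge the goal with the context-free congruence rule for $\symS$ from \cref{def:context-free-congruence-rule}; the ``up-to-erasure'' premises $\erase{g'_i} = \erase{g_i}$ required by that rule are exactly what the inductive calls supply. The metavariable case is identical using \rref{CF-Meta-Congr-Ty} and \rref{CF-Meta-Congr-Tm}, and a conversion term $u = \convert{t_0}{\beta}$ is dispatched by recursing on $t_0$ and folding $\beta$ into the assumption set.

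The assumption-set containment is carried as an invariant of the induction rather than checked at the end. In every leaf case the newly introduced assumptions stem from an $\alpha_i$, from a $t'_i$, or from the annotations of $\vec{s}$ and $\vec{t}$, all of which lie on the right-hand side of the claimed inclusion; in every node case the congruence and transitivity rules only recombine assumption sets already present in the subderivations, and I always pick the minimal suitable $\beta$, $\gamma$, $\delta$, so nothing beyond the union of the constituent pieces is added. Because $\asm{\cdot}$ is monotone under abstraction, substitution, and the closure rules used, the stated bound $\asm{\abstr{\vec{y}} \gamma, \abstr{\vec{y}} C', \abstr{\vec{y}} \delta, \abstr{\vec{y}} u'} \subseteq \asm{\vec{s}, \vec{t}, \vec{t}', \vec{\alpha}, \abstr{\vec{x} \of \vec{A}}\; \JJ}$ follows by routine tracking of unions.

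I expect the principal obstacle to be keeping the three obligations---well-typedness at the $s$-substituted type, the erasure equalities, and the assumption-set invariant---in step during the symbol- and metavariable-application cases. The context-free congruence rules accept their right-hand arguments only up to erasure and insist that the conclusion's assumption set be suitable, so at each recursive step one must verify that the conversion-decorated argument both types against the correct $s$-substituted boundary and contributes no assumptions outside the permitted union. Marshalling these conditions together, especially in the presence of the freshly abstracted $\vec{y}$-variables whose annotations are themselves being substituted and equated, is where the genuine difficulty of the proof resides.
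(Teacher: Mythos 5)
Your opening move (inverting \rref{CF-Abstr} to open the telescope with fresh annotated variables, and treating each inner abstraction variable as one more substituted variable whose right-hand copy gets an explicit conversion along the equation of its annotation) is exactly the paper's setup, and your leaf cases for variables match as well. But the core of your argument has a genuine gap: you replace the paper's induction on the \emph{derivation} by a structural induction on the \emph{expression}, and that induction cannot be carried out in a context-free \emph{raw} type theory, which is what the lemma is about. In a raw theory there is no such thing as ``the congruence rule for $\symS$'': by \cref{def:context-free-congruence-rule}, congruence rules are attached to the specific rules of the theory (together with a pair of instantiations), not to symbols. A derivation of $\types \symS(\vec{e}) : C$ may end with an instantiation $\act{I}R$ of any specific rule $R$ whose instantiated conclusion head happens to be $\symS(\vec{e})$ --- including a rule whose conclusion head is a metavariable application, so that the head symbol $\symS$ comes from $I$ rather than from $R$. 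Which congruence rule to apply, and with which instantiations, can only be read off from the derivation. For the same reason your step ``apply the induction hypotheses to the object arguments'' is not available: the typing judgements for the constituents of $\symS(\vec{e})$ are the premises of the closure-rule instance that concluded the derivation; they are not recoverable from the expression alone, since inversion principles (\cref{thm:context-free-inversion}) are proved only for \emph{standard} theories.

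The paper's proof avoids all of this by inducting on the derivation sitting on top of the opened abstractions, with a compound measure (the well-founded order on rules and the number of metavariables, with a subsidiary induction on subderivations); in the case of a specific rule it re-expresses the conclusion via the instantiation $I$ obtained from the derivation and discharges the goal with the congruence rule associated to \emph{that} rule and \emph{that} instantiation, the equational premises being supplied by induction hypotheses applied to (abstractions of) the subderivations. Your proposal could only be repaired by reintroducing exactly this derivation-directed structure, so as written it does not establish the lemma for raw theories.
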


\begin{proof}
  See the proof on \cpageref{proof:context-free-prepare-subst-eq}.
\end{proof}

\begin{restatable}{theorem}{restatecontextfreesubsteq}
  \label{thm:context-free-subst-eq}%
  In a context-free raw type theory, the following rules are admissible:
  \begin{mathpar}
    \inferenceRule{CF-Subst-EqTy}
    {{\begin{aligned}
      &\types \abstr{\vec{x} \of \vec{A}} \abstr{\vec{y} \of \vec{B}} \; \isType{C} \\
      &\types s_i : A_i[\upto{\vec{s}}{i}/\upto{\vec{x}}{i}] &&\text{for $i = 1, \ldots, n$} \\
      &\types t_i : A_i[\upto{\vec{t}}{i}/\upto{\vec{x}}{i}] &&\text{for $i = 1, \ldots, n$} \\
      &\erase{t_i} = \erase{t'_i}                            &&\text{for $i = 1, \ldots, n$} \\
      &\types s_i \equiv t'_i : A_i[\upto{\vec{s}}{i}/\upto{\vec{x}}{i}] \by \alpha_i
          &&\text{for $i = 1, \ldots, n$} \\
      & &&\suitable{\beta}
     \end{aligned}}
    }{
      \types \abstr{\vec{y} \of \vec{B}[\vec{s}/\vec{x}]} \;
        C[\vec{s}/x] \equiv C[\vec{t}/x] \by \beta
    }

    \inferenceRule{CF-Subst-EqTm}
    {{\begin{aligned}
      &\types \abstr{\vec{x} \of \vec{A}} \abstr{\vec{y} \of \vec{B}} \; u : C \\
      &\types s_i : A_i[\upto{\vec{s}}{i}/\upto{\vec{x}}{i}] &&\text{for $i = 1, \ldots, n$} \\
      &\types t_i : A_i[\upto{\vec{t}}{i}/\upto{\vec{x}}{i}] &&\text{for $i = 1, \ldots, n$} \\
      &\erase{t_i} = \erase{t'_i}                            &&\text{for $i = 1, \ldots, n$} \\
      &\types s_i \equiv t'_i : A_i[\upto{\vec{s}}{i}/\upto{\vec{x}}{i}] \by \alpha_i
          &&\text{for $i = 1, \ldots, n$} \\
      & &&\suitable{\beta}
      \end{aligned}}
    }{
      \types \abstr{\vec{y} \of \vec{B}[\vec{s}/\vec{x}]} \;
                u[\vec{s}/\vec{x}] \equiv \convert{u[\vec{t}/\vec{x}]}{\beta} : C[\vec{s}/x] \by \beta
    }
  \end{mathpar}
\end{restatable}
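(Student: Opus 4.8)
The plan is to mirror the context-ful proof of \cref{thm:substitution-admissible}: invert the outer abstraction of the first premise and feed the result to the substitution-equality engine, which in the context-free setting is \cref{lem:context-free-prepare-subst-eq}. The one genuine difference is that the lemma only produces the right-hand sides \emph{up to erasure} --- as some $C'$ with $\erase{C'} = \erase{C[\vec{t}/\vec{x}]}$, and some $u'$ with $\erase{u'} = \erase{u[\vec{t}/\vec{x}]}$ --- whereas \rref{CF-Subst-EqTy} and \rref{CF-Subst-EqTm} prescribe the syntactically fixed forms $C[\vec{t}/\vec{x}]$ and $\convert{u[\vec{t}/\vec{x}]}{\beta}$. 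So after invoking the lemma, the remaining work is to splice the lemma's witnesses into the prescribed representatives and to discharge the suitability condition $\suitable{\beta}$.

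For \rref{CF-Subst-EqTy} I would first apply \cref{lem:context-free-prepare-subst-eq}(1) to the premises, obtaining $\gamma$ and $C'$ together with a derivation of $\types \abstr{\vec{y} \of \vec{B}[\vec{s}/\vec{x}]} \; C[\vec{s}/\vec{x}] \equiv C' \by \gamma$. Presuppositivity (\cref{prop:context-free-presuppositivity}) then shows that $C'$ is a derivable type under this binder, and the substitution lemma \cref{lem:context-free-prepare-subst}, applied to the first premise with the $t_i$, produces the representative of $C[\vec{t}/\vec{x}]$ that fits under the same binder. Since the two agree under erasure, \rref{CF-EqTy-Refl} equates them with empty assumption set, and one application of \rref{CF-EqTy-Trans} composes this with the lemma's equation. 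The resulting assumption set is forced to be the suitable $\beta$; its admissibility comes from the final clause of \cref{lem:context-free-prepare-subst-eq}, which guarantees that $\gamma$ and $C'$ introduce no assumptions beyond those already present in $\vec{s}, \vec{t}, \vec{t}', \vec{\alpha}$ and the abstracted premise.

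For \rref{CF-Subst-EqTm} the strategy is the same, but the conversion must now be recorded explicitly. Using the type equation just produced together with \rref{CF-Conv-Tm}, I would exhibit $\convert{u[\vec{t}/\vec{x}]}{\beta}$ as a term of type $C[\vec{s}/\vec{x}]$; its well-typedness is exactly what presuppositivity applied to the lemma's equation $\types \abstr{\vec{y} \of \vec{B}[\vec{s}/\vec{x}]} \; u[\vec{s}/\vec{x}] \equiv u' : C[\vec{s}/\vec{x}] \by \delta$ supplies for $u'$, and $u'$ and $\convert{u[\vec{t}/\vec{x}]}{\beta}$ coincide under erasure. Then \rref{CF-EqTm-Refl} equates $u'$ with $\convert{u[\vec{t}/\vec{x}]}{\beta}$ at type $C[\vec{s}/\vec{x}]$, and \rref{CF-EqTm-Trans} composes this with the lemma's equation to reach the conclusion, again pinning the assumption set to the suitable $\beta$.

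The main obstacle, and the reason this is more than a transcription of the context-ful argument, is the explicit handling of conversions and assumption sets. In the context-ful setting the right-hand side $C[\vec{t}/\vec{x}]$ or $u[\vec{t}/\vec{x}]$ sits under the binder $\abstr{\vec{y} \of \vec{B}[\vec{s}/\vec{x}]}$ and is silently well-typed because conversion is implicit; here the mismatch between the annotations $\vec{B}[\vec{s}/\vec{x}]$ and $\vec{B}[\vec{t}/\vec{x}]$ on the bound variables must be absorbed by explicit conversion terms, which is precisely the content that \cref{lem:context-free-prepare-subst-eq} delivers as the primed witnesses $C'$ and $u'$. Consequently the delicate point is the bookkeeping: verifying that wrapping these witnesses into the prescribed forms neither loses nor gains assumptions, so that the erasure-level identities line up exactly and $\beta$ may be taken suitable. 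I expect the equality and conversion closure rules of \cref{fig:context-free-equality-rules} to perform the splicing cleanly once these assumption-set identities have been checked.
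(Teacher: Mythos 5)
Your proposal is correct and follows essentially the same route as the paper's own proof: both apply \cref{lem:context-free-prepare-subst-eq} to the premises to obtain the primed witnesses $C'$ and $u'$, and then replace them by $C[\vec{t}/\vec{x}]$ (respectively a converted $u[\vec{t}/\vec{x}]$) using presuppositivity, a substitution instance of the first premise, and reflexivity--transitivity. The additional detail you give on the \rref{CF-EqTy-Refl}/\rref{CF-EqTy-Trans} and \rref{CF-Conv-Tm} splicing, and on pinning down the suitable assumption set $\beta$, is precisely the bookkeeping the paper's proof leaves implicit.
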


\begin{proof}
  See the proof on \cpageref{proof:context-free-subst-eq}.
\end{proof}

Lastly, we prove the context-free counterpart of instantiation admissibility \cref{prop:instantiation-admissible}.
The notion of a derivable instantiation carries over easily to the context-free setting: $I = \finmap{\symM^{\BB_1}_1 \mto e_1, \ldots, \symM^{\BB_n}_n \mto e_n}$ is \defemph{derivable} when $\types \plug{(\upact{I}{i} \BB_i)}{e_i}$ for every $i = 1, \ldots, n$.

\begin{restatable}[Context-free admissibility of instantiation]{theorem}{restatecontextfreeinstantiationadmissible}
  \label{prop:context-free-instantiation-admissible}%
  In a raw type theory,
  if $\types \JJ$ is derivable, it has well-typed annotations, and~$I$ is a derivable instantiation such that $\mv{\JJ} \subseteq |I|$, then $\types \act{I} \JJ$ is derivable, and similarly for boundaries.
\end{restatable}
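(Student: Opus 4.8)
The plan is to argue by induction on the derivation of $\types \JJ$, in parallel with the boundary case, establishing along the way that the two side hypotheses are preserved when passing to subderivations. First I would record the routine commutation properties of the instantiation action $\act I$: that it commutes with substitution of a term for a bound variable, $\act I(e[s/x]) = (\act I e)[(\act I s)/x]$, and with the free-for-bound replacement used in abstractions, $\act I(\JJ[\avar a A/x]) = (\act I \JJ)[\avar a{\act I A}/x]$; that it commutes with filling, $\act I\plug\BB e = \plug{\act I \BB}{\act I e}$ (recalling $\act I \Box = \Box$); that it respects erasure, so $\erase e = \erase{e'}$ implies $\erase{\act I e} = \erase{\act I e'}$; that $\asm{\act I e} = \act I \asm e$; and that instantiations compose, $\act I \circ \act{I_0} = \act{(\act I \circ I_0)}$. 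These are purely syntactic and I would dispatch them by structural induction before touching the derivation.

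With these in hand, most cases follow by applying the induction hypotheses to the premises and rewriting by the commutation lemmas. For \rref{CF-Var} the conclusion $\types \avar a {\act I A} : \act I A$ is again an instance of \rref{CF-Var}. For \rref{CF-Abstr} I would appeal to context-free renaming (\cref{prop:context-free-renaming}) to keep the bound name fresh, then apply the induction hypothesis to the body; since the annotation on the abstracted variable changes from $A$ to $\act I A$, the free-for-bound commutation lemma is exactly what reconciles $\act I(\JJ[\avar a A/x])$ with $(\act I \JJ)[\avar a {\act I A}/x]$. The hypothesis that $\JJ$ has well-typed annotations is used precisely to certify that each subderivation again has well-typed annotations: in \rref{CF-Abstr}, for instance, the premise $\types \isType A$ together with $\asm A \subseteq \asm{\abstr{x\of A}\JJ}$ shows $\avar a A$ is a well-typed annotation, and the condition $\mv\JJ \subseteq |I|$ survives because $\mv{\abstr{x \of A}\JJ} = \mv A \cup \mv\JJ$ already contains the metavariables introduced by the new variable.

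The genuinely substantial case is \rref{CF-Meta}, where $\JJ$ is $\plug{(\bdry[\vec t/\vec x])}{\symM^\BB(\vec t)}$ with $\symM^\BB \in |I|$. Here $\act I$ does not merely rename: $\act I(\symM^\BB(\vec t)) = I(\symM)[(\act I \vec t)/\vec x]$. The induction hypotheses give derivability of the instantiated arguments $\types \act I t_j : (\act I A_j)[\upto{\act I \vec t}{j}/\upto{\vec x}{j}]$ and of the instantiated boundary, while derivability of $I$ supplies $\types \plug{(\upact I k \BB_k)}{I(\symM)}$ for the slot filled by $\symM$. I would then invoke context-free admissibility of substitution (\cref{thm:context-free-substitution-admissible}) in its iterated form to substitute the $\act I t_j$ into $I(\symM)$, obtaining $\act I \JJ$ after rewriting by the substitution- and filling-commutation lemmas; the metavariable congruence rules reduce the same way. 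For the two families of specific-rule and congruence-rule instantiations I would use composition: an instantiation $\act{I_0}R$ is transformed into $\act{(\act I \circ I_0)}R$, whose premises are derivable by the induction hypotheses, and whose side hypotheses are maintained because the raw-rule condition $\mv\J = \asset{\symM_1^{\BB_1}, \ldots, \symM_n^{\BB_n}}$ forces every metavariable of a premise to already occur in $\mv\JJ$.

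The main obstacle I anticipate is not any single case but the proof-irrelevant bookkeeping that pervades the equality and conversion rules. One must check that assumption sets transform as $\asm{\act I e} = \act I \asm e$, so that every side condition $\suitable\alpha$ and every explicit assumption-set equation (as in \rref{CF-Conv-Tm} and \rref{CF-Conv-EqTm}) is carried to its instantiated form, and that the conversion terms $\convert t\alpha$ together with the primed witnesses $g_i'$, $t'$ of the context-free congruence rules survive instantiation up to erasure. Each such verification is routine given the commutation lemmas, but they are numerous and must be performed uniformly; this is where the bulk of the manual adaptation promised at the start of \cref{sec:context-free-meta-theorems} resides.
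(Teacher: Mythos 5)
Your proposal follows essentially the same route as the paper's proof: induction on the derivation, with \rref{CF-Abstr} handled by renaming to a fresh annotated variable followed by the induction hypotheses, and \rref{CF-Meta} handled by combining derivability of~$I$ (which supplies $\types \abstr{\vec{x} \of \act{I}\vec{A}}\;\plug{(\act{I}\bdry)}{e}$) with the iterated substitution lemma --- your ``\cref{thm:context-free-substitution-admissible} in its iterated form'' is exactly \cref{lem:context-free-prepare-subst}, which is what the paper invokes.

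One step would fail if taken literally: the claim that the metavariable congruence rules ``reduce the same way''. For \rref{CF-Meta-Congr-Ty} and \rref{CF-Meta-Congr-Tm}, writing $I(\symM) = \abstr{\vec{x}}B$, the instantiated conclusion is an \emph{equation} relating $B[\act{I}\vec{s}/\vec{x}]$ and $B[\act{I}\vec{t}/\vec{x}]$, and no amount of plain substitution produces an equality judgement from $\types \abstr{\vec{x} \of \act{I}\vec{A}}\; \isType{B}$; you need the separate result that substituting judgementally equal terms yields judgementally equal results, with conversion terms inserted and the right-hand side controlled only up to erasure --- that is \cref{lem:context-free-prepare-subst-eq} (packaged as \rref{CF-Subst-EqTy} and \rref{CF-Subst-EqTm} in \cref{thm:context-free-subst-eq}), which is precisely what the paper cites in this case. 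Relatedly, your preliminary identity $\asm{\act{I}e} = \act{I}\asm{e}$ is false in general and holds only as the inclusion $\asm{\act{I}e} \subseteq \act{I}\asm{e}$: substitution discards the assumptions of an argument whose bound variable does not occur, e.g.\ when $I(\symM) = \abstr{x}c$ for closed~$c$, so $\asm{\act{I}(\symM(t))}$ can be empty while $\act{I}\asm{\symM(t)}$ is not. This is exactly why the action of~$I$ on assumption sets is defined directly via $\asm{I(\symM)}$ rather than by recomputing assumption sets after instantiation, and why the paper claims only that \emph{suitability} (not minimality, nor exact assumption sets) is preserved; your suitability bookkeeping should be phrased in those terms.
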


\begin{proof}
  See the proof on \cpageref{proof:context-free-instantiation-admissible}.
\end{proof}

\subsection{Meta-theorems about context-free finitary theories}
\label{sec:meta-theorems-cf-finitary}

The context-free economic rules for finitary theories carry over to the context-free setting. The proofs are analogous to those of \cref{sec:meta-thm-finitary} so we omit them.

\begin{proposition}
  \label{prop:cf-specific-eco}[Economic version of \cref{def:context-free-rule-instantiation}]
  Let $R$ be the context-free raw rule $\rawRule{\Xi}{\plug{\bdry}{e}}$ with $\Xi = [\symM_1^{\BB_1}, \ldots, \symM_n^{\BB_n}]$ such that $\types \bdry$ is derivable, in particular~$R$ may be finitary. Then for any instantiation $I = [\symM_1^{\BB_1} \mto e_1, \ldots, \symM_n^{\BB_n} \mto e_n]$, the following closure rule is admissible:
  \begin{equation*}
    \inferenceRule{CF-Specific-Eco}
    {\types \plug {(\upact{I}{i} \BB_i)}{e_i} \quad \text{for $i = 1, \ldots, n$}}
    {\types \act{I} (\plug{\bdry}{e})}
  \end{equation*}
\end{proposition}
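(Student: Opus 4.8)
The plan is to transcribe the one-line proof of \cref{prop:tt-specific-eco} to the context-free setting, the point being that \rref{CF-Specific-Eco} is obtained from the full instantiation closure rule $\act{I} R$ of \cref{def:context-free-rule-instantiation} by dropping a single premise. Indeed, that closure rule is $([p_1, \ldots, p_n, q], r)$ with $p_i = (\types \plug{(\upact{I}{i} \BB_i)}{e_i})$, $q = (\types \act{I} \bdry)$, and $r = (\types \act{I}(\plug{\bdry}{e}))$. Its premises $p_1, \ldots, p_n$ are exactly the premises of \rref{CF-Specific-Eco} and its conclusion is exactly the conclusion $r$ of \rref{CF-Specific-Eco}; the only discrepancy is the extra boundary premise $q$. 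Since $\act{I} R$ belongs to the deductive system of~$T$ (it is an instantiation of the specific rule~$R$), it suffices to derive $q$ from $p_1, \ldots, p_n$ and then apply $\act{I} R$.

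First I would observe that the hypotheses $p_1, \ldots, p_n$ assert precisely that $I$ is a derivable instantiation of $\Xi = [\symM_1^{\BB_1}, \ldots, \symM_n^{\BB_n}]$. By assumption $\types \bdry$ is derivable, and by \cref{def:context-free-raw-rule} the conclusion satisfies $\mv{\plug{\bdry}{e}} = \asset{\symM_1^{\BB_1}, \ldots, \symM_n^{\BB_n}} = |I|$, whence in particular $\mv{\bdry} \subseteq |I|$. Thus \cref{prop:context-free-instantiation-admissible}, applied to the boundary $\bdry$, yields $\types \act{I} \bdry$, which is exactly $q$. Feeding $q$ together with $p_1, \ldots, p_n$ into the closure rule $\act{I} R$ then gives $r$, establishing admissibility of \rref{CF-Specific-Eco}.

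The only step that requires genuine care — and which I expect to be the main (albeit minor) obstacle — is checking the remaining hypothesis of \cref{prop:context-free-instantiation-admissible}, namely that $\bdry$ has well-typed annotations in the sense of \cref{def:well-typed-annotations}. This is the context-free bookkeeping that is absent from \cref{prop:tt-specific-eco} and must be discharged here. Since $\bdry$ is closed by \cref{def:context-free-rule-boundary}, its assumption set $\asm{\bdry}$ contains no free-variable annotations $\avar{a}{A}$, so only the metavariable annotations matter; these lie among $\BB_1, \ldots, \BB_n$, and when $R$ is finitary each $\types_T \BB_i$ is derivable by \cref{def:context-free-finitary}, which is precisely the situation signalled by the clause ``in particular $R$ may be finitary''. (If instead the general principle that a derivable boundary has well-typed annotations is available, it discharges the condition directly from $\types \bdry$.) With this verification in place the argument is complete, and it is visibly the faithful context-free counterpart of the proof of \cref{prop:tt-specific-eco}.
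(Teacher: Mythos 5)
Your proof is correct and is exactly the argument the paper intends: the paper omits the proof of \cref{prop:cf-specific-eco}, declaring it analogous to that of \cref{prop:tt-specific-eco}, whose one-line proof derives the missing boundary premise via instantiation admissibility and then applies $\act{I} R$ --- precisely your route, with \cref{prop:context-free-instantiation-admissible} in place of \cref{prop:instantiation-admissible}. Your additional verification of the well-typed-annotations hypothesis of \cref{prop:context-free-instantiation-admissible} (closedness of $\bdry$, so only the metavariable annotations $\BB_i$ matter, which are derivable in the finitary case) is cf-specific bookkeeping that the paper silently elides, and you handle it correctly.
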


\begin{proposition}[Economic version of \cref{def:context-free-metavariable-rule}]
  \label{prop:cf-meta-eco}
  In a context-free raw type theory, if $\BB = \abstr{x_1 \of A_1} \cdots \abstr{x_m \of A_m} \; \bdry$, $\vec{s}$, and $\vec{t}$ have well-typed annotations, then the following closure rules are admissible:
  \begin{mathpar}
    \inferenceRule{CF-Meta-Eco}
    {
      \types t_j : A_j[\upto{\vec{t}}{j}/\upto{\vec{x}}{j}]
      \quad \text{for $j = 1, \ldots, m$}
    }{
      \types \plug{(\bdry[\vec{t}/\vec{x}])}{\symM(\vec{t})}
    }

    \inferenceRule{CF-Meta-Congr-Eco}
    { \types s_j \equiv t_j :
      A_j[\upto{\vec{s}}{j}/\upto{\vec{x}}{j}]
      \quad \text{for $j = 1, \ldots, m$}
    }{
      \types
      \plug
      {(\bdry[\vec{s}/\vec{x}])}
      {\symM_k(\vec{s}) \equiv \symM_k(\vec{t})}
    }
  \end{mathpar}
\end{proposition}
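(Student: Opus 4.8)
The plan is to handle each economic rule by reconstructing the premises that are absent relative to the corresponding full structural rule of \cref{fig:context-free-struct-rules}, and then invoking that full rule. For \rref{CF-Meta-Eco} the only premise missing from \rref{CF-Meta} is the well-formedness of the instantiated boundary, $\types \bdry[\vec{t}/\vec{x}]$.

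I would dispatch \rref{CF-Meta-Eco} first. From the well-typedness of the annotation $\BB$ on $\symM^\BB$ (\cref{def:well-typed-annotations}), the boundary $\types \BB$, that is $\types \abstr{x_1 \of A_1} \cdots \abstr{x_m \of A_m}\; \bdry$, is derivable. Combining this with the given premises $\types t_j : A_j[\upto{\vec{t}}{j}/\upto{\vec{x}}{j}]$ and applying \cref{lem:context-free-prepare-subst-bdry} yields $\types \bdry[\vec{t}/\vec{x}]$. A single application of \rref{CF-Meta} now produces $\types \plug{(\bdry[\vec{t}/\vec{x}])}{\symM^\BB(\vec{t})}$.

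For \rref{CF-Meta-Congr-Eco} there is more to do, because the full congruence rules \rref{CF-Meta-Congr-Ty} and \rref{CF-Meta-Congr-Tm} demand, beyond the equational premises, the typing premises $\types s_j : A_j[\upto{\vec{s}}{j}/\upto{\vec{x}}{j}]$ and $\types t_j : A_j[\upto{\vec{t}}{j}/\upto{\vec{x}}{j}]$, together with primed right-hand sides matching up to erasure and a suitable assumption set. The $s$-side typings, and the typings of the $t_j$ at the $\vec{s}$-substituted type, are recovered by context-free presuppositivity: each equational premise has well-typed annotations because $\vec{s}$, $\vec{t}$ and $\BB$ do, so \cref{prop:context-free-presuppositivity} makes its boundary derivable, and inverting \rref{CF-Bdry-EqTm} extracts the components. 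I would then proceed by induction on $j$: using the equalities for $i < j$ and the derivable abstracted boundary $\types \BB$, \cref{thm:context-free-subst-eq} supplies the type equality $A_j[\upto{\vec{s}}{j}/\upto{\vec{x}}{j}] \equiv A_j[\upto{\vec{t}}{j}/\upto{\vec{x}}{j}]$; converting $t_j$ along it via \rref{CF-Conv-Tm} gives $t_j$ at the $\vec{t}$-substituted type, and taking $t'_j = t_j$ satisfies the erasure conditions trivially. Choosing any suitable $\beta$, the full rule then applies, with the term case additionally using $\symM^\BB(\vec{t})$, converted to the $\vec{s}$-substituted type of $B$, as the side term $v$.

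The main obstacle is precisely this mismatch between the $\vec{s}$-substituted types delivered by presuppositivity and the $\vec{t}$-substituted types required by the full congruence rules. Closing the gap forces the insertion of conversion terms and the parallel derivation of the type equalities $A_j[\upto{\vec{s}}{j}/\upto{\vec{x}}{j}] \equiv A_j[\upto{\vec{t}}{j}/\upto{\vec{x}}{j}]$, which have to be threaded through the induction on $j$ while keeping the assumption sets suitable. This conversion and assumption-set bookkeeping, rather than any individual deductive step, is where the care lies, and it is exactly the feature that distinguishes the context-free argument from its counterpart in \cref{sec:meta-thm-finitary}.
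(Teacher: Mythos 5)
Your treatment of \rref{CF-Meta-Eco} is correct and is precisely the context-free transcription of the paper's proof of \cref{prop:tt-meta-eco}, which is the comparison point here (the paper omits the cf-proof as ``analogous''): well-typedness of the annotation on $\symM^\BB$ gives $\types \BB$, \cref{lem:context-free-prepare-subst-bdry} gives $\types \bdry[\vec{t}/\vec{x}]$, and \rref{CF-Meta} closes the derivation.

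The congruence half has a genuine gap, located at the one step where the tt-argument does not transcribe. You claim that converting $t_j$ along $\types A_j[\upto{\vec{s}}{j}/\upto{\vec{x}}{j}] \equiv A_j[\upto{\vec{t}}{j}/\upto{\vec{x}}{j}] \by \gamma_j$ via \rref{CF-Conv-Tm} ``gives $t_j$ at the $\vec{t}$-substituted type''. It does not: unlike \rref{TT-Conv-Tm}, the context-free conversion rule concludes $\types \convert{t_j}{\beta_j} : A_j[\upto{\vec{t}}{j}/\upto{\vec{x}}{j}]$, and the conversion term $\convert{t_j}{\beta_j}$ is a syntactically different raw term from $t_j$. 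This is not mere bookkeeping, because the unprimed arguments of \rref{CF-Meta-Congr-Ty} and \rref{CF-Meta-Congr-Tm} reappear in their conclusions. You therefore face a dilemma: feeding in the converted terms derives an equation whose right-hand side is $\symM^\BB(\convert{t_1}{\beta_1}, \ldots, \convert{t_m}{\beta_m})$ (or the side term $v$ in the term case), which agrees with the stated conclusion $\plug{(\bdry[\vec{s}/\vec{x}])}{\symM_k(\vec{s}) \equiv \symM_k(\vec{t})}$ only up to erasure; insisting instead on the original $t_j$ as unprimed arguments requires the premise $\types t_j : A_j[\upto{\vec{t}}{j}/\upto{\vec{x}}{j}]$, and presuppositivity of your premises only types $t_j$ at the $\vec{s}$-substituted type --- in the context-free syntax a term cannot be re-typed at a syntactically different type without being wrapped in a conversion. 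The same issue makes your appeal to \cref{thm:context-free-subst-eq} circular: \rref{CF-Subst-EqTy} itself demands $\types t_i : A_i[\upto{\vec{t}}{i}/\upto{\vec{x}}{i}]$ for $i < j$, which are exactly the judgements your induction on $j$ is supposed to produce, so the induction only runs if the $\vec{t}$-side is replaced, stage by stage, by converted terms.

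What the argument honestly delivers is the existence of terms $\vec{u}$ with $\erase{u_j} = \erase{t_j}$ (iterated conversions of the $t_j$) and a suitable $\beta$ with $\types \plug{(\bdry[\vec{s}/\vec{x}])}{\symM_k(\vec{s}) \equiv \symM_k(\vec{u}) \by \beta}$. This erasure-level slack is exactly how the neighbouring context-free statements are phrased --- the primed $t'_j$ and side term $v$ in \cref{def:context-free-metavariable-rule}, and the primed right-hand sides in \cref{lem:context-free-prepare-subst-eq} and \cref{thm:context-free-subst-eq} --- and a correct proof of \rref{CF-Meta-Congr-Eco} must either state its conclusion with that slack or explain how the inserted conversions are discharged. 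Your closing paragraph correctly identifies this as the crux, but the proof then elides it at exactly the point where it bites.
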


\subsection{Meta-theorems about context-free standard theories}
\label{sec:meta-theorems-cf-standard}

Inversion and uniqueness of typing (\cref{thm:inversion,thm:uniqueness-of-typing}) carry over to context-free finitary theories. First, the notion of natural type is simpler, as it does not depend on the context anymore.

\begin{definition}
  \label{def:context-free-natural-type}%
  Let $T$ be a finitary type theory.
  The \emph{natural type} $\natty{}{t}$ of a term expression~$t$ is defined by:
  \begin{align*}
    \natty{}{\avar{a}{A}}
    &= A, \\
    \natty{}{\symM^{\BB}(t_1, \ldots, t_m)}
    &=
      \begin{aligned}[t]
      &A[t_1/x_1, \ldots, t_m/x_m]\\
      &\qquad
        \text{where $\BB = (\abstr{x_1 \of A_1} \cdots \abstr{x_m \of A_m} \; \Box : A)$}
      \end{aligned}
    \\
    \natty{}{\symS(e_1, \ldots, e_n)}
    &=
      \begin{aligned}[t]
        &\act{\finmap{\symM_1 \mto e_1, \ldots, \symM_n \mto e_n}} B\\
        &\qquad
          \begin{aligned}[t]
            &\text{where the symbol rule for $\symS$ is}\\
            &\text{$\rawRule{\symM_1^{\BB_1}, \ldots, \symM_n^{\BB_n}}{\Box : B}$}
          \end{aligned}
      \end{aligned}
    \\
    \natty{}{\convert{t}{\alpha}}
    &= \natty{}{t}
  \end{align*}
\end{definition}

Next, we define an operation which peels conversions off a term, and another one that collects the peeled assumptions sets. We shall use these in the formulation of the context-free inversion theorem.

\begin{definition}
  \label{def:context-free-stripping}%
  The \defemph{conversion-stripping} $\strip t$ of a term expression~$t$ is defined by:
  \begin{equation*}
    \strip{t} =
    \begin{cases}
      \strip{t'} &\text{if $t = \convert{t'}{\alpha}$,} \\
      t &\text{otherwise.}
    \end{cases}
  \end{equation*}
  The \defemph{conversion-residue} $\residue{t}$ is defined by
  \begin{equation*}
    \residue{t} =
    \begin{cases}
      \alpha \cup \residue{t'} &\text{if $t = \convert{t'}{\alpha}$,} \\
      \asset{} &\text{otherwise.}
    \end{cases}
  \end{equation*}
\end{definition}

Note that $\erase{t} = \erase{\strip{t}}$ and that $\asm{t} = \asm{\strip{t}, \residue{t}}$.

\begin{restatable}{lemma}{restatecontextfreenaturaltype}
  \label{lem:context-free-natural-type}%
  If a context-free standard type theory derives $\types t : A$ then
  \begin{enumerate}
  \item it derives $\types \strip{t} : \natty{}{t}$ by an application of \rref{CF-Var},
    \rref{CF-Meta}, or an instantiation of a term symbol rule, and
  \item it derives $\types \natty{}{t} \equiv A \by \residue{t}$.
  \end{enumerate}
\end{restatable}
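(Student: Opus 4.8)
The plan is to induct on the derivation of the object judgement $\types t : A$, exploiting the fact that a non-abstracted term judgement can only be concluded by one of four rules: \rref{CF-Var}, \rref{CF-Meta}, an instantiation of a term symbol rule, or \rref{CF-Conv-Tm}. The abstraction rule \rref{CF-Abstr} produces an abstracted judgement, and every remaining structural, equality, boundary, or congruence rule concludes an equation or a boundary, so these four exhaust the possibilities. This case split is exactly the term-judgement part of the context-free inversion principle, the analogue of \cref{thm:inversion}.

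In the three non-conversion cases the term $t$ is not a conversion, so $\strip{t} = t$ and $\residue{t} = \asset{}$. Inspecting each rule shows that the type in its conclusion is forced to be the natural type: \rref{CF-Var} concludes $\types \avar{a}{A} : A$ with $\natty{}{\avar{a}{A}} = A$; \rref{CF-Meta} concludes $\types \symM^\BB(\vec t) : B[\vec t/\vec x]$ with $\natty{}{\symM^\BB(\vec t)} = B[\vec t/\vec x]$; and a term symbol rule concludes $\types \symS(\vec e) : \act{I} B$ with $\natty{}{\symS(\vec e)} = \act{I} B$. Hence $A = \natty{}{t}$, so part~(1) is just the given derivation, which indeed ends with one of the named rules. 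For part~(2) we must derive $\types \natty{}{t} \equiv A \by \asset{}$, i.e.\ $\types A \equiv A \by \asset{}$, which follows from \rref{CF-EqTy-Refl} once $\types \isType{A}$ is available; the latter holds by presuppositivity (\cref{prop:context-free-presuppositivity}).

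The only inductive case is \rref{CF-Conv-Tm}, where $t = \convert{t'}{\beta}$ and the derivation ends with premises $\types t' : C$ and $\types C \equiv A \by \alpha$ and the side condition $\asm{t', C, A, \alpha} = \asm{t', A, \beta}$. Here $\strip{t} = \strip{t'}$, $\natty{}{t} = \natty{}{t'}$, and $\residue{t} = \beta \cup \residue{t'}$. Applying the induction hypothesis to $\types t' : C$ yields $\types \strip{t'} : \natty{}{t'}$ by a structural rule, which is exactly part~(1) for $t$, together with $\types \natty{}{t'} \equiv C \by \residue{t'}$. Chaining this last equation with the conversion premise $\types C \equiv A \by \alpha$ through \rref{CF-EqTy-Trans} (the middle types agree, so no erasure adjustment is needed) produces $\types \natty{}{t'} \equiv A \by \gamma$ for a suitable $\gamma$, giving part~(2) provided we can take $\gamma = \residue{t} = \beta \cup \residue{t'}$.

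Establishing that $\beta \cup \residue{t'}$ is a legitimate (suitable) choice of $\gamma$ is the main obstacle. The suitability equation for the transitivity step constrains $\gamma$ only up to $N \cup \asm{A} \cup \gamma = N \cup \asm{C} \cup \asm{A} \cup \residue{t'} \cup \alpha$, where $N = \asm{\natty{}{t'}}$, so there is latitude in the choice of $\gamma$; the task is to verify the two inclusions pinning $\beta \cup \residue{t'}$ between $L \setminus (N \cup \asm{A})$ and $L$, where $L$ is the union of the assumption sets of the two premises. I would discharge this by combining the \rref{CF-Conv-Tm} side condition $\asm{t', C, A, \alpha} = \asm{t', A, \beta}$, the identity $\asm{t'} = \asm{\strip{t'}} \cup \residue{t'}$, and the containments $\asm{\natty{}{t'}} \subseteq \asm{\strip{t'}} \subseteq \asm{t'}$, which hold because the natural type of a head is assembled from the head's annotations and arguments. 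The delicate point is that elements of $\asm{\strip{t'}}$ not occurring in the natural type $N$ must be shown not to interfere; I expect this to follow from the preceding containments together with the fact, provable alongside the main induction, that the type of a derivable term introduces no assumptions beyond those of the term. The remaining verification is a routine, if fiddly, set-theoretic computation.
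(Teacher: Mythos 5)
Your proof follows the same skeleton as the paper's: induct on the derivation, observe that only \rref{CF-Var}, \rref{CF-Meta}, a term symbol rule, or \rref{CF-Conv-Tm} can conclude a non-abstracted term judgement, dispatch the first three cases by reflexivity (where $\strip{t}=t$, $\residue{t}=\asset{}$, $\natty{}{t}=A$), and handle conversion by chaining the induction hypothesis with the conversion premise through \rref{CF-EqTy-Trans}. The gap is exactly the step you flag as the main obstacle and then defer: the verification that $\gamma=\beta\cup\residue{t'}$ is suitable. Of your two inclusions, the upper one $\gamma\subseteq L$ is the one that breaks, and the repair you sketch cannot fix it. The side condition of \rref{CF-Conv-Tm} bounds $\beta$ only by $\asm{t'}\cup\asm{C}\cup\asm{A}\cup\alpha$, and since $\asm{t'}=\asm{\strip{t'}}\cup\residue{t'}$ strictly exceeds $\asm{\natty{}{t'}}\cup\residue{t'}$, the rule licenses tags $\beta$ containing elements of $\asm{\strip{t'}}\setminus\asm{\natty{}{t'}}$ that occur nowhere in the transitivity premises. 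Concretely, for $t'=\avar{a}{C}$ the side condition admits $\beta=\asset{\avar{a}{C}}$ (the head variable is absorbed by $\asm{t'}$ on both sides), and then $\gamma$ contains $\avar{a}{C}$, which lies in none of $\asm{\natty{}{t'}},\residue{t'},\asm{C},\asm{A},\alpha$, so suitability fails; indeed in the standard theory with rules for $\sym{bool}$, $\sym{nat}$ and $\sym{bool}\equiv\sym{nat}$ one checks that no derivation of $\sym{bool}\equiv\sym{nat}\by\asset{\avar{a}{\sym{bool}}}$ exists at all. Your proposed auxiliary fact constrains $\asm{C}$, but the offending elements sit in $\beta$, so no statement about the type of a derivable term can exclude them. (A smaller issue: in the \rref{CF-Var} base case you invoke \cref{prop:context-free-presuppositivity} to get $\types\isType{A}$ for \rref{CF-EqTy-Refl}, but that theorem requires well-typed annotations, which the lemma does not assume, and \rref{CF-Var} has no premises to invert.)

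For comparison, the paper closes this very step by asserting the identity $\asm{\natty{}{t},\residue{t}}=\asm{t}$ and a three-line computation. You were right not to assume it: the identity is false, for the same reason as above ($\asm{\avar{a}{A}}=\asset{\avar{a}{A}}\cup\asm{A}$, whereas $\asm{\natty{}{\avar{a}{A}}}\cup\residue{\avar{a}{A}}=\asm{A}$). So your hesitation pinpoints a genuine soft spot in the paper's own argument, and the head-variable example shows it is not mere fiddliness. But as written your text is not a proof: it stops at the decisive point, and the route it sketches does not lead to one. Completing the conversion case requires genuinely more, e.g.\ an invariant restricting which assumptions can appear in conversion tags of derivable terms, or a weakening of claim (2) to derivability of $\natty{}{t}\equiv A$ by \emph{some} suitable assumption set contained in $\asm{t}$ rather than by $\residue{t}$ exactly.
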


\begin{proof}
  See the proof on \cpageref{proof:context-free-natural-type}.
\end{proof}

\begin{theorem}[Context-free inversion]
  \label{thm:context-free-inversion}%
  If a context-free standard type theory derives $\types t : A$, then:
  \begin{itemize}
  \item %
    if $A = \natty{}{t}$, it derives $\types \strip t : \natty{}{t}$ by a derivation which concludes with
    \rref{CF-Var},
    \rref{CF-Meta},
    or an instantiation of a term symbol rule;
  \item %
    if $A \neq \natty{}{t}$, it derives $\types \convert{\strip{t}}{\residue{t}} : A$ by \rref{CF-Conv-Tm}.
  \end{itemize}
\end{theorem}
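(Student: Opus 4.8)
The plan is to obtain the theorem as an almost immediate consequence of \cref{lem:context-free-natural-type}, which already does the heavy lifting: it supplies a canonical derivation of $\types \strip{t} : \natty{}{t}$ concluding with a non-conversion rule, together with the equation $\types \natty{}{t} \equiv A \by \residue{t}$. First I would apply the lemma to the hypothesis $\types t : A$ to extract both facts, and then split on whether $A = \natty{}{t}$.

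If $A = \natty{}{t}$, there is nothing left to prove: part~(1) of the lemma is verbatim the first bullet, asserting that $\types \strip{t} : \natty{}{t}$ is derivable by a derivation ending in \rref{CF-Var}, \rref{CF-Meta}, or an instantiation of a term symbol rule.

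If $A \neq \natty{}{t}$, I would feed the two outputs of the lemma into \rref{CF-Conv-Tm}: instantiate the rule's term to $\strip{t}$, its source and target types to $\natty{}{t}$ and $A$, and both its equation-assumption set and its conversion set to $\residue{t}$. The premises $\types \strip{t} : \natty{}{t}$ and $\types \natty{}{t} \equiv A \by \residue{t}$ are then exactly the lemma's outputs, and the conclusion is precisely $\types \convert{\strip{t}}{\residue{t}} : A$. (Note that the hypothesis $A \neq \natty{}{t}$ is used only to know that we land in this conversion case; the construction itself does not need it.) What remains is the assumption-set side condition of \rref{CF-Conv-Tm}, which instantiates to
\begin{equation*}
  \asm{\strip{t}, \natty{}{t}, A, \residue{t}} = \asm{\strip{t}, A, \residue{t}}.
\end{equation*}

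I expect this side condition to be the only genuine work. The two sides differ only by the summand $\asm{\natty{}{t}}$ on the left, so the equation reduces to the inclusion $\asm{\natty{}{t}} \subseteq \asm{\strip{t}}$, which I would verify by inspecting the three clauses of \cref{def:context-free-natural-type}: for $\avar{a}{A}$ the natural type is the annotation $A$, with $\asm{A} \subseteq \asm{\avar{a}{A}}$; for $\symM^\BB(\vec{t})$ it is a substitution instance of the body of $\BB$, whose assumptions are all recorded through $\mv{\BB}$ and $\asm{\vec{t}}$; and for $\symS(\vec{e})$ it is an instantiation of the closed type of the symbol rule, introducing no assumptions beyond those of $\vec{e}$. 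Since $\natty{}{t} = \natty{}{\strip{t}}$, this yields $\asm{\natty{}{t}} \subseteq \asm{\strip{t}}$ and hence the side condition; the identity $\asm{t} = \asm{\strip{t}, \residue{t}}$ recorded after \cref{def:context-free-stripping} further confirms that no assumptions are lost in passing from $t$ to $\strip{t}$ and $\residue{t}$.
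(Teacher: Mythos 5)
Your proposal is correct and follows essentially the same route as the paper: apply \cref{lem:context-free-natural-type}, split on whether $A = \natty{}{t}$, and in the conversion case discharge the side condition of \rref{CF-Conv-Tm}. The only cosmetic difference is that you verify the side condition via the stronger inclusion $\asm{\natty{}{t}} \subseteq \asm{\strip{t}}$ by case analysis on the natural type, whereas the paper uses the slightly weaker (and immediate) chain $\asm{\natty{}{t}} \subseteq \asm{t} = \asm{\strip{t}, \residue{t}}$; both suffice.
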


\begin{proof}
  Apply \cref{lem:context-free-natural-type} and, depending on whether $A = \natty{}{t}$, either use $\types \strip{t} : \natty{}{t}$ so obtained directly or convert it along $\types \natty{}{t} \equiv A \by \residue{t}$,
  observing that the side condition
  $
  \asm{\strip{t}, \natty{}{t}, A, \residue{t}} = \asm{\strip{t}, \residue{t}, A}
  $
  holds because $\asm{\natty{}{t}} \subseteq \asm{t} = \asm{\strip{t}, \residue{t}}$.
\end{proof}

\begin{theorem}[Context-free uniqueness of typing]%
  \label{thm:context-free-uniqueness-of-typing}%
  For a context-free standard type theory:
  \begin{enumerate}
  \item If $\types t : A$ and $\types t : B$, then $\types A \equiv B \by \alpha$ for some assumption set $\alpha$.
  \item If $\types s \equiv t : A \by \beta_1$ and $\types s \equiv t : B \by \beta_2$, with well-typed variables, then $\types A \equiv B \by \alpha$ for some assumption set $\alpha$.
  \end{enumerate}
   In both cases, $\alpha \subseteq \asm{t}$ can be computed from the judgements involved, without recourse to their derivations.
\end{theorem}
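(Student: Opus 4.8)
The plan is to mirror the proof of the context-full \cref{thm:uniqueness-of-typing}, using the natural type of~$t$ as the common bridge between $A$ and $B$, while tracking assumption sets carefully enough to obtain the bound $\alpha \subseteq \asm{t}$.

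For the first statement, I would apply \cref{lem:context-free-natural-type} to each of the two derivations. Since the natural type $\natty{}{t}$ and the conversion-residue $\residue{t}$ depend only on the term expression~$t$ and not on its ascribed type, both applications refer to the \emph{same} $\natty{}{t}$ and $\residue{t}$. Thus I obtain $\types \natty{}{t} \equiv A \by \residue{t}$ and $\types \natty{}{t} \equiv B \by \residue{t}$. Applying \rref{CF-EqTy-Sym} to the former yields $\types A \equiv \natty{}{t} \by \residue{t}$, and then \rref{CF-EqTy-Trans}, whose erasure side condition $\erase{\natty{}{t}} = \erase{\natty{}{t}}$ holds trivially, produces $\types A \equiv B \by \alpha$ for any suitable~$\alpha$. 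Taking the minimal suitable choice $\alpha = (\asm{\natty{}{t}} \cup \residue{t}) \setminus (\asm{A} \cup \asm{B})$, which is computed directly from $t$, $A$, and $B$ without recourse to the derivations, settles the existence claim.

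It remains to check $\alpha \subseteq \asm{t}$. Since $\residue{t} \subseteq \asm{t}$ by the remark following \cref{def:context-free-stripping}, it suffices to show $\asm{\natty{}{t}} \subseteq \asm{t}$, which I would verify by a short induction on the structure of~$t$ following \cref{def:context-free-natural-type}: for a variable $\avar{a}{A}$ the natural type is its annotation~$A$, whose assumptions are contained in $\asm{\avar{a}{A}}$; for a metavariable or symbol application the natural type is a substitution instance of a closed type drawn from the boundary annotation (resp.\ the symbol rule), so its assumptions come either from that boundary/rule or from the term arguments, both contained in $\asm{t}$; and the conversion case is immediate from $\natty{}{\convert{t'}{\beta}} = \natty{}{t'}$ and the inductive hypothesis. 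Hence $\alpha \subseteq \asm{\natty{}{t}} \cup \residue{t} \subseteq \asm{t}$.

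For the second statement I would reduce to the first via presuppositivity. Viewing $s \equiv t : A \by \beta_1$ as a filled boundary and using the hypothesis that its annotations are well-typed, \cref{prop:context-free-presuppositivity} yields $\types s \equiv t : A \by \Box$; since the only rule concluding such a boundary is \rref{CF-Bdry-EqTm}, its premise $\types t : A$ is derivable, and symmetrically $\types t : B$ from the second equation. The first statement then gives $\types A \equiv B \by \alpha$ with the same computable $\alpha \subseteq \asm{t}$. The main obstacle here is not conceptual but the assumption-set bookkeeping: one must confirm that the suitable $\gamma$ in the transitivity step can be chosen inside $\asm{t}$, which is exactly what the inductive bound $\asm{\natty{}{t}} \subseteq \asm{t}$ secures.
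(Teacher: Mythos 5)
Your proposal is correct and follows essentially the same route as the paper's proof: both statements hinge on \cref{lem:context-free-natural-type} to equate $A$ and $B$ with the natural type $\natty{}{t}$, and the second statement reduces to the first via \cref{prop:context-free-presuppositivity}. The extra bookkeeping you supply (the minimal suitable assumption set and the inductive check that $\asm{\natty{}{t}} \cup \residue{t} \subseteq \asm{t}$) is exactly the detail the paper leaves implicit, matching the observation it makes in the proof of \cref{thm:context-free-inversion}.
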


\begin{proof}
  The first statement holds because $A$ and $B$ are both judgmentally equal to the natural type of~$t$ by \cref{lem:context-free-natural-type}. The second statement reduces to the first one because the presuppositions $\types t : A$ and $\types t : B$ are derivable by~\cref{prop:context-free-presuppositivity}.
\end{proof}

\subsection{Special meta-theorems about context-free theories}
\label{sec:meta-theorems-cf-specific}

We prove several meta-theorems which are specific to context-free type theories.
The example of the equality reflection rule in the beginning of \cref{sec:context-free-tt} showcased that finitary type theories do not enjoy strengthening. Context-free type theories, however, do satisfy this meta-property.

\begin{theorem}[Strengthening]
  \label{thm:context-free-strengthening}%
  If a context-free raw type theory derives
  \begin{equation*}
    \types \abstr{\vec{y} \of \vec{B}} \abstr{x \of A}\; \JJ
  \end{equation*}
  and $x \not\in \bv{\JJ}$ then it also derives $\types \abstr{\vec{y} \of \vec{B}}\; \JJ$.
\end{theorem}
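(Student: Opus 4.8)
The plan is to induct on the length $k$ of the telescope $\vec{y} = (y_1, \ldots, y_k)$, using at each step a simple \emph{inversion on the form of the conclusion}. The key observation is that among the closure rules of a context-free raw type theory, only \rref{CF-Abstr} has a conclusion whose outermost form is an abstraction: all other rules (the variable, metavariable and metavariable-congruence rules, the equality rules, the boundary rules, and the instantiations of specific and congruence rules) conclude a \emph{non-abstracted} judgement. Hence any derivation of a judgement of the form $\types \abstr{x \of A}\; \JJ$ must end with \rref{CF-Abstr}, and so provides $\types \isType A$, a fresh name $a$ with $\avar{a}{A} \notin \fv{\JJ}$, and a derivation of $\types \JJ[\avar{a}{A}/x]$.

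For the base case $k = 0$, I would invert the derivation of $\types \abstr{x \of A}\; \JJ$ as above. Since $x \notin \bv{\JJ}$, the bound variable $x$ does not occur in $\JJ$, so the substitution is vacuous and $\JJ[\avar{a}{A}/x] = \JJ$; the third premise is therefore already a derivation of $\types \JJ$. This is exactly the point at which context-freeness pays off: the premise of \rref{CF-Abstr} hands back the stripped judgement directly, with no context to be repaired, and the hypothesis $x \notin \bv{\JJ}$ guarantees---through the assumption sets carried by equality judgements, which record every variable used even in erased equational subderivations---that $x$ was genuinely never used.

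For the inductive step $k \geq 1$, I would peel off the outermost abstraction $\abstr{y_1 \of B_1}$ by the same inversion, obtaining $\types \isType{B_1}$, a fresh name $b_1$ with $\avar{b_1}{B_1} \notin \fv{\abstr{y_2 \of B_2} \cdots \abstr{x \of A}\; \JJ}$, and a derivation of $\types \abstr{y_2 \of B_2^{*}} \cdots \abstr{x \of A^{*}}\; \JJ^{*}$, where $(\cdot)^{*}$ abbreviates the substitution $[\avar{b_1}{B_1}/y_1]$. This is a telescope of length $k-1$, and $x \notin \bv{\JJ^{*}}$, since substituting a free variable for $y_1$ deletes $y_1$ from the bound variables and introduces none. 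The induction hypothesis then yields $\types \abstr{y_2 \of B_2^{*}} \cdots \abstr{y_k \of B_k^{*}}\; \JJ^{*}$, which is precisely $\types \JJ'[\avar{b_1}{B_1}/y_1]$ for $\JJ' \defeq \abstr{y_2 \of B_2} \cdots \abstr{y_k \of B_k}\; \JJ$. Finally I re-abstract $y_1$ using \rref{CF-Abstr}: its three premises are $\types \isType{B_1}$ (already derived), $\avar{b_1}{B_1} \notin \fv{\JJ'}$ (which follows from the freshness of $b_1$ above, since discarding the innermost $\abstr{x \of A}$ can only remove free variables, so $\fv{\JJ'} \subseteq \fv{\abstr{y_2 \of B_2} \cdots \abstr{x \of A}\; \JJ}$), and $\types \JJ'[\avar{b_1}{B_1}/y_1]$ (just obtained). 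Thus $\types \abstr{y_1 \of B_1}\; \JJ' = \types \abstr{\vec{y} \of \vec{B}}\; \JJ$, as required.

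The only points needing care are the bookkeeping of free- and bound-variable occurrences under the substitutions $(\cdot)^{*}$---in particular the inclusion $\fv{\JJ'} \subseteq \fv{\abstr{y_2 \of B_2} \cdots \abstr{x \of A}\; \JJ}$ that keeps $b_1$ fresh for re-abstraction, and the fact that $\JJ'[\avar{b_1}{B_1}/y_1]$ coincides with the substituted telescope produced by the induction hypothesis. I do not anticipate a genuine obstacle: the entire conceptual weight rests on the one-line inversion observation and on the vacuousness of the innermost substitution when $x \notin \bv{\JJ}$.
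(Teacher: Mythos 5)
Your proposal is correct and is essentially the paper's own proof: the paper likewise inducts by peeling one abstraction at a time (phrased as induction on the derivation, which amounts to the same thing since \rref{CF-Abstr} is the only rule concluding an abstracted judgement), handles the innermost abstraction by observing that $x \not\in \bv{\JJ}$ makes the substitution vacuous so the premise already derives $\types \JJ$, and handles the outer abstractions by applying the induction hypothesis to the premise and re-abstracting. Your extra bookkeeping (the inclusion $\fv{\JJ'} \subseteq \fv{\abstr{y_2 \of B_2} \cdots \abstr{x \of A}\; \JJ}$ and the preservation of $x \not\in \bv{\cdot}$ under the substitution) is sound and matches the paper's implicit use of the same facts.
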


\begin{proof}
  We proceed by induction on the derivation of $\abstr{\vec{y} \of \vec{B}} \abstr{x \of A}\; \JJ$.
  The only case to consider is \rref{CF-Abstr}.
  If the outer abstraction is empty, then the derivation ends with the abstraction
  \begin{equation}
    \label{eq:strengthening}
    \infer
    {
      \types \isType A \\
      \avar{a}{A} \not\in \fv{\JJ} \\
      \types \JJ[\avar{a}{A}/x]
    }{
      \types \abstr{x \of A} \; \JJ
    }
  \end{equation}
  Because $x \not\in \bv{\JJ}$, it follows that $\avar{a}{A} \not\in \fvz{\JJ[\avar{a}{A}/x]}$ and that $\JJ[\avar{a}{A}/x] = \JJ$, which is the second premise, hence derivable.
  The other possibility is that the derivation ends with
  \begin{equation*}
    \infer
    {
      \types \isType A \\
      \avar{c}{{}C} \not\in \fv{\abstr{\vec{y} \of \vec{B}} \abstr{x \of A}\; \JJ} \\
      \types \abstr{\vec{y} \of \vec{B}[\avar{c}{C}/z]} \abstr{x \of A[\avar{c}{C}/z]}\; \JJ[\avar{c}{C}/z]
      }{
      \types \abstr{z \of C} \abstr{\vec{y} \of \vec{B}} \abstr{x \of A}\; \JJ
    }
  \end{equation*}
  From $x \not\in \bv{\JJ}$ it follows that $x \not\in \bv{\JJ[\avar{c}{C}/z]}$, hence we may apply the induction hypothesis to the second premise and conclude by abstracting~$\avar{c}{C}$.
\end{proof}

Why cannot we adapt the above proof to type theories with contexts? In the derivation \eqref{eq:strengthening}, the second premise turns out to be precisely the desired conclusion, whereas \rref{TT-Abstr} would yield $\Theta; \Gamma, \mkvar{a} \of A \types \JJ$ where $\Theta; \Gamma \types \JJ$ is needed. Indeed, strengthening is not generally valid for type theories with contexts.

The next lemma can be used to modify the head of a judgement so that it fits another boundary, as long as there is agreement up to erasure.

\begin{restatable}[Boundary conversion]{lemma}{restateboundaryconvert}
  \label{lem:boundary-convert}%
  In a context-free raw theory, if $\types \BB_1$, $\types \BB_2$, $\types \plug{\BB_1}{e_1}$ and $\erase{\BB_1} = \erase{\BB_2}$ then there is $e_2$ such that $\types \plug{\BB_2}{e_2}$, $\asm{e_2} \subseteq \asm{\plug{\BB_1}{e_1}}$ and $\erase{e_1} = \erase{e_2}$.
\end{restatable}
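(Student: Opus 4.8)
The plan is to induct on the structure of the boundary. Since $\erase{\BB_1} = \erase{\BB_2}$, the two boundaries carry the same number of abstractions and the same base form, so the induction splits into the four non-abstracted base cases and a single abstraction step. Throughout, the two invariants to maintain are the erasure equality $\erase{e_1} = \erase{e_2}$ (which follows by construction, since erasure commutes with substitution and abstraction and discards conversions and assumption sets) and the inclusion $\asm{e_2} \subseteq \asm{\plug{\BB_1}{e_1}}$ (which is controlled by always taking minimal suitable assumption sets and by checking that the conversions I insert only reference data already present).

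For the non-abstracted object boundaries, the type boundary $\isType{\Box}$ has no content, so $\BB_1 = \BB_2$ and I take $e_2 = e_1$. For a term boundary $\BB_i = (\Box : A_i)$, inverting \rref{CF-Bdry-Tm} on $\types \BB_1$ and $\types \BB_2$ supplies $\types \isType{A_1}$ and $\types \isType{A_2}$; as $\erase{A_1} = \erase{A_2}$, rule \rref{CF-EqTy-Refl} yields $\types A_1 \equiv A_2 \by \asset{}$, and I set $e_2 = \convert{e_1}{\beta}$ by converting $e_1$ along this equation with \rref{CF-Conv-Tm}, where $\beta$ is the assumption set demanded by the side condition. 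Then $\erase{e_2} = \erase{e_1}$ by definition of erasure, and $\asm{e_2} = \asm{e_1} \cup \asm{\beta} \subseteq \asm{e_1} \cup \asm{A_1}$ because the side condition forces $\asm{\beta} \subseteq \asm{A_1}$.

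For the equation boundaries the head is merely an assumption set, so I must rebuild the equation with its new sides. Inverting the boundary rules provides the typings of $A_1, A_2$ (and of $s_i, t_i$ in the term case). For a type equation I chain $\types A_2 \equiv A_1 \by \asset{}$ and $\types B_1 \equiv B_2 \by \asset{}$ (both by \rref{CF-EqTy-Refl}) around the given $\types A_1 \equiv B_1 \by \asm{e_1}$ using \rref{CF-EqTy-Trans}, producing $\types A_2 \equiv B_2 \by \gamma$ with a suitable $\gamma$, and take $e_2 = \gamma$. For a term equation the carried type changes from $A_1$ to $A_2$, so I first convert the given equation along $\types A_1 \equiv A_2 \by \asset{}$ via \rref{CF-Conv-EqTm}, then replace the resulting conversion terms by $s_2$ and $t_2$ using \rref{CF-EqTm-Refl} and \rref{CF-EqTm-Trans}. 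In both cases the suitability requirement pins $\gamma$ down as the premise-assumptions minus the assumptions of the new boundary, which deletes exactly the genuinely new variables coming from $A_2, B_2$ (resp. $s_2, t_2$) and leaves the inclusion intact; the erasure condition is automatic since both heads erase to $\dummy$.

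The abstraction step is the main obstacle. Writing $\BB_i = \abstr{x \of A_i}\; \BB_i'$ with $\erase{A_1} = \erase{A_2}$ and $\erase{\BB_1'} = \erase{\BB_2'}$, the derivation of $\types \plug{\BB_1}{e_1}$ ends in \rref{CF-Abstr}. The difficulty is that both sub-boundaries must be instantiated with a fresh variable before the induction hypothesis applies, yet the natural choices $\avar{a}{A_1}$ and $\avar{a}{A_2}$ remain distinct even after erasure, since erasure treats an annotated variable as atomic. I resolve this by instantiating \emph{both} sides with a single fresh $\avar{a}{A_2}$, feeding the $\BB_1$-side the converted term $c \defeq \convert{\avar{a}{A_2}}{\beta}$, which has type $A_1$ by \rref{CF-Conv-Tm} applied to $\types A_2 \equiv A_1 \by \asset{}$. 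Admissibility of substitution (\cref{thm:context-free-substitution-admissible}) then gives $\types \plug{(\BB_1'[c/x])}{e_1'[c/x]}$, $\types \BB_1'[c/x]$, and $\types \BB_2'[\avar{a}{A_2}/x]$; because $\erase{c} = \avar{a}{A_2}$ and erasure commutes with substitution, the two sub-boundaries now agree up to erasure, so the induction hypothesis yields a head $e_2''$, which I re-abstract as $e_2' \defeq e_2''[x/\avar{a}{A_2}]$ and set $e_2 \defeq \abstr{x} e_2'$, concluding with \rref{CF-Abstr}. The erasure condition propagates since $\erase{c} = \avar{a}{A_2}$, and the assumption-set inclusion is the most delicate point: one must verify that the variables introduced through $c$ (those of $A_2$ and $\beta$) are removed again upon re-abstraction, so that only variables already recorded in $\asm{\plug{\BB_1}{e_1}}$ survive. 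This last bookkeeping is where the precise definition of abstraction on assumption sets in \cref{fig:abstraction-substitution} must be used carefully.
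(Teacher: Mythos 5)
Your proposal is correct and takes essentially the same route as the paper's proof: the same case split with reflexivity/conversion/transitivity constructions in the non-abstracted cases, and in the abstraction case the same key device of instantiating both boundaries with a single fresh $\avar{a}{A_2}$ while feeding the $\BB_1$-side the conversion term $\convert{\avar{a}{A_2}}{\asset{}}$ (of type $A_1$) so that the erasures align before invoking substitution admissibility and the induction hypothesis, then re-abstracting. The only cosmetic differences are that the paper inducts on the derivation of $\types \BB_1$ rather than on the boundary's syntactic structure, and fixes the minimal assumption sets (e.g.\ $\asm{A_1} \setminus \asm{A_2}$) explicitly where you leave them to the suitability side conditions.
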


\begin{proof}
  See the proof on \cpageref{proof:boundary-convert}.
\end{proof}

\section{A correspondence between theories with and without contexts}
\label{sec:context-free-cons-comp}

We now establish a correpondence between finitary type theories with and without contexts. We use the prefixes ``tt`` (for ``traditional types``) and ``cf`` (for ``context-free``) to disambiguate between the two versions of type theory. Thus the raw tt-syntax is the one from \cref{fig:syntax-general-type-theories}, and the raw cf-syntax the one from
\cref{fig:syntax-context-free-type-theories}.

To ease the translation between the two versions of type theory, we shall use annotated free variables $\avar{a}{A}$ and annotated metavariables $\symM^\BB$ in both version of raw syntax, where the annotations $A$ and $\BB$ are those of the cf-syntax. In the tt-syntax these annotations are considered part of the symbol names, and do not carry any type-theoretic significance.

\subsection{Translation from cf-theories to tt-theories}
\label{sec:cf-to-tt-translation}

We first show how to translate constituents of cf-theories to corresponding constituents of tt-theories. The plan is simple enough: move the annotations to contexts, elide the conversion terms, and replace the assumption sets with the dummy value.

The first step towards the translation was taken in \cref{sec:context-free-subst-equal}, where we defined the erasure operation taking a cf-expression~$e$ to a tt-expression $\erase{e}$ by removing conversions and replacing assumptions sets with the dummy value. Note that erasure and substitution commute, $\erase{e[t/x]} = \erase{e}[\erase{t}/x]$, by an induction on the syntactic structure of~$e$.

Next, in order to translate cf-judgements to tt-judgements, we need to specify when a context correctly encodes the information provided by cf-annotations.

\begin{definition}
  \label{def:suitable-context}
  We say that $\Theta$ is a \defemph{suitable metavariable context} for a set of cf-metavariables~$S$ when $S \subseteq \vert{}\Theta\vert{}$ and $\Theta(\symM^\BB) = \erase{\BB}$ for all $\symM^\BB \in S$.
  Similarly,~$\Gamma$ is a \defemph{suitable variable context} for a set of free cf-variables~$V$ when~$V \subseteq \vert{}\Gamma\vert{}$ and $\Gamma(\avar{a}{A}) = \erase{A}$ for all $\avar{a}{A} \in V$.
  We say that $\Theta; \Gamma$ is a \defemph{suitable context} for $S$ and $V$ when $\Theta$ is suitable for~$S$ an~$\Gamma$ for~$V$.
\end{definition}

As a shorthand, we say that $\Theta; \Gamma$ is \emph{suitable} for a syntactic entity~$e$ when it is suitable for~$\mv{e}$ and~$\fv{e}$. As suitability only depends on the assumption set, it follows from suitability of $\Theta; \Gamma$ for~$e$ and $\asm{e'} \subseteq \asm{e}$ that $\Theta; \Gamma$ is also suitable for~$e'$.

Next, say that a free cf-variable $\avar{a}{A}$ \defemph{depends} on a free cf-variable $\avar{b}{B}$, written $\avar{b}{B} \prec \avar{a}{A}$, when $\avar{b}{B} \in \fv{A}$, and that a set $S$ of free cf-variables is \defemph{closed under dependence} when $\avar{b}{B} \prec \avar{a}{A} \in S$ implies $\avar{b}{B} \in S$. Every set $S$ of cf-variables is contained in the least closed set, which is $\bigcup \asset{\fv{\avar{a}{A}} \such \avar{a}{A} \in S}$. We similarly define dependence for cf-metavariables.

The following lemma shows how to construct suitable contexts.

\begin{lemma}
  \label{lem:suitable-ctx}
  For every finite set of cf-metavariables~$S$ there exists a suitable metavariable context $\Theta$, such that $\vert{}\Theta\vert{}$ is the closure of $S$ with respect to dependence.
  For every finite set of free cf-variables~$V$ there exists a suitable variable context~$\Gamma$, such that $\vert{}\Gamma\vert{}$ is the closure of $V$ with respect to dependence.
\end{lemma}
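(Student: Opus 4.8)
The plan is to prove both statements in the same way, so I would carry out the construction for one of them — say the variable-context case — and then observe that the metavariable case is entirely analogous. Throughout, write $\closure{V}$ for the dependence-closure of $V$; by the remark preceding the lemma this equals $\bigcup \asset{\fv{\avar{a}{A}} \such \avar{a}{A} \in V}$, and it is finite because each annotation $A$ is a finite expression, so each $\fv{\avar{a}{A}}$ is a finite set and $\closure{V}$ is a finite union of finite sets. The context I want will have $\closure{V}$ as its domain, with each $\avar{a}{A}$ assigned the type $\erase{A}$; suitability is then immediate by construction, so the entire content of the lemma is to arrange the elements of $\closure{V}$ into a \emph{list} that constitutes a syntactically valid tt-context.

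First I would establish that the dependence relation $\prec$ is well-founded, in particular acyclic, on the set of all free cf-variables. The key observation is that dependence strictly decreases the size of the type annotation. A straightforward induction on the size of $A$, unfolding the definitions $\fv{A} = \fvz{A} \cup \fvt{A}$ and $\fvt{A} = \bigcup \asset{\fv{C} \such \avar{c}{C} \in \fvz{A}}$, shows that $\avar{b}{B} \in \fv{A}$ implies $\avar{b}{B}$ occurs as a subterm of $A$ (possibly nested inside the annotations of other variables, which is precisely what $\fvt$ records). Since $A$ is a type expression while $\avar{b}{B}$ is a term expression, $\avar{b}{B} \neq A$, so it is a \emph{proper} subterm and $\mathrm{size}(B) < \mathrm{size}(A)$. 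Thus $\avar{a}{A} \mapsto \mathrm{size}(A)$ is a strictly decreasing measure along $\prec$, witnessing well-foundedness.

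Given well-foundedness, I would linearly order the finite set $\closure{V}$ as $\avar{a_1}{A_1}, \ldots, \avar{a_n}{A_n}$ so that $\avar{a_j}{A_j} \prec \avar{a_i}{A_i}$ implies $j < i$ (a topological sort of a finite acyclic relation), and set $\Gamma = [\avar{a_1}{A_1} \of \erase{A_1}, \ldots, \avar{a_n}{A_n} \of \erase{A_n}]$. It then remains to verify syntactic validity of $\Gamma$. The variables are pairwise distinct because they are distinct elements of $\closure{V}$ (recall that the whole annotated symbol $\avar{a}{A}$ is the tt-variable name). For the dependency condition, treating each $\avar{b}{B}$ as an atomic tt-symbol, the tt-free variables of $\erase{A_i}$ are contained in $\fvz{A_i}$, since erasure can only discard variables occurring in erased conversion terms and assumption sets and cannot introduce new ones. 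As $\fvz{A_i} \subseteq \fv{A_i}$, every such $\avar{b}{B}$ satisfies $\avar{b}{B} \prec \avar{a_i}{A_i}$, hence lies in $\closure{V}$ by closure under dependence and precedes $\avar{a_i}{A_i}$ in the chosen order, which is exactly the required condition.

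The metavariable case runs identically: boundary annotations are closed, so $\mv{\BB}$ involves only metavariables, the metavariable dependence $\symM^{\BB'} \prec \symM^{\BB}$ (given by $\symM^{\BB'} \in \mv{\BB}$) is well-founded with the size of the boundary annotation as the decreasing measure, and the resulting list assigns $\erase{\BB_i}$ to $\symM_i^{\BB_i}$, with closedness preserved by erasure and the preceding-metavariable condition following from $\mathrm{mv}(\erase{\BB_i}) \subseteq \mv{\BB_i}$. I expect the only genuine obstacle to be the well-foundedness of $\prec$; once the decreasing-annotation-size measure is in hand, the topological sort and the verification of the context-validity condition are routine, hinging on the inclusion $\fvz{A_i} \subseteq \fv{A_i}$ that links the directly-occurring variables (which constrain validity) to the dependence relation (which controls the ordering).
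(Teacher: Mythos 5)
Your proposal is correct and follows essentially the same route as the paper's proof: take the dependence-closure of the given set, extend the well-founded dependence order to a total one, and list the variables (resp.\ metavariables) in that order with erased annotations as their assigned types (resp.\ boundaries). The only difference is that you additionally spell out the well-foundedness of $\prec$ via the decreasing annotation-size measure and verify syntactic validity of the resulting context, both of which the paper leaves implicit.
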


\begin{proof}
  Given a finite set of free cf-variables~$S$, the well-founded order $\prec$ on
  $\bigcup \asset{\fv{\avar{a}{A}} \such \avar{a}{A} \in S}$ may be extended to a total one, say
  $\mkvar{a}_1^{A_1}, \ldots, \mkvar{a}_n^{A_n}$.
  Now take~$\Gamma$ to be the variable context
  $\mkvar{a}_1^{A_1} : \erase{A_1}, \ldots, \mkvar{a}_n^{A_n} : \erase{A_n}$.
  The argument for metavariables is analogous.
\end{proof}

A totally ordered extension of~$\prec$ can be given explicitly, so the preceding proof yields an explicit
construction of a suitable contexts.
Notice that the construction does not introduce any spurious assumptions, in the sense that for a variable context~$\Gamma$ the constructed suitable set~$V$ contains only the variables appearing in~$\Gamma$ and the annotations of types appearing in~$\Gamma$.

\begin{proposition}
  If $\Theta; \Gamma$ is suitable for a cf-judgement~$\JJ$ then $\Theta; \Gamma \types \erase{\JJ}$ is a syntactically valid tt-judgement, and similarly for boundaries.
\end{proposition}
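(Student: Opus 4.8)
The plan is to unfold the definition of syntactic validity and verify its three clauses for $\erase{\JJ}$ over $\Sigma$ and $\Theta; \Gamma$ — that symbol and metavariable applications respect their arities, that the free variables lie in $|\Gamma|$, and that bound variables are properly abstracted. The guiding observation is that erasure preserves the gross syntactic skeleton of a cf-entity: by its defining clauses, $\erase{\cdot}$ descends through symbol applications, metavariable applications, and abstractions unchanged, and only drops conversions, $\erase{\convert{t}{\alpha}} = \erase{t}$, and replaces assumption-set arguments by the dummy, $\erase{\alpha} = \dummy$. Hence the abstraction spine of $\erase{\JJ}$ coincides with that of $\JJ$, so bound variables remain properly scoped, and every symbol application $\symS(\vec{e})$ survives with the same number of arguments. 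Since the signature is shared between the cf- and tt-presentations, these applications respect their arities precisely because $\JJ$ is syntactically valid to begin with.

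For metavariable applications I would invoke suitability. Each $\symM^\BB$ occurring in $\JJ$ lies in $\mv{\JJ}$, so suitability of $\Theta$ gives $\symM^\BB \in |\Theta|$ and $\Theta(\symM^\BB) = \erase{\BB}$. After erasure $\symM^\BB$ becomes an atomic tt-metavariable whose assigned arity is $\arity{\Theta(\symM^\BB)} = \arity{\erase{\BB}}$; since erasure changes neither the abstraction depth nor the syntactic class of a boundary, $\arity{\erase{\BB}} = \arity{\BB}$, which matches the number of term arguments supplied in the cf-application. Thus every metavariable application of $\erase{\JJ}$ respects the arity assigned by $\Theta$.

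The delicate clause is the one on free variables, and I expect it to be the main obstacle. In the tt-reading an annotated variable $\avar{a}{A}$ is atomic, so its annotation $A$ contributes no tt-free-variable occurrences; dually, variables occurring in $\JJ$ only inside annotations (the difference $\fvt{\JJ} \setminus \fvz{\JJ}$) or only inside dropped conversions and assumption sets do not appear among the tt-free variables of $\erase{\JJ}$ at all. I would therefore prove, by a routine induction over cf-syntax, the inclusion $\fv{\erase{\JJ}} \subseteq \fvz{\JJ}$ for the tt-free variables of the erased entity (the inclusion is strict in general, precisely because the conversion and assumption-set clauses discard variables). Combining this with suitability of $\Gamma$, namely $\fvz{\JJ} \subseteq \fv{\JJ} \subseteq |\Gamma|$, shows that all tt-free variables of $\erase{\JJ}$ lie in $|\Gamma|$, as required.

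It remains only to observe that $\Theta; \Gamma$ is a genuine tt-context, so that $\Theta; \Gamma \types \erase{\JJ}$ is a well-formed hypothetical judgement rather than a mere entity-over-context statement. Here one uses that $\fv{e}$ is closed under the dependence relation $\prec$ — each annotation $A$ of a variable in $\fv{\JJ}$ satisfies $\fv{A} \subseteq \fv{\JJ}$ — so that a suitable $\Gamma$, such as the one constructed in \cref{lem:suitable-ctx}, can be linearly ordered compatibly with $\prec$ and is therefore well-scoped, and analogously for $\Theta$. Assembling the three clauses then yields syntactic validity of $\erase{\JJ}$ over $\Sigma$ and $\Theta; \Gamma$; the argument for boundaries is identical, replacing $\JJ$ by $\BB$ throughout.
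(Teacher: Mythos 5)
Your proof is correct and follows essentially the same route as the paper, which simply records this as ``a straightforward induction on the structure of the judgement'': your clause-by-clause verification (arities preserved since erasure keeps the application/abstraction skeleton and $\arity{\erase{\BB}}=\arity{\BB}$; the key inclusion $\fv{\erase{\JJ}}\subseteq\fvz{\JJ}\subseteq\fv{\JJ}\subseteq\vert\Gamma\vert$ from suitability) is exactly the content of that induction. The only superfluous part is your final paragraph: under the paper's standing conventions $\Theta$ and $\Gamma$ are already assumed to be syntactically valid contexts, so their well-scopedness need not be re-derived via closure under dependence.
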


\begin{proof}
  A straightforward induction on the structrure of the judgement~$\JJ$.
\end{proof}

Next we translate rules, theories, and derivations.

\begin{proposition}
  \label{prop:cf-to-tt-raw-rule}
  A cf-rule and a cf-rule-boundary
  \begin{equation*}
    \rawRule{\symM_1^{\BB_1}, \ldots, \symM_n^{\BB_n}}{\J}
    \qquad\text{and}\qquad
    \rawRule{\symM_1^{\BB_1}, \ldots, \symM_n^{\BB_n}}{\bdry}
  \end{equation*}
  respectively translate to the raw tt-rule and the tt-rule-boundary
  \begin{equation*}
    \rawRule{\symM_1^{\BB_1} \of \erase{\BB_1}, \ldots, \symM_n^{\BB_n} \of \erase{\BB_n}}{\erase{\J}}
  \end{equation*}
  and
  \begin{equation*}
    \rawRule{\symM_1^{\BB_1} \of \erase{\BB_1}, \ldots, \symM_n^{\BB_n} \of \erase{\BB_n}}{\erase{\bdry}}.
  \end{equation*}
  A raw-cf theory $T = \finmap{R_i}_{i \in I}$ over a signature~$\Sigma$ is thus translated rule-wise to the raw tt-theory $\trantt{T} = \finmap{\trantt{(R_i)}}_{i \in I}$ over the same signature.
\end{proposition}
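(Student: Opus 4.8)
The statement is a well-definedness claim: the erasure-based translation carries a context-free raw rule to a genuine raw tt-rule in the sense of \cref{def:raw-rule} (respectively a rule-boundary per \cref{def:raw-rule-boundary}), and hence a cf-theory to a raw tt-theory. The plan is therefore to verify directly the syntactic well-formedness conditions of those definitions for $\trantt{R} = \rawRule{\symM_1^{\BB_1} \of \erase{\BB_1}, \ldots, \symM_n^{\BB_n} \of \erase{\BB_n}}{\erase{\J}}$, exploiting that $\erase{-}$ is syntax-directed and discards only the proof-irrelevant data (assumption sets and conversions). The conditions imposed on a cf-rule in \cref{def:context-free-raw-rule} are exactly what is needed to make the resulting tt-data valid, so most of the work is matching the two sets of conditions.

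First I would record the elementary preservation properties of erasure, each by a routine induction on the syntax of \cref{fig:syntax-context-free-type-theories}: $\arity{\erase{e}} = \arity{e}$, since erasure changes neither the abstraction structure nor the syntactic class of a head; and the inclusions $\mv{\erase{e}} \subseteq \mv{e}$ and $\fv{\erase{e}} \subseteq \fv{e}$, since erasure can only delete occurrences that sit inside an assumption set or conversion term. These suffice because symbol and metavariable applications stay arity-correct, and every occurrence condition we must preserve survives passage to a smaller set.

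Next I would assemble $\Theta = [\symM_1^{\BB_1} \of \erase{\BB_1}, \ldots, \symM_n^{\BB_n} \of \erase{\BB_n}]$ and check it is a syntactically well-formed metavariable context. The keys $\symM_i^{\BB_i}$ are pairwise distinct because they are the distinct premises of the cf-rule and their annotations are part of the name in tt-syntax. For well-formedness one must show each $\erase{\BB_i}$ is a closed boundary valid over $\Sigma$ and $\upto{\Theta}{i}$. Here the cf-rule conditions do the work: $\mv{\BB_i} \subseteq \asset{\symM_1^{\BB_1}, \ldots, \symM_{i-1}^{\BB_{i-1}}}$ together with $\mv{\erase{\BB_i}} \subseteq \mv{\BB_i}$ makes $\upto{\Theta}{i}$ a suitable metavariable context for $\BB_i$ (\cref{def:suitable-context}), and closedness of $\BB_i$ gives $\fv{\erase{\BB_i}} \subseteq \fv{\BB_i} = \emptyset$, so the empty variable context is suitable too. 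The preceding proposition on validity of erasures in suitable contexts then yields that $\upto{\Theta}{i}; \emptyCtx \types \erase{\BB_i}$ is a syntactically valid tt-boundary, as required. Applying the same proposition to $\Theta$ itself—suitable for the conclusion since $\mv{\erase{\J}} \subseteq \mv{\J} = \vert\Theta\vert$ and $\J$ is closed—shows $\erase{\J}$ is a valid, closed, non-abstracted judgement thesis over $\Theta$. Thus $\rawRule{\Theta}{\erase{\J}}$ satisfies every clause of \cref{def:raw-rule}. The rule-boundary case is identical, reading $\bdry$ for $\J$ and invoking \cref{def:context-free-rule-boundary}; and since erasure leaves symbol applications (hence the signature) untouched, translating each $R_i$ produces a family of raw tt-rules over the same $\Sigma$, which is by definition the raw tt-theory $\trantt{T}$.

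There is no genuine obstacle, only a point requiring care: the directionality of the occurrence inclusions. Erasure can \emph{shrink} the sets of occurring metavariables and free variables, so it is essential that every condition needed for validity of $\trantt{R}$ is an \emph{upper} bound—validity of $\erase{\BB_i}$ over $\upto{\Theta}{i}$, and closedness—which is preserved automatically by $\mv{\erase{e}} \subseteq \mv{e}$ and $\fv{\erase{e}} \subseteq \fv{e}$. The only lower-bound-flavoured hypothesis, $\mv{\J} = \vert\Theta\vert$ from \cref{def:context-free-raw-rule}, is not actually needed for the present well-definedness (the inclusion $\mv{\erase{\J}} \subseteq \vert\Theta\vert$ is all syntactic validity demands); it is recorded in the cf-rule for later use in \cref{thm:cf-to-tt-bdry-jdg}.
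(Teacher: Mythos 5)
Your proof is correct and follows essentially the same route as the paper: the paper's (one-sentence) proof likewise observes that the conditions of \cref{def:context-free-raw-rule} make $\symM_1^{\BB_1} \of \erase{\BB_1}, \ldots, \symM_n^{\BB_n} \of \erase{\BB_n}$ a well-formed metavariable context that is suitable for $\erase{\J}$ and $\erase{\bdry}$, with the preceding proposition on suitable contexts then supplying syntactic validity. Your additional observations (erasure preserves arities and only shrinks occurrence sets, and only the inclusion $\mv{\erase{\J}} \subseteq \vert\Theta\vert$ rather than the full equality is needed here) are accurate elaborations of what the paper leaves implicit.
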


\begin{proof}
  The conditions in \cref{def:context-free-raw-rule} guarantee that
  $
  \symM_1^{\BB_1} \of \erase{\BB_1}, \ldots, \symM_n^{\BB_n} \of \erase{\BB_n}
  $
  is a metavariable context and that it is suitable for $\erase{\J}$ and $\erase{\bdry}$.
\end{proof}

\begin{restatable}[Translation from finitary cf- to tt-theories]{theorem}{restatecftottbdryjdg}
  \label{thm:cf-to-tt-bdry-jdg}%
  \parbox{0pt}{} %
  \begin{enumerate}
  \item\label{it:cf-to-tt-1}
    The translation of a finitary cf-theory is finitary.
  \item\label{it:cf-to-tt-2}
    Suppose $T$ is a finitary cf-theory whose translation $\trantt{T}$ is also finitary.
    Let $\Theta; \Gamma$ be tt-context such that $\types_{\trantt{T}} \isMCtx{\Theta}$ and $\Theta \types_{\trantt{T}} \isVCtx{\Gamma}$. If $\types_T \JJ$ and $\Theta; \Gamma$ is suitable for $\JJ$, then $\Theta; \Gamma \types_{\trantt{T}} \erase{\JJ}$.
  \item\label{it:cf-to-tt-3} With $T$, $\Theta; \Gamma$ as in~\eqref{it:cf-to-tt-2}, if $\types_T \BB$ and $\Theta; \Gamma$ is suitable for~$\BB$ then $\Theta; \Gamma \types_{\trantt{T}} \erase{\BB}$.
  \end{enumerate}
\end{restatable}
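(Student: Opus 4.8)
The plan is to prove the three claims by an interleaved induction. First I would establish the second and third statements \emph{simultaneously}, as a conditional claim granting that $\trantt{T}$ is finitary, by induction on the cf-derivation of $\types_T \JJ$ (resp.\ $\types_T \BB$); then I would deduce the first statement by well-founded induction along the order $(I,\prec)$ witnessing that $T$ is finitary, feeding the already-established second and third statements into each step. Throughout I would use that erasure commutes with substitution, $\erase{e[t/x]} = \erase{e}[\erase{t}/x]$, and the rule-wise translation of \cref{prop:cf-to-tt-raw-rule}, under which a cf-instantiation $I = \finmap{\symM_1^{\BB_1} \mto e_1, \ldots}$ becomes the tt-instantiation $\erase{I} = \finmap{\symM_1 \mto \erase{e_1}, \ldots}$.

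In the induction proving the second and third statements, the decisive bookkeeping is suitability: given that $\Theta; \Gamma$ is suitable for the conclusion, I must check that it is still suitable for each premise before applying the induction hypothesis. This is exactly where the assumption sets and the $\suitable{\cdot}$ side conditions earn their keep. For instance, in \rref{CF-EqTy-Trans} suitability of $\gamma$ forces the union of the premise assumption sets to equal $\asm{A \equiv C \by \gamma}$, so the conclusion already records everything occurring in the premises and suitability propagates upward; the conversion rules \rref{CF-Conv-Tm} and \rref{CF-Conv-EqTm} behave the same way by their explicit side conditions. Once each premise is translated, I reassemble the conclusion with the matching tt-structural rule: \rref{CF-Conv-Tm} erases to \rref{TT-Conv-Tm} because $\erase{\convert{t}{\beta}} = \erase{t}$, and the reflexivity, symmetry and transitivity rules map to their tt-counterparts with the dummy $\by$. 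The one case needing care is \rref{CF-Abstr}: to translate $\types \abstr{x \of A}\; \JJ$ I extend $\Gamma$ to $\Gamma, \avar{a}{A} \of \erase{A}$, which is suitable for $\JJ[\avar{a}{A}/x]$, translate the premise there, and conclude with \rref{TT-Abstr}, arranging freshness of the new variable by \cref{prop:tt-renaming} when needed.

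The substantive cases are the instantiations of specific and congruence rules. For an instantiation of a specific cf-rule $R_i$, I translate it to $\act{\erase{I}} \trantt{R_i}$: the induction hypotheses translate the premise judgements $\types \plug{(\upact{I}{k}\BB_k)}{e_k}$, and because $\trantt{T}$ is assumed finitary the missing boundary premise is supplied by \rref{TT-Specific-Eco} (\cref{prop:tt-specific-eco}) or directly by \cref{prop:instantiation-admissible}. The congruence rules of \cref{def:context-free-congruence-rule} are handled similarly, using that erasure annihilates the auxiliary primed terms and conversions: since $\erase{g_i'} = \erase{g_i}$ and $\erase{t'} = \erase{\act{J} t}$, the cf-premises $\types \plug{(\upact{I}{i}\BB_i)}{f_i \equiv g_i' \by \alpha_i}$ erase to precisely the object premises of the tt-congruence rule, so the erased conclusion follows from the economic congruence rule of \cref{def:congruence-rule-eco}. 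The metavariable rules \rref{CF-Meta}, \rref{CF-Meta-Congr-Ty} and \rref{CF-Meta-Congr-Tm} translate to \rref{TT-Meta} and \rref{TT-Meta-Congr}, using suitability of $\Theta$ to recover $\Theta(\symM) = \erase{\BB}$.

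For the first statement I fix the order $(I,\prec)$ for $T$ and show, by well-founded induction, that the same order witnesses finitariness of $\trantt{T}$. At stage $i$ the downward-closed subtheory $(R_j)_{j\prec i}$ is a finitary cf-theory whose translation is finitary by the induction hypothesis, so the second and third statements apply to it. Since each premise boundary $\BB_k$ of $R_i$ is closed with $\mv{\BB_k} \subseteq \asset{\symM_1^{\BB_1}, \ldots, \symM_{k-1}^{\BB_{k-1}}}$ by \cref{def:context-free-raw-rule}, the metavariable context $[\symM_1 \of \erase{\BB_1}, \ldots]$ is built up through \rref{MCtx-Extend}, each extension justified by translating $\types_{(R_j)_{j\prec i}} \BB_k$; translating the conclusion boundary likewise gives the required $\erase{\bdry}$, so $\trantt{R_i}$ is finitary over $\trantt{(R_j)_{j\prec i}}$ in the sense of \cref{def:finitary}. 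I expect the main obstacle to be precisely the suitability bookkeeping in the induction for the second and third statements: one must verify rule by rule that a context suitable for the conclusion is suitable for every premise, and this hinges delicately on the assumption-set side conditions together with the fact that conversion terms and assumption sets are exactly what erasure discards.
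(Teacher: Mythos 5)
Your proposal is correct, and its skeleton is the same as the paper's: statements (2) and (3) are proved by structural induction on the cf-derivation under the standing hypothesis that $\trantt{T}$ is finitary, and statement (1) by well-founded induction on $(I,\prec)$, feeding (2)/(3) into each stage (the paper phrases this as a mutual induction on all three statements). The genuine divergence is in how the tt-premises with no cf-counterpart are discharged. The paper uses the full (non-economic) rules: for a specific rule it simply translates the cf-boundary premise -- note that this premise is not ``missing,'' since \cref{def:context-free-rule-instantiation} already includes $\types \act{I}\bdry$ among the premises of the cf-closure rule, so your appeal to \rref{TT-Specific-Eco} is sound but unnecessary; for a congruence rule it derives the extra premise $\Theta;\Gamma \types \act{I}B \equiv \act{J}B$ via \cref{thm:admissibility-equality-instantiation}, applied to a translation of the derivation of $\types_T \isType{A}$ that witnesses the finitary character of the cf-rule; and for \rref{TT-Meta-Congr} it derives the type-equation premise via \rref{TT-Subst-EqTy} from $\types \isMCtx{\Theta}$. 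Your route through \rref{TT-Congr-Eco} (\cref{def:congruence-rule-eco}) is a real simplification here: its hypotheses ($\trantt{T}$ finitary, well-formed contexts) are exactly your standing assumptions, and it keeps your inner induction purely structural, whereas the paper's congruence case invokes the induction hypothesis on a derivation that is \emph{not} a subderivation of the current one (the finitariness witness), which is precisely why the paper needs the looser mutual-induction organization. The one loose end in your writeup is the metavariable congruence case: you say it translates to \rref{TT-Meta-Congr}, but that rule carries the premise $C[\vec{s}/\vec{x}] \equiv C[\vec{t}/\vec{x}]$, which has no cf-counterpart; to stay consistent with your strategy you should invoke \rref{TT-Meta-Congr-Eco} (\cref{prop:tt-meta-eco}), which needs only $\types \isMCtx{\Theta}$, or else derive the extra premise as the paper does.
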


\begin{proof}
  See the proof on \cpageref{proof:cf-to-tt-bdry-jdg}.
\end{proof}

With the theorem in hand, the loose ends are easily tied up.

\begin{corollary}
  \label{thm:cf-to-tt-thy}%
  The translation of a standard cf-theory is a standard tt-theory.
\end{corollary}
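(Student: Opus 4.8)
The plan is to unwind standardness into its constituent conditions and verify each by transport along the translation. Recall that a tt-theory is standard (\cref{def:standard-type-theory}) exactly when (i) it is finitary, (ii) each of its specific object rules is a symbol rule, and (iii) each symbol has precisely one associated rule. Let $T$ be a standard cf-theory, so in particular $T$ is a finitary cf-theory, its specific object rules are cf-symbol rules, and each symbol has a unique associated rule. Condition (i) for $\trantt{T}$ is then free from the first part of \cref{thm:cf-to-tt-bdry-jdg}. Moreover, by \cref{prop:cf-to-tt-raw-rule} the translation acts rule-by-rule over the \emph{same} signature $\Sigma$, leaving the index set and the symbol-to-rule assignment untouched; hence (ii) and (iii) will follow once I show that the translation of a cf-symbol rule for a symbol $\symS$ is a tt-symbol rule for the \emph{same} $\symS$.

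The core step is this intertwining claim: if $R$ is the cf-symbol rule for $\symS$ associated (\cref{def:context-free-symbol-rule}) with an object rule-boundary $\rawRule{\symM_1^{\BB_1}, \ldots, \symM_n^{\BB_n}}{\bdry}$, then $\trantt{R}$ is precisely the tt-symbol rule for $\symS$ associated (\cref{def:symbol-rule}) with the translated rule-boundary $\rawRule{\symM_1^{\BB_1} \of \erase{\BB_1}, \ldots, \symM_n^{\BB_n} \of \erase{\BB_n}}{\erase{\bdry}}$. I would establish this by checking that erasure commutes with the two operations out of which a symbol rule is built. First, erasure carries each cf-generic application to the corresponding tt-generic application: for an object premise $\erase{\abstr{x_1}\cdots\abstr{x_k} \symM_i^{\BB_i}(x_1, \ldots, x_k)} = \abstr{x_1}\cdots\abstr{x_k} \symM_i^{\BB_i}(x_1, \ldots, x_k)$, whereas for an equality premise $\erase{\abstr{x_1}\cdots\abstr{x_k} \asset{\symM_i^{\BB_i}, x_1, \ldots, x_k}} = \abstr{x_1}\cdots\abstr{x_k}\dummy$, matching the two clauses of \cref{def:symbol-rule}. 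Second, erasure commutes with filling an object boundary by a head, $\erase{\plug{\bdry}{h}} = \plug{\erase{\bdry}}{\erase{h}}$, by a routine induction over abstracted boundaries. Combining these with $\erase{\symS(e_1, \ldots, e_n)} = \symS(\erase{e_1}, \ldots, \erase{e_n})$ shows that the conclusion of $\trantt{R}$ is exactly the conclusion produced by \cref{def:symbol-rule}. I would also record that erasure preserves arities, $\arity{\erase{\BB_i}} = \arity{\BB_i}$, since it leaves both the abstraction depth and the syntactic class of a boundary intact; this ensures that both rules extend $\Sigma$ by $\symS$ with the same arity, so they genuinely live over the same signature.

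With the intertwining claim in hand, the remaining bookkeeping is immediate. For (ii): erasure preserves the judgement form of a conclusion, so an object cf-rule translates to an object tt-rule, and by the claim each specific object rule of $\trantt{T}$, being the translation of a cf-symbol rule, is a tt-symbol rule. For (iii): the claim shows that the unique cf-symbol rule of $T$ for a symbol $\symS$ translates to a tt-symbol rule for that same $\symS$, and since distinct symbols remain distinct and no rules are added or removed, each symbol of $\trantt{T}$ again has exactly one associated rule. I expect the only genuinely non-routine point to be the intertwining computation of the previous paragraph; everything else is transport along the rule-wise, signature-preserving translation.
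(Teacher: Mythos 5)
Your proposal is correct and follows essentially the same route as the paper: the paper's own proof is the terse observation that the translation takes symbol rules to symbol rules (and equality rules to equality rules), with finitariness supplied by \cref{thm:cf-to-tt-bdry-jdg}; your ``intertwining claim'' and the erasure computations (generic applications, filling, arities) are just a careful spelling-out of that same observation.
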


\begin{proof}
  The translation takes symbol rules to symbol rules, and equality rules to equality rules.
\end{proof}

\begin{corollary}
  If a finitary cf-theory $T$ derives $\types_T \JJ$ and~$\JJ$ has well-typed annotations then there exists a context $\Theta; \Gamma$ which is suitable for~$\JJ$ such that $\types_{\trantt{T}} \isMCtx{\Theta}$ and $\Theta \types_{\trantt{T}} \isVCtx{\Gamma}$.
\end{corollary}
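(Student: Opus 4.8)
The plan is to read off $\Theta$ and $\Gamma$ from the annotations occurring in $\JJ$ by means of \cref{lem:suitable-ctx}, and then to verify their well-formedness by two inductions that invoke the translation theorem \cref{thm:cf-to-tt-bdry-jdg}. The key preliminary observation is that the occurrence sets $\mv{\JJ}$ and $\fv{\JJ}$ of \cref{fig:cf-variable-occurrences} are already closed under dependence: the clause for $\mv{\symM^\BB(\vec{t})}$ collects $\mv{\BB}$ as well, and $\fvt{\cdot}$ gathers free variables transitively through type annotations. Consequently the closures produced by \cref{lem:suitable-ctx} add nothing, and we obtain a suitable metavariable context $\Theta = [\symM_1^{\BB_1} \of \erase{\BB_1}, \ldots, \symM_n^{\BB_n} \of \erase{\BB_n}]$ with $\vert\Theta\vert = \mv{\JJ}$ and a suitable variable context $\Gamma = [\avars{a}{1}{A_1} \of \erase{A_1}, \ldots, \avars{a}{m}{A_m} \of \erase{A_m}]$ with $\vert\Gamma\vert = \fv{\JJ}$, each listed so that every entry precedes those that depend on it. By construction $\Theta; \Gamma$ is suitable for $\JJ$, so only the two well-formedness judgements remain. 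Since $T$ is finitary, $\trantt{T}$ is finitary by part~\eqref{it:cf-to-tt-1} of \cref{thm:cf-to-tt-bdry-jdg}, which licenses parts~\eqref{it:cf-to-tt-2} and~\eqref{it:cf-to-tt-3} in what follows.

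To prove $\types_{\trantt{T}} \isMCtx{\Theta}$ I would induct on $n$ with \rref{MCtx-Extend}. At stage $i$ the hypothesis gives $\types_{\trantt{T}} \isMCtx{\upto{\Theta}{i}}$, and the outstanding premise is $\upto{\Theta}{i}; \emptyCtx \types_{\trantt{T}} \erase{\BB_i}$. Now $\symM_i^{\BB_i} \in \mv{\JJ} \subseteq \asm{\JJ}$, so $\types_T \BB_i$ because $\JJ$ has well-typed annotations; furthermore $\BB_i$ is closed and $\mv{\BB_i} \subseteq \mv{\JJ}$ lies, by the dependence ordering, inside $\vert\upto{\Theta}{i}\vert$ with matching erased annotations, so $\upto{\Theta}{i}; \emptyCtx$ is suitable for $\BB_i$. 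Part~\eqref{it:cf-to-tt-3} of \cref{thm:cf-to-tt-bdry-jdg} then supplies the missing premise. The verification of $\Theta \types_{\trantt{T}} \isVCtx{\Gamma}$ is entirely parallel, inducting on $m$ with \rref{VCtx-Extend}: at stage $j$ we have $\avars{a}{j}{A_j} \in \fv{\JJ} \subseteq \asm{\JJ}$, whence $\types_T \isType{A_j}$; the metavariables of $A_j$ lie in $\mv{\JJ} = \vert\Theta\vert$ and its free variables in $\vert\upto{\Gamma}{j}\vert$, so $\Theta; \upto{\Gamma}{j}$ is suitable for $\isType{A_j}$, and part~\eqref{it:cf-to-tt-2} of \cref{thm:cf-to-tt-bdry-jdg} yields $\Theta; \upto{\Gamma}{j} \types_{\trantt{T}} \isType{\erase{A_j}}$.

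The step demanding the most care, and the only genuine obstacle, is the bookkeeping of suitability throughout these two inductions: one must be sure that at each stage the relevant prefix context really is suitable for the boundary or type being translated. This rests on the two facts isolated at the outset --- that $\mv{\JJ}$ and $\fv{\JJ}$ are closed under dependence, so that the well-typed-annotations hypothesis on $\JJ$ furnishes derivability for \emph{every} entry of $\Theta$ and $\Gamma$ rather than only those appearing syntactically in the thesis, and that the total order chosen in \cref{lem:suitable-ctx} refines the dependence relation, so that the dependencies of each annotation have already been placed in the context by the time it is processed. With these in place both inductions proceed mechanically, and the resulting $\Theta; \Gamma$ is the desired suitable, well-formed context.
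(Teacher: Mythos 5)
Your proposal is correct and follows essentially the same route as the paper: the paper's proof is the single observation that $\Theta$ and $\Gamma$ may be constructed from $\mv{\JJ}$ and $\fv{\JJ}$ via \cref{lem:suitable-ctx}, leaving the well-formedness derivations implicit. Your two inductions (using the well-typed-annotations hypothesis together with parts~\eqref{it:cf-to-tt-2} and~\eqref{it:cf-to-tt-3} of \cref{thm:cf-to-tt-bdry-jdg}, and the fact that $\mv{\JJ}$ and $\fv{\JJ}$ are closed under dependence) simply spell out the verification the paper takes for granted.
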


\begin{proof}
  We may use the suitable context $\Theta; \Gamma$ with~$\Theta$ and~$\Gamma$ constructed respectively from~$\mv{\JJ}$ and~$\fv{\JJ}$ as in \cref{lem:suitable-ctx}.
\end{proof}

\subsection{Translation from tt-theories to cf-theories}
\label{sec:tt-to-cf-translation}

Transformation from tt-theories to cf-theories requires annotation of variables with typing information, insertion of conversions, and reconstruction of assumption sets.
Unlike in the previous section, we cannot directly translate judgements, but must look at derivations in order to tell where conversions should be inserted and what assumption sets used.
We begin by defining auxiliary notions that help organize the translation.

Given a cf-expression~$e$, let $\erasex{e}$ be the \defemph{double erasure} of~$e$, which is like erasure $\erase{e}$, except that we also remove annotations: $\erasex{\symM^\BB} = \symM$ and $\erasex{\avar{a}{A}} = \mkvar{a}$.
The following definition specifies when an assignment of annotations to variables, which we call a \emph{labeling}, meets the syntactic criteria that makes it eligible for a translation.

\begin{definition}
  \parbox{0pt}{}
  \begin{enumerate}
  \item Consider a metavariable context
    \begin{equation*}
      \Theta = [\symM_1 \of \BB_1, \ldots, \symM_m \of \BB_m].
    \end{equation*}
    An \defemph{eligible labeling for $\Theta$} is a map
    \begin{equation*}
      \theta = \finmap{\symM_1 \mto \BB'_1, \ldots, \symM_m \mto \BB'_m}
    \end{equation*}
    which assigns to each $\symM_i$ a cf-boundary $\BB'_i$ such that $\erasex{\BB'_i} = \BB_i$, and if $\symM_j^{\BB} \in \mv{\BB'_i}$ then $\BB = \BB'_j$.

  \item
    With $\Theta$ and $\theta$ as above, consider a variable context
    \begin{equation*}
      \Gamma = [\mkvar{a}_1 \of A_1, \ldots, \mkvar{a}_n \of A_n],
    \end{equation*}
    over~$\Theta$. An \defemph{eligible labeling for $\Gamma$} with respect to $\theta$ is a map
    \begin{equation*}
      \gamma = \finmap{\mkvar{a}_1 \mto A'_1, \ldots, \mkvar{a}_n \mto A'_n}
    \end{equation*}
    which assigns to each $\mkvar{a}_i$ a cf-type $A'_i$ such that $\erasex{A'_i} = A_i$, if $\symM_j^{\BB} \in \mv{A_i}$ then $\BB = \theta(\symM_j)$, and if $\mkvar{a}_k^A \in \fv{A_i}$ then $A = \gamma(\mkvar{a}_k)$.

  \item A pair $(\theta, \gamma)$ is an \defemph{eligible labeling for $\Gamma; \Theta$} when~$\theta$ is eligible for~$\Theta$ and $\gamma$ is eligible for~$\Gamma$ with respect to $\theta$.

  \item
    With $(\theta, \gamma)$ eligible for $\Theta; \Gamma$,
    an \defemph{eligible cf-judgement} $\JJ'$ for a tt-judgement $\JJ$ over $\Theta; \Gamma$ is one that
    satisfies $\erasex{\JJ'} = \JJ$, if $\symM_i^\BB \in \mv{\JJ'}$ then $\BB = \theta(\symM_i)$, and if $\mkvar{a}_k^A \in \fv{\JJ'}$ then $A = \gamma(\mkvar{a}_k)$.

  \item
    With $(\theta, \gamma)$ eligible for $\Theta; \Gamma$,
    an \defemph{eligible cf-boundary} $\BB'$ for a tt-boundary $\BB$ over $\Theta; \Gamma$ is one that satisfies $\erasex{\BB'} = \BB$, if $\symM_i^{\BB''} \in \mv{\BB'}$ then $\BB'' = \theta(\symM_i)$, and if $\mkvar{a}_k^A \in \fv{\BB'}$ then $A = \gamma(\mkvar{a}_k)$.
  \end{enumerate}
\end{definition}

\noindent
We also postulate eligibility requirements for raw rules and theories.

\begin{definition}
  Consider a raw tt-rule
  \begin{equation*}
    R = (\rawRule{\symM_1 \of \BB_1, \ldots, \symM_n \of \BB_n}{\J}).
  \end{equation*}
  An \defemph{eligible raw cf-rule} for~$R$ is a raw cf-rule
  \begin{equation*}
    R' = (\rawRule{\symM_1^{\BB'_1}, \ldots, \symM_n^{\BB'_n}}{\J'})
  \end{equation*}
  such that $\theta = \finmap{\symM_1 \mto \BB'_1, \ldots, \symM_n \mto \BB'_n}$ is
  eligible for $[\symM_1 \of \BB_1, \ldots, \symM_n \of \BB_n]$, and $\J'$ is eligible for $\J$ with respect to~$\theta$ (and the empty labeling for $\emptyCtx$).

  Let $T = \finmap{R_i}_{i \in I}$ be a raw tt-theory over $\Sigma$.
  An \defemph{eligible raw cf-theory} for~$T$ is a raw cf-theory $T' = \finmap{R'_i}_{i \in I}$ over $\Sigma$ such that each $R'_i$ is eligible for $R_i$.
\end{definition}

\begin{restatable}[Translation of standard tt- to cf-theories]{theorem}{restatetttocf}
  \label{thm:tt-to-cf}
  \parbox{0pt}{}
  \begin{enumerate}
  \item
    \label{item:tt-cf-theory}%
    For any standard tt-theory $T$ there exists a standard cf-theory $T'$ eligible for~$T$.

  \item
    \label{item:tt-cf-extension}%
    For any $T$, $T'$ as above,
    if $\types_T \isMCtx{\Theta}$ then there exists an eligible labeling~$\theta$ for $\Theta$ such that $\types_{T'} \theta(\symM)$ for every $\symM \in \vert{}\Theta\vert{}$.

  \item
    \label{item:tt-cf-context}%
    For any $T$, $T'$, $\Theta$, $\theta$ as above,
    if $\Theta; \emptyCtx \types_T \isVCtx \Gamma$ then there exists an eligible labeling~$\gamma$ for~$\Gamma$ with respect to $\theta$ such that $\types_{T'} \isType{\gamma(\mkvar{a})}$ for every $\mkvar{a} \in \vert{}\Gamma\vert{}$.

  \item
    \label{item:tt-cf-boundary}%
    For any $T$, $T'$, $\Theta$, $\theta$, $\Gamma$, $\gamma$ as above,
    if $\Theta; \Gamma \types_T \BB$ then there exists an eligible cf-boundary $\BB'$ for~$\BB$ with respect to $\theta$, $\gamma$ such that $\types_{T'} \BB'$.

  \item
    \label{item:tt-cf-judgement}%
    For any $T$, $T'$, $\Theta$, $\theta$, $\Gamma$, $\gamma$, as above,
    if $\Theta; \Gamma \types_T \JJ$ then there exists an eligible cf-judgement $\JJ'$ for~$\JJ$ with respect to $\theta$, $\gamma$ such that $\types_{T'} \JJ'$.

  \end{enumerate}
\end{restatable}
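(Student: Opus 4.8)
The plan is to prove the four translation statements together, for a fixed standard tt-theory $T$ equipped with an eligible cf-theory $T'$, by a simultaneous induction on tt-derivations, and to obtain the existence of $T'$ itself (the first statement) by a well-founded recursion on the rules of $T$ that interleaves with this induction. Concretely, I would recurse along the order $(I,\prec)$ witnessing standardness: at stage $i$ the initial segment $T'_{\prec i}$ is already built and eligible for $T_{\prec i}$, so I apply the boundary translation (the fourth statement, for the smaller pair $(T_{\prec i},T'_{\prec i})$) to the premise-boundaries of $R_i$ to obtain eligible cf-boundaries $\BB'_k$, hence a labeling $\theta$, and then form the associated cf-symbol rule by \cref{def:context-free-symbol-rule} or the cf-equality rule by \cref{def:context-free-equality-rule}. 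Because $\erasex{\cdot}$ sends each constructed cf-rule back to the original tt-rule and each symbol retains a unique rule, standardness and all eligibility conditions are preserved, which gives the first statement.

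The two context statements reduce to the boundary and judgement statements. A derivation of $\types_T \isMCtx{\Theta}$ ends in \rref{MCtx-Empty} or \rref{MCtx-Extend}; in the extension case the premise $\Theta;\emptyCtx \types_T \BB$ is translated by the boundary statement to an eligible $\BB'$ with $\types_{T'}\BB'$, and I set $\theta(\symM)=\BB'$, the conditions $\erasex{\BB'}=\BB$ and the matching of earlier metavariable annotations being exactly what the inductive hypothesis supplies. The derivation of $\isVCtx{\Gamma}$ is handled the same way through \rref{VCtx-Extend}, using the judgement statement on the type premise to produce $\gamma(\mkvar{a})$. Within the core induction, the object structural rules translate almost verbatim once annotations are read off $\theta$ and $\gamma$: \rref{TT-Var} becomes \rref{CF-Var} with head $\avar{a}{\gamma(\mkvar{a})}$, the boundary rules of \cref{fig:well-formed-boundaries} map to those of \cref{fig:context-free-well-formed-boundaries}, and \rref{TT-Abstr} is translated by extending $\gamma$ with a fresh annotated variable and applying \rref{CF-Abstr}. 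An instantiation of a specific rule is translated to an instantiation (\cref{def:context-free-rule-instantiation}) of the eligible cf-rule from the first statement, by translating the instantiation $I$ component-wise, each $I(\symM_k)=e_k$ being replaced by the translation $e'_k$ of the premise deriving $\plug{(\upact{I}{k}\BB_k)}{e_k}$.

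The crux, and the reason the whole apparatus is needed, is that the cf-translation of a term is not determined by the term alone but by the conversions inserted along its derivation. Two premises that in the tt-world mention the same term $t$ at judgementally equal types will in general yield cf-translations $t'_1,t'_2$ agreeing only up to erasure, $\erase{t'_1}=\erase{t'_2}$, and must be reconciled before being plugged into a common cf-boundary. This is precisely what \cref{lem:boundary-convert} provides: whenever the induction delivers a derivable head fitting one boundary while we need a head fitting an erasure-equal boundary, we replace it by a boundary-converted head with the same erasure and a controlled assumption set. The emblematic case is \rref{TT-Conv-Tm}, where from the translated premises $\types_{T'} t':A'$ and $\types_{T'} A'\equiv B'\by\alpha$ we form $\convert{t'}{\beta}:B'$ via \rref{CF-Conv-Tm} with the suitable $\beta$ dictated by that rule's side condition; the same pattern governs \rref{TT-Meta} and the instantiation cases (whose typed arguments must be converted to fit the annotated types coming from $\theta$) and the congruence rules, where one additionally reconstructs the primed right-hand terms $g'_i$ and the assumption sets $\alpha_i,\beta$ demanded by \cref{def:context-free-congruence-rule}.

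The main obstacle is therefore not any single rule but the global assumption-set discipline underlying all of these conversions: at each step one must check that the reconstructed assumption sets are suitable, that they introduce no extraneous variables, and that every inserted conversion preserves eligibility, namely double-erasure back to the tt-entity and agreement of all annotations with $\theta$ and $\gamma$. Keeping this under control requires combining \cref{lem:boundary-convert} with context-free presuppositivity (\cref{prop:context-free-presuppositivity}) to ensure the intermediate boundaries remain well-formed, and with the natural-type machinery (\cref{lem:context-free-natural-type}) and context-free uniqueness of typing (\cref{thm:context-free-uniqueness-of-typing}) to pin down the types at which conversions are taken. I expect the suitability bookkeeping under these repeated conversions to be the most delicate and error-prone part of the argument.
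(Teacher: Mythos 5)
Your proposal matches the paper's proof in all essentials: the same simultaneous induction over all parts, the same well-founded recursion on the rules of $T$ to construct $T'$, the same reduction of the context statements to the boundary and judgement statements via inversion of \rref{MCtx-Extend} and \rref{VCtx-Extend}, and crucially the same reliance on \cref{lem:boundary-convert} to reconcile translations that agree only up to erasure in the metavariable, specific-rule, congruence, transitivity, and conversion cases. The only cosmetic difference is that you anticipate needing the natural-type machinery and context-free uniqueness of typing to pin down the types at which conversions are inserted, whereas the paper's proof gets by with boundary conversion, \cref{prop:context-free-presuppositivity}, and reflexivity/transitivity rectifications alone.
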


\begin{proof}
  See the proof on \cpageref{proof:tt-to-cf}.
\end{proof}

\subsection{Transporting meta-theorems across the correspondence}
\label{sec:lift-thms}

In \cref{sec:context-free-meta-theorems} we proved enough meta-theorems about cf-theories to secure the translations between cf- and tt-theories. We may now take advantage of the translations by transporting meta-theorems about tt-theories to their cf-counterparts.
We illustrate the technique by proving the cf-counterpart of \cref{thm:admissibility-equality-instantiation}, which states that judgementally equal derivations act equally on judgements, and by formulating the economic congrurence cf-rules.

\begin{proposition}
  \label{prop:cf-admissibility-equality-instantiation-forward}
  In a standard cf-theory, consider derivable instantiations
  \begin{equation*}
    I = \finmap{\symM^{\BB_1}_1 \mto f_1, \ldots, \symM^{\BB_n}_n \mto f_n}
    \quad\text{and}\quad
    J = \finmap{\symM^{\BB_1}_1 \mto g_1, \ldots, \symM^{\BB_n}_n \mto g_n}
  \end{equation*}
  such that $\types \BB_i$ for each $i = 1, \ldots, n$, as well as
  \begin{equation}
    \label{eq:cf-eq-inst-I-J}%
    \types \plug{(\upact{I}{i} \BB_i)}{f_i \equiv g'_i \by \alpha_i}
    \qquad\text{and}\qquad
    \erase{g'_i} = \erase{g_i}.
  \end{equation}
  If an object cf-judgement~$\plug{\BB}{e}$ has well-typed annotations and is derivable then there is a derivable equality $\plug{\BB'}{e_I \equiv e_J \by \beta}$ such that $\beta \subseteq \asm{\JJ, \vec{f}, \vec{g}, \vec{g}', \vec{\alpha}}$, $\erase{\BB'} = \erase{\act{I} \BB}$, $\erase{e_I} = \erase{\act{I} e}$ and $\erase{e_J} = \erase{\act{J} e}$.
\end{proposition}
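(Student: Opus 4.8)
The plan is to transport the statement across the tt/cf-correspondence of \cref{sec:context-free-cons-comp}, reducing it to the already-established tt-version \cref{thm:admissibility-equality-instantiation}. First I would erase all the cf-data. The boundaries $\BB_i$ erase to tt-boundaries $\erase{\BB_i}$, and since $\types \BB_i$ holds in the cf-theory $T$, \cref{thm:cf-to-tt-bdry-jdg} gives $\types_{\trantt T}\isMCtx{\Xi}$ for $\Xi = \finmap{\symM_1 \of \erase{\BB_1}, \ldots, \symM_n \of \erase{\BB_n}}$, using that $\mv{\BB_i}$ is confined to the earlier metavariables so each restriction $\upto{\Xi}{i}$ is suitable for $\BB_i$. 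With \cref{lem:suitable-ctx} I would build a suitable tt-context $\Theta;\Gamma$ for $\fv{\JJ}$, $\vec f$, $\vec g$, $\vec{g'}$, $\vec\alpha$ and their metavariables; its well-formedness $\types_{\trantt T}\isMCtx\Theta$ and $\Theta\types_{\trantt T}\isVCtx\Gamma$ follows from the hypothesis that $\plug\BB e$ has well-typed annotations, together with derivability of $I$ and $J$, again via \cref{thm:cf-to-tt-bdry-jdg} applied to the annotation judgements. Since $T$ is standard, its translation $\trantt T$ is finitary by \cref{thm:cf-to-tt-bdry-jdg}.

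The two cf-instantiations then erase to tt-instantiations $\trantt I = \finmap{\symM_i \mto \erase{f_i}}$ and $\trantt J = \finmap{\symM_i \mto \erase{g_i}} = \finmap{\symM_i \mto \erase{g'_i}}$, the last identity by $\erase{g'_i}=\erase{g_i}$. Because erasure commutes with filling, with substitution, and hence with the instantiation action, cf-derivability of $I$ and $J$ translates to derivability of $\trantt I,\trantt J$ over $\Theta;\Gamma$, and the hypotheses $\types\plug{(\upact I i \BB_i)}{f_i\equiv g'_i\by\alpha_i}$ translate, the assumption set $\alpha_i$ collapsing to $\dummy$, into $\Theta;\Gamma\types_{\trantt T}\plug{(\upact{\trantt I}{i}\erase{\BB_i})}{\erase{f_i}\equiv\erase{g'_i}}$, i.e.\ $\trantt I$ and $\trantt J$ are judgementally equal. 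Finally $\types_T\plug\BB e$ yields $\Xi;\Gamma\types_{\trantt T}\erase{\plug\BB e}$. All premises of \cref{thm:admissibility-equality-instantiation} are now in place, and it delivers $\Theta;\Gamma\types_{\trantt T}\plug{(\act{\trantt I}\erase\BB)}{\act{\trantt I}\erase e\equiv\act{\trantt J}\erase e\by\dummy}$.

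To land back in the cf-theory I would apply the back-translation \cref{thm:tt-to-cf} to this tt-derivation, using the eligible labeling $\theta=\finmap{\symM_i\mto\BB_i}$ together with the labeling $\gamma$ reading off the cf-annotations actually occurring in $\JJ$, $\vec f$, $\vec g$, $\vec{g'}$ and $\vec\alpha$; this is legitimate precisely because those annotations are well-typed and mutually consistent. The theorem produces a $\types_T$-derivable eligible cf-judgement, which by the shape of the tt-judgement is of the form $\plug{\BB'}{e_I\equiv e_J\by\beta}$. Eligibility, which fixes annotations by $\theta,\gamma$ and matches under double erasure, together with the commutation $\erase{\act I(-)}=\act{\trantt I}\erase{(-)}$, gives the three required identities $\erase{\BB'}=\erase{\act I\BB}$, $\erase{e_I}=\erase{\act I e}$, and $\erase{e_J}=\erase{\act J e}$.

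The one point needing genuine care, and which I expect to be the main obstacle, is the assumption-set bound $\beta\subseteq\asm{\JJ,\vec f,\vec g,\vec{g'},\vec\alpha}$: \cref{thm:tt-to-cf} merely asserts the \emph{existence} of an eligible cf-judgement and says nothing about the size of the reconstructed assumption sets. I would handle this by strengthening the back-translation so that it introduces no spurious assumptions, every suitable assumption set generated along the reconstructed derivation being bounded by the union of the assumption sets of its inputs, exactly in the spirit of the ``no extraneous assumptions'' clause of \cref{lem:context-free-prepare-subst-eq} and of the remark following \cref{lem:suitable-ctx}, and then observe that all inputs trace back to $\JJ$ and the data of $I$ and $J$. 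Closing this gap cleanly, either by this refinement or by a standalone lemma that the assumption set of any cf-derivable equality is contained in those of the judgements feeding the final closure rule, is the crux; the remaining bookkeeping is the routine erasure and commutation algebra already pervasive in \cref{sec:context-free-cons-comp}.
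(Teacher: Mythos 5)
Your proposal follows the same route as the paper's proof: erase to the tt-side, apply \cref{thm:admissibility-equality-instantiation}, and return with \cref{thm:tt-to-cf}; your preparatory steps (turning the $\BB_i$ into a tt-metavariable context, checking well-formedness of the contexts, and showing that $\erase{I}$ and $\erase{J}$ are judgementally equal derivable tt-instantiations) match the paper's, and are in places spelled out more carefully than in the paper itself. The substantive difference is the point you flag as the crux. The paper does \emph{not} strengthen \cref{thm:tt-to-cf}; instead it chooses the suitable context $\Theta;\Gamma$ to be \emph{minimal} from the outset, in the sense that every variable of $\Theta;\Gamma$ already occurs in \eqref{eq:cf-eq-inst-I-J} or in $\plug{\BB}{e}$. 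Since the back-translation with labelings $\theta(\symM_i)=\BB_i$ and $\gamma(\mkvar{a}_i)=A_i$ produces an \emph{eligible} cf-judgement, every free variable and metavariable occurring in it --- including those inside assumption sets --- is an annotated variable of $\Theta;\Gamma$ (fresh variables introduced in abstraction cases are bound away), so minimality immediately yields $\beta\subseteq\asm{\JJ,\vec{f},\vec{g},\vec{g}',\vec{\alpha}}$. This one-line device dissolves your crux without re-doing the induction behind \cref{thm:tt-to-cf}; your proposed ``no spurious assumptions'' strengthening would also work, but it is strictly more labour, and as written your proof is incomplete without it.

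One further bookkeeping step you gloss over: \cref{thm:admissibility-equality-instantiation} yields a tt-equality over the annotated names $\symM_i^{\BB_i}$ and $\avar{a}{A}$ construed as atomic tt-symbols, whereas the labelings fed to \cref{thm:tt-to-cf} are maps on bare names. The paper inserts the renaming $\symM_i^{\BB_i}\mapsto\symM_i$, $\avar{a}{A_i}\mapsto\mkvar{a}_i$ (justified by \cref{prop:tt-renaming}) so that the derived tt-judgement is literally the double erasure of the sought cf-judgement; only then is eligibility with respect to $\theta,\gamma$ the applicable notion, and only then do your commutation identities deliver $\erase{\BB'}=\erase{\act{I}\BB}$, $\erase{e_I}=\erase{\act{I}e}$, and $\erase{e_J}=\erase{\act{J}e}$. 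This is minor but not optional; with it and the minimality observation in place, your argument coincides with the paper's.
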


\begin{proof}
  Let $\Theta; \Gamma$ be a context which is suitable for both~\eqref{eq:cf-eq-inst-I-J} and $\plug{\BB}{e}$, and is minimal in the sense that any variable appearing in it also appears in \eqref{eq:cf-eq-inst-I-J} or~$\plug{\BB}{e}$.
  Let $\Xi = \finmap{\symM^{\BB_1}_1 : \erase{\BB_1}, \ldots, \symM^{\BB_n}_n : \erase{\BB_n}}$.
  By \cref{thm:tt-to-cf}, erasure yields judgementally equal derivable tt-instaniations $\erase{I}$ and $\erase{J}$
  of $\Xi$ over $\Theta; \Gamma$, and a derivable judgement $\Theta; \Gamma \vdash \erase{\plug{\BB}{e}}$.
  By \Cref{thm:admissibility-equality-instantiation}, the tt-equality
  \begin{equation*}
    \Theta ; \Gamma \types
       \plug{\erase{\act{I} \BB}}
            {\erase{\act{I} e} \equiv \erase{\act{J} e}}
  \end{equation*}
  is derivable. We apply the renaming $\symM_i^{\BB_i} \mapsto \symM_i$ and $\avar{a}{A_i}_i \mto \mkvar{a}_i$ to it and
  obtain
  \begin{equation*}
    \Theta ; \Gamma \types
       \plug{\erasex{\act{I} \BB}}
            {\erasex{\act{I} e} \equiv \erasex{\act{J} e}}.
  \end{equation*}
  Next, we apply \cref{thm:tt-to-cf} to the above equation with labelings
  $\theta(\symM_i) = \BB_i$ and
  $\gamma(\mkvar{a}_i) = A_i$,
  which results in a derivable cf-equality
  \begin{equation}
    \label{eq:cf-inst-eq-cf}%
    \types \plug{\BB'}{e_I \equiv e_J \by \beta}.
  \end{equation}
  such that $\erase{\BB'} = \erase{\act{I} \BB}$, $\erase{e_I} = \erase{\act{I} e}$ and $\erase{e_J} = \erase{\act{J} e}$.
  Because we required $\Theta; \Gamma$ to be minimal, $\beta$ satisfies the desired constraint.
\end{proof}

The previous proposition gives us a forward-chaining style of congruence rule, because the conclusion is calculated from the premises via the translation theorems. There is also a backward-chaining version in which we proceed from a given (well-formed) cf-equality that we wish to establish.

\begin{corollary}
  \label{cor:cf-admissibility-equality-instantiation-backward}
  In a standard cf-theory, consider derivable instantiation
  \begin{equation*}
    I = \finmap{\symM^{\BB_1}_1 \mto f_1, \ldots, \symM^{\BB_n}_n \mto f_n}
    \quad\text{and}\quad
    J = \finmap{\symM^{\BB_1}_1 \mto g_1, \ldots, \symM^{\BB_n}_n \mto g_n}
  \end{equation*}
  such that $\types \BB_i$ for each $i = 1, \ldots, n$, as well as
  \begin{equation}
    \label{eq:cf-eq-inst-I-J-backward}%
    \types \plug{(\upact{I}{i} \BB_i)}{f_i \equiv g'_i \by \alpha_i}
    \qquad\text{and}\qquad
    \erase{g'_i} = \erase{g_i}.
  \end{equation}
  Suppose $\types \plug{\BB'}{e_I \equiv e_J \by \Box}$ is derivable, where $\erase{\BB'} = \erase{\act{I} \BB}$, $\erase{e_I} = \erase{\act{I} e}$ and $\erase{e_J} = \erase{\act{J} e}$. Then there is $\beta \subseteq \asm{\JJ, \vec{f}, \vec{g}, \vec{g}', \vec{\alpha}}$ such that $\types \plug{\BB'}{e_I \equiv e_J \by \beta}$ is derivable.
\end{corollary}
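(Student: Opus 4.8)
The plan is to derive this backward-chaining statement directly from its forward-chaining companion, \cref{prop:cf-admissibility-equality-instantiation-forward}, whose hypotheses coincide with those assumed here. First I would apply that proposition to the same instantiations $I$ and $J$ and the same derivable, well-annotated object judgement $\plug{\bdry}{e}$, obtaining a derivable equality, which I rename $\plug{\BB''}{\hat e_I \equiv \hat e_J \by \hat\beta}$ to avoid clashing with the data supplied in the corollary. It satisfies $\erase{\BB''} = \erase{\act{I}\BB}$, $\erase{\hat e_I} = \erase{\act{I}e}$, $\erase{\hat e_J} = \erase{\act{J}e}$, and $\hat\beta \subseteq \asm{\JJ, \vec{f}, \vec{g}, \vec{g}', \vec{\alpha}}$. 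Since the corollary supplies $\BB'$, $e_I$, $e_J$ with the very same erasures $\erase{\BB'} = \erase{\act{I}\BB}$, $\erase{e_I} = \erase{\act{I}e}$ and $\erase{e_J} = \erase{\act{J}e}$, the forward equality and the target equality agree up to erasure both on their boundary and on their two sides.

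The second step is to transport the forward equality onto the given boundary and heads. I would read a two-sided equality as the filling of an equality boundary by an assumption-set head: the forward result is $\plug{\BB_1}{\hat\beta}$ with $\BB_1 = \plug{\BB''}{\hat e_I \equiv \hat e_J \by \Box}$, and the desired judgement fills $\BB_2 = \plug{\BB'}{e_I \equiv e_J \by \Box}$. Here $\BB_2$ is derivable by the hypothesis $\types \plug{\BB'}{e_I \equiv e_J \by \Box}$, $\BB_1$ is derivable by context-free presuppositivity (\cref{prop:context-free-presuppositivity}) applied to the forward equality, and $\erase{\BB_1} = \erase{\BB_2}$ by the erasure comparison above. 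Boundary conversion (\cref{lem:boundary-convert}) then produces an assumption set $\beta$ with $\types \plug{\BB'}{e_I \equiv e_J \by \beta}$ derivable and $\asm{\beta} \subseteq \asm{\plug{\BB_1}{\hat\beta}}$, which is precisely the required derivability.

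The main obstacle is the final containment $\beta \subseteq \asm{\JJ, \vec{f}, \vec{g}, \vec{g}', \vec{\alpha}}$. Boundary conversion only bounds $\beta$ by $\asm{\plug{\BB_1}{\hat\beta}} = \asm{\BB''} \cup \asm{\hat e_I} \cup \asm{\hat e_J} \cup \hat\beta$; while $\hat\beta$ is already controlled, the sides $\hat e_I$ and $\hat e_J$ may carry extra assumptions hidden inside the conversion terms that the forward construction inserts. I would resolve this by taking $\beta$ suitable, so that $\asm{e_I}$ and $\asm{e_J}$ are subtracted off, and by appealing to the minimality already exploited in the proof of \cref{prop:cf-admissibility-equality-instantiation-forward}: the minimal suitable context used there guarantees that every variable occurring in an inserted conversion already appears in \eqref{eq:cf-eq-inst-I-J-backward} or in $\plug{\bdry}{e}$, whence $(\asm{\hat e_I} \cup \asm{\hat e_J}) \setminus \asm{e_I, e_J} \subseteq \asm{\JJ, \vec{f}, \vec{g}, \vec{g}', \vec{\alpha}}$. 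Combining these observations yields the stated bound.

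A fully elementary alternative to the boundary-conversion step is to chain $e_I \equiv \hat e_I$, $\hat e_I \equiv \hat e_J$, and $\hat e_J \equiv e_J$ using reflexivity (\rref{CF-EqTy-Refl}, \rref{CF-EqTm-Refl}, whose premises are met because both sides are well-typed by derivability of $\BB_2$ and of the forward equality) and transitivity (\rref{CF-EqTy-Trans}, \rref{CF-EqTm-Trans}, which permit the intermediate terms to differ up to erasure). This route reaches the same conclusion, but it runs into the identical assumption-set bookkeeping, so it offers no genuine simplification over the boundary-conversion argument.
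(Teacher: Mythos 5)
Your proposal is correct and takes essentially the same route as the paper's own proof, which consists of exactly your two steps: apply \cref{prop:cf-admissibility-equality-instantiation-forward} to produce a derivable equality agreeing with the target up to erasure, then invoke \cref{lem:boundary-convert} to rectify the boundary to the given one. Your additional care with the assumption-set bound (reading the equality as a filled equality boundary, invoking presuppositivity for derivability of that boundary, and appealing to suitability plus the minimality of the context used in the forward proof) merely makes explicit the bookkeeping that the paper's two-line proof leaves implicit.
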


\begin{proof}
  By \cref{prop:cf-admissibility-equality-instantiation-forward} there is a derivable judgement
  \begin{equation*}
    \types \plug{\BB'}{e'_I \equiv e'_J \by \beta'}
  \end{equation*}
  such that $\erase{\BB''} = \erase{\act{I} \BB}$, $\erase{e'_I} = \erase{\act{I} e}$, $\erase{e'_J} = \erase{\act{J} e}$, and $\beta$ satisfies that required condition. Apply \cref{lem:boundary-convert} to rectify the boundary to the given one.
\end{proof}

The method works on other meta-theorems, too. For example, the backward-chaining cf-variant of economic congruence tt-rules (\cref{def:congruence-rule-eco}) goes as follows.

\begin{proposition}
  \label{prop:context-free-congruence-rule-eco-forward}
  In a standard cf-theory, consider a derivable finitary object rule
  \begin{equation*}
    \rawRule{\symM_1^{\BB_1}, \ldots, \symM_n^{\BB_n}}{\plug{\bdry}{e}}
  \end{equation*}
  and instantiations of its premises
  \begin{equation*}
    I = \finmap{\symM_1^{\BB_1} \mto f_1, \ldots, \symM_n^{\BB_n} \mto f_n},
    \quad\text{and}\quad
    J = \finmap{\symM_1^{\BB_1} \mto g_1, \ldots, \symM_n^{\BB_n} \mto g_n}.
  \end{equation*}
  Suppose the following are derivable:
  \begin{enumerate}
  \item $\types \plug{(\upact{I}{i} \BB_i)}{f_i}$ for each equality boundary $\BB_i$,
  \item $\types \plug{(\upact{I}{i} \BB_i)}{f_i \equiv g'_i \by \alpha_i}$ with $\erase{g'_i} = \erase{g_i}$ for each object boundary $\BB_i$.
  \end{enumerate}
  Suppose $\types \plug{\bdry'}{e_I \equiv e_J \by \Box}$ is derivable, where $\erase{\bdry'} = \erase{\act{I} \bdry}$, $\erase{e_I} = \erase{\act{I} e}$, $\erase{e_j} = \erase{\act{J} e}$. Then there is $\beta \subseteq \asm{\plug{\bdry}{e}, \vec{f}, \vec{g}, \vec{g'}, \vec{\alpha}}$ such that $\types \plug{\bdry'}{e_I \equiv e_J \by \beta}$ is derivable.
\end{proposition}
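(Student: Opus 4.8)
The plan is to transport the economic congruence rule for tt-theories (\cref{def:congruence-rule-eco}) across the correspondence, mirroring exactly how \cref{cor:cf-admissibility-equality-instantiation-backward} was obtained from \cref{prop:cf-admissibility-equality-instantiation-forward}. First I would fix a suitable context $\Theta; \Gamma$ for the premise equalities $\plug{(\upact{I}{i} \BB_i)}{f_i \equiv g'_i \by \alpha_i}$ and for the conclusion data $\plug{\bdry}{e}$, chosen minimal in the sense of \cref{lem:suitable-ctx} so that every variable it contains already occurs in these judgements; this minimality is what will ultimately pin down the bound on $\beta$. Writing $\Xi = \finmap{\symM_1^{\BB_1} : \erase{\BB_1}, \ldots, \symM_n^{\BB_n} : \erase{\BB_n}}$, the rule translates to a finitary tt-rule over $\Xi$, and by \cref{thm:cf-to-tt-bdry-jdg} the erased instantiations $\erase{I}, \erase{J}$ of $\Xi$ over $\Theta; \Gamma$ are derivable, as are the erased premises (using $\types_{\trantt{T}} \isMCtx{\Theta}$ and $\Theta \types_{\trantt{T}} \isVCtx{\Gamma}$, which hold for the minimal suitable context).

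The crucial simplification is that erasure collapses the primed and unprimed heads: since $\erase{g'_i} = \erase{g_i}$, the erased object premise $\Theta; \Gamma \types \plug{(\upact{\erase{I}}{i} \erase{\BB_i})}{\erase{f_i} \equiv \erase{g_i}}$ is precisely the premise demanded by \cref{def:congruence-rule-eco}, while the equality-boundary premise erases to $\Theta; \Gamma \types \plug{(\upact{\erase{I}}{i} \erase{\BB_i})}{\erase{f_i}}$. No conversions are required in the tt-world, so the economic tt-congruence rule applies outright to the translated rule and yields
\begin{equation*}
  \Theta; \Gamma \types \plug{(\act{\erase{I}} \erase{\bdry})}{\act{\erase{I}} \erase{e} \equiv \act{\erase{J}} \erase{e}},
\end{equation*}
which, by the routine commutation of erasure with instantiation, records exactly the erasures $\erase{\act{I} \bdry}$, $\erase{\act{I} e}$, $\erase{\act{J} e}$ appearing in the statement.

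I would then translate this tt-equality back. After applying the renaming $\symM_i^{\BB_i} \mapsto \symM_i$ and $\avar{a}{A_i}_i \mapsto \mkvar{a}_i$ to reach double-erased form, \cref{thm:tt-to-cf} with the labelings $\theta(\symM_i) = \BB_i$ and $\gamma(\mkvar{a}_i) = A_i$ produces a derivable cf-equality $\types \plug{\bdry''}{e''_I \equiv e''_J \by \beta'}$ whose boundary and heads agree with $\bdry'$, $e_I$, $e_J$ only up to erasure. The final step rectifies this to the prescribed boundary: the hypothesis supplies $\types \plug{\bdry'}{e_I \equiv e_J \by \Box}$, context-free presuppositivity (\cref{prop:context-free-presuppositivity}) supplies well-formedness of the derived boundary, and the two boundaries are erasure-equal, so \cref{lem:boundary-convert} delivers a derivable $\types \plug{\bdry'}{e_I \equiv e_J \by \beta}$ with $\beta \subseteq \asm{\plug{\bdry''}{e''_I \equiv e''_J \by \beta'}}$.

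The main obstacle is the assumption-set bookkeeping: ensuring the resulting $\beta$ lands inside $\asm{\plug{\bdry}{e}, \vec{f}, \vec{g}, \vec{g'}, \vec{\alpha}}$. This is precisely where minimality of $\Theta; \Gamma$ is essential — because the suitable context introduces no spurious variables, every assumption produced by the back-translation traces back to one of the inputs, and combining this with the $\subseteq$ guarantee of \cref{lem:boundary-convert} yields the stated bound. Everything else is a faithful replay of the forward/backward pair \cref{prop:cf-admissibility-equality-instantiation-forward} and \cref{cor:cf-admissibility-equality-instantiation-backward}, now with \cref{def:congruence-rule-eco} substituted for \cref{thm:admissibility-equality-instantiation}.
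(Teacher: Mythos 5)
Your proposal is correct and follows essentially the same route as the paper: the paper's own proof is exactly to replay \cref{prop:cf-admissibility-equality-instantiation-forward} and \cref{cor:cf-admissibility-equality-instantiation-backward} (minimal suitable context, erasure to the tt-side, back-translation via \cref{thm:tt-to-cf}, boundary rectification via \cref{lem:boundary-convert}), substituting the economic tt-congruence rule \cref{def:congruence-rule-eco} for \cref{thm:admissibility-equality-instantiation}, which is precisely what you do. Your write-up in fact supplies more detail than the paper's one-line proof, including the key observation that erasure collapses $g'_i$ and $g_i$ so the erased premises match \rref{TT-Congr-Eco} exactly.
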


\begin{proof}
  We proceed much as in the proof of \cref{prop:context-free-congruence-rule-eco-forward,cor:cf-admissibility-equality-instantiation-backward}, except that we apply \cref{def:congruence-rule-eco} on the tt- side.
\end{proof}

\section{Related and future work}
\label{sec:conclusion}

Our investigation into an general metatheory for type theory has lead us to present and study two languages.
In \Cref{sec:finitary-type-theories}, we gave a general definition of a broad class of finitary type theories and proven that it satisfies the expected desirable type theoretic metatheorems.
In \Cref{sec:context-free-tt}, we introduced a context-free formulation of type theories and demonstrated that this definition satisfies further metatheorems that were previously lost, notably strengthening and good inversion principles.
Context-free type theories serve as the theoretical foundation of Andromeda~2, as the annotation discipline for variables and metavariables turned out to be better suited for an effectful meta-language~\cite{pgh:thesis}.
The generality of finitary type theories has been put to work in~\cite{eqchk}, where a general equality checking algorithm is shown to be sound for all standard type theories.

Our work was developed concurrently with several other general frameworks for type theory.
There are different approaches to the study of formal systems such as logics and type theories, ranging from syntactic \cite{Cartmell:Generalised:1978,harper-honsell-plotkin:framework} to semantic \cite{Fiore:Functorial:2014,Isaev:Algebraic:2016,Capriotti:Models:2017} characterisations.
To reasonably delimit the scope of this discussion we shall focus on those that (i) are sufficiently expressive to faithfully represent a wide family of dependent type theories, but (ii) are sufficiently restrictive to prove general meta-theorems that are comparable to ours.

\subsubsection*{General dependent type theories}
The closest relative are general dependent type theories~\cite{gtt}, which we proposed together with Lumsdaine.
Finitary and general dependent type theories (GDTT) have more in common than divides them.
FTT can be seen as a bridge from GDTT to context-free type theories (CFTT).
As context-free type theories in turn are intended as the theoretical underpinning of Andromeda~2, the choice was made to restrict arities of rules and symbols to be finite, which allows for a direct representation as concrete syntax.
This restriction is somewhat coincidental, and we expect that it should be possible to generalise much of the treatment of FTT and possibly CFTT to arbitrary arities.

The treatment of variables and metavariables in FFT differs from that of GDTT in an inessential way: the former usees a locally-nameless discipline and metavariable contexts, while the latter uses shape systems and metavariables as theory extensions. Once again the difference is motivated by implementation details and the rôle metavariables play in proof assistants.

Finally, the levels of well-formedness of the two formalisms differs slighly. GDTT places fewer restrictions on the rules of raw type theories, while a raw FTT already satisfies presuppositivity.

We expect that translations between the finitary fragment of GDTT and FTT can be defined under mild assumptions, and leave their formal comparison as future work.

\subsubsection*{Logical frameworks}
Perhaps the most prominent family of systems for representing logics are logical frameworks \cite{harper-honsell-plotkin:framework,pfenning:logical-frameworks}.
Logical frameworks have spawned a remarkably fruitful line of work \cite{Cervesato:Linear:2002,Watkins:Concurrent:2003,Cousineau:Embedding:2007} and several implementations exist \cite{Pfenning:System:1999,Pientka:Beluga:2010}.
In concurrent work to the development of GDTTs and FTTs, Uemura~\cite{Uemura:General:2019} and Harper~\cite{Harper:Equational:2021} recently proposed frameworks with the purpose of representing type theories.

Both Uemura's LF (\emph{ULF} for short), and Harper's Equational LF (henceforth \emph{EqLF}) extend previous frameworks by the addition of an equality type satisfying reflection to judgemental equality at the framework level, and Uemura includes a substantial development of a general categorical semantics.
Harper's Equational LF \emph{almost} forms a standard finitary type theory. In fact, only inessential modifications are needed to put it in standard form, as is confirmed by a formalisation of EqLF in Andromeda~2~\cite{pgh:thesis}.
We compare both accounts of type theory to FTT along several axes. As they are quite similar, we focus on Uemura's variant.

In one way, ULF is more expressive than FTT.
While FTT allows only one judgement form for types, terms, and their equalities, ULF can also capture theories with other judgement forms, such as the fibrancy judgement of the homotopy type system or two-level type theory~\cite{Voevodsky:HTS:2013,Annenkov:TwoLevel:2019}, or the face formulas of cubical type theory~\cite{Cohen:Cubical:2016}.
While it may be possible to reconstruct some type theories expressible in ULF via the use of universes in FTT, a careful analysis would be required to show that the account is faithful, for instance by showing that it is sound and complete for derivability.
Conversely, every standard finitary type theory is expressible in ULF. The translation is straightforward, and we take this as a sign that both ULF and FTT achieve their goal of giving a ``natural'' account of type theory.

Finitary type theories on the other hand are not directly expressible in ULF or in EqLF.
Frequently, accounts of type theory present rules that are not standard, most often because a symbol does not record all of the metavariables introduced by its premises as arguments.
But it is also standard practice to have only one notation for say dependent products which may occur at more than one sort, as is done in \cite{Martin-Lof:Constructive:1982,Harper:Equational:2021}, or give a general cumulativity rule allowing the silent inclusion of types from one sort into another~\cite{Luo:Extended:1990,Uemura:General:2019}.
One may of course take the view that such presentations are not \emph{really} type theories and should be read with full annotations inserted.
It is usually understood that such an annotated presentation can be given, and by including the right set of equations the original calculus can be recovered~\cite{Harper:Type:1991}.
Proofs that an unannotated theory is equivalent to a fully annotated one are hard labour~\cite[Theorem~4.13]{Streicher:Semantics:1991}.
Finitary type theories can thus serve to study the elaboration of such unannotated to a standard FTT or ULF presentation.
One such useful general result can already be found in~\cite{gtt}, where it is shown that every raw type theory, possibly containing cyclic dependencies between rules, is equivalent to a well-founded one.
The assumption of well-founded stratification is hardwired in ULF through the definition of a signature and in EqLF trough the inductive construction of a context serving as signature, so that such a theorem could not even be stated in ULF or EqLF.
In ongoing research, Petković Komel is employing finitary type theories to investigate a general elaboration theorem, stating that all finitary type theories can be elaborated to standard ones~\cite{PetkovicKomel:Elaboration:2021}.

It would be useful to prove a general adequacy theorem of Uemura's or Harper's~\cite{Harper:Equational:2021} logical framework for finitary type theories.
Conversely, the extension of finitary and context-free type theories to other judgement forms in the style of Uemura's LF seems within reach and would allow the expression of exciting new type theories such as those based on cubical sets~\cite{Cohen:Cubical:2016,Angiuli:Cartesian:2018,Cavallo:Unifying:2020}.
Another active domain of current research are modal type theories~\cite{Schreiber:Quantum:2014,Birkedal:Multimodal:2021}.
Multimodal type theory does not readily fit into our setup or the framework of Uemura~\cite{Gratzer:Normalization:2021}, and the development of modal finitary type theories is an exciting possibility for further work.

\subsubsection*{Context-free type theories}
Geuvers et al.~\cite{geuvers10} investigated the $\Gamma_\infty$ system, a context-free formulation of pure type systems.
They prove similar metatheorems, including translations from and to traditional pure type systems.
Pure type systems disallow proof-irrelevant rules such as equality reflection. Consequently, the results of \cite{geuvers10} are obtained more straightforwardly and without complications arising from the use of conversion terms and assumption sets.
Like the authors of \cite{geuvers10}, our motivation for avoiding explicit contexts came from implementation considerations.
A previous version of Andromeda implemented a form of extensional type theory with assumption sets \cite{andromeda1}.
The results of \cite{geuvers10} have been formalised in the Coq proof assistant.
A formalisation of context-free type theories could serve as trusted nucleus of a future version of Andromeda.
Generalisations of finitary type theories to more general judgement forms in the style of~\cite{Uemura:General:2019} should be mirrored by the development of the corresponding context-free notions and eventually implemented in Andromeda.

\bibliographystyle{plain}
\bibliography{finitary}

\appendix

\section{Proofs of statements}
\label{sec:proofs-statements}

We provide here without further comment the rather technical detailed proofs that were elided in the main text.

\subsection{Proofs of meta-theorems about type theories}
\label{sec:proofs-from-meta-theorems}

This section provides missing proofs from \cref{sec:meta-theorems}.

\restatepreparesubst*

\begin{proof}
  \label{proof:prepare-subst}%
  We proceed by induction on the derivation of the judgement. The induction is mutual with the corresponding statement for boundaries, \cref{lem:prepare-subst-bdry}.

  \inCase{TT-Var}
  If the derivation ends with the variable rule for~$\mkvar{a}$ then we apply weakening to $\Theta; \Gamma \types t : A$ to get $\Theta; \Gamma, \Delta[t/\mkvar{a}] \types t : A$.
  For other variables, we apply the variable rule for the same variable.

  \inCase{TT-Abstr}
  Consider a derivation which ends with an abstraction
  \begin{equation*}
    \infer
    {
      \Theta; \Gamma, \mkvar{a} \of A, \Delta \types \isType B \\
      \mkvar{b} \not\in \vert\Gamma, \mkvar{a} \of A, \Delta\vert \\
      \Theta; \Gamma, \mkvar{a} \of A, \Delta, \mkvar{b} \of B \types \JJ[\mkvar{b}/x]
    }{
      \Theta; \Gamma, \mkvar{a} \of A, \Delta \types \abstr{x \of B} \; \JJ
    }
  \end{equation*}
  The induction hypotheses for the premises yield
  \begin{equation*}
    \Theta; \Gamma, \Delta[t/\mkvar{a}] \types \isType {B[t/\mkvar{a}]}
    \quad\text{and}\quad
      \Theta; \Gamma, \Delta[t/\mkvar{a}], \mkvar{b} \of B[t/\mkvar{a}] \types (\JJ[\mkvar{b}/x])[t/\mkvar{a}].
  \end{equation*}
  Note that $(\JJ[\mkvar{b}/x])[t/\mkvar{a}] = (\JJ[t/\mkvar{a}])[\mkvar{b}/x]$, because $x \notin \bv{t}$ as $t$ is closed, and $\mkvar a \not= \mkvar b$ by assumption. Hence abstracting $\mkvar{b}$ in the second premise yields
  $$
    \Theta; \Gamma, \Delta[t/\mkvar{a}] \types \abstr{x \of B[t/\mkvar{a}]} \; \JJ[t/\mkvar{a}],
  $$ as desired.

  \inCaseText{\rref{TT-Meta} and \rref{TT-Meta-Congr}}
  We only consider the congruence rules, as the metavariable rule is treated similarly.
  Consider a derivation which ends with the congruence rule for a metavariable $\symM$ whose boundary is $\Theta(\symM) = \abstr{\vec{x} \of \vec{B}}\; \bdry$:
  \begin{equation*}
    \infer{
     \Theta; \Gamma, \mkvar{a} \of A, \Delta \types s_j : B_j[\upto{\vec{s}}{j}/\upto{\vec{x}}{j}]
      \quad \text{for $j = 1, \ldots, m$}
      \\
     \Theta; \Gamma, \mkvar{a} \of A, \Delta \types t_j : B_j[\upto{\vec{t}}{j}/\upto{\vec{x}}{j}]
      \quad \text{for $j = 1, \ldots, m$}
      \\
     \Theta; \Gamma, \mkvar{a} \of A, \Delta \types s_j \equiv t_j : B_j[\upto{\vec{s}}{j}/\upto{\vec{x}}{j}]
      \quad \text{for $j = 1, \ldots, m$}
      \\
      \Theta; \Gamma, \mkvar{a} \of A, \Delta \types
      C[\vec s/\vec x] \equiv C[\vec t/\vec x]
      \quad \text{if $\bdry = \Box : C$}
    }{
      \Theta; \Gamma, \mkvar{a} \of A, \Delta \types
      \plug
        {(\bdry[\vec{s}/\vec{x}])}
        {\symM(\vec{s}) \equiv \symM(\vec{t})}
    }
  \end{equation*}
  We apply the induction hypotheses to the premises, and conclude by
  \rref{TT-Meta-Congr} for~$\symM$, applied to $\vec{s}[\mkvar{a}/x]$ and $\vec{t}[\mkvar{a}/x]$, taking into account that in general $(e[u/x])[v/\mkvar{a}] = (e[v/\mkvar{a}])[u[v/\mkvar{a}]/x]$.

  \inCaseText{of a specific rule}
  Consider a derivation ending with the application of a raw rule
  $R = (\rawRule{\symM_1 \of \BB_1, \ldots, \symM_n \of \BB_n}{\J})$ with $\J = \plug \bdry e$, instantiated
  by $I = \finmap{\symM_1 \mto e_1, \ldots, \symM_n \mto e_n}$,
  \begin{equation*}
    \infer
    {
      \Theta; \Gamma, \mkvar{a} \of A, \Delta \types
      \plug{(\upact{I}{i} \BB_i)}{e_i}
      \quad \text{for $i = 1, \ldots, n$}
      \\\\
      \Theta; \Gamma, \mkvar a \of A, \Delta \types
      \act I \bdry
    }{
      \Theta; \Gamma, \mkvar{a} \of A, \Delta \types \act{I} \J
    }
  \end{equation*}
  The induction hypotheses for the premises yield, for $i = 1, \ldots, n$,
  \begin{equation*}
      \Theta; \Gamma, \Delta[t/\mkvar{a}] \types
      (\plug{(\upact{I}{i} \BB_i)}{e_i})[t/\mkvar{a}],
  \end{equation*}
  which equals
  \begin{equation*}
      \Theta; \Gamma, \Delta[t/\mkvar{a}] \types
      \plug{(\upact{I[t/\mkvar{a}]}{i} \BB_i)}{e_i[t/\mkvar{a}]}.
  \end{equation*}
  By \cref{lem:prepare-subst-bdry}, we further obtain $\Theta; \Gamma, \Delta \types \act {(I[t/\mkvar a])} \bdry$.
  Now apply $R$ instantiated at
  $I[t/a] = \finmap{\symM_1 \mto e_1[t/\mkvar{a}], \ldots, \symM_n \mto e_n[t/\mkvar{a}]}$
  to derive
  $
    \Theta; \Gamma, \Delta[t/\mkvar{a}] \types
    \act{I[t/a]} \J
  $,
  which equals
  $
    \Theta; \Gamma, \Delta[t/\mkvar{a}] \types
    (\act{I} \J)[t/\mkvar{a}]
  $.

  \inCaseText{of a congruence rule}
  Apply the induction hypotheses to the premises and conclude by the same rule.

  \inCasesText{
  \rref{TT-EqTy-Refl},
  \rref{TT-EqTy-Sym},
  \rref{TT-EqTy-Trans},
  \rref{TT-EqTm-Refl},
  \rref{TT-EqTm-Sym},
  \rref{TT-EqTm-Trans},
  \rref{TT-Conv-Tm},
  \rref{TT-Conv-EqTm}}
  These cases are dispensed with, once again, by straightforward applications of the induction hypotheses.
\end{proof}

\restatettsubstandttconvabstr*

\begin{proof}
  \label{proof:tt-subst-and-tt-conv-abstr}%
  Suppose the premises of \rref{TT-Subst} are derivable. By inversion the first premise is derived by an application of \rref{TT-Abstr}, therefore for some $\mkvar{a} \not\in \vert{}\Gamma\vert{}$, we can derive
  $
    \Theta; \Gamma, \mkvar{a} \of A \types \JJ[\mkvar{a}/x]
  $.
  \Cref{lem:prepare-subst} yields $\Theta; \Gamma \types (\JJ[\mkvar{a}/x])[t/\mkvar{a}]$, which is equal to the conclusion of \rref{TT-Subst}.

  The rule \rref{TT-Bdry-Subst} follows from \cref{lem:prepare-subst-bdry}.

  Next, assuming the premises of \rref{TT-Conv-Abstr} are derivable, its conclusion is derived as
  {\footnotesize
    \begin{mathpar}\mprset{sep=1em}
      \infRR{}
      { \Theta; \Gamma \types \isType B \\
        \infRR{TT-Subst}
        { \infRL{}
          {\Theta; \Gamma \types \abstr{x \of A}\; \JJ}
          {\Theta; \Gamma, \mkvar a \of B \types \abstr{x \of A}\; \JJ} \\
          \infRR{}
          { \infRL{}{ }{\Theta; \Gamma, \mkvar a \of B \types \mkvar a \of B} \\
            \infRR{}
            { \infRR{}
              { \Theta; \Gamma \types A \equiv B }
              { \Theta; \Gamma \types B \equiv A } }
            { \Theta; \Gamma, \mkvar a \of B \types B \equiv A } }
          { \Theta; \Gamma, \mkvar a \of B \types \mkvar a \of A } }
        { \Theta; \Gamma, \mkvar a \of B \types \JJ[\mkvar a/x] }
      }
      { \Theta; \Gamma \types \abstr{x \of B}\; \JJ }
\ifjar
\phantom{TT-Subst}
    \end{mathpar}
    {\normalsize \qedhere}
\else
      {\normalsize \qedhere}
    \end{mathpar}
\fi
  }
\end{proof}

\restatepreparesubsteq*

\begin{proof}
  \label{proof:prepare-subst-eq}%
  We proceed by induction on the derivation of~\eqref{eq:prep-2}.

  \inCase{TT-Var}
  For a variable $\mkvar{b} \in \vert{}\Gamma\vert{}$, \eqref{it:prep-1} and~\eqref{it:prep-2} follow by the same variable rule, while \eqref{it:prep-3} follows by reflexivity for~$\mkvar{b}$ and the same variable rule.

  For the variable $\mkvar{a}$, the desired judgements are precisely the assumptions \eqref{eq:prep-3}, \eqref{eq:prep-4}, and \eqref{eq:prep-5} weakened to $\Gamma, \Delta[s/\mkvar{a}]$.

  For a variable $\mkvar b \in \vert{}\Delta\vert{}$ with $B = \Delta(\mkvar b)$, the same variable rule derives $\Theta; \Delta[s/\mkvar{a}] \types \mkvar b : B[s/\mkvar a]$ to satisfy \eqref{it:prep-1}, while \eqref{it:prep-2} requires an additional conversion along
  \begin{equation}
    \label{eq:subst-eq-var-convert}%
    \Theta; \Gamma, \Delta[s/\mkvar{a}] \types B[s/\mkvar a] \equiv B[t/\mkvar a]
  \end{equation}
  which is just \eqref{eq:prep-1}.
  To show \eqref{it:prep-3}, namely $\Theta; \Gamma, \Delta[s/\mkvar{a}] \types \mkvar b \equiv \mkvar b : B[s/\mkvar a]$, we use \rref{TT-EqTm-Refl} and the variable rule.

  \inCase{TT-Abstr}
  Consider a derivation ending with an abstraction
  \begin{equation*}
    \infer{
      \Theta; \Gamma, \mkvar a \of A, \Delta \types \isType B \\
      \mkvar b \not\in \vert{}\Gamma, \mkvar a \of A, \Delta\vert{} \\
      \Theta; \Gamma, \mkvar a \of A, \Delta, \mkvar b \of B \types \JJ[\mkvar b/x]
    }{
      \Theta; \Gamma, \mkvar a \of A, \Delta \types \abstr{x \of B} \; \JJ
    }
  \end{equation*}
  The induction hypothesis \eqref{it:prep-1} applied to the first premise yields
  \begin{align}
    \label{eq:subst-eq-abstr-first-1}%
    &\Theta; \Gamma, \Delta[s/\mkvar{a}] \types \isType{B[s/\mkvar a]}, \\
    \label{eq:subst-eq-abstr-first-2}%
    &\Theta; \Gamma, \Delta[s/\mkvar{a}] \types B[s/\mkvar a] \equiv B[t/\mkvar{a}].
  \end{align}
  Equation~\eqref{eq:subst-eq-abstr-first-2} ensures that the extended variable context $\Delta, \mkvar b \of B$ satisfies~\eqref{eq:prep-1}, hence we may use
  the induction hypothesis \eqref{it:prep-1} for the last premise to show
  \begin{equation*}
    \Theta; \Gamma, \Delta[s/\mkvar{a}], \mkvar b \of B[s/\mkvar a] \types \JJ[\mkvar b/x][s/\mkvar a],
  \end{equation*}
  which equals
  \begin{equation}
    \label{eq:subst-eq-abstr-last-1}%
    \Theta; \Gamma, \Delta[s/\mkvar{a}], \mkvar b \of B[s/\mkvar a] \types \JJ[s/\mkvar a][\mkvar b/x].
  \end{equation}
  We can thus use the abstraction rule with \eqref{eq:subst-eq-abstr-first-1} and \eqref{eq:subst-eq-abstr-last-1} to derive $\Theta; \Gamma, \Delta[s/\mkvar{a}] \types \abstr{x \of B[s/\mkvar{a}]}\; \JJ[s/\mkvar a]$, as required.

  The derivation of $\Theta; \Gamma, \Delta[s/\mkvar{a}] \types \abstr{x \of B[t/\mkvar{a}]}\; \JJ[t/\mkvar a]$ is more interesting. We first apply induction hypothesis~\eqref{it:prep-2} to the last premise and get
  \begin{equation*}
    \Theta; \Gamma, \Delta[s/\mkvar{a}], \mkvar{b} \of B[s/\mkvar{a}] \types
    \JJ[\mkvar{b}/x][t/\mkvar{a}].
  \end{equation*}
  Abstraction now gets us to $\Theta; \Gamma, \Delta[s/\mkvar{a}] \types \abstr{x \of B[s/\mkvar{a}]} \; \JJ[t/\mkvar a]$, after which we apply \rref{TT-Conv-Abstr} from \cref{lem:tt-subst-and-tt-conv-abstr} to replace $B[s/\mkvar{a}]$ with $B[t/\mkvar{a}]$
  using~\eqref{eq:subst-eq-abstr-first-2}.

  Lastly, we use the induction hypothesis \eqref{it:prep-3} for the last premise to derive
  \begin{equation*}
    \Theta; \Gamma, \Delta[s/\mkvar{a}], \mkvar b \of B[s/\mkvar a] \types
    (\JJ[\mkvar b/x])[(s \equiv t)/\mkvar a],
  \end{equation*}
  which equals
  \begin{equation}
    \label{eq:subst-eq-abstr-last-3}%
    \Theta; \Gamma, \Delta[s/\mkvar{a}], \mkvar b \of B[s/\mkvar a] \types
    (\JJ[(s \equiv t)/\mkvar a])[\mkvar b/x].
  \end{equation}
  We may thus apply abstraction to \eqref{eq:subst-eq-abstr-first-1} and \eqref{eq:subst-eq-abstr-last-3} to derive
  \begin{equation*}
    \Theta; \Gamma, \Delta[s/\mkvar{a}] \types
    \abstr{x \of B[s/\mkvar{a}]} \; \JJ[(s \equiv t)/\mkvar{a}],
  \end{equation*}
  as desired.

  \inCase{TT-Meta}
  Suppose \eqref{eq:prep-2} concludes with the metavariable rule for $\symM$,
  where $\Theta(\symM) = \BB = (\abstr{x_1 \of A_1} \cdots \abstr{x_n \of A_n}\; \bdry)$:
  \begin{equation}
    \label{eq:prep-mv}%
    \infer
    {{\begin{aligned}
      &\Theta; \Gamma, \mkvar a \of A, \Delta \types u_i : A_i[\upto{\vec{u}}{i}/\upto{\vec{x}}{i}]
      &&\text{for $i = 1, \ldots, n$}
      \\
      &\Theta; \Gamma, \mkvar{a} \of A, \Delta \types \bdry[\vec{u}/\vec{x}]
     \end{aligned}}
    }{
      \Theta; \Gamma, \mkvar a \of A, \Delta \types (\plug{(\bdry[\vec{u}/\vec{x}])}{\symM(\vec{u})})
    }
  \end{equation}
  Judgements \eqref{it:prep-1} and \eqref{it:prep-2} are derived by the metavariable rule for~$\symM$, applied to the corresponding induction hypotheses for the premises of~\eqref{eq:prep-mv}.
  We address \eqref{it:prep-3} in case $\bdry = (\Box : B)$, and leave the simpler case $\bdry = (\isType{\Box})$ to the reader. We thus seek a derivation of
  \begin{equation*}
    \Theta; \Gamma, \Delta[s/\mkvar{a}] \types
    (\symM(\vec{u}) : B[\vec{u}/\vec x])[(s \equiv t)/\mkvar a]
  \end{equation*}
  which equals
  \begin{equation*}
    \Theta; \Gamma, \Delta[s/\mkvar{a}] \types
    \symM(\vec{u}[s/\mkvar a])
      \equiv \symM(\vec{u}[t/\mkvar a])
      : B[\vec{u}[s/\mkvar a]/\vec x].
  \end{equation*}
  This is just the conclusion of the congruence rule \rref{TT-Meta-Congr} for~$\symM$, suitably applied so that its term and term equation premises are precisely the induction hypotheses (\ref{it:prep-1},\ref{it:prep-2},\ref{it:prep-3}) for the term premises of~\eqref{eq:prep-mv}, and its type equation premise is obtained by application of the induction hypothesis~\eqref{it:prep-3} to the last premise of~\eqref{eq:prep-mv}.

  \inCase{TT-Meta-Congr}
  If \eqref{eq:prep-2} ends with a congruence rule for an object metavariable~$\symM$
  then both \eqref{it:prep-1} and \eqref{it:prep-2} follow by the same congruence rule, applied to the respective induction hypotheses for the premises.

  \inCaseText{of a specific rule}
  Suppose \eqref{eq:prep-2} ends with an application of the raw rule
  $R = (\rawRule{\symM_1 \of \BB_1, \ldots, \symM_n \of \BB_n}{\J})$
  instantiated with $I = \finmap{\symM_1 \mto e_1, \ldots, \symM_n \mto e_n}$:
  \begin{equation}
    \label{eq:prep-subst-eq-specific}
    \infer
    {{\begin{aligned}
      &\Theta; \Gamma, \mkvar{a} \of A, \Delta \types \plug{(\upact{I}{i} \BB_i)}{e_i}
      &&\text{for $i = 1, \ldots, n$} \\
      &\Theta; \Gamma, \mkvar{a} \of A, \Delta \types \act{I} \bdry
      &&\text{where $\J = \plug{\bdry}{e}$}
     \end{aligned}}
    }{
      \Theta; \Gamma, \mkvar{a} \of A, \Delta \types
      \act{I} \J
    }
  \end{equation}
  We would like to derive
  \begin{align}
    \Theta; \Gamma, \Delta[s/\mkvar{a}] &\types (\act{I} \J)[s/\mkvar a],
    \label{eq:subst-eq-R1}
    \\
    \Theta; \Gamma, \Delta[s/\mkvar{a}] &\types (\act{I} \J)[t/\mkvar a],
    \label{eq:subst-eq-R2}
    \\
    \intertext{and in case $\J$ is an object judgement, also}
    \Theta; \Gamma, \Delta[s/\mkvar{a}] &\types (\act{I} \J)[(s \equiv t)/\mkvar{a}].
    \label{eq:subst-eq-R3}
  \end{align}
  We derive~\eqref{eq:subst-eq-R1} by $\act{(I[s/\mkvar{a}])} R$ where $I[s/\mkvar a] = \langle \symM_1 \mto e_1[s/\mkvar a], \ldots, \symM_n \mto e_n[s/\mkvar a] \rangle$, as its premises are induction hypotheses. Similarly, \eqref{eq:subst-eq-R2} is derived by $\act{(I[t/\mkvar{a}])} R$.
  We consider \eqref{eq:subst-eq-R3} in case $\J = (u : B)$ and leave the simpler case $\J = (\isType B)$ to the reader. We thus need to derive
  \begin{equation}
    \label{eq:subst-eq-R3-tm}%
    \Theta; \Gamma, \Delta[s/\mkvar{a}] \types
    (\act{I} u)[s/\mkvar a] \equiv (\act{I} u)[t/\mkvar a] : (\act{I} B)[s/\mkvar a],
  \end{equation}
  which we do by applying the congruence rule, where $J = I [s/\mkvar{a}]$ and $K = I [t/\mkvar{a}]$,
  \begin{equation*}
    \infer{
      {\begin{aligned}
      &\Theta; \Gamma, \Delta[s/\mkvar{a}] \types \plug{(\upact{J}{i} \BB_i)}{e_i[s/\mkvar{a}]}
        &&\text{for $i = 1, \ldots, n$}\\
      &\Theta; \Gamma, \Delta[s/\mkvar{a}] \types \plug{(\upact{K}{i} \BB_i)}{e_i[t/\mkvar{a}]}
        &&\text{for $i = 1, \ldots, n$}\\
      &\Theta; \Gamma, \Delta[s/\mkvar{a}] \types \plug{(\upact{J}{i} \BB_i)}{e_i[s/\mkvar{a}] \equiv e_i[t/\mkvar{a}]}
        &&\text{for object boundary $\BB_i$} \\
      &\Theta; \Gamma, \Delta[s/\mkvar{a}] \types \act{J} B \equiv \act{K} B
    \end{aligned}}
    }{
      \Theta; \Gamma, \Delta[s/\mkvar{a}] \types \act{J} u \equiv \act{K} u : \act{J} B
    }
  \end{equation*}
  The first three rows of premises are just the induction hypotheses for the first row of premises
  of~\eqref{eq:prep-subst-eq-specific}, and the last one is~\eqref{it:prep-3} for the last premise
  of~\eqref{eq:prep-subst-eq-specific}.

  \inCaseText{of a congruence rule}
  Both \eqref{it:prep-1} and \eqref{it:prep-2} are derived by applying the induction hypotheses to the premises and using the congruence rule.

  \inCase{TT-Conv-Tm}
  Consider a derivation ending with a conversion
  \begin{equation*}
    \infer
    { \Theta; \Gamma, \mkvar{a} \of A, \Delta \types u : B \\
      \Theta; \Gamma, \mkvar{a} \of A, \Delta \types B \equiv C }
    { \Theta; \Gamma, \mkvar{a} \of A, \Delta \types u : C }
  \end{equation*}
  The judgements $\Theta; \Gamma, \Delta[s/\mkvar{a}] \types u[s/\mkvar a] : C[s/\mkvar a]$ and $\Theta; \Gamma, \Delta[s/\mkvar{a}] \types u[t/\mkvar a] : C[t/\mkvar a]$ immediately follow from the induction hypothesis and conversion.
  To derive $\Theta; \Gamma, \Delta[s/\mkvar{a}] \types (u : C)[(s \equiv t)/\mkvar a]$, note that the induction hypothesis \eqref{it:prep-3} for the first premise yields
  \begin{equation*}
    \Theta; \Gamma, \Delta[s/\mkvar{a}] \types u[s/\mkvar{a}] \equiv u[t/\mkvar{a}] : B[s/\mkvar{a}],
  \end{equation*}
  and \eqref{it:prep-1} applied to the second premise
  \begin{equation*}
    \Theta; \Gamma, \Delta[s/\mkvar{a}] \types B[s/\mkvar{a}] \equiv C[s/\mkvar{a}].
  \end{equation*}
  Thus by equality conversion we conclude $\Theta; \Gamma, \Delta[s/\mkvar{a}] \types u[s/\mkvar{a}] \equiv u[t/\mkvar{a}] : C[s/\mkvar{a}]$.

  \inCasesText{
  \rref{TT-EqTy-Refl},
  \rref{TT-EqTy-Sym},
  \rref{TT-EqTy-Trans},
  \rref{TT-EqTm-Refl},
  \rref{TT-EqTm-Sym},
  \rref{TT-EqTm-Trans},
  \rref{TT-Conv-EqTm}}
  These cases are dispensed with by straightforward applications of the induction hypotheses.
\end{proof}

\restatesubsteqsides*

\begin{proof}
  \label{proof:subst-eq-sides}%
  We spell out the proof of the first claim only.
  By substituting $s$ for $x$ in the first assumption we obtain
  \begin{equation*}
    \Theta; \Gamma \types \abstr{\vec{y} \of \vec{B}[s/x]} \; C[s/x] \equiv D[s/x],
  \end{equation*}
  and by applying \rref{TT-Subst-EqTy} to the second assumption
  \begin{equation*}
    \Theta; \Gamma \types \abstr{\vec{y} \of \vec{B}[s/x]} \; D[s/x] \equiv D[t/x].
  \end{equation*}
  These two may be combined to give the desired judgement by unpacking the abstraction,
  applying transitivity, and packing up the abstraction.
\end{proof}

\restateeqsubstn*

\begin{proof}
  \label{proof:eq-subst-n}%
  First, by inversion on the derivation of
  $
  \Theta; \Gamma \types
  \abstr{\vec{x} \of \vec{A}}\;
  \plug{\BB}{e}
  $
  we see that, for $i = 1, \ldots, n$,
  \begin{equation*}
    \Theta; \Gamma \types \abstr{\upto{\vec{x}}{i} \of \upto{\vec{A}}{i}}\; \isType{A_i}.
  \end{equation*}
  Next, we claim that, for all $j = 1, \ldots, i-1$,
  \begin{equation*}
    \Theta; \Gamma \types
    \begin{aligned}[t]
      \abstr{x_j \of A_j[\upto{\vec{s}}{j}/\upto{\vec{x}}{j}]} \cdots \abstr{x_{i-1} \of A_{i-1}[\upto{\vec{s}}{j}/\upto{\vec{x}}{j}]} \; \qquad\qquad& \\
      A_i[\upto{\vec{s}}{j}/\upto{\vec{x}}{j}] \equiv A_i[\upto{\vec{t}}{j}/\upto{\vec{x}}{j}]. &
    \end{aligned}
  \end{equation*}
  Indeed, when $j = 1$ the statement reduces to reflexivity, while an application of \cref{lem:subst-eq-sides} lets us pass from~$j$ to $j+1$. When $j = i$ we obtain
  \begin{equation*}
    \Theta; \Gamma \types
    A_i[\upto{\vec{s}}{i}/\upto{\vec{x}}{i}] \equiv
    A_i[\upto{\vec{t}}{i}/\upto{\vec{x}}{i}],
  \end{equation*}
  and this can be used to show by conversion that
  $
    \Theta; \Gamma \types t_i : A_i[\upto{\vec{s}}{i}/\upto{\vec{x}}{i}]
  $.
  Now the goal can be derived by repeated applications of \cref{lem:subst-eq-sides}.
\end{proof}

\restateinstantiationadmissible*

\begin{proof}
  \label{proof:instantiation-admissible}%
  We proceed by structural induction on the derivation of $\Xi; \Gamma, \Delta \types \JJ$,
  only devoting attention to the metavariable and abstraction rules, as all the other cases are straightforward.

  \inCase{TT-Meta}
  Consider an application of a metavariable rule for~$\symM$ with
  $\Xi(\symM) = (\abstr{x_1 \of A_1} \cdots \abstr{x_m \of A_m}\; \bdry)$ and
  $I(\symM) = \abstr{\vec{x}} e$:
  \begin{equation*}
    \infer
    { { \begin{aligned}
          &\Xi; \Gamma, \Delta \types t_j : A_j[\upto{\vec{t}}{j}/\upto{\vec{x}}{j}]
            \quad \text{for $j = 1, \ldots, m$}\\
          &\Xi; \Gamma, \Delta \types \bdry[\vec t/ \vec x]
        \end{aligned} } }
    {
      \Xi; \Gamma, \Delta \types \plug{(\bdry[\vec{t}/\vec{x}])}{\symM(\vec{t})}
    }
  \end{equation*}
  We need to derive
  \begin{equation}
    \label{eq:instantiate-1}%
    \Theta; \Gamma, \act{I} \Delta \types \plug{((\act{I} \bdry)[\act{I} \vec{t}/\vec{x}])}{e[\act{I} \vec{t}/\vec{x}]}.
  \end{equation}
  By induction hypothesis, for each $j = 1, \ldots, m$,
  \begin{equation*}
    \Theta; \Gamma, \act{I} \Delta \types \act{I} t_j : (\act{I} A_j)[\upto{\act{I} \vec{t}}{j}/\upto{\vec{x}}{j}],
  \end{equation*}
  while derivability of~$I$ at $\symM$ and weakening by $\act{I} \Delta$ yield
  \begin{equation}
    \label{eq:instantiate-2}%
    \Theta; \Gamma, \act{I} \Delta \types \abstr{\vec{x} \of \act{I} \vec{A}} \; \plug{(\act{I} \bdry)}{e}.
  \end{equation}
  We now derive~\eqref{eq:instantiate-1} by repeatedly using \rref{TT-Subst} to substitute $\act{I} t_i$'s for $x_i$'s in~\eqref{eq:instantiate-2}.

  \inCase{TT-Meta-Congr}
  Consider an application of a metavariable congruence rule for~$\symM$ with
  $\Xi(\symM) = (\abstr{x_1 \of A_1} \cdots \abstr{x_m \of A_m}\ \bdry)$ and
  $I(\symM) = \abstr{\vec{x}} e$:
  \begin{equation*}
    \infer
    { { \begin{aligned}
          &\Xi; \Gamma, \Delta \types s_j :
              A_j[\upto{\vec{s}}{j}/\upto{\vec{x}}{j}]
          &\text{for $j = 1, \ldots, m$}
          \\
          &\Xi; \Gamma, \Delta \types t_j :
              A_j[\upto{\vec{t}}{j}/\upto{\vec{x}}{j}]
          &\text{for $j = 1, \ldots, m$}
          \\
          &\Xi; \Gamma, \Delta \types s_j \equiv t_j :
              A_j[\upto{\vec{s}}{j}/\upto{\vec{x}}{j}]
          &\text{for $j = 1, \ldots, m$}
          \\
          &\Xi; \Gamma, \Delta \types C[\vec s/\vec x] \equiv C[\vec t/\vec x]
          &\text{if $\bdry = (\Box : C)$}
        \end{aligned} }
    }{
      \Xi; \Gamma, \Delta \types
      \plug
      {(\bdry[\vec{s}/\vec{x}])}
      {\symM(\vec{s}) \equiv \symM(\vec{t})}
    }
  \end{equation*}
  We need to derive
  \begin{equation*}
    \Theta; \Gamma, \act{I} \Delta \types
      \plug
      {((\act{I} \bdry)[\act{I}\vec{s}/x])}
      {e[\act{I} \vec{s}/\vec{x}] \equiv e[\act{I} \vec{t}/\vec{x}]}.
  \end{equation*}
  Derivability of~$I$ yields
  \begin{equation}
    \label{eq:instantiate-3}%
    \Theta; \Gamma, \act{I} \Delta \types \abstr{\vec{x} \of \act{I} \vec{A}} \; \plug{(\act{I} \bdry)}{e}.
  \end{equation}
  We may apply \cref{lem:eq-subst-n} to \eqref{eq:instantiate-3} with terms $\act{I} \vec{s}$ and $\act{I} \vec{t}$. The preconditions of the lemma are met by the induction hypotheses for the premises.

  \inCase{TT-Abstr}
  Suppose the derivation ends with an abstraction
  \begin{equation*}
    \inferrule
    {
      \Xi; \Gamma, \Delta \types \isType A \\
      \mkvar{a} \not\in \vert{}\Gamma, \Delta\vert{} \\
      \Xi; \Gamma, \Delta, \mkvar{a} \of A \types \JJ[\mkvar{a}/x]
    }{
      \Xi; \Gamma, \Delta \types \abstr{x \of A} \; \JJ
    }
  \end{equation*}
  The induction hypotheses for the premises state
  \begin{equation*}
    \Theta; \Gamma, \act{I} \Delta \types \isType{\act{I} A}
    \qquad\text{and}\qquad
    \Theta; \Gamma, \act{I} \Delta, \mkvar{a} \of \act{I} A \types \act{I} (\JJ[\mkvar{a}/x]).
  \end{equation*}
  Because $\act{I} (\JJ[\mkvar{a}/x]) = (\act{I} \JJ)[\mkvar{a}/x]$ we may abstract~$\mkvar{a}$ to derive
  \begin{equation*}
    \Theta; \Gamma, \act{I} \Delta \types \abstr{x \of \act{I} A} \; \act{I} \JJ.
\ifjar
  \end{equation*}
  \qedhere
\else
    \qedhere
  \end{equation*}
\fi
\end{proof}

\restateeqinstadmit*

\begin{proof}
  \label{proof:eq-inst-admit}%
  Note that \eqref{eq:instEq-jdg-I} already follows from \cref{prop:instantiation-admissible}, so we do not bother to reprove it, but we include the statement because we use it repeatedly.
  We proceed by structural induction on the derivations of $\types \isMCtx{\Xi}$ and $\Xi; \Gamma, \Delta \types \JJ$.

  \inCase{TT-Var}
  Consider a derivation ending with the variable rule
  \begin{equation*}
    \infer{
    }{
      \Xi; \Gamma, \Delta \types \mkvar{a}_i : A_i.
    }
  \end{equation*}
  We derive
  \eqref{eq:instEq-jdg-J}
  by the variable rule, and when $\mkvar{a}_i \in \vert{}\Delta\vert{}$ a subsequent conversion along
  $\Theta; \Gamma, \act{I} \Delta \types \act{I} A_i \equiv \act{J} A_i$.
  The judgement \eqref{eq:instEq-eq} holds by \rref{TT-EqTm-Refl}.

  \inCase{TT-Abstr}
  Consider a derivation ending with an abstraction
  \begin{equation*}
    \infer{
      \Xi; \Gamma, \Delta \types \isType{B} \\
      \mkvar{b} \notin \vert{}\Gamma, \Delta\vert{} \\
      \Xi; \Gamma, \Delta, \mkvar{b} \of B \types \JJ[\mkvar{b} / y]\\
    }{
      \Xi; \Gamma, \Delta \types \abstr{y \of B}\; \JJ
    }
  \end{equation*}
  The induction hypothesis for the first premise yields
  \begin{align}
    \label{eq:instEq-1}
    \Theta; \Gamma, \act{I} \Delta &\types \isType{\act{I} B},\\
    \label{eq:instEq-1.5}
    \Theta; \Gamma, \act{I} \Delta &\types \isType{\act{J} B},\\
    \label{eq:instEq-2}
    \Theta; \Gamma, \act{I} \Delta &\types \act{I} B \equiv \act{J} B.
  \end{align}
  The extended variable context $\Gamma, \Delta, \mkvar{b} \of B$ satisfies the preconditions of the induction hypotheses for the second premise, therefore
  \begin{align}
    \label{eq:instEq-abstr-I}
    \Theta; \Gamma, \act{I} \Delta, \mkvar{b} \of \act{I} B &\types (\act{I} \JJ)[\mkvar{b}/y], \\
    \label{eq:instEq-abstr-J}
    \Theta; \Gamma, \act{I} \Delta, \mkvar{b} \of \act{I} B &\types (\act{J} \JJ)[\mkvar{b}/y], \\
    \label{eq:instEq-abstr-eq}
    \Theta; \Gamma, \act{I} \Delta, \mkvar{b} \of \act{I} B &\types (\act{(I \equiv J)} \JJ)[\mkvar{b}/y],
  \end{align}
  where \eqref{eq:instEq-abstr-eq} is present only when $\JJ$ is an object judgement.
  Now \eqref{eq:instEq-eq} follows by abstraction from \eqref{eq:instEq-1} and \eqref{eq:instEq-abstr-eq}.
  To derive \eqref{eq:instEq-jdg-J}, we first abstract \eqref{eq:instEq-abstr-J} to get
  \begin{equation*}
    \Theta; \Gamma, \act{I} \Delta \types \abstr{y \of \act{I} B} \; \act{J} \JJ
  \end{equation*}
  and then apply \rref{TT-Conv-Abstr} to convert it along \eqref{eq:instEq-2} to derive the desired
  \begin{equation*}
    \Theta; \Gamma, \act{I} \Delta \types \abstr{y \of \act{J} B} \; \act{J} \JJ.
  \end{equation*}

  \inCaseText{of a specific rule}
  Consider a specific rule
  \begin{equation*}
    R =
    (\rawRule
    {\sym{N}_1 \of \BB'_1, \ldots, \sym{N}_m \of \BB'_m}
    {\plug{\bdry}{e}})
  \end{equation*}
  and an instantiation
  $K = \finmap{\sym{N}_1 \mto g_1, \ldots, \sym{N}_m \mto g_m}$.
  Suppose the derivation ends with the instantiation $\act{K} R$:
  \begin{equation}
    \label{eq:instEq-5}
    \infer{
      \Xi; \Gamma, \Delta \types \plug{(\upact{K}{i} \BB'_i)}{g_i}
      \quad \text{for $i = 1, \ldots, m$}
      \\\\
      \Xi; \Gamma, \Delta \types \act K \bdry
    }{
      \Xi; \Gamma, \Delta \types \act{K} (\plug{\bdry}{e})
    }
  \end{equation}
  We derive \eqref{eq:instEq-jdg-J} by~$\act{(\act{J} K)} R$ where
  $\act{J}K = \finmap{\sym{N}_1 \mto \act{J}g_1, \ldots, \sym{N}_m \mto \act{J}g_m}$.
  The resulting premises for $i = 1, \ldots, m$ are precisely the induction hypotheses \eqref{eq:instEq-jdg-J} for the premises of~\eqref{eq:instEq-5}.
  The last premise, $\Theta; \Gamma, \act I \Delta \types \act {(\act J K)} \bdry$, follows by case analysis of $\bdry$ and the same induction hypothesis \eqref{eq:instEq-jdg-J}.
  To establish \eqref{eq:instEq-eq}, we must derive
  \begin{equation*}
    \Theta; \Gamma, \act{I} \Delta \types
    \plug{(\act{(\act{I} K)} \bdry)}{\act{(\act{I} K)} e \equiv \act{(\act{J} K)} e}.
  \end{equation*}
  We do so by an application of the congruence rule associated with~$R$,
  instantiated with $\act I K$ and $\act J K$.
  The resulting closure rule has four sets of premises, all of which are derivable:
  \begin{itemize}
  \item both copies of premises of~$R$ are derivable because they are the induction hypotheses \eqref{eq:instEq-jdg-I}
    and \eqref{eq:instEq-jdg-J} for the premises of~\eqref{eq:instEq-5},
  \item the additional equational premises are derivable because they are the induction hypotheses \eqref{eq:instEq-eq} for the premises of~\eqref{eq:instEq-5}.
  \end{itemize}

  \inCaseText{of a congruence rule}
  Similar to the case of a specific rule. Given a congruence rule with instantiations $L$ and $K$, \eqref{eq:instEq-jdg-J} follows from the same congruence rule with instantiations $\act J L$ and $\act J K$. The premises hold by induction hypothesis~\eqref{eq:instEq-jdg-J}.

  \inCase{TT-Meta}
  Consider a derivation ending with an application of the metavariable rule for~$\symM_i$, where
  $\vec{x} = (x_1, \ldots, x_m)$,
  $\vec{t} = (t_1, \ldots, t_m)$,
  $J(\symM_i) = \abstr{\vec{x}} e$, and
  $\BB_i = \abstr{\vec{x} \of \vec{A}}\; \bdry$,
  \begin{equation}
    \label{eq:instEq-metavar}%
    \infer
    {
      \Xi; \Gamma, \Delta \types t_j : A_j[\upto{\vec{t}}{j}/\upto{\vec{x}}{j}]
      \quad \text{for $j = 1, \ldots, m$}
      \\\\
      \Xi; \Gamma, \Delta \types \bdry[\vec t/\vec x]
    }{
      \Xi; \Gamma, \Delta \types \plug{(\bdry[\vec{t}/\vec{x}])}{\symM_i(\vec{t})}
    }
  \end{equation}
  Because $J$ is derivable we know that
  $
    \Theta; \Gamma \types \abstr{\vec{x} \of \act{J} \vec{A}}\; \plug{(\act{J} \bdry)}{e}
  $.
  For \eqref{eq:instEq-jdg-J}, we derive
  \begin{equation*}
    \Theta; \Gamma, \act{I} \Delta \types
    \plug
    {((\act{J}\bdry)[\act{J} \vec{t}/\vec{x}])}
    {e[\act{J} \vec{t}/\vec{x}]}
  \end{equation*}
  by substituting $\act{J} \vec{t}$ for $\vec{x}$ by repeated applications of \rref{TT-Subst}, which generate premises, for $j = 1, \ldots, m$,
  \begin{equation*}
    \Theta; \Gamma, \act{I} \Delta \types \act{J}t_j : (\act{J}A_j)[\act{J} \upto{\vec{t}}{j}/\upto{\vec{x}}{j}].
  \end{equation*}
  These are precisely the induction hypotheses for the premises of~\eqref{eq:instEq-metavar}.

  It remains to show \eqref{eq:instEq-eq}. Writing $I(\symM_i)$ as $\abstr{x} e'$, we must establish
  \begin{equation*}
    \Theta; \Gamma, \act{I} \Delta \types
    \plug
    {((\act{I}\bdry)[\act{I} \vec{t}/\vec{x}])}
    {e'[\act{I} \vec{t}/\vec{x}] \equiv e[\act{J} \vec{t}/\vec{x}]}.
  \end{equation*}
  Because $I$ and $J$ are judgementally equal, we know that
  \begin{equation*}
    \Theta; \Gamma \types
       \abstr{\vec{x} : \act{I} \vec A}\;
        \plug{(\act{I} \bdry)}
             {e' \equiv e}.
  \end{equation*}
  By substituting $\act I {\vec t}$ for $\vec{x}$ by repeated use of \rref{TT-Subst}, we derive
  \begin{equation}
    \Theta; \Gamma, \act{I} \Delta \types
    \plug
    {((\act{I}\bdry)[\act{I} \vec{t}/\vec{x}])}
    {e'[\act{I} \vec{t}/\vec{x}] \equiv e[\act{I} \vec{t}/\vec{x}]},
  \end{equation}
  where the substitutions generate obligations, for $j = 1, \ldots, m$,
  \begin{equation*}
    \Theta; \Gamma, \act{I} \Delta \types \act{I}t_j : (\act{I}A_j)[\act{I} \upto{\vec{t}}{j}/\upto{\vec{x}}{j}].
  \end{equation*}
  These are precisely the induction hypotheses for the term premises of~\eqref{eq:instEq-metavar}.
  By transitivity it suffices to derive
  \begin{equation}
    \label{eq:instEq-meta-1}%
    \Theta; \Gamma, \act{I} \Delta \types
    \plug
    {((\act{I}\bdry)[\act{I} \vec{t}/\vec{x}])}
    {e[\act{I} \vec{t}/\vec{x}] \equiv e[\act{J} \vec{t}/\vec{x}]}.
  \end{equation}
  The induction hypotheses for the premises of \eqref{eq:instEq-metavar} for $j = 1, \ldots, m$ are
  \begin{align}
    \Theta; \Gamma, \act{I} \Delta &\types \act{I} t_j : (\act{I} A_j)[\upto{\act{I} \vec{t}}{j}/\upto{\vec{x}}{j}] \label{eq:instEq-meta-It-IA}\\
    \Theta; \Gamma, \act{I} \Delta &\types \act{J} t_j : (\act{J} A_j)[\upto{\act{J} \vec{t}}{j}/\upto{\vec{x}}{j}] \label{eq:instEq-meta-Jt-JA}\\
    \Theta; \Gamma, \act{I} \Delta &\types \act{I} t_j \equiv \act{J} t_j : (\act{I} A_j)[\upto{\act{I} \vec{t}}{j}/\upto{\vec{x}}{j}]. \label{eq:instEq-meta-eq-It-Jt}
  \end{align}
  We would like to apply \cref{lem:eq-subst-n} to these judgements to derive \eqref{eq:instEq-meta-1}, but the type of the terms~$\act J t_j$ in \eqref{eq:instEq-meta-Jt-JA} does not match the type of the corresponding terms~$\act I t_j$.
  We rectify the situation by successively deriving the equality of the types involved and converting, as follows.

  By assumption $\types \isMCtx \Xi$ holds and hence $\upto \Xi i; \emptyCtx \types \abstr{x_1 \of A_1} \cdots \abstr{x_{j-1} \of A_{j-1}}\; \isType {A_j}$ for $j = 1, \ldots, m$.
  Note that the preceding judgement is derivable in a smaller metavariable context, and we can thus appeal to the induction hypothesis to derive
  \begin{equation*}
    \Theta; \Gamma, \act I \Delta \types \abstr{x_1 \of \act I A_1} \cdots \abstr{x_{j-1} \of \act I A_{j-1}}\; \act I A_j \equiv \act J A_j.
  \end{equation*}
  We apply \cref{lem:eq-subst-n} together with~(\ref{eq:instEq-meta-It-IA},\ref{eq:instEq-meta-Jt-JA},\ref{eq:instEq-meta-eq-It-Jt}) to obtain
  \begin{equation*}
    \Theta; \Gamma, \act I \Delta \types
    (\act I A_j)[\act I \upto {\vec t} j/\upto {\vec x} j] \equiv
    (\act J A_j)[\act J \upto {\vec t} j/\upto {\vec x} j].
  \end{equation*}
  We now appeal to \rref{TT-Conv-Tm} to derive
  \begin{equation}
    \Theta; \Gamma, \act{I} \Delta
    \types \act J t_j : (\act I A_j)[\upto {\act I \vec t} j/\upto {\vec x} j].\label{eq:instEq-meta-Jt-IA}
  \end{equation}
  Finally we derive \eqref{eq:instEq-meta-1} by applying \cref{lem:eq-subst-n} to~(\ref{eq:instEq-meta-It-IA},\ref{eq:instEq-meta-Jt-IA},\ref{eq:instEq-meta-eq-It-Jt}) and to the judgement $\Theta; \Gamma \types \abstr{\vec{x} \of \act{I} \vec{A}} \plug{(\act{I} \bdry)}{e}$,
  which equals $\Theta; \Gamma \types \plug{(\act{I} \BB_i)}{J(\symM_i)}$ and so is derivable by assumption.

  \inCase{TT-Meta-Congr}
  Consider a derivation ending with an application of the congruence rule for~$\symM_i$,
  where
  $\vec{x} = (x_1, \ldots, x_m)$,
  $\vec{s} = (s_1, \ldots, s_m)$,
  $\vec{t} = (t_1, \ldots, t_m)$,
  $J(\symM_i) = \abstr{\vec{x}} e$, and
  $\BB_i = \abstr{\vec{x} \of \vec{A}}\; \bdry$,
  \begin{equation}
    \label{eq:instEq-6}
    \infer{
      {\begin{aligned}
         \Xi; \Gamma, \Delta &\types s_j : A_j[\upto{\vec{s}}{j}/\upto{\vec{x}}{j}] \
         &\text{for $j = 1, \ldots, m$} \\
         \Xi; \Gamma, \Delta &\types t_j : A_j[\upto{\vec{t}}{j}/\upto{\vec{x}}{j}] \
         &\text{for $j = 1, \ldots, m$} \\
         \Xi; \Gamma, \Delta &\types s_j \equiv t_j : A_j[\upto{\vec{s}}{j}/\upto{\vec{x}}{j}] \
         &\text{for $j = 1, \ldots, m$} \\
         \Xi; \Gamma, \Delta &\types C[\vec s/\vec x] \equiv C[\vec t/\vec x] \
         &\text{if $\bdry = (\Box : C)$}
      \end{aligned}}
    }{
      \Xi; \Gamma, \Delta \types \plug{(\bdry[\vec{s}/\vec{x}])}{\symM_i(\vec{s}) \equiv \symM_i(\vec{t})}
    }
  \end{equation}
  Because $J$ is derivable we know that $\Theta; \Gamma \types \abstr{\vec{x} \of \act{J} \vec{A}}\; \plug{(\act{J} \bdry)}{e}$, therefore by weakening also
  \begin{equation*}
    \Theta; \Gamma, \act{I} \Delta \types \abstr{\vec{x} \of \act{J} \vec{A}}\; \plug{(\act{J}\bdry)}{e}.
  \end{equation*}
  The desired judgement
  \begin{equation*}
    \Theta; \Gamma, \act{I} \Delta \types
    \plug
    {((\act{J}\bdry)[\act{J} \vec{s}/\vec{x}])}
    {e[\act{J} \vec{s}/\vec{x}] \equiv e[\act{J} \vec{t}/\vec{x}]}
  \end{equation*}
  may be derived by repeated applications of \rref{TT-Subst-EqTm},
  provided that, for $j = 1, \ldots, m$,
  \begin{align*}
    \Theta; \Gamma, \act{I} \Delta &\types \act{J}s_j : (\act{J}A_j)[\act{J} \upto{\vec{s}}{j}/\upto{\vec{x}}{j}], \\
    \Theta; \Gamma, \act{I} \Delta &\types \act{J}t_j : (\act{J}A_j)[\act{J} \upto{\vec{t}}{j}/\upto{\vec{x}}{j}], \\
    \Theta; \Gamma, \act{I} \Delta &\types \act{J}s_j \equiv \act{J}t_j : (\act{J}A_j)[\act{J} \upto{\vec{s}}{j}/\upto{\vec{x}}{j}].
  \end{align*}
  These are precisely induction hypotheses for~\eqref{eq:instEq-6}.

  \inCasesText{
    \rref{TT-EqTy-Refl},
    \rref{TT-EqTy-Sym},
    \rref{TT-EqTy-Trans},
    \rref{TT-EqTm-Refl},
    \rref{TT-EqTm-Sym},
    \rref{TT-EqTm-Trans},
    \rref{TT-Conv-Tm}, and
    \rref{TT-Conv-EqTm}
  }
  The remaining cases are all equality rules. Each is established by an appeal to the induction hypotheses for the premises, followed by an application of the same rule.
\end{proof}

\restateinsteqbootstrapctx*

\begin{proof}
  \label{proof:inst-eq-bootstrap-ctx}%
  We proceed by induction on the length of~$\Delta$. The base case is trivial.
  For the induction step, suppose $\Theta \types \isVCtx{(\Gamma, \Delta, \mkvar{b} \of B)}$.
  For $\mkvar{a} \in \vert{}\Delta\vert{}$ we apply the induction hypothesis to~$\Delta$ and weaken by $\mkvar{b} \of \act{I} B$.
  To deal with $\mkvar{b}$,  we apply \Cref{lem:eq-inst-admit} to $\Theta; \Gamma, \Delta \types \isType{B}$, which
  holds by inversion, and weaken by $\mkvar{b} \of \act{I} B$ to derive the desired
  \begin{align*}
    \Theta; \Gamma, \act{I} \Delta, \mkvar{b} \of \act{I} B &\types \isType{\act{I} B}, \\
    \Theta; \Gamma, \act{I} \Delta, \mkvar{b} \of \act{I} B &\types \isType{\act{J} B}, \\
    \Theta; \Gamma, \act{I} \Delta, \mkvar{b} \of \act{I} B &\types \act{I} B \equiv \act{J} B.
\ifjar \else \qedhere \fi
  \end{align*}
\end{proof}

\restateinsteqbootstrapinst*

\begin{proof}
  \label{proof:inst-eq-bootstrap-inst}.
  We proceed by induction on~$n$. The base case is trivial.
  To prove the induction step for $n > 0$, suppose the statement holds for $\upto{\Xi}{n}$, $\upto{I}{n}$ and $\upto{J}{n}$, and that $\BB_n = \abstr{x_1 \of A_1} \cdots \abstr{x_m \of A_m} \; \bdry$.
  By inversion on $\types \isMCtx{\Xi}$ and weakening we derive $\upto{\Xi}{n}; \Gamma \types \BB_n$. Then by inverting the abstractions of~$\BB_n$ we obtain variables $\vec{a} = (\mkvar{a}_1, \ldots, \mkvar{a}_m)$ such that, with $A'_i = A_i[\upto{\vec{a}}{i}/\upto{\vec{x}}{i}]$ and $\Delta = [\mkvar{a}_1 \of A'_1, \ldots, \mkvar{a}_m \of A'_m]$,
  \begin{equation*}
    \upto{\Xi}{n} \types \isVCtx{(\Gamma, \Delta)},
    \qquad\text{and}\qquad
    \upto{\Xi}{n}; \Gamma, \Delta \types \bdry[\vec{a}/\vec{x}].
  \end{equation*}
  We apply \cref{lem:inst-eq-bootstrap-ctx} to $\upto{\Xi}{n}$, $\upto{I}{n}$, $\upto{J}{n}$, and $\Delta$ to derive, for $i = 1, \ldots, m$,
  \begin{align}
    \notag
    \Theta; \Gamma, \act{I} \Delta &\types \isType{\act{I} A'_i}, \\
    \notag
    \Theta; \Gamma, \act{I} \Delta &\types \isType{\act{J} A'_i}, \\
    \notag
    \Theta; \Gamma, \act{I} \Delta &\types \act{I} A'_i \equiv \act{J} A'_i, \\
    \label{eq:bootstrap-1}
    \Theta; \Gamma, \act{I} \Delta &\types \mkvar{a}_i : \act{J} A'_i.
  \end{align}
  where \eqref{eq:bootstrap-1} follows by conversion from the judgement above it.
  Next, we use \eqref{eq:bootstrap-1} to substitute $\mkvar{a}_i$ for $x_i$ in
  $
  \Theta; \Gamma, \act{I} \Delta \types \abstr{\vec{x} \of \act{J} \vec{A}} \; \plug{(\act{J} \bdry)}{f_n}
  $,
  which results in
  \begin{equation}
    \label{eq:bootstrap-2}%
    \Theta; \Gamma, \act{I} \Delta \types
       \plug
       {((\act{J} \bdry)[\vec{a}/\vec{x}])}
       {f_n[\vec{a}/\vec{x}]}.
  \end{equation}
  If we can reduce \eqref{eq:bootstrap-2} to
  \begin{equation}
    \label{eq:bootstrap-3}%
    \Theta; \Gamma, \act{I} \Delta \types
       \plug
       {((\act{I} \bdry)[\vec{a}/\vec{x}])}
       {f_n[\vec{a}/\vec{x}]},
  \end{equation}
  we will be able to derive the desired judgement
  \begin{equation*}
    \Theta; \Gamma, \act{I} \Delta \types \abstr{\vec{x} \of \act{I} \vec{A}} \; \plug{(\act{I} \bdry)}{f_n}
  \end{equation*}
  by abstracting $\mkvar{a}_1, \ldots, \mkvar{a}_n$ in \eqref{eq:bootstrap-3}.
  There are four cases, depending on what~$\bdry$ is.

  \inCaseText{$\bdry = (\isType{\Box})$}
  \eqref{eq:bootstrap-2} and \eqref{eq:bootstrap-3} are the same.

  \inCaseText{$\bdry = (\Box : B)$}
  We convert \eqref{eq:bootstrap-2} along
  \begin{equation*}
    \Theta; \Gamma, \act{I} \Delta \types
    (\act{J} B)[\vec{a}/\vec{x}] \equiv
    (\act{I} B)[\vec{a}/\vec{x}],
  \end{equation*}
  which holds by \cref{lem:eq-inst-admit} applied to 
  $\upto{\Xi}{n}; \Gamma, \Delta \types \isType{B[\vec{a}/\vec{x}]}$
  with $\upto{\Xi}{n}$, $\upto{I}{n}$, and~$\upto{J}{n}$.

  \inCaseText{$\bdry = (B \equiv C \by \Box)$}
  Here \eqref{eq:bootstrap-2} and \eqref{eq:bootstrap-3} are respectively
  \begin{equation*}
    \Theta; \Gamma, \act{I} \Delta \types \act{J} B \equiv \act{J} C
    \quad\text{and}\quad
    \Theta; \Gamma, \act{I} \Delta \types \act{I} B \equiv \act{I} C.
  \end{equation*}
  The latter follows from the former if we can also derive
  \begin{equation}
    \label{eq:bootstrap-4}%
    \Theta; \Gamma, \act{I} \Delta \types \act{I} B \equiv \act{J} B,
    \quad\text{and}\quad
    \Theta; \Gamma, \act{I} \Delta \types \act{I} C \equiv \act{J} C.
  \end{equation}
  We invert $\upto{\Xi}{n}; \Gamma, \Delta \types B \equiv C \by \Box$ to derive
  \begin{equation}
    \label{eq:bootstrap-5}%
    \upto{\Xi}{n}; \Gamma, \Delta \types \isType{B}
    \qquad\text{and}\qquad
    \upto{\Xi}{n}; \Gamma, \Delta \types \isType{C}.
  \end{equation}
  When we apply \cref{lem:eq-inst-admit} to \eqref{eq:bootstrap-5} it gives us \eqref{eq:bootstrap-4}.

  \inCaseText{$\bdry = (s \equiv t : B \by \Box)$}
  Here \eqref{eq:bootstrap-2} and \eqref{eq:bootstrap-3} are respecetively
  \begin{equation*}
    \Theta; \Gamma, \act{I} \Delta \types \act{J} s \equiv \act{J} t : \act{J} B
    \quad\text{and}\quad
    \Theta; \Gamma, \act{I} \Delta \types \act{I} s \equiv \act{I} t : \act{I} B,
  \end{equation*}
  The latter follows from the former if we can also derive
  \begin{equation}
    \label{eq:bootstrap-7}%
    \begin{aligned}
    \Theta; \Gamma, \act{I} \Delta &\types \act{I} B \equiv \act{J} B, \\
    \Theta; \Gamma, \act{I} \Delta &\types \act{I} s \equiv \act{J} s : \act{I} B, \\
    \Theta; \Gamma, \act{I} \Delta &\types \act{I} t \equiv \act{J} t : \act{I} B.
    \end{aligned}
  \end{equation}
  We invert $\upto{\Xi}{n}; \Gamma, \Delta \types s \equiv t : B \by \Box$ to derive
  \begin{equation}
    \label{eq:bootstrap-8}%
    \upto{\Xi}{n}; \Gamma, \Delta \types \isType{B},
    \quad
    \upto{\Xi}{n}; \Gamma, \Delta \types s : B
    \quad\text{and}\quad
    \upto{\Xi}{n}; \Gamma, \Delta \types t : B.
  \end{equation}
  When we apply \cref{lem:eq-inst-admit} to \eqref{eq:bootstrap-8} it gives us \eqref{eq:bootstrap-7}.
\end{proof}

\restatepresuppositivity*

\begin{proof}
  \label{proof:presuppositivity}%
  We proceed by induction on the derivation of $\Theta; \Gamma \types \plug{\BB}{e}$.

  \inCase{TT-Var}
  By \cref{prop:ctx-inversion}.

  \inCase{TT-Meta}
  The presupposition $\Theta; \Gamma \types \bdry[\vec t/\vec x]$ is available as premise.

  \inCase{TT-Meta-Congr}
  Consider a derivation ending with an application of the congruence rule for~$\symM$ whose boundary is $\Theta(\symM) = (\abstr{x_1 \of A_1} \cdots \abstr{x_m \of A_m} \; \bdry)$:
  \begin{equation*}
    \infer
    { { \begin{aligned}
          &\Theta; \Gamma \types s_j :
              A_j[\upto{\vec{s}}{j}/\upto{\vec{x}}{j}]
          &\text{for $j = 1, \ldots, m$}
          \\
          &\Theta; \Gamma \types t_j :
              A_j[\upto{\vec{t}}{j}/\upto{\vec{x}}{j}]
          &\text{for $j = 1, \ldots, m$}
          \\
          &\Theta; \Gamma \types s_j \equiv t_j :
              A_j[\upto{\vec{s}}{j}/\upto{\vec{x}}{j}]
          &\text{for $j = 1, \ldots, m$}
          \\
          &\Theta; \Gamma \types C[\vec{s}/\vec{x}] \equiv C[\vec{t}/\vec{x}]
          &\text{if $\bdry = (\Box : C)$}
        \end{aligned} }
    }{
      \Theta; \Gamma \types
      \plug
      {(\bdry[\vec{s}/\vec{x}])}
      {\symM(\vec{s}) \equiv \symM(\vec{t})}
    }
  \end{equation*}
  If $\bdry = (\isType{\Box})$, the presupposition $\Theta; \Gamma \types \symM(\vec{s}) \equiv \symM(\vec{t}) \by \Box$ follows directly by \rref{TT-Bdry-EqTy} and two uses of \rref{TT-Meta}.
  If $\bdry  = (\Box : C)$, the presuppositions of $\types \symM(\vec{s}) \equiv \symM(\vec{t}) : C[\vec{s}/\vec{x}] \by \Box$ follow by \rref{TT-Bdry-EqTm}:
  \begin{enumerate}
  \item $\Theta; \Gamma \types \isType{C[\vec{s}/\vec{x}]}$ holds by substitution of $\vec{s}$ for $\vec{x}$ in $\Theta; \Gamma \types \abstr{\vec{x} \of \vec{A}} \; \isType C$ much like in the previous case,
  \item $\Theta; \Gamma \types \symM(\vec{s}) : C[\vec{s}/\vec{x}]$ holds by \rref{TT-Meta},
  \item $\Theta; \Gamma \types \symM(\vec{t}) : C[\vec{s}/\vec{x}]$ is derived from
    $\Theta; \Gamma \types \symM(\vec{t}) : C[\vec{t}/\vec{x}]$
    by conversion along $\Theta; \Gamma \types C[\vec{t}/\vec{x}] \equiv C[\vec{s}/\vec{x}]$, which holds by the last premise.
  \end{enumerate}
  When applying \rref{TT-Meta} above, the premise $\Theta; \Gamma \types \bdry[\vec s/\vec x]$ is required, and likewise for~$\vec t$.
  We may derive it by applying \cref{prop:ctx-inversion} to $\types \isMCtx{\Theta}$ and substituting~$\vec{s}$ for~$\vec{x}$ with the help of \rref{TT-Subst}, and analogously for~$\vec{t}$.

  \inCase{TT-Abstr}
  Consider an abstraction
  \begin{equation*}
    \infer
    {
      \Theta; \Gamma \types \isType A \\
      \mkvar{a} \not\in \vert{}\Gamma\vert{} \\
      \Theta; \Gamma, \mkvar{a} \of A \types \JJ[\mkvar{a}/x]
    }{
      \Theta; \Gamma \types \abstr{x \of A} \; \JJ
    }
  \end{equation*}
  By induction hypothesis on the last premise, we obtain
  $\Theta; \Gamma, \mkvar{a} \of A \types \BB[\mkvar{a}/x]$ after which we apply \rref{TT-Bdry-Abstr}.

  \inCaseText{of a specific rule} The presupposition is available as premise.

  \inCaseText{of a congruence rule}
  Consider a congruence rulles associated with an object rule~$R$ and instantiated with~$I$ and~$J$, as in \cref{def:congruence-rule}.

  If $R$ concludes with $\types \isType A$, the presuppositions are $\Theta; \Gamma \types \isType{\act I A}$ and $\Theta; \Gamma \types \isType{\act J A}$, which are derivable by $\act I R$ and $\act J R$, respectively.

  If~$R$ concludes with $\types t : A$, the presuppositions are
  $\Theta; \Gamma \types \isType{\act{I} A}$,
  $\Theta; \Gamma \types \act{I} t : \act{I} A$, and
  $\Theta; \Gamma \types \act{J} t : \act{I} A$.
  We derive the first one by applying the induction hypothesis to the premise $\Theta; \Gamma \types \act I B \equiv \act J B$, the second one by~$\act{I} R$, and the third one by converting the second one along the aforementioned premise.

  \inCasesText{
  \rref{TT-EqTy-Refl},
  \rref{TT-EqTy-Sym},
  \rref{TT-EqTy-Trans},
  \rref{TT-EqTm-Refl},
  \rref{TT-EqTm-Sym},
  \rref{TT-EqTm-Trans}}
  These are all dispensed with by straightforward appeals to the induction hypotheses.

  \inCase{TT-Conv-Tm}
  Consider a term conversion
  \begin{equation*}
    \infer
    { \Theta; \Gamma \types t : A \\
      \Theta; \Gamma \types A \equiv B }
    { \Theta; \Gamma \types t : B }
  \end{equation*}
  Then $\Theta; \Gamma \types \isType{B}$ holds by the induction hypothesis for the second premise.

  \inCase{TT-Conv-EqTm}
  Consider a term equality conversion
  \begin{equation*}
    \infer
    { \Theta; \Gamma \types s \equiv t : A \\
      \Theta; \Gamma \types A \equiv B }
    { \Theta; \Gamma \types s \equiv t : B }
  \end{equation*}
  As in the previous case, the induction hypothesis for the second premise provides $\Theta; \Gamma \types \isType{B}$.
  The induction hypothesis for the first premise yields
  \begin{equation*}
    \Theta; \Gamma \types s : A
    \qquad\text{and}\qquad
    \Theta; \Gamma \types t : A
  \end{equation*}
  We may convert these to
  $\Theta; \Gamma \types s : B$ and $\Theta; \Gamma \types t : B$
  using the second premise.
\end{proof}

\restatettmetaeco*

\begin{proof}
  \label{proof:tt-meta-eco}%
  To prove admissibility of \rref{TT-Meta-Eco}, note that by \cref{prop:ctx-inversion} we have
  $
    \Theta; \Gamma \types \abstr{\vec{x} \of \vec{A}}\; \bdry,
  $
  so we may derive $\Theta; \Gamma \types \bdry[\vec{t}/\vec{x}]$ by substituting $\vec{t}$ for $\vec{x}$ by repeated applications of \rref{TT-Subst} to the premises of~\rref{TT-Meta-Eco}. We can now apply \rref{TT-Meta}.

  Next, we address admissibility of \rref{TT-Meta-Congr-Eco} by deriving its conclusion with the aid of \rref{TT-Meta-Congr}. For this purpose we need to derive
  \begin{align*}
    &\Theta; \Gamma \types s_j :
      A_j[\upto{\vec{s}}{j}/\upto{\vec{x}}{j}]
    &&\text{for $j = 1, \ldots, m$}
    \\
    &\Theta; \Gamma \types t_j :
      A_j[\upto{\vec{t}}{j}/\upto{\vec{x}}{j}]
    &&\text{for $j = 1, \ldots, m$}
    \\
    &\Theta; \Gamma \types C[\vec{s}/\vec{x}] \equiv C[\vec{t}/\vec{x}]
    &&\text{if $\bdry = (\Box : C)$}
  \end{align*}
  The first group follows by \cref{prop:presuppositivity}.
  The second is established by induction on $j$: by \cref{prop:ctx-inversion}, $\Theta \types \abstr{\vec{x} \of \vec{A}}\; \bdry$ holds, and thus $\Theta \types \abstr{\upto{\vec{x}}{j} \of \upto{\vec{A}}{j}} \; \isType {A_j}$.
  By applying \cref{lem:eq-subst-n}, we obtain
  $\Theta; \Gamma
  \types A_j[\upto{\vec{s}}{j}/\upto{\vec{x}}{j}]
  \equiv A_j[\upto{\vec{t}}{j}/\upto{\vec{x}}{j}]$
  and we can convert $\Theta; \Gamma \types t_j : A_j[\upto{\vec{s}}{j}/\upto{\vec{x}}{j}]$
  which holds again by \cref{prop:presuppositivity}.
  Finally, the last premise holds again by \cref{lem:eq-subst-n}, this time applied to $\Theta \types \abstr{\vec{x} \of \vec{A}} \; \isType{C}$.
\end{proof}

\restateequalboundaryconvert*

\begin{proof}
  \label{proof:equal-boundary-convert}%
  We proceed by induction on the derivation of $\Xi; \Gamma \types \BB$.

  \inCase{TT-Bdry-Ty}
  We have $\BB = (\isType{\Box})$, the statement is trivial.

  \inCase{TT-Bdry-Tm}
  We have $\BB = (\Box : A)$. From $\Xi; \Gamma \types \isType{A}$ we obtain $\Theta; \Gamma \types \act{I} A \equiv \act{J} A$ using \cref{thm:admissibility-equality-instantiation}, and convert $\Theta; \Gamma \types e : \act{I} A$
  to $\Theta; \Gamma \types e : \act{J} A$.

  \inCase{TT-Bdry-EqTy}
  We have $\BB = (A \equiv B \by \Box)$. From \cref{prop:presuppositivity} we get $\Xi; \Gamma \types \isType{A}$, hence $\Theta; \Gamma \types \act{I} A \equiv \act{J} A$ by \cref{thm:admissibility-equality-instantiation}.
  It follows similarly that $\Theta; \Gamma \types \act{I} B \equiv \act{J} B$. We may combine these with $\Theta; \Gamma \types \act{I} A \equiv \act{I} B$ using transitivity to derive $\Theta; \Gamma \types \act{J} A \equiv \act{J} B$.

  \inCase{TT-Bdry-EqTm}
  We have $\BB = (s \equiv t : A)$. From \cref{prop:presuppositivity} we get
  \begin{equation*}
    \Xi; \Gamma \types \isType{A},
    \qquad
    \Xi; \Gamma \types s : A,
    \quad\text{and}\quad
    \Xi; \Gamma \types t : A.
  \end{equation*}
  and from \cref{thm:admissibility-equality-instantiation}
  \begin{equation*}
    \Xi; \Gamma \types \act{I} A \equiv \act{J} A,
    \qquad
    \Xi; \Gamma \types \act{I} s \equiv \act{J} s : \act{I} A,
    \quad\text{and}\quad
    \Xi; \Gamma \types \act{I} t \equiv \act{J} t : \act{I} A.
  \end{equation*}
  Together with $\Xi; \Gamma \types \act{I} s \equiv \act{I} t : \act{I} A$ this is sufficient to derive
  $\Xi; \Gamma \types \act{J} s \equiv \act{J} t : \act{J} A$ using transitivity and conversions.

  \inCase{TT-Bdry-Abstr}
  We have $\BB = (\abstr{x \of A} \; \BB')$ and $\Xi; \Gamma \types \abstr{x \of \act{I} A} \; \plug{(\act{I} \BB')}{e}$. We use induction hypothesis and abstraction to derive $\Xi; \Gamma \types \abstr{x \of \act{I} A} \; \plug{(\act{J} \BB')}{e}$ and then convert the abstracion to $\act{J} A$ using \rref{TT-Conv-Abstr}.
\end{proof}

\restatecongruenceruleeco*

\begin{proof}
  \label{proof:congruence-rule-eco}%
  We appeal to the congruence rule for~$R$,
  \begin{equation*}
    \infer{
      {\begin{aligned}
      &\Theta; \Gamma \types \plug{(\upact{I}{i} \BB_i)}{f_i}  &&\text{for $i = 1, \ldots, n$}\\
      &\Theta; \Gamma \types \plug{(\upact{J}{i} \BB_i)}{g_i}  &&\text{for $i = 1, \ldots, n$}\\
      &\Theta; \Gamma \types \plug{(\upact{I}{i} \BB_i)}{f_i \equiv g_i} &&\text{for object boundary $\BB_i$} \\
      &\Theta; \Gamma \types \act{I} B \equiv \act{J} B        &&\text{if $\bdry = (\Box : B)$}
    \end{aligned}}
    }{
      \Theta; \Gamma \types \plug{(\act{I} \bdry)}{\act{I} e \equiv \act{J} e}
    }
  \end{equation*}  
  whose premises are derived as follows.

  The equational premises of the first row are given, while the object premises follow from the corresponding equational premises in \rref{TT-Congr-Eco} by \cref{prop:presuppositivity}.

  The second row of premises is more challenging. First, for each object premise, applying \Cref{prop:presuppositivity} to the corresponding equational premise in \rref{TT-Congr-Eco} yields
  $\Theta; \Gamma \vdash \plug{(\upact{I}{i} \BB_i)}{g_i}$
  which is then converted to
  $\Theta; \Gamma \vdash \plug{(\upact{J}{i} \BB_i)}{g_i}$
  with the aid of \cref{lem:equal-boundary-convert}.
  For an equational premise, we again use \cref{lem:equal-boundary-convert}, except that we apply it to the corresponding equational premise in the first row, noting that in this case~$f_i$ and~$g_i$ are the same.

  The third row of premises is given. The last premise, when present, follows by \cref{thm:admissibility-equality-instantiation} from the fact that~$R$ is finitary.
\end{proof}

\restateinversion*

\begin{proof}
  \label{proof:inversion}%
  We proceed by induction on the derivation $\Gamma; \Theta \types \JJ$.
  If the derivation concludes with \rref{TT-Var}, \rref{TT-Meta}, a symbol rule, or \rref{TT-Abstr}, then it already has the desired form.
  The remaining case is a derivation $D$ ending with a term conversion rule
  \begin{equation*}
    \small
    \infer
    { \infer*{D_1}{\Theta; \Gamma \types t : A} \\
      \infer*{D_2}{\Theta; \Gamma \types A \equiv B}
    }{
      \Theta; \Gamma \types t : B
    }
  \end{equation*}
  By induction hypothesis we may invert $D_1$ and obtain a derivation $D'$ of $\Theta; \Gamma \types t : A$ as in the statement of the theorem:
  \begin{enumerate}
  \item
    If $D'$ ends with \rref{TT-Var}, \rref{TT-Meta} or a term symbol rule then $A = \natty{\Theta; \Gamma}{t}$. Either $\natty{\Theta; \Gamma}{t} = B$ and we use~$D'$, or~$\natty{\Theta; \Gamma}{t} \neq B$ and we use $D$.
  \item
    If $D'$ concludes with a term conversion
    \begin{equation*}
      \small
      \infer
      { \infer*{D'_1}{\Theta; \Gamma \types t : \natty{\Theta; \Gamma}{t}} \\
        \infer*{D'_2}{\Theta; \Gamma \types \natty{\Theta; \Gamma}{t} \equiv A} }
      { \Theta; \Gamma \types t : A }
    \end{equation*}
    there are again two cases. If $\natty{\Theta; \Gamma}{t} = B$ we use $D'_1$, otherwise we combine $\natty{\Theta; \Gamma}{t} \equiv A$ and $A \equiv B$ by transitivity and conversion:
    \begin{equation*}
      \small
      \infer
      { \infer*{D'_1}{\Theta; \Gamma \types t : \natty{\Theta; \Gamma}{t}} \\
        \infer*{
          \infer*{D_2'}{\Theta; \Gamma \types \natty{\Theta; \Gamma}{t} \equiv A} \\
          \infer*{D_2}{\Theta; \Gamma \types A \equiv B}
        }{
          \Theta; \Gamma \types \natty{\Theta; \Gamma}{t} \equiv B
        }
      }{
        \Theta; \Gamma \types t : B
      }
      \ifjar \else \qedhere \fi
    \end{equation*}
  \end{enumerate}
\end{proof}

\subsection{Proofs of meta-theorems about context-free type theories}
\label{sec:proofs-from-context-free-meta-theorems}

This section provides missing proofs from \cref{sec:context-free-meta-theorems}.

\restatecontextfreepreparesubst*

\begin{proof}
  \label{proof:context-free-prepare-subst}%
  We may invert the derivation of $\types \abstr{\vec{x} \of \vec{A}} \; \JJ$ to obtain a series of applications of~\rref{CF-Abstr}, yielding types $A'_1, \ldots, A'_n$ and (suitably fresh) free variables $\avars{a}{1}{A'_1}, \ldots, \avars{a}{n}{A'_n}$ where, for $i = 1, \ldots, n$,
  \begin{equation*}
    A'_i = A_i[\avars{a}{1}{A'_1}/x_1, \ldots, \avars{a}{{i-1}}{A'_{i-1}}/x_{i-1}]
    \quad\text{and}\quad
    \types \isType{A'_i}.
  \end{equation*}
  At the top of the abstractions sits a derivation~$D$ of the judgement
  \begin{equation*}
    \JJ[\avars{a}{1}{A'_1}/x_1, \ldots, \avars{a}{n}{A'_n}/x_n].
  \end{equation*}
  The proof proceeds by induction on the derivation~$D$, i.e.\ we only ever apply the induction hypotheses to
  derivations that have a series of abstractions, and on the top a derivation that is structurally smaller than~$D$.
  Let us write
  \begin{align*}
    \theta &= [\avars{a}{1}{A'_1}/x_1, \ldots, \avars{a}{n}{A'_n}/x_n],\\
    \zeta  &= [x_n/\avars{a}{n}{A'_n}, \ldots, x_1/\avars{a}{1}{A'_1}],\\
    \tau &= [t_1/x_1, \ldots, t_n/x_n].
  \end{align*}

  \inCase{CF-Var}
  Suppose the derivation ends with the variable rule
  \begin{equation*}
    \infer{
    }{
      \types \avar{b}{B} : B
    }
  \end{equation*}
  If $\avar{b}{B}$ is one of~$\avars{a}{i}{A'_i}$ then $\JJ = \abstr{\vec{x} \of \vec{A}} \; x_i : A_i$,
  hence $\JJ \tau = (t_i : A_i[\upto{\vec{t}}{i}/\upto{\vec{x}}{i}])$, which is derivable by assumption.
  If $\avar{b}{B}$ is none of~$\avars{a}{i}{A_i}$'s then $\avars{a}{i}{A_i} \notin \fv{B}$ by freshness, hence $\JJ \tau = (\avar{b}{B} : B)$, so we may reuse the same variable rule.

  \inCase{CF-Abstr}
  Suppose the derivation ends with an abstraction
  \begin{equation*}
    \infer{
      \types \isType{(A_{n+1} \theta)} \\
      \avars{a}{n+1}{A_{n+1} \theta} \notin \fv{\JJ' \theta} \\
      (\JJ' \theta)[\avars{a}{n+1}{A_{n+1} \theta}/x_{n+1}]
    }{
      \types \abstr{x_{n+1} \of A_{n+1}\theta} \; (\JJ'\theta)
    }
  \end{equation*}
  We extend the substitution by $t_{n+1} = \avars{a}{n+1}{A_{n+1}  \tau}$ and apply the induction hypothesis to the abstracted derivation of the right-hand premise, whose conclusion is
  $\abstr{\vec{x} \of \vec{A}} \abstr{x_{n+1} \of A_{n+1}} \; \JJ'$, to obtain
  $
    \types \JJ'[\tau, t_{n+1}/x_{n+1}]
  $.
  We may abstract $\avars{a}{n+1}{A_{n+1} \tau}$ to get the desired judgement
  $
    \types \abstr{x_{n+1} \of A_{n+1}\tau} \; (\JJ'\tau)
  $.

  \inCaseCustom{All other cases}
  The remaining cases all follow the same pattern: abstract the premises, apply the induction hypotheses to them, and conclude with the same rule. We demonstrate how this works in case of~$D$ ending with an instance of a specific rule
  $
    R = (\rawRule{\symM_1^{\BB_1}, \ldots, \symM_n^{\BB_n}}{\plug{\bdry}{e}})
  $
  instantiated with~$I = \finmap{\symM_1^{\BB_1} \mto e_1, \ldots, \symM_n^{\BB_n} \mto e_n}$:
  \begin{equation*}
    \infer{
      {\begin{aligned}
       &\types \plug{(\upact{I}{i} \BB_i)}{e_i}  &&\text{for $i = 1, \ldots, n$} \\
       &\types \act{I} \bdry
       \end{aligned}}
    }{
      \types \act{I} (\plug{\bdry}{e})
    }
  \end{equation*}
  Define the instantiation $J$ of the premises of~$R$ by $J(\symM_i) = e'_i = (e_i \zeta) \tau$.
  Note that $\types (\act{I} (\plug{\bdry}{e})) \tau$ equals $\types \act{J} (\plug{\bdry}{e})$, therefore
  we may derive it by $\act{J} R$.
  The last premise of~$\act{J} R$ is $\types (\act{I} \bdry) \tau$, and it follows by \cref{lem:context-free-prepare-subst-bdry} applied to the last premise of~$\act{I} R$.
  For $i = 1, \ldots, n$, abstract $\types \plug{(\upact{I}{i} \BB_i)}{e_i}$ to 
  \begin{equation*}
    \types \abstr{\vec{x} \of \vec{A}} \; (\plug{(\upact{I}{i} \BB_i)}{e_i}) \zeta
  \end{equation*}
  and apply the induction hypothesis to derive
  $
    \types ((\plug{(\upact{I}{i} \BB_i)}{e_i}) \zeta) \tau
  $,
  which equals
  $
    \types \plug{(((\upact{I}{i} \BB_i) \zeta)  \tau)}{(e_i \zeta)  \tau}
  $
  and because~$\BB_i$ does not contain any free variables, also to
  $
    \types \plug{(\upact{J}{i} \BB_i)}{e'_i}.
  $
\end{proof}

\restatecontextfreepresuppositivity*

\begin{proof}
  \label{proof:context-free-presuppositivity}%
  The proof proceeds by induction on the number of metavariables appearing in the judgement and the derivation of $\types \plug{\BB}{e}$. That is, each appeal to the induction hypothesis reduces the number of metavariables, or is applied to a subderivation.

  \inCase{CF-Var}
  Immediate, by the well-typedness of annotations.

  \inCase{CF-Meta}
  Immedate as the desired judgement is a premise of the rule.

  \inCase{CF-Meta-Congr-Tm}
  Suppose $\BB = \abstr{x_1 \of A_1} \cdots \abstr{x_m \of A_m}\; \Box : B$ and
  consider a derivation ending with the metavariable congruence rule
  \begin{equation*}
    \infer
    { { \begin{aligned}
          &\text{for $k = 1, \ldots, m$:}\\
          &\types s_k : A_k[\upto{\vec{s}}{k}/\upto{\vec{x}}{k}]
          \\
          &\types t_k : A_k[\upto{\vec{t}}{k}/\upto{\vec{x}}{k}]
          \\
          &\erase{t_k} = \erase{t'_k}
          \\
          &\types s_k \equiv t'_k : A_k[\upto{\vec{s}}{k}/\upto{\vec{x}}{k}]
                  \by \alpha_k
          \\
          &\types v : B[\vec{s}/\vec{x}]
           \qquad\erase{\symM^\BB(\vec t)} = \erase{v}
          \\
          &\suitable{\beta}
        \end{aligned} }
    }{
      \types
      \symM^\BB(\vec s) \equiv
      v
      : B[\vec{s}/\vec{x}]
      \by \beta
    }
  \end{equation*}
  The presuppositions are derived as follows:
  \begin{itemize}
  \item $\types \isType{B[\vec{s}/\vec{x}]}$ follows by \rref{CF-Subst} from $\types{\vec{x} \of \vec{A}} \; \isType{B}$, which in turn follows by inversion on $\types \BB$.
  \item $\types \symM^\BB(\vec s) : B[\vec{s}/\vec{x}]$ follows by \rref{CF-Meta}.
  \item $v : B[\vec{s}/\vec{x}]$ is a premise.
  \end{itemize}

  \inCase{CF-Abstr}
  Consider an abstraction
  \begin{equation*}
    \infer
    {
      \types \isType A \\
      \avar{a}{A} \not\in \fv{\plug{\BB}{e}} \\
      \types (\plug{\BB}{e})[\avar{a}{A}/x]
    }{
      \types \abstr{x \of A} \; \plug{\BB}{e}
    }
  \end{equation*}
  By induction hypothesis on the last premise, we obtain
  $\types \BB[\avar{a}{A}/x]$ after which we apply \rref{CF-Bdry-Abstr}.

  \inCaseText{of a specific rule}
  Immediate, as the well-formedness of the boundary is a premise.

  \inCaseText{of a congruence rule}
  Consider a congruence rulles associated with an object rule~$R$ and instantiated with~$I$ and~$J$, as in \cref{def:context-free-congruence-rule}.

  If $R$ concludes with $\types \isType A$, the presuppositions are $\types \isType{\act I A}$ and $\types \isType{\act J A}$, which are derivable by $\act I R$ and $\act J R$, respectively.

  If~$R$ concludes with $\types t : A$, the presuppositions are
  $\types \isType{\act{I} A}$,
  $\types \act{I} t : \act{I} A$, and
  $\types t' : \act{I} A$.
  We derive the first one by applying the induction hypothesis to the premise $\types t' : \act{I} A$, the second one by~$\act{I} R$, while the third one is a premise.

  \inCasesText{
  \rref{CF-EqTy-Refl},
  \rref{CF-EqTy-Sym},
  \rref{CF-EqTy-Trans},
  \rref{CF-EqTm-Refl},
  \rref{CF-EqTm-Sym},
  \rref{CF-EqTm-Trans}}
  These are all dispensed with straightforward appeals to the induction hypotheses.

  \inCase{CF-Conv-Tm}
  Consider a term conversion
  \begin{equation*}
    \infer
    { \types t : A \\
      \types A \equiv B \by \alpha
      \\\\
      \asm{t, A, B, \alpha} = \asm{t, B, \beta}
    }{
      \types \convert{t}{\beta} : B
    }
  \end{equation*}
  By induction hypothesis for the second premise, $\types \isType{B}$.

  \inCase{CF-Conv-EqTm}
  Consider a term equality conversion
  \begin{equation*}
    \infer
    {
      \types s \equiv t : A \by \alpha \\
      \types A \equiv B \by \beta \\\\
      \asm{s, A, B, \beta} = \asm{s, B, \gamma} \\\\
      \asm{t, A, B, \beta} = \asm{t, B, \delta}
    }{
      \types
      \convert{s}{\gamma} \equiv \convert{t}{\delta} : B \by \alpha
    }
  \end{equation*}
  As in the previous case, the induction hypothesis for the second premise provides $\types \isType{B}$.
  The induction hypothesis for the first premise yields
  \begin{equation*}
    \types s : A
    \qquad\text{and}\qquad
    \types t : A
  \end{equation*}
  We may convert these to
  $\types \convert{s}{\gamma} : B$ and $\types \convert{t}{\delta} : B$
  using the second premise.
\end{proof}

\restatecontextfreepreparesubsteq*

\begin{proof}
  \label{proof:context-free-prepare-subst-eq}.
  As in the proof of \cref{lem:context-free-prepare-subst},
  we invert the derivation of $\types \abstr{\vec{x} \of \vec{A}} \; \JJ$ to obtain types $A'_1, \ldots, A'_n$ and (suitably fresh) free variables $\avars{a}{1}{A'_1}, \ldots, \avars{a}{n}{A'_n}$ where, for $i = 1, \ldots, n$,
  \begin{equation*}
    A'_i = A_i[\avars{a}{1}{A'_1}/x_1, \ldots, \avars{a}{{i-1}}{A'_{i-1}}/x_{i-1}]
    \quad\text{and}\quad
    \types \isType{A'_i}
  \end{equation*}
  and a derivation~$D$ of the judgement
  \begin{equation*}
    \JJ[\avars{a}{1}{A'_1}/x_1, \ldots, \avars{a}{{i-1}}{A'_n}/x_n].
  \end{equation*}
  The proof proceeds by induction on the well-founded ordering of the rules, the number of metavariables, with a subsidiary induction on the derivation~$D$. That is, each appeal to the induction hypotheses either decreases the number of metavariables appearing in the judgement, or descends to a subderivation of~$D$.
  Let us write
  \begin{align*}
    \theta &= [\avars{a}{1}{A'_1}/x_1, \ldots, \avars{a}{{i-1}}{A'_n}/x_n], \\
    \sigma &= [s_1/x_1, \ldots, s_n/x_n], \\
    \tau &= [t_1/x_1, \ldots, t_n/x_n].
  \end{align*}

  \inCase{CF-Var}
  Suppose the derivation ends with the variable rule
  \begin{equation*}
    \infer{
    }{
      \types \avar{b}{B} : B
    }
  \end{equation*}
  If $\avar{b}{B}$ is one of~$\avars{a}{i}{A'_i}$ then $\JJ = \abstr{\vec{x} \of \vec{A}} \; x_i : A_i$,
  hence~\eqref{it:cf-prep-subst-eq-2} is satisfied by~\eqref{eq:cf-prep-subst-eq-5}.
  If $\avar{b}{B}$ is none of~$\avars{a}{i}{A_i}$'s then $\avars{a}{i}{A_i} \notin \fv{B}$ by freshness, hence~\eqref{it:cf-prep-subst-eq-2} is satisfied by
  $\types \avar{b}{B} \equiv \avar{b}{B} : B \by \asset{}$,
  which holds by \rref{CF-EqTm-Refl}.

  \inCase{CF-Abstr}
  Suppose the derivation ends with an abstraction
  \begin{equation}
    \label{eq:cf-prep-subst-eq-abstr}
    \infer{
      \types \isType{(A_{n+1} \theta)} \\
      \avars{a}{n+1}{A_{n+1} \theta} \notin \fv{\JJ \theta} \\
      (\JJ' \theta)[\avars{a}{n+1}{A_{n+1} \theta}/y]
    }{
      \types \abstr{x_{n+1} \of A_{n+1}\theta} \; (\JJ'\theta)
    }
  \end{equation}
  We may abstract the first premise to $\types \abstr{\vec{x} \of \vec{A}} \; \isType{A_{n+1}}$, apply
  \cref{lem:context-free-prepare-subst} to derive $\types \isType{A_{n+1} \tau}$, and the induction hypothesis to obtain $\beta_{n+1}$ and $A'$ such that $\erase{A_{n+1} \tau} = \erase{A'}$,
  \begin{equation*}
    \types \isType{A'}
    \qquad\text{and}\qquad
    \types A_{n+1} \sigma \equiv A' \by \beta_{n+1}.
  \end{equation*}
  By \rref{CF-EqTy-Trans} and \rref{CF-EqTy-Refl} it follows that for some $\gamma_{n+1}$
  \begin{equation*}
    \types A_{n+1} \sigma \equiv A_{n+1} \tau \by \gamma_{n+1}.
  \end{equation*}
  Let $\avars{a}{n+1}{A_{n+1} \sigma}$ be fresh, and define
  \begin{equation*}
    s_{n+1} = t'_{n+1} = \avars{a}{n+1}{A_{n+1} \sigma},
    \qquad
    t_{n+1} = \convert{\avars{a}{n+1}{A_{n+1} \sigma}}{\gamma_{n+1}},
    \quad\text{and}\quad
    \alpha_{n+1} = \asset{}.
  \end{equation*}
  We may abstract the last premise of~\eqref{eq:cf-prep-subst-eq-abstr} to
  \begin{equation*}
    \types \abstr{x_1 \of A_1} \cdots \abstr{x_{n+1} \of A_{n+1}} \; \JJ'
  \end{equation*}
  apply the induction hypothesis with the given $s_{n+1}$, $t_{n+1}$ and $t'_{n+1}$ to derive either \eqref{it:cf-prep-subst-eq-1} or \eqref{it:cf-prep-subst-eq-2}, and abstract $\avars{a}{n+1}{A_{n+1} \sigma}$ to get the desired judgements.

  \inCase{CF-Meta}
  We consider the case of an object metavariable, and leave the easier case of a type metavariable to the reader.
  Let $\BB = (\abstr{\vec{y} \of \vec{B}} \; \Box : C)$, and suppose the derivation ends with an application of the metavariable rule,
  \begin{equation}
    \label{eq:cf-prep-subst-eq-meta}
    \infer{
      {\begin{aligned}
       &\types u_j \theta : B_j[\upto{\vec{u}}{j}\theta/\upto{\vec{y}}{j}] &&\text{for $j = 1, \ldots, m$} \\
       &\types \Box : C[\vec{u} \theta / \vec{y}]
       \end{aligned}}
    }{
      \types \symM^\BB(\vec{u} \theta) : C[\vec{u} \theta/\vec{y}]
    }
  \end{equation}
  For each $j = 1, \ldots, m$ we may abstract the premise of~\eqref{eq:cf-prep-subst-eq-meta} to
  \begin{equation*}
    \types \abstr{\vec{x} \of \vec{A}} \; u_j : B_j[\upto{\vec{u}}{j}/\upto{\vec{y}}{j}]
  \end{equation*}
  and apply \cref{lem:context-free-prepare-subst}, once with~$\vec{s}$ and once with~$\vec{t}$, to
  derive
  \begin{align*}
    &\types u_j \sigma : B_j[\upto{(\vec{u} \sigma)}{j}/\upto{\vec{y}}{j}], \\
    &\types u_j \tau : B_j[\upto{(\vec{u} \tau)}{j}/\upto{\vec{y}}{j}],
  \end{align*}
  where we took into account the fact that $B_j$ does not contain any bound variables.
  Also, by induction hypothesis there are $\delta_j$ and $u'_j$ such that
  $\erase{u_j \tau} = \erase{u'_j}$ and
  \begin{equation*}
    \types u_j \sigma \equiv u'_j
      : B_j[\upto{(\vec{u}  \sigma)}{j} / \upto{\vec{y}}{j}] \by \delta_j.
  \end{equation*}
  Next, we invert the last premise of~\eqref{eq:cf-prep-subst-eq-meta} and abstract it to
  $
    \types \abstr{\vec{x} \of \vec{A}} \; \isType{C[\vec{u}/\vec{y}]}
  $.
  By induction hypothesis we obtain~$\delta'$ and~$C'$ such that
  $\erase{C'} = \erase{C[\vec{u} \tau/\vec{y}]}$ and
  $\types C[\vec{u} \sigma/\vec{y}] \equiv C' \by \delta'$, hence $\types C[\vec{u} \sigma/\vec{y}] \equiv C[\vec{u} \tau/\vec{y}] \by \delta''$ for some~$\delta''$. Now $\eqref{it:cf-prep-subst-eq-2}$ is satisfied, for some~$\delta'''$
  \begin{align*}
    &\vdash \convert{\symM^\BB(\vec{u} \tau)}{\delta} : C[\vec{u}\sigma/\vec{y}],\\
    &\vdash \symM^\BB(\vec{u}\sigma) \equiv \convert{\symM^\BB(\vec{u} \tau)}{\delta} : C[\vec{u}\sigma/\vec{y}] \by \delta'''
  \end{align*}
  where the last judgement follows by the congruence rule for~$\symM^\BB$.

  \inCaseText{of a specific term rule}
  Suppose the derivation ends with a specific rule
  $R = (\rawRule{\symM_1^{\BB_1}, \ldots, \symM_m^{\BB_m}}{u : B})$ instantiated with
  $I' = \finmap{\symM_1^{\BB_1} \mto e'_1, \ldots, \symM_m^{\BB_m} \mto e'_m}$:
  \begin{equation*}
    \infer{
      {\begin{aligned}
       &\types \plug{(\upact{I'}{j} \BB_j)}{e'_j} &&\text{for $j = 1, \ldots, n$} \\
       &\types (\Box : \act{I'} B)
       \end{aligned}}
    }{
      \types \act{I'} u : \act{I'} B
    }
  \end{equation*}
  Let $\zeta = [x_n/\avars{a}{n}{A'_n}, \ldots, x_1/\avars{a}{1}{A'_1}]$ be the abstraction that undoes~$\theta$.
  Define $e_j = e'_j \zeta$ and $I = I' \zeta$, so that $e'_j = e_j \theta$ and $I' = I \theta$, which allows us to write the above judgement as
  \begin{equation*}
    \infer{
      {\begin{aligned}
       &\types (\plug{(\upact{I}{j} \BB_j)}{e_j}) \theta &&\text{for $j = 1, \ldots, n$} \\
       &\types (\Box : \act{I} B) \theta
       \end{aligned}}
    }{
      \types (\act{I} u) \theta : (\act{I} B) \theta
    }
  \end{equation*}
  We invert the last premise, abstract to
  $
    \types \abstr{\vec{x} \of \vec{A}} \; \isType{\act{I} B}
  $,
  and apply \cref{lem:context-free-prepare-subst} to derive
  $\types \isType{(\act{I} B) \sigma}$.
  Next, the induction hypothesis provides $\beta$ and $B'$ such that $\erase{B'} = \erase{(\act{I} B) \tau}$ and
  $\types (\act{I} B) \sigma \equiv B' \by \beta$.
  Therefore, we have $\beta'$ such that
  \begin{equation*}
    \types (\act{I} B) \sigma \equiv (\act{I} B) \tau \by \beta'.
  \end{equation*}
  It suffices to show
  \begin{equation*}
    \types
    (\act{I} u) \sigma \equiv
    \convert{(\act{I} u) \tau}{\beta'}
    : (\act{I} B) \sigma \by \delta
  \end{equation*}
  for a suitable~$\delta$.
  This is precisely the conclusion of the congruence rule for~$R$, so we derive its premises.
  For any $j = 1, \ldots, m$ we may abstract the $j$-th premise to
  \begin{equation}
    \label{eq:cf-prep-subst-eq-spec-1}
    \types \abstr{\vec{x} \of \vec{A}} \; \plug{(\upact{I}{j} \BB_j)}{e_j},
  \end{equation}
  and apply \cref{lem:context-free-prepare-subst}, once with $\vec{s}$ and once with $\vec{t}$, to derive
  \begin{equation*}
    \types \plug{(\upact{(I \sigma)}{j} \BB_j)}{e_j \sigma}
    \qquad\text{and}\qquad
    \types \plug{(\upact{(I \tau)}{j} \BB_j)}{e_j \tau}.
  \end{equation*}
  For each object premise with boundary $\BB_j$, the remaining premises are provided precisely by the induction hypotheses.

  \inCaseText{of a specific type rule}
  Suppose the derivation ends with a specific rule
  $R = (\rawRule{\symM_1^{\BB_1}, \ldots, \symM_m^{\BB_m}}{\isType{B}})$ instantiated with
  $I' = \finmap{\symM_1^{\BB_1} \mto e'_1, \ldots, \symM_m^{\BB_m} \mto e'_m}$:
  \begin{equation*}
    \infer{
      \types \plug{(\upact{I'}{j} \BB_j)}{e'_j} \quad\text{for $j = 1, \ldots, n$} \\\\
      \types \isType{\Box}
    }{
      \types \isType{\act{I'} B}
    }
  \end{equation*}
  With $\zeta$ and $I$ as in the previous case, we may write the above as
  \begin{equation*}
    \infer{
      \types (\plug{(\upact{I}{j} \BB_j)}{e_j}) \theta \quad\text{for $j = 1, \ldots, n$}
    }{
      \types \isType{(\act{I} B) \theta}
    }
  \end{equation*}
  where we elided the trivial boundary premise.
  It suffices to find a suitable $\gamma$ such that
  $
    \types (\act{I} B) \sigma \equiv (\act{I} B) \tau \by \gamma
  $,
  which is precisely the conclusion of the congruence rule for~$R$, whose premises are derived as in the previous case.

  \inCase{CF-Conv-Tm}
  Suppose the derivation ends with an application of the conversion rule
  \begin{equation*}
    \infer
    { \types u\theta : B\theta \\
      \types B\theta \equiv C\theta \by \beta\theta }
    { \types \convert{u\theta}{\beta\theta \cup \asm{B\theta}} : C\theta }
  \end{equation*}
  We abstract the first premise to $\types \abstr{\vec{x} \of \vec{A}} \; u : B$ and apply the induction hypothesis to obtain $\delta$ and $u'$ such that $\erase{u'} = \erase{u \tau}$ and
  \begin{equation*}
    \types u \sigma \equiv u' : B \sigma \by \delta.
  \end{equation*}
  We abstract the second premise to $\types \abstr{\vec{x} \of \vec{A}} \; B \equiv C \by \beta$, apply \cref{lem:context-free-prepare-subst} to derive
  $
    \types B \sigma \equiv C \sigma \by \beta \sigma
  $,
  and use \rref{CF-Conv-EqTm} to conclude, for suitable~$\gamma$ and $\gamma'$,
  \begin{equation*}
    \types \convert{u \sigma}{\gamma} \equiv
           \convert{u'}{\gamma'} : C \sigma \by \delta.
    \ifjar \else \qedhere \fi
  \end{equation*}
\end{proof}

\restatecontextfreesubsteq*

\begin{proof}
  \label{proof:context-free-subst-eq}%
  \Cref{lem:context-free-prepare-subst-eq} applied to the premises of \rref{CF-Subst-EqTy} provides~$\gamma$ and~$C'$ such that $\erase{C[\vec{t}/\vec{x}]} = \erase{C'}$ and
  \begin{equation}
    \label{eq:cf-subst-eq-1}%
    \types \abstr{\vec{y} \of \vec{B}[\vec{s}/\vec{x}]} \;
    C[\vec{s}/\vec{x}] \equiv C' \by \gamma.
  \end{equation}
  We would like to replace $C'$ in the right-hand side with $C[\vec{t}/\vec{x}]$, which we can so long as
  \begin{equation*}
    \types \abstr{\vec{y} \of \vec{B}[\vec{s}/\vec{x}]} \; \isType{C'}
    \quad\text{and}\quad
    \types \abstr{\vec{y} \of \vec{B}[\vec{s}/\vec{x}]} \; C[\vec{t}/\vec{x}].
  \end{equation*}
  The first judgement holds by \cref{prop:context-free-presuppositivity} applied to~\eqref{eq:cf-subst-eq-1} under the abstraction, while the second one is a substitution instance of the first premise. This establishes admissibility of \rref{CF-Subst-EqTy}.

  In case of \rref{CF-Subst-EqTm} the same lemma yields $\delta$ and $u'$ such that $\erase{u[\vec{t}/\vec{x}]} = \erase{u'}$ and
  \begin{equation*}
    \types \abstr{\vec{y} \of \vec{B}[\vec{s}/\vec{x}]} \;
       u[\vec{s}/\vec{x}] \equiv u' : C[\vec{s}/x] \by \delta.
  \end{equation*}
  We would like to replace $u'$ with a converted $u[\vec{t}/\vec{x}]$, which we can by an argument similar to the one above.
\end{proof}

\restatecontextfreeinstantiationadmissible*

\begin{proof}
  \label{proof:context-free-instantiation-admissible}.
  We proceed by induction on the derivation of $\types \JJ$.
  We only devote attention to the metavariable and abstraction rules, as all the other cases are straightforward.
  Suppose $I = \finmap{\symM_1^{\BB_1} \mto e_1, \ldots, \symM_n^{\BB_n} \mto e_n}$.

  \inCase{CF-Meta}
  Consider an application of the metavariable rule for~$\symM_i^{\BB_i}$ with
  $\BB_i = (\abstr{x_1 \of A_1} \cdots \abstr{x_m \of A_m}\; \bdry)$ and $e_i = \abstr{\vec{x}} e$:
  \begin{equation*}
    \infer{
      {\begin{aligned}
      &\types t_i : A_i[\upto{\vec{t}}{i}/\upto{\vec{x}}{i}] &&\text{for $i = 1, \ldots, n$} \\
      &\types \bdry[\vec{t}/\vec{x}]
      \end{aligned}}
    }{
      \types (\plug{(\bdry[\vec{t}/\vec{x}])}{\symM_i^{\BB_i}(\vec{t})})
    }
  \end{equation*}
  Because
  $
    \act{I} (\plug{(\bdry[\vec{t}/\vec{x}])}{\symM_i^{\BB_i}(\vec{t})}) =
    \plug{((\act{I} \bdry)[\act{I} \vec{t}/\vec{x}])}{e[\act{I} \vec{t}/\vec{x}]}
  $
  we need to derive
  \begin{equation}
    \label{eq:cf-instantiate-1}%
    \types \plug{((\act{I} \bdry)[\act{I} \vec{t}/\vec{x}])}{e[\act{I} \vec{t}/\vec{x}]}.
  \end{equation}
  Because $I$ is derivable, we know that
  $
    \types \abstr{\vec{x} \of \upact{I}{i} \vec{A}} \; \plug{(\upact{I}{i} \bdry)}{e}
  $.
  By induction hypothesis
  $
    \types \upact{I}{i} t_j : (\upact{I}{i} A_j)[\upact{I}{i} \upto{\vec{t}}{j}/\upto{\vec{x}}{j}])
  $
  for each $j = 1, \ldots, m$, so
  by \cref{lem:context-free-prepare-subst} we derive
  $
    \types (\plug{(\upact{I}{i} \bdry)}{e}) [\upact{I}{i} \vec{t}/\vec{x}]
  $,
  which coincides with~\eqref{eq:cf-instantiate-1}.

  \inCase{CF-Meta-Congr-Ty}
  We consider the congruence rule for types only. Suppose the derivation ends with
  an application of the congruence rule for~$\symM_i^{\BB_i}$ with
  $\BB_i = (\abstr{x_1 \of A_1} \cdots \abstr{x_m \of A_m}\; \isType{\Box})$ and $e_i = \abstr{\vec{x}} B$:
  \begin{equation*}
    \infer
    { { \begin{aligned}
          &\text{for $k = 1, \ldots, m$:} \\
          &\types s_k : A[\upto{\vec{s}}{k}/\upto{\vec{x}}{k}]
          \\
          &\types t_k : A[\upto{\vec{t}}{k}/\upto{\vec{x}}{k}]
          \\
          &\erase{t_k} = \erase{t'_k}
          \\
          &\types s_k \equiv t'_k : A[\upto{\vec{s}}{k}/\upto{\vec{x}}{k}] \by \alpha_k
          \\
          &\text{$\beta$ suitable}
        \end{aligned} }
    }{
      \types
      \symM_i^{\BB_i}(\vec s) \equiv
      \symM_i^{\BB_i}(\vec t)
      \by \beta
    }
  \end{equation*}
  Because $I$ is derivable, we know that
  $
    \types \abstr{\vec{x} \of \upact{I}{i} \vec{A}} \; \isType{B}
  $,
  hence \cref{lem:context-free-prepare-subst-eq} applies.

  \inCase{CF-Abstr}
  Suppose the derivation ends with an abstraction
  \begin{equation*}
    \inferrule
    {
      \types \isType A \\
      \avar{a}{A} \not\in \fv{\JJ} \\
      \types \JJ[\avar{a}{A}/x]
    }{
      \types \abstr{x \of A} \; \JJ
    }
  \end{equation*}
  Without loss of generality we may assume that $\avar{a}{\act{I} A} \not\in \fv{\act{I} \JJ}$. (If not, rename $\mkvar{a}$ to a fresh symbol.)
  We may apply the induction hypotheses to both premises and get
  \begin{equation*}
    \types \isType{\act{I} A}
    \qquad\text{and}\qquad
    \types (\act{I} \JJ)[\avar{a}{\act{I} A}/x].
  \end{equation*}
  and derive the desired judgement $\types \abstr{x \of \act{I} A} \; \act{I} \JJ$ by abstracting $\avar{a}{\act{I} A}$ in the right-hand judgement.
\end{proof}

\restatecontextfreenaturaltype*

\begin{proof}
  \label{proof:context-free-natural-type}%
  We proceed by induction on the derivation of $\types t : A$.

  \inCasesText{
    \rref{CF-Var},
    \rref{CF-Meta},
    and symbol rule
  }
  In these cases $t = \strip{t}$ and $\natty{}{t} = A$, so we already have $\types \strip{t} : \natty{}{t}$, while $\types \natty{}{t} \equiv A \by \asset{}$ holds by reflexivity.

  \inCase{CF-Conv-Tm}
  Consider a derivation ending with a conversion
  \begin{equation*}
    \infer
    { \types t : B \\
      \types B \equiv A \by \beta
    }{
      \types \convert{t}{\alpha} : A
    }
  \end{equation*}
  where $\asm{t, B, A, \beta} = \asm{t, A, \alpha}$.
  By induction hypothesis for the first premise we obtain $\types \strip{t} : \natty{}{t}$ and $\types \natty{}{t} \equiv B \by \residue{t}$, derived by one of the desired rules. Because $\strip{t} = \strip{\convert{t}{\alpha}}$ and $\natty{}{t} = \natty{}{\convert{t}{\alpha}}$, the first claim is established. For the second one, we apply \rref{CF-EqTy-Trans} like this:
  \begin{equation*}
    \infer{
      \types \natty{}{t} \equiv B \by \residue{t}
      \\
      \types B \equiv A \by \beta
    }{
      \types \natty{}{t} \equiv A \by \residue{t} \cup \alpha
    }
  \end{equation*}
  Suitability of $\residue{t} \cup \alpha$ is implied by $\asm{\natty{}{t}, \residue{t}} = \asm{t}$:
  \begin{align*}
    \asm{\natty{}{t}, B, \residue{t}, A, \beta}
    &= \asm{t, B, A, \beta} \\
    &= \asm{t, A, \alpha} \\
    &= \asm{\natty{}{t}, A, \residue{t}, \alpha}. \ifjar \else \qedhere \fi
  \end{align*}
\end{proof}

\restateboundaryconvert*

\begin{proof}
  \label{proof:boundary-convert}%
  We proceed by induction on the derivation of~$\types \BB_1$.

  \inCase{CF-Bdry-Ty}
  If $\BB_1 = (\isType \Box)$ then $\BB_2 = (\isType \Box)$ and we may take $e_2 = e_1$.

  \inCase{CF-Bdry-Tm}
  If $\BB_1 = (\Box : A_1)$ then $\BB_2 = (\Box : A_2)$ and $\erase{A_1} = \erase{A_2}$, therefore $\types A_1 \equiv A_2 \by \asset{}$ by \rref{CF-EqTy-Refl}. We may take $e_2 = \convert{e_1}{\asm{A_1} \setminus \asm{A_2}}$ and derive $\types e_2 : A_2$ by \rref{CF-Conv-Tm}.

  \inCase{CF-Bdry-EqTy}
  If $\BB_1 = (A_1 \equiv B_1 \by \Box)$ then $\BB_2 = (A_2 \equiv B_2 \by \Box)$, $\erase{A_1} = \erase{A_2}$ and $\erase{B_1} = \erase{B_2}$. By \rref{CF-EqTy-Refl} we obtain $\types A_2 \equiv A_1 \by \asset{}$ and $\types B_1 \equiv B_2 \by \asset{}$. 
  We take $e_2 = (e_1 \cup \asm{A_1} \cup \asm{B_1}) \setminus (\asm{A_2} \cup \asm{B_2})$ and derive $\types A_2 \equiv B_2 \by e_2$ by two applications of \rref{CF-EqTy-Trans}.

  \inCase{CF-Bdry-EqTm}
  If $\BB_1 = (s_1 \equiv t_1 : A_1 \by \Box)$ then $\BB_2 = (s_2 \equiv t_2 : A_2 \by \Box)$, $\erase{s_1} = \erase{s_2}$, $\erase{t_1} = \erase{t_2}$ and $\erase{A_1} = \erase{A_2}$. By \rref{CF-EqTy-Refl} we obtain $\types A_1 \equiv A_2 \by \asset{}$, then by \rref{CF-Conv-EqTm}
  \begin{equation*}
    \types \convert{s_1}{\gamma} \equiv \convert{t_1}{\delta} : A_2 \by e_1
  \end{equation*}
  where $\gamma = \asm{A_1} \setminus \asm{s_1, A_2}$ and
  $\delta = \asm{A_1} \setminus \asm{t_1, A_2}$.
  Next, by reflexivity
  \begin{align*}
    &\types s_2 \equiv \convert{s_1}{\gamma} : A_2 \by \asset{} \\
    &\types \convert{t_1}{\delta} \equiv t_2 : A_2 \by \asset{}
  \end{align*}
  We may chain these together by transitivity to derive
  \begin{equation*}
    \types s_2 \equiv t_2 : A_2 \by e_2
  \end{equation*}
  where $e_2 = \asm{e_1, s_1, t_1, A_1} \setminus \asm{s_2, t_2, A_2}$.

  \inCase{CF-Bdry-Abstr}
  If $\BB_1 = (\abstr{x \of A_1}\; \BB'_1)$ then $e_1 = \abstr{x} e'_1$, $\BB_2 = \abstr{x \of A_2}\; \BB'_2$, $\erase{A_1} = \erase{A_2}$, and $\erase{\BB'_1} = \erase{\BB'_2}$.
  There is $\avar{a}{A_2} \not\in \fv{\BB'_2}$ such that $\types \BB'_2[\avar{a}{A_2}/x]$.
  We may apply \cref{lem:context-free-prepare-subst} to $\types \abstr{x \of A_1}\; \plug{\BB'_1}{e_1'}$ and $\types \convert{\avar{a}{A_2}}{\asset{}} : A_1$ to derive
  \begin{equation*}
    \types
    \plug{(\BB'_1[\convert{\avar{a}{A_2}}{\asset{}}/x])}
         {e_1'[\convert{\avar{a}{A_2}}{\asset{}}/x]}.
  \end{equation*}
  By \rref{CF-Bdry-Subst} we have $\types \BB'_1[\convert{\avar{a}{A_2}}{\asset{}}/x]$, hence we may apply
  the induction hypothesis to obtain $e''_2$ such that
  $\erase{e''_2} = \erase{e_1'[\convert{\avar{a}{A_2}}{\asset{}}/x]}$,
  $\asm{e''_2} \subseteq \asm{\BB'_1[\convert{\avar{a}{A_2}}{\asset{}}/x]}$,
  and $\types \plug{(\BB'_2[\avar{a}{A_2}/x])}{e''_2}$. Set $e'_2 = e''_2[x/\avar{a}{A_2}]$ and apply \rref{CF-Bdry-Abstr} to derive
  $\types \abstr{x \of A_2}\; \plug{\BB'_2}{e'_2}$.
  Thus we may take $e_2 = \abstr{x} e'_2$.
\end{proof}

\subsection{Proofs of theorems about translation betweeen tt- and cf-type theories}
\label{sec:proofs-from-context-free-cons-comp}

This section provides missing proofs from \cref{sec:context-free-cons-comp}.

\restatecftottbdryjdg*

\begin{proof}
  \label{proof:cf-to-tt-bdry-jdg}%
  We proceed by mutual structural induction on all three statements.

  To prove statement~\eqref{it:cf-to-tt-1},
  consider a finitary cf-theory $T = (R_i)_{i \in I}$, and let $(I, {\prec})$ be a well-founded order witnessing the finitary character of~$T$ (\cref{def:context-free-finitary}). We prove that $\trantt{T}$ is finitary with respect to $(I, {\prec})$ by a well-founded induction on the order.
  Given any $i \in I$, with
  \begin{equation*}
    R_i = (\rawRule{\symM_1^{\BB_1}, \ldots, \symM_n^{\BB_n}}{\J}),
  \end{equation*}
  let $\Theta = [\symM_1^{\BB_1} \of \erase{\BB_1}, \ldots, \symM_n^{\BB_n} \of \erase{\BB_n}]$.
  We verifty that $\trantt{(R_i)} = (\rawRule{\Theta}{\erase{\J}})$ is finitary in~$T' = \trantt{((R_j)_{j \prec i})}$ as follows:
  \begin{itemize}
  \item $\types_{T'} \isMCtx{\Theta}$ holds by induction on $k = 1, \ldots, n$: assuming $\types_{T'} \isMCtx{\upto{\Theta}{k}}$ has been established, apply \eqref{it:cf-to-tt-2} to a cf-derivation of $\types_{(R_i)_{j \prec i}} \BB_k$ and the suitable context $\upto{\Theta}{k}; \emptyCtx$.
  \item $\types_{T'} \erase{\J}$ holds by application of \eqref{it:cf-to-tt-2} to a cf-derivation of $\types_{(R_i)_{j \prec i}} \J$ and the suitable context $\Theta; \emptyCtx$.
  \end{itemize}

  We next address statement~\eqref{it:cf-to-tt-2}, which we prove by structural induction on the derivation of $\types_{T} \JJ$.

  \inCase{CF-Var}
  A cf-derivation ending with the variable rule
  \begin{equation*}
    \inferrule{ }{
      \types_T \avar{a}{A} : A
    }
  \end{equation*}
  is translated to an application of \rref{TT-Var}
  \begin{equation*}
    \inferrule{ \avar a A \in \vert{}\Gamma\vert{} }
    { \Theta; \Gamma \types_{\trantt{T}} \avar{a}{A} : \erase{A} }
  \end{equation*}
  By suitability of $\Gamma$ the side-condition $\avar a A \in \vert{}\Gamma\vert{}$ is satisfied, and $\Gamma(\avar a A) = \erase{A}$.

  \inCase{CF-Meta}
  Consider a cf-derivation ending in 
  \begin{equation*}
    \infer{
      {\begin{aligned}
      &\types_T t_i : A_i[\upto{\vec{t}}{i}/\upto{\vec{x}}{i}] &&\text{for $i = 1, \ldots, n$} \\
      &\types_T \bdry[\vec{t}/\vec{x}]
      \end{aligned}}
    }{
      \types \plug{(\bdry[\vec{t}/\vec{x}])}{\symM^{\BB}(\vec{t})}
    }
  \end{equation*}
  Because erasure commutes with substitution we have
  \begin{align*}
    \erase{A_i[\upto {\vec t} i/\upto {\vec x} i]}
    &= \erase{A_i}[\erase{\upto {\vec t} i}/\upto {\vec x} i],
    \\
    \erase{\bdry[\vec t/\vec x]} &= \erase{\bdry}[\erase{\vec t}/\vec x],
    \\
    \erase{(\plug{\bdry}{\symM^{\abstr{\vec{x} \of \vec{A}} \bdry}(\vec{x})})[\vec{t}/\vec{x}]}
    &= (\plug{\erase \bdry} {\erase{\symM^{\abstr{\vec{x} \of \vec{A}} \bdry}(\vec{x})}})[\erase{\vec{t}}/\vec{x}].
  \end{align*}
  Applying \rref{TT-Meta} to the translation of the premises obtained by the induction hypothesis thus yields the desired result. Suitability of $\Theta; \Gamma$ is ensured because all premises are recorded in the conclusion.

  \inCasesText{\rref{CF-Meta-Congr-Ty} and \rref{CF-Meta-Congr-Tm}}
  We spell out the translation of the latter rule, where
  $\BB = \abstr{x_1 \of A_1} \cdots \abstr{x_m \of A_m}\; \Box : B$:
  \begin{equation}
    \label{eq:cf-tt-meta-congr}%
    \infer
    { { \begin{aligned}
          &\types s_k : A_k[\upto{\vec{s}}{k}/\upto{\vec{x}}{k}]
          &&\text{for $k = 1, \ldots, m$}
          \\
          &\types t_k : A_k[\upto{\vec{t}}{k}/\upto{\vec{x}}{k}]
          &&\text{for $k = 1, \ldots, m$}
          \\
          &\erase{t_k} = \erase{t'_k}
          &&\text{for $k = 1, \ldots, m$}
          \\
          &\types s_k \equiv t'_k : A[\upto{\vec{s}}{k}/\upto{\vec{x}}{k}]
                  \by \alpha_k
          &&\text{for $k = 1, \ldots, m$}
        \end{aligned} }
      \\\\
      \types v : B[\vec{s}/\vec{x}] \\
      \erase{\symM^\BB(\vec t)} = \erase{v} \\
      \suitable{\beta}
    }{
      \types
      \symM^\BB(\vec s) \equiv
      v
      : B[\vec{s}/\vec{x}]
      \by \beta
    }
  \end{equation}
  The context $\Theta; \Gamma$ is suitable for the premises because~$\beta$ is suitable.
  We apply \rref{TT-Meta-Congr} as follows:
  \begin{equation*}
    \infer
    {
     { \begin{aligned}
          &\Theta; \Gamma \types \erase{s_k} :
              \erase{A_k}[\upto{\erase{\vec{s}}}{k}/\upto{\vec{x}}{k}]
          &\text{for $k = 1, \ldots, m$}
          \\
          &\Theta; \Gamma \types \erase{t_k} :
              \erase{A_k}[\upto{\erase{\vec{t}}}{k}/\upto{\vec{x}}{k}]
          &\text{for $k = 1, \ldots, m$}
          \\
          &\Theta; \Gamma \types \erase{s_k} \equiv \erase{t_k} :
              \erase{A_k}[\upto{\erase{\vec{s}}}{k}/\upto{\vec{x}}{k}]
          &\text{for $k = 1, \ldots, m$}
          \\
          &\Theta; \Gamma \types \erase{B}[\erase{\vec{s}}/\vec{x}] \equiv \erase{B}[\erase{\vec{t}}/\vec{x}]
        \end{aligned} }
    }{
      \Theta; \Gamma \types
      \symM_k^\BB(\erase{\vec{s}}) \equiv \symM_k^\BB(\erase{\vec{t}}) :
      \erase{B}[\erase{\vec{s}}/\vec{x}]
    }
  \end{equation*}
  The first two rows of premises are secured by the induction hypotheses for the corresponding rows in~\eqref{eq:cf-tt-meta-congr}, and
  the premises in the third row are derivable by the side conditions in the third row and induction hypotheses for the fourth row.
  The last premise follows by \cref{thm:substitution-admissible} applied to $\Theta; \Gamma \types_{\trantt{T}} \isType{\erase{B}}$, which holds because we assumed $\types_{\trantt{T}} \isMCtx{\Theta}$.

  \inCase{CF-Abstr}
  A cf-derivation ending with an abstraction
  \begin{equation*}
    \inferrule
    {
      \types_T \isType A \\
      \avar{a}{A} \not\in \fv{\JJ} \\
      \types_T \JJ[\avar a A /x]
    }{
      \types_T \abstr{x \of A} \; \JJ
    }
  \end{equation*}
  is translated to a tt-derivation ending with \rref{TT-Abstr}
  \begin{equation*}
    \inferrule
    {
      \Theta; \Gamma \types_{\trantt{T}} \isType{\erase{A}} \\
      \avar{b}{A} \not\in \vert{}\Gamma\vert{} \\
      \Theta; \Gamma, \avar{b}{A} \of \erase{A} \types_{\trantt{T}} \erase{\JJ}[\avar{b}{A}/x]
    }{
      \Theta; \Gamma \types_{\trantt{T}} \abstr{x \of \erase{A}} \; \erase{\JJ}
    }
  \end{equation*}
  The premises get their derivations from induction hypotheses, where $\avar{b}{A} \not\in \vert{}\Gamma\vert{}$ ensures that $\Gamma, \avar{b}{A} \of \erase{A}$ is suitable for~$\JJ[\avar{b}{A}/x]$.

  \inCaseText{of a specific rule}
  Consider a derivation ending with an instantiation $I = \finmap{\symM_1^{\BB_1} \mto e_1, \ldots, \symM_n^{\BB^n} \mto e_n}$ of a raw cf-rule $R = (\rawRule{\symM_1^{\BB_1}, \ldots, \symM_n^{\BB^n}}{\plug \bdry e})$:
  \begin{equation*}
    \inferrule
    { { \begin{aligned}
          &\types_T
            \plug
            {(\upact I i \BB_i)}
            {e_i}
            \quad\text{for i = 1, \ldots, n}
          \\
          &\types_T \act I \bdry
        \end{aligned} } }
    { \types_T \act{I} (\plug \bdry e) }
  \end{equation*}
  Let $\erase{I} = \symM_1^{\BB_1} \mto \erase{e_1}, \ldots, \symM_n^{\BB^n} \mto \erase{e_n}$.
  Because erasure commutes with instantiation we have
  \begin{equation*}
    \erase{\plug{(\upact{I}{i} \BB_i)}{e_i}}
    =
    \plug{(\upact{\erase{I}}{i} \erase{\BB_i})}{\erase{e_i}}
  \end{equation*}
  and $\erase{\act{I} (\plug{\bdry}{e})} = \act{\erase{I}} \erase{\plug{\bdry}{e}}$.
  Thus we may appeal to the induction hypotheses for the premises and conclude by~$\trantt{R}$, so long as we remember to check that $\Theta; \Gamma$ is suitable for the premises, which it is because \cref{def:context-free-raw-rule} of raw cf-rules requires $\mv{\plug{\bdry}{e}} = \asset{\symM_1^{\BB_1}, \ldots, \symM_n^{\BB^n}}$.

  \inCaseText{of a congruence rule}
  Consider an application of the congruence rule associated with a cf-rule
  \begin{equation*}
    R = (\rawRule{\symM_1^{\BB_1}, \ldots, \symM_n^{\BB_n}}{t : A}),
  \end{equation*}
  as in \cref{def:context-free-congruence-rule}:
  \begin{equation}
    \label{eq:cf-tt-congr}%
    \infer{
      {\begin{aligned}
      &\types_T \plug{(\upact{I}{i} \BB_i)}{f_i}  &&\text{for $i = 1, \ldots, n$}\\
      &\types_T \plug{(\upact{J}{i} \BB_i)}{g_i}  &&\text{for $i = 1, \ldots, n$}\\
      &\erase{g'_i} = \erase{g_i} &&\text{for object boundary $\BB_i$} \\
      &\types_T \plug{(\upact{I}{i} \BB_i)}{f_i \equiv g'_i \by \alpha_i} &&\text{for object boundary $\BB_i$} \\
      &\types_T t' : \act{I} A &&\erase{t'} = \erase{\act{J} t} \\
      & &&\suitable{\beta}
    \end{aligned}}
    }{
      \types_T \act{I} t \equiv t' : \act{I} A \by \beta
    }
  \end{equation}
  The context $\Theta; \Gamma$ is suitable for the premises because~$\beta$ is suitable.
  We apply the corresponding congruence for $\trantt{R}$ (\cref{def:congruence-rule}):
  \begin{equation*}
    \infer{
      {\begin{aligned}
      &\Theta; \Gamma \types_{\trantt{T}} \plug{(\upact{\erase{I}}{i} \erase{\BB_i})}{\erase{f_i}}  &&\text{for $i = 1, \ldots, n$}\\
      &\Theta; \Gamma \types_{\trantt{T}} \plug{(\upact{\erase{J}}{i} \erase{\BB_i})}{\erase{g_i}}  &&\text{for $i = 1, \ldots, n$}\\
      &\Theta; \Gamma \types_{\trantt{T}} \plug{(\upact{\erase{I}}{i} \erase{\BB_i})}{\erase{f_i} \equiv \erase{g_i}} &&\text{for object boundary $\BB_i$} \\
      &\Theta; \Gamma \types_{\trantt{T}} \act{\erase{I}} \erase{A} \equiv \act{\erase{J}} \erase{A}
    \end{aligned}}
    }{
      \Theta; \Gamma \types_{\trantt{T}} \act{\erase{I}} \erase{t} \equiv \act{\erase{J}} \erase{t} : \act{\erase{I}} \erase{A}
    }
  \end{equation*}
  The first and the second row of premises are derivable by induction hypotheses for the corresponding rows in~\eqref{eq:cf-tt-congr}, while the third row is derivable because of the side conditions on the third row and induction hypotheses for the fourth row. The last premise follows by \cref{thm:admissibility-equality-instantiation} applied to $\Theta; \Gamma \types_{\trantt{T}} \isType{A}$, which in turn follows by induction hypothesis applied to a derivation of $\types_T \isType{A}$ witnessing the finitary character of~$R$.

  \inCase{CF-Conv-Tm}
  Consider a term conversion
  \begin{equation*}
    \inferrule{
      \types_T t : A \\
      \types_T A \equiv B \by \alpha
    }{
      \types_T \convert{t}{\beta} : B
    }
  \end{equation*}
  The side condition $\asm{t, A, B, \alpha} = \asm{t, B, \beta}$ ensures that $\Theta; \Gamma$ is suitable for both premises, hence we may apply the induction hypotheses to the premisess and conclude by \rref{TT-Conv-Tm}.

  \inCase{CF-Conv-EqTm}
  Consider an equality conversion
  \begin{equation*}
    \infer
    {
      \types s \equiv t : A \by \alpha \\
      \types A \equiv B \by \beta
    }{
      \types
      \convert{s}{\gamma} \equiv \convert{t}{\delta} : B \by \alpha
    }
  \end{equation*}
  The side conditions
  \begin{equation*}
    \asm{s, A, B, \beta} = \asm{s, B, \gamma}
    \qquad\text{and}\qquad
    \asm{t, A, B, \beta} = \asm{t, B, \delta}
  \end{equation*}
  ensure that $\Theta; \Gamma$ is suitable for both premises, hence we may apply the induction hypotheses to the premises and conclude by \rref{TT-Conv-EqTm}.
  As in the preceding case all assumptions in the premises already appear in the conclusion, and suitability is preserved.

  \inCasesText{\rref{CF-EqTy-Refl}, \rref{CF-EqTy-Sym}, \rref{CF-EqTy-Trans},
    \rref{CF-EqTm-Refl}, \rref{CF-EqTm-Sym}, \rref{CF-EqTm-Trans}}
  These all proceed by application of induction hypotheses to the premises, followed by the corresponding tt-rule, where crucially we rely on recording metavariables in the assumption sets to make sure that $\Theta$ and $\Gamma$ are suitable for the premises.

  \medskip

  Finally, we address statement~\eqref{it:cf-to-tt-2}, which is proved by structural induction on $\types_T \BB$.
  The base cases \rref{CF-Bdry-Ty}, \rref{CF-Bdry-Tm}, \rref{CF-Bdry-EqTy}, \rref{CF-Bdry-EqTm} reduce to translation of term and type judgements, while the induction step \rref{CF-Bdry-Abstr} is similar to the case \rulename{CF-Abstr} above.
\end{proof}

\restatetttocf*

\begin{proof}
\label{proof:tt-to-cf}%
We prove the above existence statements by explicit constructions, e.g., we prove \eqref{item:tt-cf-theory}) by constructing a specific $T'$ which meets the criteria, and similarly for the remaining parts.
We proceed by simultaneous structural induction on all the parts.

\medskip
\noindent
\emph{Proof of part (\ref{item:tt-cf-theory}):}
We proceed by induction on a well-founded order $(I, {\prec})$ witnessing the finitary character of $T = (R_i)_{i \in I}$.
Consider any $i \in I$, with the corresponding specific rule
\begin{equation*}
  R_i = (\rawRule{\Theta}{\plug{\bdry}{e}}),
\end{equation*}
and let $T_i = (R_j)_{j \prec i}$. By induction hypothesis the tt-theory $T'_i$ eligible for~$T_i$ has been constructred.
Because $\types_{T_i} \isMCtx{\Theta}$, by~\eqref{item:tt-cf-extension} there is an eligible labeling $\theta = \finmap{\symM_1 \mto \BB'_1, \ldots, \symM_n \mto \BB'_n}$ for~$\Theta$ such that $\types_{T'_k} \BB'_k$ for each~$k = 1, \ldots, n$.
The empty map $\gamma = \finmap{}$ is an eligible labeling for the empty context~$\emptyCtx$.
Because $\Theta; \emptyCtx \types_{T_i} \bdry$, by~\eqref{item:tt-cf-boundary} there is an eligible cf-boundary~$\bdry'$ for~$\bdry$ with respect to $\theta, \gamma$ such that $\types_{T'_i} \bdry'$.
We now are in possession of the cf-rule-boundary
\begin{equation}
  \label{eq:tt-to-cf-rule-boundary}
  \rawRule{\symM_1^{\BB'_1}, \ldots, \symM_n^{\BB'_n}}{\bdry'}
\end{equation}
eligible for the tt-rule-boundary $\rawRule{\Theta}{\bdry}$. Let
\begin{equation*}
  R'_i = (\rawRule{\symM_1^{\BB'_1}, \ldots, \symM_n^{\BB'_n}}{\plug{\bdry'}{e'}})
\end{equation*}
be the symbol or equality cf-rule induced by~\eqref{eq:tt-to-cf-rule-boundary}, as in \cref{def:context-free-symbol-rule,def:context-free-equality-rule}. Comparison with
\cref{def:symbol-rule,def:equality-rule} shows that $\erasex{e'} = e$, as required.

\medskip
\noindent
\emph{Proof of part (\ref{item:tt-cf-extension}):}
We proceed by induction on the derivation of $\types_T \isMCtx{\Theta}$.
The empty map is an eligible labeling for the empty metavariable context.
If $\types_T \isMCtx{\finmap{\Theta, \symM \of \BB}}$ then by inversion $\types_T \isMCtx{\Theta}$ and $\Theta; \emptyCtx \types_T \BB$. By induction hypothesis there exists an eligible labeling $\theta$ for $\Theta$, and by~\eqref{item:tt-cf-boundary}
applied to $T$, $T'$, $\Theta$, $\theta$, $\emptyCtx$, $\finmap{}$ a cf-boundary $\BB'$  eligible for $\BB$ such that $\types_{T'} \BB'$.
The map $\theta' = \finmap{\theta, \symM \mto \BB'}$ is eligible for $\finmap{\Theta, \symM \of \BB}$, and moreover $\types_{T'} \theta'(\symM')$ for every $\symM' \in \vert{}\theta'\vert{}$.

\medskip
\noindent
\emph{Proof of part (\ref{item:tt-cf-context})} is analogous to part (\ref{item:tt-cf-extension}).

\medskip
\noindent
\emph{Proof of part (\ref{item:tt-cf-boundary}):}
The non-abstracted boundaries reduce to instances of \eqref{item:tt-cf-judgement} by inversion, while the case of \rref{TT-Bdry-Abstr} is analogous to the case \rref{TT-Abstr} below.

\medskip
\noindent
\emph{Part (\ref{item:tt-cf-judgement}):}
Let $T$, $T'$, $\Theta$, $\theta$, $\Gamma$, $\gamma$ be as in~\eqref{item:tt-cf-judgement} with
\begin{align*}
  \Theta &= [\symM_1 \of \BB_1, \ldots, \symM_m \of \BB_p], \\
  \theta &= \finmap{\symM_1 \mto \BB'_1, \ldots, \symM_p \mto \BB'_p}, \\
  \Gamma &= [\mkvar{a}_1 \of A_1, \ldots, \mkvar{a}_p \of A_r], \\
  \gamma &= \finmap{\mkvar{a}_1 \mto A'_1, \ldots, \mkvar{a}_r \mto A'_r}.
\end{align*}
We have the further assumption that each $\symM_i$ has a cf-derivation $D_{\symM_i}$ of $\types_{T'} \BB'_i$, and each $\mkvar{a}_j$ a cf-derivation $D_{\mkvar{a}_j}$ of $\types_{T'} \isType{A'_j}$.
We proceed by structural induction on the derivation of $\Theta; \Gamma \types_T \JJ$.
In each case we construct a cf-derivation concluding with $\types_{T'} \JJ'$ such that~$\JJ'$ is eligible for~$\JJ$.

\inCase{TT-Var} Consider a tt-derivation ending with the variable rule
\begin{equation*}
  \infer{
  }{
    \Theta; \Gamma \types_T \mkvar{a}_j : A_j
  }
\end{equation*}
The corresponding cf-derivation is the application of~\rref{CF-Var}
\begin{equation*}
  \infer{ }{
    \types_{T'} \mkvar{a}_j^{A_j'} : A_j'
  }
\end{equation*}

\inCase{TT-Meta}
Consider a tt-derivation ending with the metavariable rule, where
$\BB_k = \abstr{x_1 \of B_1} \cdots \abstr{x_m \of B_m}\; \bdry$ and
$\BB'_k = \abstr{x_1 \of B'_1} \cdots \abstr{x_m \of B'_m}\; \bdry'$:
\begin{equation*}
  \infer
    {{\begin{aligned}
     &\Theta; \Gamma \types_T t_j : B_j[\upto{\vec{t}}{j}/\upto{\vec{x}}{j}]
      &&\text{for $j = 1, \ldots, m$}
     \\
     &\Theta; \Gamma \types_T \bdry[\vec{t}/\vec{x}]
    \end{aligned}}
    }{
      \Theta; \Gamma \types_T \plug{(\bdry[\vec{t}/\vec{x}])}{\symM_k(\vec{t})}
    }
\end{equation*}
The correspond cf-derivation ends with and application of~\rref{CF-Meta},
\begin{equation*}
  \infer{
    {\begin{aligned}
    &\types_{T'} t'_j : B'_j[\upto{\vec{t}'}{j}/\upto{\vec{x}}{j}] &&\text{for $j = 1, \ldots, m$} \\
    &\types_{T'} \bdry'[\vec{t}'/\vec{x}]
    \end{aligned}}
  }{
    \types_{T'} \plug{\bdry'}{\symM^{\BB'}(\vec{t}')}
  }
\end{equation*}
where the cf-terms $\vec{t}' = (t'_1, \ldots, t'_m)$ are constructed inductively as follows. Assuming we already have $\upto{\vec{t}'}{j}$, we apply the induction hypothesis to the $j$-th premise and obtain its eligible counterpart
$
  \types_{T'} t''_j : B''_j
$,
so that $\erasex{t''_j} = t_j$ and $\erasex{B''_j} = B_j[\upto{\vec{t}}{j}/\upto{\vec{x}}{j}]$.
It follows that $\erase{B''_j} = \erase{B'_j[\upto{\vec{t}'}{j}/\upto{\vec{x}}{j}]}$, therefore
we may use \cref{lem:boundary-convert} to modify $t''_j$ to a term $t'_j$ which fills $B'_j[\upto{\vec{t}'}{j}/\upto{\vec{x}}{j}]$.

\inCase{TT-Meta-Congr}
We consider a tt-derivation ending with a metavariable term congruence rule, where
$\BB_k = \abstr{x_1 \of B_1} \cdots \abstr{x_m \of B_m}\; \Box : C$ and
$\BB'_k = \abstr{x_1 \of B'_1} \cdots \abstr{x_m \of B'_m}\; \Box : C'$:
\begin{equation}
  \label{eq:tt-cf-meta-congr}
  \infer
  {
   { \begin{aligned}
        &\Theta; \Gamma \types_T s_j :
            B_j[\upto{\vec{s}}{j}/\upto{\vec{x}}{j}]
        &\text{for $j = 1, \ldots, m$}
        \\
        &\Theta; \Gamma \types_T t_j :
            B_j[\upto{\vec{t}}{j}/\upto{\vec{x}}{j}]
        &\text{for $j = 1, \ldots, m$}
        \\
        &\Theta; \Gamma \types_T s_j \equiv t_j :
            B_j[\upto{\vec{s}}{j}/\upto{\vec{x}}{j}]
        &\text{for $j = 1, \ldots, m$}
        \\
        &\Theta; \Gamma \types_T C[\vec{s}/\vec{x}] \equiv C[\vec{t}/\vec{x}]
      \end{aligned} }
  }{
    \Theta; \Gamma \types_T
    \symM_k(\vec{s}) \equiv \symM_k(\vec{t}) : C[\vec{s}/\vec{x}]
  }
\end{equation}
The corresponding cf-derivation ends with \rref{CF-Meta-Congr-Tm}
\begin{equation}
  \label{eq:tt-cf-meta-congr-cf}
  \infer
  { { \begin{aligned}
        &\types_{T'} s'_j : B'_j[\upto{\vec{s}'}{j}/\upto{\vec{x}}{j}]
           &&\text{for $j = 1, \ldots, m$}\\
        &\types_{T'} t'_j : B'_j[\upto{\vec{t}'}{j}/\upto{\vec{x}}{j}]
           &&\text{for $j = 1, \ldots, m$}\\
        &\erase{t'_k} = \erase{t''_j}
           &&\text{for $j = 1, \ldots, m$}\\
        &\types_{T'} s'_j \equiv t''_j
              : B'_j[\upto{\vec{s}'}{j}/\upto{\vec{x}}{j}] \by \alpha_j
           &&\text{for $j = 1, \ldots, m$}\\
        &\types_{T'} v : C'[\vec{s}'/\vec{x}]
           &&\erase{\symM^\BB(\vec{t}')} = \erase{v}
      \end{aligned} }
  }{
    \types_{T'}
    \symM^\BB(\vec{s}') \equiv v : C'[\vec{s'}/\vec{x}]
    \by \beta
  }
\end{equation}
where suitable $\vec{s}'$, $\vec{t}'$, $\vec{t}''$, $\vec{\alpha}$, $v$, and $\beta$ remain to be constructed.
The terms $\vec{s}'$ and $\vec{t}'$ are obtained as in the previous case, using the first two rows of premises of~\eqref{eq:tt-cf-meta-congr}.
The induction hypotheses for the third row give us judgements, for $j = 1, \ldots, m$,
\begin{equation*}
  \types_{T'} s''_j \equiv t''_j : B''_j
\end{equation*}
such that $\erase{B''_j} = \erase{B_j[\upto{\vec{s}'}{j}/\upto{\vec{x}}{j}]}$.
We convert the above equality along $\types_{T'} B''_j \equiv B_j[\upto{\vec{s}'}{j}/\upto{\vec{x}}{j}]$ to derive
\begin{equation*}
  \types_{T'} s'''_j \equiv t'_j : B_j[\upto{\vec{s}'}{j}/\upto{\vec{x}}{j}]
\end{equation*}
and since $\erase{s'''_j} = \erase{s'_j}$ by reflexivity and transitivity
\begin{equation*}
  \types_{T'} s'_j \equiv t'_j : B_j[\upto{\vec{s}'}{j}/\upto{\vec{x}}{j}].
\end{equation*}

It remains to construct~$v$ and~$\beta$. For the former, we apply $\rref{CF-Subst-EqTy}$ to $\types_{T'} \abstr{\vec{x} : \vec{B}'} \; \isType{C'}$ to derive
\begin{equation*}
  \types_{T'} C'[\vec{s}'/\vec{x}] \equiv C'[\vec{t}'/\vec{x}] \by \delta
\end{equation*}
ands use it to convert $\types_{T'} \symM_k(\vec{t}') : C'[\vec{t}'/\vec{x}]$ to
$\types_{T'} \convert{\symM_k(\vec{t}')}{\epsilon} : C'[\vec{s}'/\vec{x}]$ for a suitable~$\epsilon$.
We take $v = \convert{\symM_k(\vec{t}')}{\epsilon}$ and the minimal suitable~$\beta$.

\inCase{TT-Abstr} Consider a tt-derivation ending with an abstraction
\begin{equation*}
  \infer
  {
    \Theta; \Gamma \types_T \isType A \\
    \mkvar{a} \not\in \vert{}\Gamma\vert{} \\
    \Theta; \Gamma, \mkvar{a} \of A \types_T \JJ[\mkvar{a}/x]
  }{
    \Theta; \Gamma \types_T \abstr{x \of A} \; \JJ
  }
\end{equation*}
By induction hypothesis we obtain a derivation of $\types_{T'} \isType{A'}$ which is eligible for the first premise. The extended map $\finmap{\gamma, \mkvar{a} \mto A'}$ is eligible for $\Gamma, \mkvar{a} \of A$, and so by induction hypothesis we obtain a derivble $\types_{T'} \JJ'$ which is
eligible for the second premise with respect to $(\theta, \finmap{\gamma, \mkvar{a} \mto A'})$.
We form the desired abstraction by \rref{CF-Abstr},
\begin{equation*}
  \infer
  {
    \types_{T'} \isType{A'} \\
    \avar{a}{{}A'} \not\in \fv{\JJ'} \\
    \types_{T'} \JJ'
  }{
    \types_{T'} \abstr{x \of A'} \; \JJ'[x/\avar{a}{{}A'}]
  }
\end{equation*}

\inCaseText{of a specific rule}
Consider a specific tt-rule
\begin{equation*}
  R = (\rawRule{\sym{N}_1 \of \DD_1, \ldots, \sym{N}_m \of \DD_m}{\J}),
\end{equation*}
and the corresponding cf-rule
\begin{equation*}
  R' = (\rawRule{{\sym{N}_1}^{\DD'_1}, \ldots, {\sym{N}_k}^{\DD'_m}}{\J'})
\end{equation*}
Consider a tt-derivation ending with $\act{I} R$ where
$I = \finmap{\sym{N}_1 \mto e_1, \ldots, \sym{N}_m \to e_m}$:
\begin{equation}
  \label{eq:tt-cf-specific}
  \infer
  {
    { \begin{aligned}[t]
      & \Theta; \Gamma \types_T \plug{(\upact{I}{j} \DD_j)}{e_j} \quad \text{for $j = 1, \ldots, m$} \\
      & \Theta; \Gamma \types_T \act{I} \bdry
    \end{aligned} }
  }{
    \Theta; \Gamma \types_T \act{I} (\plug{\bdry}{e})
  }
\end{equation}
The corresponding cf-derivation is obtained by an application of $R'$ instantiated with
\begin{equation*}
  I' = \finmap{{\sym{N}_1}^{\DD'_1} \mto e'_1, \ldots, {\sym{N}_k}^{\DD'_k} \mto e'_m},
\end{equation*}
which is constructed inductively as follows.
Suppose $\upto{\vec{e'}}{j}$ have already been constructed in such a way that $\erasex{e'_k} = e_k$ and
$
  \types_{T'}
     \plug
       {(\upact{I'}{k} \DD'_k)}
       {e'_k}
$
for all $k < j$.
The induction hypothesis for the $j$-th premise of~\eqref{eq:tt-cf-specific} yields
$
  \types_{T'} \plug{\DD''_j}{e''_j}
$
such that $\erase{\DD''_j} = \erase{\upto{I'}{j} \DD'_j}$. We apply \cref{lem:boundary-convert} to modify~$e''_j$ to~$e'_j$ such that $\types_{T'} \plug{(\upto{I'}{j} \DD'_j)}{e'_j}$ and $\erasex{e'_j} = \erase{e''_j}$.
Lastly, the premise $\types_{T'} \act{I'} \bdry'$ is derivable because~$R'$ is finitary.

\inCaseText{of a congruence rule}
Consider a term tt-rule
\begin{equation*}
  R = (\rawRule{\sym{N}_1 \of \DD_1, \ldots, \sym{N}_m \of\DD_m}{t : C}),
\end{equation*}
and the corresponding cf-rule
\begin{equation*}
  R' = (\rawRule{\sym{N}_1^{\DD'_1}, \ldots, \sym{N}_m^{\DD'_m}}{t' : C'}),
\end{equation*}
Given instantiations
\begin{equation*}
  I = \finmap{\sym{N}_1 \mto f_1, \ldots, \sym{N}_m \to f_m}
  \quad\text{and}\quad
  J = \finmap{\sym{N}_1 \mto g_1, \ldots, \sym{N}_m \to g_m},
\end{equation*}
suppose the tt-derivation ends with the congruence rule for~$R$:
\begin{equation}
  \label{eq:tt-cf-congr-rule}
  \infer{
    {\begin{aligned}
      &\Theta; \Gamma \types_T \plug{(\upact{I}{j} \DD_j)}{f_j}  &&\text{for $i = 1, \ldots, m$}\\
      &\Theta; \Gamma \types_T \plug{(\upact{J}{j} \DD_j)}{g_j}  &&\text{for $i = 1, \ldots, m$}\\
      &\Theta; \Gamma \types_T \plug{(\upact{I}{j} \DD_j)}{f_j \equiv g_i} &&\text{for object boundary $\DD_j$} \\
      &\Theta; \Gamma \types_T \act{I} C \equiv \act{J} C
    \end{aligned}}
  }{
    \Theta; \Gamma \types \act{I} t \equiv \act{J} t : \act{I} C
  }
\end{equation}
The corresponding cf-derivation ends with the congruence rule for~$R'$,
\begin{equation*}
  \infer{
    {\begin{aligned}
    &\types_{T'} \plug{(\upact{I'}{i} \DD'_i)}{f'_i}  &&\text{for $i = 1, \ldots, m$}\\
    &\types_{T'} \plug{(\upact{J'}{i} \DD'_i)}{g'_i}  &&\text{for $i = 1, \ldots, m$}\\
    &\erase{g''_i} = \erase{g'_i} &&\text{for object boundary $\DD'_i$} \\
    &\types_{T'} \plug{(\upact{I'}{i} \DD'_i)}{f'_i \equiv g''_i \by \alpha_i} &&\text{for object boundary $\DD'_i$} \\
    &\types_{T'} t'' : \act{I'} C' &&\erase{t''} = \erase{\act{J'} t'} \\
    & &&\suitable{\beta}
  \end{aligned}}
  }{
    \types_{T'} \act{I'} t' \equiv t'' : \act{I}' C \by \beta
  }
\end{equation*}
where
\begin{equation*}
  I' = \finmap{\sym{N}_1^{\DD'_1} \mto f'_1, \ldots, \sym{N}_m^{\DD'_m} \to f'_m}
  \quad\text{and}\quad
  J' = \finmap{\sym{N}_1^{\DD'_1} \mto g'_1, \ldots, \sym{N}_m^{\DD'_m} \to g'_m}.
\end{equation*}
It remains to determine $\vec{f}'$, $\vec{g}'$, $\vec{g}''$, and $t''$.

The terms $\vec{f}'$ and $\vec{g}'$ are constructed from the first two rows of premises of the tt-derivation in the same way as $\vec{e}'$ in the previous case. The third row of premises yields equations, which after an application of~\cref{lem:boundary-convert}, take the form
\begin{equation*}
  \types_{T'} \plug{(\upact{I'}{i} \DD'_i)}{f''_i \equiv g''_i \by \beta_i}.
\end{equation*}
As $\erase{f'_i} = \erase{f''_i}$, these can be rectified by reflexivity and transitivity to the desired form
\begin{equation*}
  \types_{T'} \plug{(\upact{I'}{i} \DD'_i)}{f'_i \equiv g''_i \by \alpha_i}.
\end{equation*}

Finally, we construct $t''$ by converting $\types_{T'} \act{J} t' : \act{J} C$ along
$
  \types_{T'} \act{J'} C' \equiv \act{I'} C' \by \gamma
$,
which is derived as follows.
The induction hypothesis for the last premise of~\eqref{eq:tt-cf-congr-rule} gives
\begin{equation*}
  \types_{T'} C_1 \equiv C_2
\end{equation*}
such that $\erase{C_1} = \erase{\act{I'} C'}$ and $\erase{C_2} = \erase{\act{J'} C'}$.
Because $\types_{T'} \isType{\act{I'} C'}$ and $\types_{T'} \isType{\act{J'} C'}$, as well as
$\types_{T'} \isType{C_1}$ and $\types_{T'} \isType{C_2}$ by \cref{prop:context-free-presuppositivity},
we may adjust the above equation to
\begin{equation*}
  \types_{T'} \act{I'} C' \equiv \act{J'} C',
\end{equation*}
which is only a symmetry away from the desired one.

The case of a type specific rule is simpler and dealt with in a similar fashion.

\inCasesText{\rref{TT-EqTy-Refl}, \rref{TT-EqTy-Sym}, \rref{TT-EqTm-Refl}, \rref{TT-EqTm-Sym}}
each of these is taken care of by applying the induction hypotheses to the premises, followed by application of the corresponding cf-rule.

\inCasesText{\rref{TT-EqTy-Trans} and \rref{TT-EqTm-Trans}}
Consider a derivation ending with term transitivity
\begin{equation*}
  \infer
  { \Theta; \Gamma \types_T s \equiv t : A \\
    \Theta; \Gamma \types_T t \equiv u : A }
  { \Theta; \Gamma \types_T s \equiv u : A }
\end{equation*}
The induction hypotheses for the premises produce eligible judgements
\begin{equation*}
  \types_{T'} s' \equiv t' : A' \by \alpha
  \qquad\text{and}\qquad
  \types_{T'} t'' \equiv u'' : A'' \by \beta
\end{equation*}
Because $\erase{A'} = \erase{A''}$ and $\erase{t'} = \erase{t''}$, we may convert the second judgement to~$A'$, and rectify the left-hand side, which results in
\begin{equation*}
  \types_{T'} t' \equiv u' : A' \by \gamma.
\end{equation*}
Now \rref{CF-EqTm-Trans} applies.
The case of transitivity of type equality similar and easier.

\inCase{TT-Conv-Tm}
Consider a conversion
\begin{equation*}
  \infer
  { \Theta; \Gamma \types_T t : A \\
    \Theta; \Gamma \types_T A \equiv B }
  { \Theta; \Gamma \types_T t : B }
\end{equation*}
The induction hypotheses for the premises produce eligible judgements
\begin{equation*}
  \types_{T'} t'' : A'
  \qquad\text{and}\qquad
  \types_{T'} A'' \equiv B' \by \alpha
\end{equation*}
Because $\erase{A'} = \erase{A''}$, we obtain $A' \equiv B' \by \beta$, after which \rref{CF-Conv-Tm} can be used to convert~$\types_{T'} t'' : A'$ to a judgement $\types_{T'} t' : B' \by \beta$ which is eligible for the conclusion.

\inCase{TT-Conv-EqTm}
This case follows the same pattern as the previous one.
\end{proof}

\end{document}

